\documentclass[10pt]{article}
\usepackage[utf8]{inputenc}
\usepackage[english]{babel}
\usepackage{mathrsfs}
%% Mathematical packages
\usepackage{amsthm}
\usepackage{enumitem}
\usepackage{amsmath}
\usepackage{mathtools}
\usepackage{amssymb}

\newcommand{\Xtildell}{(\widetilde{X},\widetilde{d},\widetilde{\ll},\widetilde{\leq},\widetilde{\tau}) }

\newcommand{\Xll}{(X,d,\ll,\leq,\tau) }
\def\confXll{(X,d,\ll_{\Omega},\leq_{\Omega},\tau_{\Omega}) }

\hyphenation{Lo-rent-zian}
\hyphenation{mathema-tisch}

%%Graphics
\usepackage{tikz}
\usetikzlibrary{decorations.markings}
\usepackage{pgfplots}
\pgfplotsset{compat=1.18}

%% Typographical packages
\usepackage{dirtytalk} % Package which allows correct formatting of quotations through the command \say{}
\usepackage{xcolor} % Just colors
\usepackage[letterpaper,top=3cm,bottom=3cm,left=3cm,right=3cm,marginparwidth=0cm]{geometry}
\usepackage{comment}

\usepackage[normalem]{ulem}

%% Header
\usepackage{authblk} % Package to better display affiliations to universities, etc.
% \title{Conformal transformations on Length\\ and Lorentzian pre-Length Spaces}
\title{Conformal transformations of metric spaces\\ and Lorentzian pre-length spaces}
%\author{Miguel Manzano, Karim Mosani, Clemens S\"amann, Omar Zoghlami}
\author{
Miguel Manzano$^1$\thanks{{\tt m.manzano.rod@gmail.com}}\ ,
Karim Mosani$^1$\thanks{{\tt karim.mosani@univie.ac.at}}\ ,
Clemens S{\"a}mann$^1$\thanks{{\tt clemens.saemann@univie.ac.at}}\ \ 
and Omar Zoghlami$^1$\thanks{{\tt omar.zoghlami@univie.ac.at}} \\ 
$^1$ Faculty of Mathematics, University of Vienna, \\
Oskar-Morgenstern-Platz 1, 1090 Vienna, Austria.
}
%\affil{University of Vienna}
\date{\today}

%%Bibliography
\usepackage[
  backend=biber,
  style=alphabetic,
  maxnames=99,
  giveninits=true,  % initials instead of full first names
  sorting=nyt       % sort by name, year, title (alphabetical in surname)
]{biblatex}
\AtEveryBibitem{%
  \clearfield{issn}%
}
\renewbibmacro*{in:}{
  \ifboolexpr{
    test {\iffieldequalstr{eprinttype}{arxiv}} 
    or
    test {\ifentrytype{article}}               
  }
    {}
    {\printtext{\bibstring{in}\intitlepunct}}}

\usepackage{csquotes}
\addbibresource{biblio.bib}

%%Hyperref package (always at the end to avoid conflicts)
\usepackage{hyperref}
\hypersetup{
  colorlinks   = true, %Colours links instead of ugly boxes
  urlcolor     = blue, %Colour for external hyperlinks
  linkcolor    = blue, %Colour of internal links
  citecolor   = olive %Colour of citations
}

%%Weird Tobias stuff to make cleverref work in 2025
\usepackage{zref-clever}
\let\oldnewtheorem\newtheorem

\RenewDocumentCommand{\newtheorem}{momo}{
  \IfValueTF{#2}{
    \AddToHook{env/#1/begin}{
      \zcsetup{countertype={#2=#1}}}
      \zcRefTypeSetup{#1}{
Name-sg = #3 ,
      }
    \oldnewtheorem{#1}[#2]{#3}
  }{
    \AddToHook{env/#1/begin}{
      \zcsetup{countertype={#1=#1}}}
    \zcRefTypeSetup{#1}{
Name-sg = #3 ,
      }
    \IfValueTF{#4}{
      \oldnewtheorem{#1}{#3}[#4]
    }{
      \oldnewtheorem{#1}{#3}
    }
  }
}

\newcommand{\cref}[1]{\zcref{#1}}
\newcommand{\Cref}[1]{\zcref[S]{#1}}

%%Shorthand commands
\newcommand{\R}{\mathbb{R}}
\newcommand{\N}{\mathbb{N}}
\newcommand{\abs}[1]{\left\vert #1 \right\vert}
\newcommand{\conftau}{\tau_\Omega}

%%Displayed comments

%%%%%%%%%%%%%%%%%%%%%%%%%%%%%%%%%%%%%%%%%%%%%%%%%%%%%%%%%%%%
\newcounter{mnotecount}

 %\newcommand{\miguel}[1]{\mnote{{\textbf{miguel:}}#1}}
  
%%%%%%%%%%%%%%%%%%%%%%%%%%%%%%%%%%%%%%%%%%%%%%%%%%%%%%%%%%%%

\newcommand{\ma}{\measuredangle}

\newtheorem{theorem}{Theorem}[section]
\newtheorem{proposition}[theorem]{Proposition}
\newtheorem{lemma}[theorem]{Lemma}
\theoremstyle{definition}
\newtheorem{definition}[theorem]{Definition}
\newtheorem{remark}[theorem]{Remark}
\newtheorem{corollary}[theorem]{Corollary}
\newtheorem{example}[theorem]{Example}

\usepackage{setspace}
\setlength{\parindent}{0pt}
\setstretch{1.2}
\setlength{\parskip}{0.5em}

\usepackage{tikz}
\usetikzlibrary{calc}

\usepackage{tocloft}

\setlength{\cftbeforesecskip}{0pt}    
\setlength{\cftbeforesubsecskip}{0pt}

\begin{document}
        
\maketitle

\vspace{-0.7cm}

\begin{abstract}
    We introduce conformal transformations in the synthetic setting of metric spaces and Lorentzian (pre-)length spaces. Our main focus lies on the Lorentzian case, where, motivated by the need to extend classical notions to spaces of low regularity, we provide the first consistent notion of conformal length, and analyse its fundamental properties. We prove that the conformal time separation function $\tau_{\Omega}$ (and the causal structure it induces) yields a Lorentzian pre-length structure if the original space is intrinsic and strongly causal. This allows us to construct a notion of conformal transformation between spaces within this class, yielding an equivalence relation. As applications, we show that the conformal length functional agrees with the standard conformal length of (strongly causal) spacetimes. We also prove conformal invariance of angles and causality conditions, give a characterisation of global hyperbolicity via finiteness of $\tau_{\Omega}$ for all conformal factors, and establish the behaviour of the Lorentzian Hausdorff measure defined in \cite{McCann_2022} under conformal changes. Moreover, we apply the same methods to the metric case, which is of interest in its own right. This is exemplified by proving an analog of the Nomizu--Ozeki theorem for metric length spaces, which has the advantage that the resulting complete space is conformally related to the original space.
\vskip 1em

\noindent
\emph{Keywords:} Conformal transformations, conformal Lorentzian and Riemannian metrics, nonsmooth spacetime geometry, general relativity, Lorentzian length spaces, low regularity, synthetic Lorentzian geometry.
\medskip

\noindent
\emph{2020 Mathematics Subject Classification:}
%2020 MSC Classification
28A75, %Length, area, volume, other geometric measure theory 
51K10, %Synthetic differential geometry
53C23, %Global geometric and topological methods (à la Gromov); differential geometric analysis on metric spaces
53C50, %Lorentz manifolds, manifolds with indefinite metrics 
53B30, %Lorentz metrics, indefinite metrics
53C80, %Applications to physics 
83C99. %None of the above, but in this section (83C GR)    
\end{abstract}

\newpage
\begingroup
\tableofcontents
\endgroup

\section{Introduction}\label{Introduction}
\hypertarget{intro-target}{}

 Conformal transformations play a fundamental role in Einstein's general theory of relativity. While these transformations alter lengths of curves induced by the spacetime metric tensor and also alter the curvature quantities, they preserve the light cone structure at each point --- that is, the distinction between timelike, spacelike, and null vectors, as well as future and past, remains unchanged under such rescalings of the spacetime metric. This property makes conformal transformations an invaluable tool for studying aspects of spacetime that depend only on causality. Moreover, the conformal structure (or light cone structure) and the spacetime volume completely determine the Lorentzian manifold \cite{HawkingKingMcCarthy1976, Mal:77}. This is elevated to a fundamental principle in causal set theory, an approach to quantum gravity \cite{BLMS:87}, encapsulated in the slogan ``number + order = geometry", see e.g.\ \cite{Sur:25, Bra:25b}.

 In geometry, the study of conformal transformations dates back to the classical works of Cotton~\cite{cotton1899varietes}, Schouten~\cite{schouten1921konforme}, and Weyl~\cite{weyl1918reine,Weyl1921}.
 In general relativity, their significance stems from the seminal contributions of Penrose~\cite{penrose1963asymptotic,penrose1964conformal}, who provided the first geometric description of the asymptotic structure of spacetime. Conformal methods remain a vibrant area of research, see for instance, the proceedings \cite{Frauendiener:2002mm}, the monograph by Valiente Kroon \cite{Kro:16}, and the references therein, with applications like the construction of conformal extensions of exact solutions of the Einstein field equations, the analysis of asymptotic spacetime behaviour, the structure of spacelike and null infinity, and the geometry of the light cone.
 
 We say that there exists a conformal transformation from one spacetime to another if the spacetimes are conformally related, i.e., if one can be obtained from the other by a smooth, positive rescaling of the metric tensor. The precise definition is as follows:
\begin{definition}[Conformal relation]\label{definition:conformal relation}
      Two spacetimes $(\mathcal{M}_1,g_1)$ and $(\mathcal{M}_2,g_2)$ are said to be conformally related if there exists a diffeomorphism $\iota: \mathcal{M}_1 \to \mathcal{M}_2$ and a positive smooth map $\Omega: \mathcal{M}_2\to \mathbb{R}_{>0}$, called {\em conformal factor}, such that $\iota^{*}\left(\Omega^{-2}g_2\right)=g_1$.
    \end{definition}

The aim of this article is to provide a synthetic definition of a conformal relation, that is, to define and study the notion of conformal relation without any smooth or manifold structure. In particular, we achieve this within the framework of metric spaces and Lorentzian pre-length spaces.

In general relativity, spacetimes are modelled as connected smooth manifolds endowed with a Lorentzian metric tensor that is traditionally assumed to be sufficiently smooth (at least of class $C^2$) so that fundamental geometric constructions --- such as the Levi-Civita connection, curvature tensors, and geodesic equations --- are well-defined in the classical sense. However, many physically relevant spacetimes possess metric tensors of lower regularity. Examples include impulsive gravitational waves \cite{Podolsky_2022} (see also §20 of \cite{Griffiths_2009}), and models obtained by matching spacetime regions, such as those describing stellar collapse \cite{Hogen_2004, Samad_2023}. These and similar examples have motivated a systematic investigation of general relativity under weakened regularity assumptions on the metric tensor.

Beyond these specific models, the singularity theorems represent one of the cornerstones of general relativity, demonstrating that under physically reasonable energy and causality conditions, spacetimes can become geodesically incomplete as a result of gravitational collapse (see, e.g.\ \cite{Senovilla_1998, Sen:22, Ste:23} for extensive reviews of the singularity theorems). The classical formulations --- developed by Penrose and later by Hawking --- were established for smooth Lorentzian manifolds \cite{Hawking_1973}. This leaves open the question of whether they imply the existence of an incomplete causal geodesic (hence a singularity) or just the breakdown of the regularity of the spacetime metric tensor. To answer this question and fill this gap, one needed to prove the singularity theorems for metrics of low regularity --- a need that was already spelled out explicitly in \cite{Hawking_1973}. This line of thought also leads to the natural question of whether spacetimes can be extended beyond their singularities when the regularity assumptions on the metric tensor are suitably relaxed.

Two complementary approaches have emerged in the study of low-regularity spacetimes. The first investigates manifolds with Lorentzian metric tensors of limited smoothness or cone structures. A foundational contribution in this direction is due to Minguzzi \cite{Min:15} and  Kunzinger--Steinbauer--Stojković--Vickers \cite{Kunzinger_2013}, who showed that many standard results of causality theory remain valid even if the regularity of the metric tensor is reduced to $C^{1,1}$. For cone structures we refer to \cite{FS:12, BS:18, Min:19a} and references therein. The second approach takes a synthetic or metric-space perspective: rather than relying on a differentiable manifold and a smooth metric tensor, it seeks to capture the causal and geometric structure of spacetime using purely metric notions. This approach is grounded in the classical result of Hawking, King, and McCarthy \cite{HawkingKingMcCarthy1976}, which establishes that, for a strongly causal spacetime, the time-separation function $\tau$ --- also known as the Lorentzian distance, in analogy with the distance function in metric spaces --- completely determines the metric tensor. This line of inquiry is inspired by advances in metric geometry, where curvature is understood via comparison properties of geodesic triangles (as in Alexandrov and CAT($K$)-spaces) rather than through smooth differential structure. Following this analogy, Kunzinger and the third author introduced the framework of Lorentzian Length Spaces \cite{Kunzinger_2018} as a Lorentzian counterpart of classical length spaces \cite{Burago_2001}. This allows one to study spacetime geometry in a setting where both the metric tensor and the manifold structure are absent, yet causal and geometric information is retained in a synthetic form, see \cite{Sae:24} for a recent review of this approach.

In recent years, substantial progress has been achieved on both fronts. Regarding the first approach, the classical singularity theorems have been extended to metrics of lower and lower regularity, thereby closing the gap mentioned above. Most recently, Calisti et al.\ \cite{Calisti_2025} established a version of Hawking’s singularity theorem under the assumption that the Lorentzian metric is merely Lipschitz continuous (cf.\ the discussion therein for the development up to the Lipschitz Hawking theorem and see also \cite{Ste:23}). Earlier, in a landmark result, Sbierski proved that the maximally extended analytic Schwarzschild spacetime is inextendible beyond its central singularity even under $C^0$ regularity of the metric tensor \cite{Sbierski_2018b, Sbierski_2018}, and more recently extended this approach considerably \cite{Sbi:22, Sbi:25}.

 As for the second approach, building on the foundational work of \cite{Kunzinger_2018}, Beran et al.\ \cite{Beran_2023} recently established a splitting theorem for globally hyperbolic Lorentzian length spaces, extending classical results to the synthetic setting. Furthermore, Beran, Kunzinger and Rott \cite{Beran_2024} introduced and analysed several notions of sectional curvature bounds for Lorentzian pre-length spaces, proving their equivalence under mild conditions. Moreover, causality conditions and the causal ladder have been established in \cite{ACS:20} by Aké Hau, Cabrera Pacheco and Solis.

 Complementing these advances, tools from optimal transport have been found to be essential for studying general relativity in the synthetic setup. For example, McCann \cite{McC:20} and independently, Mondino and Suhr \cite{Mondino_2022} developed an optimal transport formulation of (lower) timelike Ricci curvature bounds. Mondino--Suhr also defined upper timelike Ricci curvature bounds, which enabled them to define purely synthetic Einstein equations for Lorentzian length spaces. Additionally, versions of the Hawking and Penrose singularity theorems have also been derived by Cavalletti and Mondino \cite{Cavalletti_2024}, and in collaboration with Manini \cite{CMM:24, Cavalletti_2025}, using techniques from optimal transport theory. These are just a few selected results, for a more comprehensive view see the recent reviews \cite{CM:22, Sae:24, McC:25, Bra:25}. These developments indicate that this synthetic approach to Lorentzian geometry via Lorentzian length spaces (or their variants) is a natural and powerful way of investigating curvature, causality, and singularities within a non-smooth context.

 Within this broader research program, one of the key directions concerns the study of black hole spacetimes (see \S 12.1 of \cite{Wald_1984} for the classical definition of a black hole). Although substantial progress has been made in extending causality theory to low-regularity or synthetic settings --- as discussed in the previous paragraphs --- the concept of black holes has, so far, received comparatively little attention within the Lorentzian length space framework, even in situations where such an extension seems to appear feasible in principle \cite{Hau_2022, Burgos_2023}. As the classical definition of a black hole is given in terms of asymptotic flatness, a first step towards a synthetic notion of black holes is therefore to define asymptotic flatness.
 
 This would have far-reaching implications beyond black holes, for instance for the analysis of
 timelike and null infinity, the formulation of the weak and strong cosmic censorship conjectures
 (see \cite[§11.1, 11.2, 12.1]{Wald_1984}), and the theory of gravitational radiation \cite{penrose1963asymptotic}. Consequently, to even get started, we require a synthetic notion of conformal transformation and conformal equivalence, which we introduce in this article. Before we do this we comment briefly on other work.

There is another approach to define conformal transformations via the \emph{causal speed} of causal curves (introduced in \cite{BBCGMORS:24}) in upcoming work by Braun and Sálamo Candal \cite{Bra:25c}, which works in the \emph{measured} Lorentzian synthetic setting. Also, there has been a previous attempt to define a synthetic notion of conformal transformations by Ebrahimi, Vatandoost, and Pourkhandani \cite{ebrahimi23}. While this work represents a valuable step toward a synthetic conformal framework, it encounters fundamental challenges: the proposed conformal length is not additive, and the supremum of this length over all causal curves between two fixed points fails to satisfy the reverse triangle inequality, preventing it from being a time separation function. We illustrate these problems using an example based on a subset of two-dimensional Minkowski spacetime in \Cref{section3.1}. Owing to these foundational inconsistencies, the definitions in \cite{ebrahimi23} do not yield a coherent notion of synthetic conformal transformation within Lorentzian (pre-)length spaces. In what follows, we adopt a more cautious and structurally grounded framework that preserves consistency with classical Lorentzian geometry.

Finally, we apply the same methods used in the Lorentzian case to the metric case. Thereby we get, in a straightforward manner, analogous results for metric (length) spaces, that were (to the best of our knowledge) not available in the literature in this form. In particular, our approach allows us to prove an analogue of the Nomizu--Ozeki theorem from Riemannian geometry, i.e., we show that on any locally compact metric length space there exists a complete metric that not only induces the same topology but is conformally related to the original one, see \Cref{thm:make_a_space_complete}.

The structure of the paper is as follows. In \Cref{Preliminaries}, we provide preliminary definitions relevant to our construction, in particular we recall the basics of Lorentzian (pre-)length spaces. In \Cref{Conformal transformations of Lorentzian pre-length spaces}, after discussing the issues with the previous approach of \cite{ebrahimi23} in \Cref{section3.1}, we introduce the notions of \textit{conformal variation} and \textit{conformal length} for a given conformal factor (\Cref{definition:conformal-factor-variation-length}) and establish their fundamental properties. In \Cref{A class of spaces closed under conformal transformations}, we define a map $\tau_{\Omega}: X \times X \to \mathbb{R}$, where $X$ is a Lorentzian pre-length space, as the supremum of the conformal length of causal curves between two points, and show that it satisfies the reverse triangle inequality. We then prove that the class of intrinsic and (quasi-)strongly causal Lorentzian pre-length space is closed under conformal transformations by showing that $\tau_{\Omega}$ is lower semi-continuous (and hence a time separation function in the sense of Definition 2.8 in \cite{Kunzinger_2018}) and intrinsic whenever the original space is intrinsic. In \Cref{Uniqueness of the conformal time separation function}, we discuss under which conditions distinct conformal factors give rise to distinct conformal time separation functions. In \Cref{Conformal transformations as an equivalence relation}, we define a conformal relation between intrinsic (quasi-)strongly causal Lorentzian pre-length spaces (\Cref{def:conf_rel_LLS}) and prove that it forms an equivalence relation. In \Cref{thm-con-ang-pre} we show that under conformal changes angles between timelike curves of the same time orientation are preserved. As applications, we establish results connecting the synthetic framework to classical Lorentzian geometry in \Cref{sectionapplications}. In \Cref{Connection with smooth spacetimes}, we show that for strongly causal spacetimes, our synthetic notion of conformal relation coincides with the classical smooth one (\Cref{definition:conformal relation}) by proving that the conformal length of a causal curve agrees with the length induced by the conformally transformed metric tensor. In \Cref{subsec-cc}, we exhibit the conformal invariance of the causality conditions. In \Cref{Global hyperbolicity and finiteness of Lorentzian distance}, we establish a synthetic analogue of the classical result linking global hyperbolicity with the finiteness of the conformal time separation function, showing that an intrinsic, non--totally imprisoning, quasi-strongly causal Lorentzian pre-length space is globally hyperbolic if and only if the conformal time separation function associated with any conformal factor is finite. In \Cref{Lorentzian Hausdorff measure}, we derive the relation between the $s$-dimensional Lorentzian Hausdorff measure (\cite{McCann_2022}) of a Borel set $E \subset X$ (with $X$ a Lorentzian pre-length space) before and after a conformal transformation of $X$. In \Cref{Conformal Length on Metric Spaces}, we apply the same methods as in \Cref{Conformal transformations of Lorentzian pre-length spaces} in the case of metric spaces. In \Cref{basic_properties_metric_case}, we introduce the notion of conformal variation and conformal length for a given conformal factor (\Cref{conformal variation and length in ms}), and establish their fundamental properties. In \Cref{section5.2}, we define the induced conformal distance as a map $d_{\Omega}: X \times X \to \mathbb{R}$, where $X$ is a metric space, as the infimum of the conformal length of causal curves between two points, and establish its essential properties. As applications, in \Cref{section 5.3}, we prove a synthetic version of Nomizu--Ozeki's theorem, and in \Cref{section 5.4}, we derive the relation between Hausdorff measures (of a given Borel set) induced by comformally related metrics (just like in \Cref{Lorentzian Hausdorff measure}).

\section{Preliminaries}\label{Preliminaries}

In this section, we review the fundamental concepts and results to be used throughout the paper. We divide the section into two parts: the first covers a number of basic notions of metric geometry and length spaces, while the second focuses on Lorentzian pre-length spaces. For further details, we refer to \cite{Burago_2001} and \cite{Kunzinger_2018}.

\subsection{Metric geometry and length spaces}

We next recall the well-known concepts of metric space, length structure, length space, variational length and metric speed.

\begin{definition}[Metric space]
    Let $X$ be a set and $d\colon X\times X\longrightarrow [0,\infty)$ a function such that, for all $x,y,z\in X$: $(i)$  $d(x,y)=0$ if and only if $x=y$, $(ii)$ $d(x,y)=d(y,x)$, and $(iii)$ $d(x,y)\leq d(x,z)+d(z,y)$. Then, \(d\) is called {\em distance} and $(X,d)$ is called a {\em metric space} .
\end{definition}

\begin{definition}[Length structure]\label{def-len-str}
Let $X$ be a Hausdorff topological space. A length structure $(X,A,L)$ on $X$ consists of a subset $A$ of all (admissible) paths in $X$ together with a length functional $L: A \longrightarrow [0,+\infty]$ such that the following holds.
\begin{enumerate}[label=(\roman*)]\itemsep0cm
    \item 
    If $\gamma : [a,b] \to X$ is admissible and $a \le c < d \le b$, then 
    $\gamma|_{[c,d]}$ 
    is also admissible.

    \item 
    If $\gamma_1\colon[a,c]\rightarrow X$ and $\gamma_2\colon [c,d]\rightarrow X$ are admissible for 
    $a < c < b$, so is their concatenation $\gamma_1\gamma_2 : [a,b] \to X$.

    \item 
    For $A \ni \gamma : [a,b] \to X$ and a homeomorphism $\varphi : [c,d] \to [a,b]$ of the form $\varphi(t) = \alpha t + \beta$ ($\alpha, \beta \in \mathbb{R}$), the composition $\gamma \circ \varphi : [c,d] \to X$ is also an admissible path.

    \item 
    If $\gamma : [a,b] \to X$ is admissible, then 
    \(
    L(\gamma) = L(\gamma|_{[a,c]}) + L(\gamma|_{[c,b]})
    \)
    for any $c \in [a,b]$.

    \item 
    If $\gamma : [a,b] \to X$ is admissible and of finite length,
    $t \mapsto L(\gamma|_{[a,t]})$ is continuous and the length of all constant paths vanishes.

    \item
    For $\gamma, \varphi$ as in (A3), 
    $L(\gamma \circ \varphi) = L(\gamma)$.

    \item 
    For any neighbourhood $U$ of ${x\in X}$, the length of paths connecting $x$ with a point in the complement of $U$ is uniformly bounded away from zero, i.e.\ 
    \(
        \inf \{ L(\gamma) : \gamma(a) = x, \gamma(b) \in X \setminus U \} > 0.
    \)
\end{enumerate}
\end{definition}

\begin{definition}[Variational length]
Let $\gamma : [a,b] \to X$ be a path in a metric space $(X,d)$, and $\sigma = \{ t_0=a, \ldots, t_n=b \}$ a partition of $[a,b]$. The {\em variation $V^d_\sigma(\gamma)$ of $\gamma$ with respect to $\sigma$}, and the {\em variational length $L^d(\gamma)$ of $\gamma$} are defined as
\begin{align*}
V^d_\sigma(\gamma) &\coloneqq \sum_{i=0}^{n-1} d\big(\gamma(t_i), \gamma(t_{i+1})\big),\qquad\qquad  L^d(\gamma):=\sup \big\{V_{\sigma}^d(\gamma):\sigma\textup{ is a partition of }[a,b]\big\}.
\end{align*}
\end{definition}

\begin{definition}[Length space]
A metric space $(X, d)$ is called a {\em length space} if 
$d=d_L$, where 
\(
    d_L(x,y)\coloneqq \inf\{L(\gamma)\  \colon \ \gamma \in A, \gamma(a)=x, \gamma(b)=y\}
\)
for any $x,y\in X$ and $(X, A, L)$ is the length structure coming from the metric $d$ (i.e., $L=L^d$). In such case, $d$ is called {\em intrinsic} or {\em length metric}.
\end{definition}

\begin{definition}[Metric speed] \label{def:metric_speed}
Let $(X,d)$ be a metric space and consider a curve $\gamma:I\longrightarrow X$. The metric speed $v_{\gamma}(t)$ of $\gamma$ at $t\in I$ is defined as 
\[
v_{\gamma}(t)\coloneqq \lim_{\varepsilon\to0}\frac {d\big(\gamma(t),\gamma(t+\varepsilon)\big)}{\vert\varepsilon\vert},
\]
if it exists.
\end{definition}

\subsection{Lorentzian pre-length spaces}\label{lorentzianprelengthspacesubsection}
The fundamental concept in synthetic Lorentzian geometry is that of a Lorentzian pre-length space, which we present next.
They are the analogues of metric spaces, while Lorentzian length spaces are the analogues of metric length spaces. These notions have been introduced in \cite{Kunzinger_2018} (following earlier works of Busemann \cite{Bus:67} and Kronheimer--Penrose \cite{KP:67}). Nowadays, there are different variants of the basic axiomatization of these spaces, cf.\ \cite{McC:24, BMcC:23, BBCGMORS:24, MS:25} (and other approaches like e.g.\ bounded Lorentzian metric spaces \cite{MS:24, BMS:25}). We opted for sticking to the original setting of \cite{Kunzinger_2018} as, in particular, the choice of a background metric $d$ inducing the topology does not matter except for the causality condition of non-total imprisoning (see \Cref{subsec-cc}) and the Lorentzian Hausdorff measures (see \Cref{Lorentzian Hausdorff measure}).

\begin{definition}[Lorentzian pre-length space]\label{definition:LpLS}
    Let $X$ be a set and $\ll,\leq$ two transitive relations on $X$ with $\leq$ reflexive and $\ll$ contained in $\leq$. Let $d$ be a metric on $X$ and $\tau:X\times X\longrightarrow[0,+\infty]$ be a map satisfying the following properties:
    \begin{itemize}\itemsep0cm
        \item[$(i)$] $\tau$ is lower semicontinuous with respect to the metric topology induced by $d$;
        \item[$(ii)$] $\tau$ satisfies the reverse triangle inequality: $\quad \tau(x,z)\geq \tau(x,y)+\tau(y,z)\quad \forall x\leq y\leq z$;
        \item[$(iii)$] $\tau(x,y)=0$ if $x\not\leq y$, and $\tau(x,y)>0$ if and only if $x\ll y$. 
    \end{itemize}
    \(\tau\) is called {\em time separation function} and $\Xll$ is called {\em Lorentzian pre-length space}.
\end{definition}
Any Lorentzian pre-length space fulfils the so-called \emph{push-up property}, that is, for any $x,y,z\in X$ with $x\leq y\ll z$ or $x\ll y\leq z$ it holds that $x\ll y$. Moreover, for every $x\in X$ either $\tau(x,x)=0$ or $\tau(x,x)=+\infty$, and if $\tau(x,y)\in(0,+\infty)$ then $\tau(y,x)=0$. The causal structure of $\Xll$ allows us to define the timelike and causal future/past of a given point as follows.
\begin{definition}[Timelike/causal future of a point]
    \label{definition:future-past-of-points}
        Let $\Xll$ be a Lorentzian pre-length space. For any $x,y\in X$ we define the sets
        \begin{enumerate}[label=(\roman*)]\itemsep0cm
            \item $I^{+}(x):=\Big\{z\in X \, \colon x\ll z\Big\}, \, \, \, I^{-}(x):=\Big\{z\in X \, \colon z\ll x\Big\}$;
            \item $J^{+}(x):=\Big\{z\in X \, \colon x\leq z\Big\}, \, \,  \,  J^{-}(x):=\Big\{z\in X \, \colon z\leq x\Big\}$;
            \item $I(x,y):=I^+(x)\cap I^-(y), \, \, \, J(x,y):=J^+(x)\cap J^-(y)$.
        \end{enumerate}
    We call $I^{+}(x)$ (resp. $I^{-}(x)$) chronological future (resp. past), $J^{+}(x)$ (resp. $J^{-}(x)$) causal future (resp. past), and $I(x,y)$ (resp. $J(x,y)$) timelike (resp. causal) diamond.
\end{definition}
From the causal relations $\ll,\leq$ and the time separation function $\tau$, we can define timelike and causal curves as well as the so-called variational $\tau$-length of a causal curve as follows. 

\begin{definition}[Causal and timelike curves]
\label{definition:causal-timelike-curves}
    Let $\Xll$ be a Lorentzian pre-length space and  $I\subseteq\mathbb{R}$ an interval. A non-constant curve $\gamma:I\longrightarrow X$ is called causal (resp.\ timelike) if $\gamma(t)\leq \gamma(s)$ (resp.\ $\gamma(t)\ll \gamma(s)$) for all $t,s\in I$ such that $t< s$. 
\end{definition}
\begin{definition}[Variation of causal curves, $\tau$-length]
    \label{definition:tau-length}
    Let \((X, \ll, \leq, \tau, d)\) be a Lorentzian pre-length space. For any causal curve \(\gamma \colon [a,b] \to X\) and any partition \(\sigma=\{t_i\}_{i=0}^{m}\) of \([a,b]\), we define the variation with respect to \(\sigma\), denoted by $V_{\sigma}(\gamma)$ as
    \[
        V_{\sigma}(\gamma) \coloneqq \sum_{i=0}^{m-1} \tau(\gamma(t_i), \gamma(t_{i+1})).
    \]
    We then define the \(\tau\)-length of \(\gamma\) by
    \[
        L^\tau(\gamma) \coloneqq \inf \Big\{ V_{\sigma}(\gamma) \ \colon \ \sigma\ \text{is a partition of} \ [a,b] \Big\}.
    \]
\end{definition}
Given a causal curve $\gamma$, 
variations are monotone decreasing with respect to inclusions of partitions, while the $\tau$-length satisfies additivity and reparametrization invariance\footnote{A reparametrization of a causal curve $\gamma:[a,b]\longrightarrow X$ is another causal curve $\eta:[c,d]\longrightarrow X$ such that $\gamma=\eta\circ\phi$, where $\phi:[a,b]\longrightarrow[c,d]$ is a continuous and strictly monotonically increasing map.}. If the time separation function between causally-related points coincides with the supremum of $\tau$-lengths over all possible causal curves and vanishes otherwise, the space $\Xll$ is called intrinsic. The precise definition is as follows.
\begin{definition}[Intrinsicness]
    \label{definition:intrisicness}
    A Lorentzian pre-length space $\Xll$ is intrinsic if
    \[
        \tau(x,y) :=
            \sup \Big\{ \{0\} \cup\left\{ L^{\tau}(\gamma) \ \colon \ \gamma \ \text{is a causal curve from \(x\) to \(y\)}  \right\} \Big\}.
    \]
\end{definition}
In the context of synthetic Lorentzian geometry, it is usually convenient to enforce certain causality conditions, e.g.\ to exclude pathological causal structures or to guarantee the existence of causal curves between causally-related points. In this regard, the following notions will play a role later on.
\begin{definition}[Causal path connectedness]
    \label{definition:path-connectedness}
     A Lorentzian pre-length space $\Xll$ is said to be causally path connected if for all $x,y\in X$ with $x\ll y$ (resp.\ $x< y$) there is a timelike (resp.\ causal) curve from $x$ to $y$. 
\end{definition}

\begin{definition}[Quasi-strong causality]
    \label{defintion:quasi_strong_causality}
    A Lorentzian pre-length space $\Xll$ is said to be quasi-strongly causal 
    if, for all $x\in X$ and for every neighbourhood $U$ of $x$, there is a neighbourhood $V\subseteq U$ of $x$ such that every causal curve $\gamma:[a,b]\longrightarrow X$ with $\gamma(a),\gamma(b)\in V$ verifies $\gamma([a,b])\subseteq U$. 
\end{definition}

\begin{remark}\label{remark:quasi-strong-is-smooth-strong}
    In the literature, a spacetime is typically called strongly causal if it fulfils \Cref{defintion:quasi_strong_causality}, see e.g.\ \cite{Oneill_1983,Wald_1984}. In the context of Lorentzian pre-length spaces, however, strong causality usually requires that the Alexandrov topology coincides with the topology induced by $d$ (i.e., if timelike diamonds form a subbasis). While \Cref{defintion:quasi_strong_causality} agrees with the standard spacetime notion, it is less restrictive than the corresponding condition for Lorentzian pre-length spaces. We therefore use the terminology \emph{quasi-strong causality} to emphasise this point, unless we are considering an actual spacetime, in which case we simply refer to this property as strong causality. 
\end{remark}

Given a spacetime $(M,g)$, i.e.\ a connected, smooth, time-oriented manifold with a Lorentzian metric $g$ of signature $(-,+,...,+)$, one can construct a causal space $(M,\ll,\leq)$ from the standard causal relations \cite{Kunzinger_2018}, namely $x\ll y$ (resp.\ $x\leq y$) if there is a future-directed timelike (resp.\ causal) curve from $x$ to $y$. One then defines the $g$-length of a causal curve $\gamma:[a,b]\longrightarrow M$ by 
\begin{align}
\label{def:g-length}
L_g(\gamma):=\int_{a}^b\sqrt{-g(\dot{\gamma},\dot{\gamma})dt}.%,\quad \text{where} \, \, \dot\gamma(t):=\left.\frac{d\gamma(t)}{dt}\right.%\vert_{s=t}.
\end{align}
The map $\tau_g:M\times M\longrightarrow[0,+\infty]$ defined by
\begin{align}
    \label{def:time-sep-funct-spacetimes}
    \tau_g(x,y) \coloneqq
    %\begin{dcases}
        \sup \Big\{\{0\} \cup \big\{ L_g(\gamma) \ \colon \ \gamma \ \text{is a causal curve from \(x\) to \(y\)}  \big\} \Big\}.% \, \, \text {if} \, \, x \leq y, \\
     %       0 \, \, \text{if} \, \, x \not \leq y.
    %\end{dcases}
\end{align}
defines a time separation function \cite{Oneill_1983}, and, in particular, is lower semicontinuous with respect to the metric $d^h$ associated to any Riemannian metric $h$ on $M$ (which induces the manifold topology). Thus, $(M,d^h,\ll,\leq,\tau)$ is a Lorentzian pre-length space induced by $(M,g)$. Observe that, from the construction of the causal relations, it immediately follows that $(M,d^h,\ll,\leq,\tau)$ is causally path connected. 

\section{Conformal transformations of Lorentzian pre-length spaces}\label{Conformal transformations of Lorentzian pre-length spaces}

In this section, we introduce a notion of conformal transformation for Lorentzian pre-length spaces. We begin by examining the shortcomings in the earlier approach of \cite{ebrahimi23}, thereby motivating the need for a consistent and well-founded formulation. We then introduce the concepts of conformal variation with respect to partitions and conformal length, and analyse some of its basic properties. We continue by defining a conformal time separation function $\tau_{\Omega}$, and studying its semi-continuity and causal properties. This, in particular, allows us to show that the class of intrinsic and quasi-strongly causal Lorentzian pre-length spaces is closed under conformal transformations. The section concludes with the proof that the conformal relation defined in this setting is an equivalence relation.

\setcounter{subsection}{0}
\subsection{Previous approach to synthetic conformal transformations}\label{section3.1}

As mentioned in the \hyperlink{intro-target}{Introduction}, the earliest attempt to define a notion of conformal transformation within the synthetic Lorentzian framework was presented in \cite{ebrahimi23}. Given a Lorentzian pre-length space $\Xll$, a positive continuous function $\Omega:X\to (0, +\infty)$, and any (future-directed) causal curve $\gamma:I\subseteq\mathbb{R}\to X$, the conformal length functional is defined in \cite{ebrahimi23} as
    \[
        \overline{L}_{\Omega}(\gamma) \coloneqq \inf \Bigg\{ \sum_{i=0}^{m-1} \Omega\big(\gamma(t_i)\big)\Omega\big(\gamma(t_{i+1})\big) \, \tau(\gamma(t_i), \gamma(t_{i+1})) \ \colon \ \sigma:=\{t_i\}_{i=1}^{m} \, \text{is a partition of} \ I \Bigg\}.
    \]
This definition, however, presents at least two important drawbacks, namely that $\overline{L}_{\Omega}$ is \textit{not} additive and that the function $\overline{\tau}_{\Omega}$ given by
\begin{align*}
    \overline{\tau}_{\Omega}(p,q) \coloneqq
   % \begin{dcases}
        \sup \big\{ \bar{L}_\Omega(\gamma) \ \colon \ \gamma \ \text{is a future-directed causal curve from \(p\) to \(q\)}  \big\}\cup\{0\},% \, \, \text {if} \, \, p \leq q, \\
         %   0 \, \, \text{if} \, \, p \not \leq q
  %  \end{dcases}
\end{align*}
does \textit{not} satisfy the reverse triangle inequality. 
We illustrate this with the following example of a subset of Minkowski space.

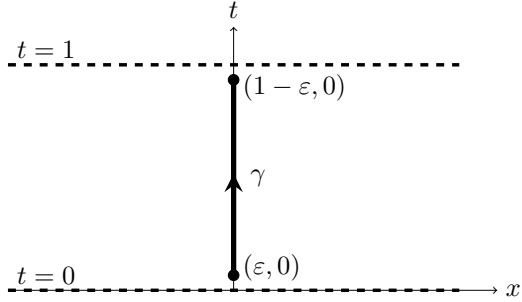
\begin{figure}[ht]
    \centering
\begin{tikzpicture}[scale=1, decoration={markings, 
                            mark= at position 0.52 with {\arrow{stealth}}}]
  \def\xmin{-3}
  \def\xmax{3}
  \def\ymin{0}
  \def\ymax{3}
  \def\ylo{0}
  \def\yhi{3}

  %\fill[blue!20] (\xmin,\ylo) rectangle (\xmax,\yhi); I think it looks better without the background

  \draw[dashed, line width=1.3pt] (\xmin,\ylo) -- (\xmax,\ylo);
  \draw[dashed, line width=1.3pt] (\xmin,\yhi) -- (\xmax,\yhi);

  \draw[->, line width=0.1pt] (\xmin,0) -- (3.5,0) node[right] {$x$};
  \draw[->, line width=0.1pt] (0,\ymin) -- (0,3.5) node[above] {$t$};

  \node[right] at (-3,3.2) {$t=1$};
  \node[right] at (-3,0.2) {$t=0$};
  \node[right] at (0.1,1.5) {$\gamma$};
  \node[right] at (0,0.3) {$(\varepsilon,0)$};
  \node[right] at (0,2.7) {$(1-\varepsilon,0)$};

  \draw[line width=2pt, black, postaction = decorate] (0,0.2) -- (0,2.8);

  \filldraw[black] (0,0.2) circle (2pt);
  \filldraw[black] (0,2.8) circle (2pt);
\end{tikzpicture}

\caption{The subset $\big(M=(0,1)\times \mathbb{R},g=-dt^2+dx^2\big)$ of Minkowski space, and the timelike curve $\gamma: [\varepsilon,1-\varepsilon] \longrightarrow M$ given by $\gamma(s):= \big(s,0\big)$ 
with $\varepsilon\in(0,1/2)$.}
\label{Figure:Mk:example}
\end{figure}

\begin{example}\label{example:iranian-length} \normalfont
Consider the subset $\big(M=(0,1)\times \mathbb{R},g=-dt^2+dx^2\big)$ of Minkowski space (see \Cref{Figure:Mk:example}), and let $\Omega:M\longrightarrow(0,+\infty)$ be the function defined as $\Omega(t,x):=t(1-t)$. The timelike/causal relations $\ll,\leq$ induced from Minkowski space, together with the euclidean metric $d$ of $\mathbb{R}^2$ and the Minkowskian time separation function $\tau\big((t,x),(t',x')\big)=\sqrt{((t'-t)_+^2-(x'-x)^2)_+}$ (where $(.)_+:=\max(0,.)$ denotes the positive part) yield a Lorentzian pre-length space $(M,d,\ll,\leq,\tau)$.  
For the (future-directed) timelike curve $\gamma: [\varepsilon,1-\varepsilon] \longrightarrow M$ given by $\gamma(s):= \big(s,0\big)$ with $\varepsilon\in(0,1/2)$, the definition of $\overline{L}_{\Omega}$ as an infimum entails
\[
    \overline{L}_{\Omega}(\gamma)\leq \Omega\big(\gamma(\varepsilon)\big)\Omega\big(\gamma(1-\varepsilon))\big) \, \tau\big(\gamma(\varepsilon), \gamma(1-\varepsilon)\big)=\varepsilon^2(1-\varepsilon)^2(1-2\varepsilon).
\]

Moreover, we have that $\min_{s\in[\varepsilon,1-\varepsilon]} (\Omega\circ\gamma)(s)=\Omega(\varepsilon) =\varepsilon(1-\varepsilon)$, so $\varepsilon^2(1-\varepsilon)^2(1-2\varepsilon)$ is also a lower bound for the conformal length of $\gamma$. Thus $\overline{L}_{\Omega}(\gamma)=\varepsilon^2(1-\varepsilon)^2(1-2\varepsilon)$, which means that $\overline{L}_{\Omega}(\gamma\vert_{[\varepsilon,1-\varepsilon]})$ vanishes in the limit $\varepsilon\to 0$. Hence, $\overline{L}_{\Omega}$ is not additive as extending \(\gamma\) decreases its length. 

To prove that $\overline\tau_{\Omega}$ does not satisfy the reverse triangle inequality, first notice that any causal curve $\beta$ with same endpoints as $\gamma$ satisfies $\overline{L}_{\Omega}(\beta)\leq \Omega(\varepsilon)\Omega(1-\varepsilon)(1-2\varepsilon)=\overline{L}_{\Omega}(\gamma)$, hence $\gamma$ is $\overline{L}_{\Omega}$-maximizing. Now, suppose that $\overline\tau_{\Omega}$ verifies the reverse triangle inequality and for a fixed \(\delta \in (0,1/2)\) consider any \(\varepsilon < \delta\). Then,
\begin{align*}    
    \varepsilon^2(1-\varepsilon)^2(1-2\varepsilon)&=\overline\tau_{\Omega}\big((\varepsilon,0),(1-\varepsilon,0)\big)\\
    &\geq  \overline\tau_{\Omega}\big((\varepsilon,0),(\delta,0)\big)
    + \overline\tau_{\Omega}\big((\delta,0),(1-\delta,0)\big)
    + \overline\tau_{\Omega}\big((1-\delta,0),(1-\varepsilon,0)\big)\\
    &\geq \overline\tau_{\Omega}\big((\delta,0),(1-\delta,0)\big)=\overline{L}_{\Omega}\big(\gamma\vert_{[\delta,1-\delta]}\big)=\delta^2(1-\delta)^2(1-2\delta).
    \end{align*} 
In the limit $\varepsilon\to0$ we obtain $0\geq \delta^2(1-\delta)^2(1-2\delta)$, which is a contradiction. Thus, $\overline\tau_{\Omega}$ does not satisfy the reverse triangle inequality.
\end{example}

While the approach in \cite{ebrahimi23} might seem not completely unreasonable as a first guess, \Cref{example:iranian-length} highlights the need for a notion of \emph{conformal length functional} with more suitable properties, additivity and compatibility with the smooth case in particular. This issue is addressed in the next section.

\subsection{Lorentzian conformal length: definition and properties}\label{Lorentzian conformal length: definition and properties}

The definition of \textit{conformal length} that we propose next is analogous to that of the standard length in a Lorentzian pre-length space. 
In particular, it is a variational length, defined as the infimum of variations of a given curve over all partitions. The corresponding notion of \textit{conformal variation} with respect to a partition will therefore be analysed in order to verify that it is sufficiently well-behaved so as to ensure desirable properties of the conformal length (e.g.\ additivity).

\begin{definition}[Conformal factor, variation, length]\label{definition:conformal-factor-variation-length} 
    Let \(\Xll\) be a Lorentzian pre-length space and \(\Omega \colon X \to (0,+\infty)\) a positive continuous function. For any causal curve \(\gamma \colon [a,b] \to X\) and any partition \(\sigma\) of \([a,b]\), we define the {\em conformal variation of \(\gamma\) with respect to the conformal factor \(\Omega\)} as
    \[
        V_{\Omega,\sigma}(\gamma) \coloneqq \sum_{i=0}^{m-1} \max_{t \in [t_i, t_{i+1}]} \Omega(\gamma(t)) \, \tau(\gamma(t_i), \gamma(t_{i+1})).
    \]
    We then define the {\em conformal length of \(\gamma\) with respect to the conformal factor \(\Omega\)} as
    \[
        L_\Omega^\tau(\gamma) \coloneqq \inf \Big\{ V_{\Omega,\sigma}(\gamma) \ \colon \ \sigma\ \text{is a partition of} \ [a,b] \Big\}.
    \]
\end{definition}
The previous definition of $L_{\Omega}^{\tau}$ is motivated by the usual expression of the conformal length of a curve in a smooth spacetime as an integral; it is basically a discretisation of such integral. In fact, for strongly causal spacetimes our notion agrees with the classical notion of conformal length, as we will see in \Cref{Connection with smooth spacetimes}. The presence of the maximum function in the definition is necessary for natural properties of the conformal variations to hold, such as monotone decrease with respect to inclusion of partitions. This property is proven in the following lemma.
\begin{lemma}\label{lemma:variations_decreasing}
    If \(\sigma\) and \(\eta\) are partitions of \([a,b]\) such that \(\sigma \subset \eta\), then \(V_{\Omega,\sigma}(\gamma) \geq V_{\Omega, \eta}(\gamma)\). As a consequence, if we fix any partition \(\sigma\) of \([a,b]\), we have that
    \[
        L_\Omega^\tau(\gamma) = \inf\Big\{V_{\Omega, \eta}(\gamma)\ \colon \ \eta\ \text{is a partition of} \ [a,b], \ \sigma \subset \eta \Big\}.
    \]
\end{lemma}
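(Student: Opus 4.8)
The plan is to prove the statement in two steps: first, the "one-point refinement" case, where $\eta = \sigma \cup \{s\}$ for a single new point $s$; then the general case follows by a straightforward induction on $|\eta \setminus \sigma|$, and the final "consequence" is immediate from the monotonicity together with the definition of $L_\Omega^\tau$ as an infimum over all partitions.

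For the one-point refinement, suppose $s$ lies in the $k$-th subinterval, so $t_k < s < t_{k+1}$ (if $s$ coincides with some $t_i$ there is nothing to do). Comparing $V_{\Omega,\sigma}(\gamma)$ and $V_{\Omega,\eta}(\gamma)$, all summands indexed by $i \neq k$ are identical, so it suffices to show
\[
    \max_{t \in [t_k,t_{k+1}]}\Omega(\gamma(t))\,\tau(\gamma(t_k),\gamma(t_{k+1}))
    \;\geq\;
    \max_{t \in [t_k,s]}\Omega(\gamma(t))\,\tau(\gamma(t_k),\gamma(s))
    \;+\;
    \max_{t \in [s,t_{k+1}]}\Omega(\gamma(t))\,\tau(\gamma(s),\gamma(t_{k+1})).
\]
Here is where the two ingredients enter. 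First, since $\gamma$ is a causal curve we have $\gamma(t_k)\leq\gamma(s)\leq\gamma(t_{k+1})$, so the reverse triangle inequality for $\tau$ (property $(ii)$ in \Cref{definition:LpLS}) gives $\tau(\gamma(t_k),\gamma(t_{k+1}))\geq \tau(\gamma(t_k),\gamma(s))+\tau(\gamma(s),\gamma(t_{k+1}))$. Second, $[t_k,s]\subseteq[t_k,t_{k+1}]$ and $[s,t_{k+1}]\subseteq[t_k,t_{k+1}]$, so $M := \max_{t\in[t_k,t_{k+1}]}\Omega(\gamma(t))$ dominates both of the smaller maxima. Combining these,
\[
    M\cdot\tau(\gamma(t_k),\gamma(t_{k+1}))
    \;\geq\;
    M\cdot\tau(\gamma(t_k),\gamma(s)) + M\cdot\tau(\gamma(s),\gamma(t_{k+1}))
    \;\geq\;
    \max_{[t_k,s]}(\Omega\circ\gamma)\,\tau(\gamma(t_k),\gamma(s)) + \max_{[s,t_{k+1}]}(\Omega\circ\gamma)\,\tau(\gamma(s),\gamma(t_{k+1})),
\]
where the last step uses nonnegativity of $\tau$ to replace the coefficient $M$ by the smaller maxima on each term separately. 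This is exactly the desired inequality for the $k$-th block, hence $V_{\Omega,\sigma}(\gamma)\geq V_{\Omega,\eta}(\gamma)$.

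For the general case, write $\eta\setminus\sigma = \{s_1,\dots,s_\ell\}$ and define intermediate partitions $\sigma = \eta_0 \subset \eta_1 \subset \cdots \subset \eta_\ell = \eta$ by $\eta_j := \sigma \cup \{s_1,\dots,s_j\}$; each step $\eta_{j-1}\subset\eta_j$ adds exactly one point, so the one-point case yields $V_{\Omega,\eta_{j-1}}(\gamma)\geq V_{\Omega,\eta_j}(\gamma)$, and chaining these gives $V_{\Omega,\sigma}(\gamma)\geq V_{\Omega,\eta}(\gamma)$. Finally, for the "consequence": on one hand, restricting the infimum defining $L_\Omega^\tau(\gamma)$ to partitions refining $\sigma$ can only increase it, so $L_\Omega^\tau(\gamma)\leq\inf\{V_{\Omega,\eta}(\gamma) : \sigma\subset\eta\}$; on the other hand, for any partition $\rho$ of $[a,b]$ the common refinement $\rho\cup\sigma$ satisfies $V_{\Omega,\rho}(\gamma)\geq V_{\Omega,\rho\cup\sigma}(\gamma)\geq\inf\{V_{\Omega,\eta}(\gamma):\sigma\subset\eta\}$, and taking the infimum over $\rho$ gives the reverse inequality.

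I do not anticipate a genuine obstacle here; the only point requiring care is to keep the maximum coefficient $M$ attached to the correct term when splitting and then to use $\tau\geq 0$ to shrink each coefficient independently — if one tried instead to argue with a common coefficient one might worry about which maximum to take, but since $\tau$ is nonnegative the two terms can be bounded separately, which is exactly what makes the presence of the maximum (rather than, say, the product $\Omega(\gamma(t_i))\Omega(\gamma(t_{i+1}))$ of the previous approach in \Cref{section3.1}) the right choice.
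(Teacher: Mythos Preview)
Your proof is correct and follows essentially the same approach as the paper: reduce to the case $\eta = \sigma \cup \{s\}$, apply the reverse triangle inequality for $\tau$ to split $\tau(\gamma(t_k),\gamma(t_{k+1}))$, then shrink the common maximum $M$ to the maxima over the two subintervals, and iterate for the general case. The argument for the consequence via common refinements is likewise identical to the paper's.
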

\begin{proof}
    It suffices to prove the result for the case \(\eta = \sigma \cup \{s\}\), for some \(s \in[a,b] \setminus \sigma\). Indeed, if \(\eta\) had more than one extra point than \(\sigma\), as partitions are finite we can just iteratively apply the previous case until we include all points of \(\eta\). As \(\sigma = \{t_i\}_{i=0}^m\) is a partition of \([a,b]\) and \(s \not \in \sigma\), there exists a unique \(i \in \{0, \dots, m-1\}\) such that \(s \in (t_i, t_{i+1})\). We then see that by the reverse triangular inequality of \(\tau\) we get
    \[
        \begin{split}
            \max_{t \in [t_i, t_{i+1}]} \Omega(\gamma(t)) \, \tau(\gamma(t_i), \gamma(t_{i+1}))&\geq  
            \max_{t \in [t_i, t_{i+1}]}\Omega(\gamma(t)) \,\tau(\gamma(t_i), \gamma(s)) + \max_{t \in [t_i, t_{i+1}]}\Omega(\gamma(t)) \,\tau(\gamma(s), \gamma(t_{i+1}))\\
            &\geq \max_{t \in [t_i, s]}\Omega(\gamma(t)) \,\tau(\gamma(t_i), \gamma(s)) + \max_{t \in [s, t_{i+1}]}\Omega(\gamma(t)) \,\tau(\gamma(s), \gamma(t_{i+1})).
        \end{split}
    \]
    Since all of the other terms required to compute the variations \(V_{\Omega, \sigma}\) and \(V_{\Omega, \eta}\) are the same, we immediately get that \( V_{\Omega, \sigma}(\gamma) \geq V_{\Omega, \eta}(\gamma)\) which proves the first claim.
    
    The second claim immediately follows from noticing that the inequality \say{\(\leq\)} holds by definition and the inequality \say{\(\geq \)} is implied by the fact that for any partition \(\theta\), we can always consider the partition \(\theta \cup \sigma\) which will contain \(\sigma\) and will give a smaller variation by the first claim.
\end{proof}

Later on we will use the notion of conformal length to construct a conformal time separation function $\tau_{\Omega}$ on an intrinsic and quasi-strongly causal Lorentzian pre-length space $\Xll$ (see \Cref{lorentzianprelengthspacesubsection}). This requires that $L_\Omega^\tau $ satisfies a number of properties that we prove in the next proposition.
\begin{proposition}\label{prop:basic_properties_conformal_length}
    The functional \(L_\Omega^\tau \) satisfies the following properties:
    \begin{enumerate}[label=(\roman*)]
        \item For any positive number \(\delta >0\), it holds that
            \[
                L_\Omega^\tau (\gamma) = \inf \left\{ V_{\Omega,\sigma}(\gamma) \ \colon \ \sigma\ \text{is a partition of} \ [a,b] \ \land \ \abs{\sigma} < \delta \right\},
            \]
            where for a partition \(\sigma = \{t_i\}_{i=0}^m\) we define its modulus as
            \(
                \abs{\sigma} \coloneqq \max_{i=0, \dots, m-1} \abs{t_{i+1} - t_i};
            \)
        \item \label{item:additivity} \(L_\Omega^\tau \) is additive, i.e.\ for any causal curve \(\gamma \colon [a,b] \to X\) and any \(s \in [a,b]\), it holds
            \[
             L_\Omega^\tau (\gamma) = L_\Omega^\tau (\gamma\vert_{[a,s]}) + L_\Omega^\tau (\gamma\vert_{ [s,b]});
            \]
        \item \( L_\Omega^\tau \) is invariant under reparametrizations of causal curves, namely if \(\gamma \colon [a,b] \to X\) is a causal curve and \(\sigma \colon [c,d] \to [a,b]\) is an increasing Lipschitz homeomorphism (so that \(\gamma \circ \sigma\) is still causal), then \(L_\Omega^\tau (\gamma) = L_\Omega^\tau (\gamma \circ \sigma)\);
    \end{enumerate}
\end{proposition}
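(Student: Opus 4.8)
The plan is to obtain all three properties essentially for free from \Cref{lemma:variations_decreasing} together with routine manipulations of infima, exploiting that $L_\Omega^\tau$ is an infimum over partitions of the \emph{finite sums} $V_{\Omega,\sigma}$, each of which is manifestly additive over concatenations of partitions. For (i), the inequality $L_\Omega^\tau(\gamma)\le\inf\{V_{\Omega,\sigma}(\gamma):\abs{\sigma}<\delta\}$ is trivial, since partitions of modulus less than $\delta$ form a subclass of all partitions. For the reverse inequality, given an arbitrary partition $\sigma$ of $[a,b]$ I would adjoin finitely many points to produce a refinement $\eta\supseteq\sigma$ with $\abs{\eta}<\delta$; by \Cref{lemma:variations_decreasing} we get $V_{\Omega,\eta}(\gamma)\le V_{\Omega,\sigma}(\gamma)$, hence $\inf\{V_{\Omega,\eta}(\gamma):\abs{\eta}<\delta\}\le V_{\Omega,\sigma}(\gamma)$, and taking the infimum over all $\sigma$ gives $\inf\{V_{\Omega,\eta}(\gamma):\abs{\eta}<\delta\}\le L_\Omega^\tau(\gamma)$.

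For (ii), the key point is that if a partition $\sigma=\{t_i\}_{i=0}^m$ of $[a,b]$ has $s$ as one of its nodes, say $s=t_j$, then no summand of $V_{\Omega,\sigma}(\gamma)$ straddles $s$, so the sum splits \emph{exactly} as $V_{\Omega,\sigma_1}(\gamma\vert_{[a,s]})+V_{\Omega,\sigma_2}(\gamma\vert_{[s,b]})$ with $\sigma_1=\{t_0,\dots,t_j\}$ and $\sigma_2=\{t_j,\dots,t_m\}$ (neither the max-factors $\max_{[t_i,t_{i+1}]}\Omega\circ\gamma$ nor the time separations $\tau(\gamma(t_i),\gamma(t_{i+1}))$ are affected by the splitting). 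By the second claim of \Cref{lemma:variations_decreasing}, applied with the fixed partition $\{a,s,b\}$, one may compute $L_\Omega^\tau(\gamma)$ as the infimum of $V_{\Omega,\sigma}(\gamma)$ over partitions $\sigma$ that contain $s$; combining this with the exact splitting and with $V_{\Omega,\sigma_1}(\gamma\vert_{[a,s]})\ge L_\Omega^\tau(\gamma\vert_{[a,s]})$, $V_{\Omega,\sigma_2}(\gamma\vert_{[s,b]})\ge L_\Omega^\tau(\gamma\vert_{[s,b]})$ yields $L_\Omega^\tau(\gamma)\ge L_\Omega^\tau(\gamma\vert_{[a,s]})+L_\Omega^\tau(\gamma\vert_{[s,b]})$. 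Conversely, the union of any partition $\sigma_1$ of $[a,s]$ with any partition $\sigma_2$ of $[s,b]$ is a partition of $[a,b]$ through $s$, whence $L_\Omega^\tau(\gamma)\le V_{\Omega,\sigma_1}(\gamma\vert_{[a,s]})+V_{\Omega,\sigma_2}(\gamma\vert_{[s,b]})$; since the two summands vary independently, $\inf_{\sigma_1}\inf_{\sigma_2}$ of the right-hand side equals $L_\Omega^\tau(\gamma\vert_{[a,s]})+L_\Omega^\tau(\gamma\vert_{[s,b]})$, giving the reverse inequality. All infima are taken in $[0,+\infty]$, so the argument is unaffected if a piece has infinite conformal length; the degenerate cases $s=a$, $s=b$ are handled by the convention that a point has zero conformal length.

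For (iii), an increasing homeomorphism $\varphi\colon[c,d]\to[a,b]$ induces a bijection between partitions of $[c,d]$ and partitions of $[a,b]$, sending $\{s_i\}_{i=0}^m$ to $\{\varphi(s_i)\}_{i=0}^m$ and each subinterval $[s_i,s_{i+1}]$ homeomorphically onto $[\varphi(s_i),\varphi(s_{i+1})]$. Hence $\max_{t\in[s_i,s_{i+1}]}\Omega\big((\gamma\circ\varphi)(t)\big)=\max_{u\in[\varphi(s_i),\varphi(s_{i+1})]}\Omega\big(\gamma(u)\big)$ and $\tau\big((\gamma\circ\varphi)(s_i),(\gamma\circ\varphi)(s_{i+1})\big)=\tau\big(\gamma(\varphi(s_i)),\gamma(\varphi(s_{i+1}))\big)$, so $V_{\Omega,\pi'}(\gamma\circ\varphi)=V_{\Omega,\varphi(\pi')}(\gamma)$ term by term. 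Since $\pi'\mapsto\varphi(\pi')$ is a bijection, the two families of variations coincide as sets and therefore so do their infima, i.e.\ $L_\Omega^\tau(\gamma\circ\varphi)=L_\Omega^\tau(\gamma)$. (The Lipschitz hypothesis on $\varphi$ plays no role in this argument beyond ensuring that $\gamma\circ\varphi$ is still an admissible causal curve.)

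I do not anticipate a genuine obstacle: the whole proof is bookkeeping with finite sums and infima once \Cref{lemma:variations_decreasing} is in hand. The two points that need a little care are, in (ii), justifying that restricting to partitions through $s$ is harmless — this is exactly the second part of \Cref{lemma:variations_decreasing} with the fixed partition $\{a,s,b\}$ — and that the $\max$-factors do not interact across $s$; and the routine observation that $\Omega\circ\gamma$ is continuous on the compact subintervals of any partition, so that the maxima appearing throughout are attained and every expression is well-defined.
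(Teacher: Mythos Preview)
Your proposal is correct and follows essentially the same approach as the paper: (i) via refinement and \Cref{lemma:variations_decreasing}, (ii) by restricting to partitions through $s$ using the second claim of \Cref{lemma:variations_decreasing} and splitting the sum exactly, and (iii) via the bijection on partitions induced by $\varphi$. Your presentation is slightly more streamlined in (i) (you bypass the explicit $\varepsilon$ and the equal subdivision), and your remark that the Lipschitz hypothesis in (iii) is only needed for $\gamma\circ\varphi$ to remain causal is a useful observation the paper leaves implicit.
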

\begin{proof}
    We prove each point separately:
    \begin{enumerate}[label=(\roman*)]
        \item By definition, \(L_\Omega^\tau \) is equal to the infimum over all partitions of \([a,b]\) of the variations \(V_{\Omega, \sigma}\). In particular, this includes as well partitions which have modulus less than \(\delta\), hence the inequality \say{\(\leq\)} is trivial. About the other inequality, for any \(\varepsilon > 0\) we can consider a partition \(\sigma = \{t_i\}_{i=0}^m\)  such that \(V_{\Omega,\sigma}(\gamma) \leq L_\Omega^\tau (\gamma) + \varepsilon\). We split each interval \([t_i, t_{i+1}]\) of the partition into subintervals which have modulus less than \(\delta\) as follows: let \(M\in\mathbb{N}\) be such that \(\abs{\sigma}/M < \delta\). For each \(i \in \{0, \dots, m-1\}\) we define the points
        \[
            t_j^i \coloneqq t_i + \frac{t_{i+1} - t_i}{M}\cdot j, \quad \text{for} \ j \in \{0, \dots, M\}.
        \]
        The collection of points \(\eta = \{t_j^i\}_{i=0,j=0}^{m-1,M}\) is again a partition of \([a,b]\). Moreover, \(\abs{\eta} < \delta\): indeed, it is clear that for any \(i \in \{0, \dots,m-1\}\) and any \(j \in \{0, \dots, M-1\} \) we have that 
        \[
            \abs{t^i_{j+1} - t_j^i} = \frac{t_{i+1}-t_i}{M} \leq \frac{\abs{\sigma}}{M} < \delta.
        \]
        If instead we look at any point of the form \(t_M^i\), for \(i \in \{0, \dots, m-1\}\), we know that \(t_M^i = t_0^{i+1}\), which is either equal to \(b\) so there is nothing to be proved, or we can apply the previous reasoning. The partition \(\eta\) contains \(\sigma\) by construction, so by \Cref{lemma:variations_decreasing} we immediately get that 
        \[
            V_{\Omega, \eta}(\gamma) \leq V_{\Omega,\sigma}(\gamma) \leq L_\Omega^\tau (\gamma) + \varepsilon.
        \]
        As we have proven that for any \(\varepsilon > 0\) there is a partition \(\eta\) with modulus less than \(\delta\) such that the variation is less than \( L_\Omega^\tau (\gamma) + \varepsilon\), we have that the inequality \say{\(\geq\)} holds and thus the two infimum are equal.
        \item Again, we prove that both the inequalities \say{\(\leq\)} and \say{\(\geq\)} hold. For \say{\( \leq \)}, take any two partitions \(\sigma\) and \(\eta\) of \([a,s]\) and \([s,b]\) respectively. Clearly, the set \(\theta \coloneqq \sigma \cup \eta\) is a partition of the full interval \([a,b]\). From the definition it is immediate to see that
        \[
            L_\Omega^\tau (\gamma) \leq V_{\Omega, \theta}(\gamma) = V_{\Omega, \sigma}(\gamma\vert_{ [a,s]}) + V_{\Omega, \eta}(\gamma\vert_{ [s,b]}).
        \]
        Taking the infimum over all \(\sigma\) and \(\eta\) yields the inequality \say{\(\leq\)}. For the opposite one, we start with the partition \(\sigma = \{a,s,b\}\) and consider any partition \(\theta\) of the interval \([a,b]\) containing \(\sigma\). We then define \(\theta_{\leq s}\) and \(\theta_{\geq s}\) to be the set of points in \(\theta\) which are less than or equal to \(s\) and greater or equal to \(s\) respectively. The two sets \(\theta_{\leq s}\) and \(\theta_{\geq s}\) are then partitions of \([a,s]\) and \([s,b]\); from the definition it is immediate to see that
        \[
            V_{\Omega, \theta}(\gamma) = V_{\Omega, \theta_{\leq s}}(\gamma\vert_{ [a,s]}) + V_{\Omega, \theta_{\geq s}}(\gamma\vert_{ [s,b]}) \geq L_\Omega^\tau (\gamma\vert_{ [a,s]}) + L_\Omega^\tau (\gamma\vert_{ [s,b]}).
        \]
        Since this holds for any \(\theta\) containing \(\sigma\), by \Cref{lemma:variations_decreasing} taking the infimum over all such \(\theta\) yields the inequality \say{\(\geq\)}, which concludes the proof.
        \item This last point is immediate, since if \(\phi \colon [c,d] \to [a,b]\) is an increasing Lipschitz homeomorphism, any partition \(\sigma\) of \([a,b]\) yields a partition \(\phi^{-1}(\sigma)\) of \([c,d]\) with the same variation, and for any partition \(\eta\) of \([c,d]\) we get the natural partition \(\phi(\eta)\) of \([a,b]\), which again has the same variation.
    \end{enumerate}
\end{proof}
\begin{remark}
    Given a Lorentzian pre-length space $\Xll$ and a conformal factor \(\Omega\), in general we cannot expect \(L_\Omega^\tau \) to satisfy more properties than the standard Lorentzian length functional $L^{\tau}$ induced by the time separation function $\tau$ (see \Cref{definition:tau-length}). Indeed, from \Cref{definition:conformal-factor-variation-length} it follows that the choice \(\Omega \equiv 1\) yields $L_\Omega^\tau =L^{\tau}$, therefore any property of $L_\Omega^\tau $ must also hold for $L^\tau$, unless we introduce additional assumptions on the conformal factor \(\Omega\) or on the space itself.
\end{remark}

We now show that the conformal length of causal curves is positive if and only if their $\tau$-length is positive, and that the length-maximizing property is preserved under conformal transformations for curves of zero length. This result will be key later to prove that the causal character of curves is invariant under conformal transformations.

\begin{proposition}\label{prop:length_causal_curves} 
    Let \( \Xll\) be a Lorentzian pre-length space, \( \Omega:X\rightarrow(0,+\infty) \) a conformal factor, and \(\gamma:I\rightarrow X\) a causal curve. Then, 
    \begin{enumerate}[label=(\roman*)]
        \item \label{item:length_are_both_positive}\(L_\Omega^\tau (\gamma) > 0 \iff L^\tau(\gamma) > 0\);
        \item \label{item:length_are_both_null}if \(L^\tau(\gamma) = 0\) and $\gamma$ is length maximizing between its endpoints, then it is also length maximizing with respect to \(L_\Omega^\tau \).
    \end{enumerate}
\end{proposition}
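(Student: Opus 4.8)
The plan is to establish part (i) via a two-sided comparison between $L_\Omega^\tau$ and the ordinary $\tau$-length $L^\tau$ on compact intervals, with comparison constants given by the extremal values of $\Omega$ along the curve, and then to deduce part (ii) formally from part (i). Fix a compact subinterval $[a,b]\subseteq I$. Since $\gamma$ is continuous and $\Omega$ is continuous and strictly positive, $\Omega\circ\gamma$ attains a minimum $m>0$ and a maximum $M<\infty$ on $[a,b]$. Then for every partition $\sigma=\{t_i\}_{i=0}^m$ of $[a,b]$ each coefficient $\max_{t\in[t_i,t_{i+1}]}\Omega(\gamma(t))$ lies in $[m,M]$, so $m\,V_\sigma(\gamma)\leq V_{\Omega,\sigma}(\gamma)\leq M\,V_\sigma(\gamma)$, where $V_\sigma(\gamma)=\sum_i\tau(\gamma(t_i),\gamma(t_{i+1}))$ is the variation used to define $L^\tau$. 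Passing to the infimum over $\sigma$ — from $V_{\Omega,\sigma}(\gamma)\leq M\,V_\sigma(\gamma)$ for each $\sigma$ on the one hand, and from $V_{\Omega,\sigma}(\gamma)\geq m\,V_\sigma(\gamma)\geq m\,L^\tau(\gamma\vert_{[a,b]})$ for each $\sigma$ on the other — yields
\[
    m\,L^\tau(\gamma\vert_{[a,b]}) \leq L_\Omega^\tau(\gamma\vert_{[a,b]}) \leq M\,L^\tau(\gamma\vert_{[a,b]}),
\]
with $0<m\leq M<\infty$. Hence $L^\tau(\gamma\vert_{[a,b]})$ and $L_\Omega^\tau(\gamma\vert_{[a,b]})$ vanish simultaneously and are positive simultaneously.

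For part (i) for a curve defined on an arbitrary interval $I$, I would pass from compact pieces to all of $I$: since $L^\tau$ and $L_\Omega^\tau$ are additive and nonnegative, $L^\tau(\gamma)>0$ if and only if $L^\tau(\gamma\vert_{[a,b]})>0$ for some compact $[a,b]\subseteq I$ (and likewise for $L_\Omega^\tau$), so the displayed estimate on each compact piece gives $L_\Omega^\tau(\gamma)>0\iff L^\tau(\gamma)>0$. For curves on compact intervals this reduction is unnecessary.

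Part (ii) is then immediate from part (i). Assume $\gamma\colon[a,b]\to X$ satisfies $L^\tau(\gamma)=0$ and is $L^\tau$-maximizing among causal curves from $\gamma(a)$ to $\gamma(b)$. Part (i) applied to $\gamma$ gives $L_\Omega^\tau(\gamma)=0$. If $\beta$ is any causal curve from $\gamma(a)$ to $\gamma(b)$, then $0\leq L^\tau(\beta)\leq L^\tau(\gamma)=0$, so $L^\tau(\beta)=0$, and part (i) applied to $\beta$ gives $L_\Omega^\tau(\beta)=0=L_\Omega^\tau(\gamma)$. Hence $L_\Omega^\tau(\gamma)\geq L_\Omega^\tau(\beta)$ for every such $\beta$, i.e.\ $\gamma$ is also $L_\Omega^\tau$-maximizing.

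I do not expect a genuine obstacle here: the argument is soft once the sandwich estimate is in place. The points needing care are that the extremal values $m,M$ of $\Omega\circ\gamma$ over each compact piece are truly bounded away from $0$ and from $\infty$ — this is exactly where continuity and positivity of $\Omega$, together with continuity of causal curves, enter — and that the passages to the infimum are performed in the right order so that both inequalities in the sandwich are preserved. Part (ii) then just records that being of zero $\tau$-length and maximizing is a property unaffected by multiplying $\tau$ along each causal curve by a factor that is bounded above and bounded away from $0$.
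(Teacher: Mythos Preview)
Your proof is correct and follows essentially the same approach as the paper: the sandwich estimate $m\,L^\tau(\gamma)\leq L_\Omega^\tau(\gamma)\leq M\,L^\tau(\gamma)$ with $m,M$ the extremal values of $\Omega\circ\gamma$ on the compact domain is exactly what the paper uses for (i), and your direct argument for (ii) is the contrapositive of the paper's contradiction argument. Your extra care about arbitrary intervals $I$ is unnecessary here, since both $L^\tau$ and $L_\Omega^\tau$ are only defined in the paper for curves on compact intervals $[a,b]$, but it does no harm.
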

\begin{proof}
    If we denote by \(m\) and \(M\) the minimum and the maximum respectively of the function $\Omega\circ \gamma$, it is immediate to see from the definition that
    \[
        m L^\tau(\gamma) \leq L_\Omega^\tau(\gamma) \leq M L^\tau(\gamma).
    \]
    From these previous inequalities we get that \( L_\Omega^\tau (\gamma) > 0\) if and only if \( L^\tau(\gamma) > 0\), as both \(m\) and \(M\) are positive, which proves the first item. If we now assume that $\gamma$ is length maximizing with respect to \(L^\tau\) and of zero length, it must also be length maximizing with respect to \(L_\Omega^\tau \), otherwise there would be a causal curve $\alpha$ joining the endpoints of $\gamma$ with greater \(L_\Omega^\tau \)-length, and this would imply that $L_\Omega^\tau (\alpha) > 0$. By the previous item, we know then that \(L^\tau(\alpha) > 0\), which then contradicts \(\gamma\) being \(L^\tau\)-maximal.
\end{proof}

\subsection{A class of spaces closed under conformal transformations}\label{A class of spaces closed under conformal transformations}

Any reasonable definition of conformal transformation must satisfy the property that the class of spaces under consideration is closed under conformal change, i.e., that conformally-transformed spaces remain in the same class as the original one. This requires, in particular, providing a notion of a \say{conformal time separation function}, and showing that it indeed fulfils the properties of a time separation function in the conformally-transformed space. As we will see in \Cref{Conformal transformations as an equivalence relation}, closedness of conformal transformations holds 
for the class of intrinsic, quasi-strongly causal Lorentzian pre-length spaces (cf. \Cref{defintion:quasi_strong_causality}). The motivation for assuming intrinsicness is due to \Cref{definition:conformal-factor-variation-length} strongly relying on causal curves, which makes it necessary to relate them with time separation functions in order to capture properties of the latter. Moreover, this property turns out to be necessary for causality relations (and thus for quasi-strong causality) to be preserved under conformal transformations (cf.\ \Cref{prop:character_causal_curves}). Quasi-strong causality, on the other hand, is key to demonstrate that $\tau_{\Omega}$ is a time separation function (or, more precisely, that it is lower semi-continuous, cf.\ \Cref{proposition:tau-lower-semi-continuous}), and that the conformal length $L_\Omega^\tau$ and the length induced from $\tau_{\Omega}$ actually coincide (cf.\ \Cref{lemma:strongly_causal_implies_intrinsic}).

Let us introduce a notion of conformal time separation function $\tau_{\Omega}$ given by the supremum of the conformal length over all causal curves between any two points. We define the map $\tau_{\Omega}: X\times X\rightarrow\mathbb{R}$ as
\[
    \conftau(x,y) \coloneqq 
  %  \begin{dcases}
        \sup \Big \{ \{0\} \cup \big\{ L_\Omega^\tau (\gamma) \ \colon \ \gamma \ \text{is a causal curve from \(x\) to \(y\)}  \big\} \Big\}. % \ \text {if} \ x \leq y, \\
    %    0 \ \text{if} \ x \not \leq y.
   % \end{dcases}
\]
Observe that, in general, \(\tau_\Omega\) is \textit{not} lower semi-continuous, hence not a time separation function in the sense of \Cref{definition:LpLS}. However, we can construct a causal structure from \(\tau_\Omega\) by defining the causal relations
\[
    x \ll_\Omega  y\ \ :\Longleftrightarrow \ \conftau(x,y) > 0  
    \quad \text{and} \quad 
    x \leq_{\Omega} y \ :\Longleftrightarrow \ x \leq y. 
\]
Remarkably, conformal transformations preserve the causal structure; moreover, $\tau_{\Omega}$ verifies the reverse triangle inequality and is intrinsic, meaning that it is given by the supremum over the length functional it induces. These results are proven in the following three propositions.

\begin{proposition}\label{prop:character_causal_curves} 
    Let $\Xll$ be an intrinsic Lorentzian pre-length space, \( \Omega:X\rightarrow(0,+\infty) \) a conformal factor, and \( x,y\in X \) be any two points. Then, 
    \[
        \tau_{\Omega}(x,y) > 0 \quad \iff \quad \tau(x,y)>0 \ ; \quad\qquad 
        x \leq_\Omega y, \ x \not\ll_\Omega  y \quad \iff \quad x \leq y, \ x \not \ll y.
    \]
\end{proposition}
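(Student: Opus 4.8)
The plan is to reduce both equivalences to \cref{prop:length_causal_curves} (which compares $L_\Omega^\tau$ and $L^\tau$ curve by curve) together with the definition of intrinsicness (\cref{definition:intrisicness}). The only elementary fact I would use repeatedly is that for a set $S\subseteq[0,+\infty)$ one has $\sup S>0$ if and only if $S$ contains a strictly positive element; since the defining sets for both $\tau_\Omega(x,y)$ and $\tau(x,y)$ always contain $0$, neither quantity can be strictly positive unless some causal curve from $x$ to $y$ realises a positive length.

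First I would prove the chronological equivalence $\tau_\Omega(x,y)>0 \iff \tau(x,y)>0$. Unfolding the definition of $\tau_\Omega$ and using the elementary fact above, $\tau_\Omega(x,y)>0$ holds precisely when there is a causal curve $\gamma$ from $x$ to $y$ with $L_\Omega^\tau(\gamma)>0$. By the first item of \cref{prop:length_causal_curves}, for every such $\gamma$ we have $L_\Omega^\tau(\gamma)>0$ if and only if $L^\tau(\gamma)>0$, so the condition becomes: there exists a causal curve $\gamma$ from $x$ to $y$ with $L^\tau(\gamma)>0$. This is exactly the point where intrinsicness enters: by \cref{definition:intrisicness}, $\tau(x,y)$ is by definition the supremum of $\{0\}$ and the $L^\tau$-lengths of causal curves from $x$ to $y$, so applying the elementary fact once more, $\tau(x,y)>0$ if and only if such a curve of positive $\tau$-length exists. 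Chaining these equivalences gives the first claim.

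The second equivalence I would then deduce formally. By definition $x\leq_\Omega y$ means the same as $x\leq y$; and since $\tau_\Omega$ and $\tau$ are nonnegative, $x\not\ll_\Omega y$ is the same as $\tau_\Omega(x,y)=0$, while $x\not\ll y$ is the same as $\tau(x,y)=0$ by property $(iii)$ of \cref{definition:LpLS}. Substituting these identifications and using the first equivalence in contrapositive form, the statement $x\leq_\Omega y \ \wedge\ x\not\ll_\Omega y$ becomes $x\leq y \ \wedge\ \tau_\Omega(x,y)=0$, hence $x\leq y \ \wedge\ \tau(x,y)=0$, hence $x\leq y \ \wedge\ x\not\ll y$, and every step reverses.

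I do not expect a genuine obstacle here, since the substantive work has already been carried out in \cref{prop:length_causal_curves}. The one subtlety worth flagging is the logical role of intrinsicness: without it, $\tau(x,y)$ could be positive even though no causal curve from $x$ to $y$ exists, so the curve-wise comparison of $L_\Omega^\tau$ and $L^\tau$ would not suffice to control $\tau$ itself. It is also worth noting that, in contrast to the lower semi-continuity of $\tau_\Omega$ established later, this proposition requires neither quasi-strong causality nor causal path-connectedness.
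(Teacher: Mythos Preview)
Your proof is correct and follows the same approach as the paper: both reduce the first equivalence to \cref{prop:length_causal_curves}\ref{item:length_are_both_positive} by passing to the supremum over causal curves (where intrinsicness is needed so that $\tau$ itself is such a supremum), and then derive the second equivalence formally from the definitions of $\ll_\Omega$ and $\leq_\Omega$. Your write-up is more explicit than the paper's one-line proof, in particular in spelling out precisely where intrinsicness enters, but the underlying argument is identical.
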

\begin{proof}
    
    Taking the supremum over all causal curves with endpoints $x$ and $y$ on item \textit{\ref{item:length_are_both_positive}} of \Cref{prop:length_causal_curves} yields that $\tau(x,y)>0$ if and only if $\tau_{\Omega}(x,y)>0$. This, together with the definitions of $\ll_\Omega ,\leq_{\Omega}$, immediately proves the second claim.
\end{proof} 

\begin{proposition}\label{prop:reverse_triangle_inequality}
    Let $\Xll$ be a Lorentzian pre-length space and $\Omega$ a conformal factor. Then \(\conftau\) satisfies the reverse triangle inequality, namely
    \[
        \conftau(x,z) \geq \conftau(x,y) + \conftau(y,z),\qquad x,y,z\in X, \quad  x \leq_\Omega y \leq_\Omega z.
    \]
\end{proposition}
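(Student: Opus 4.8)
The plan is to reduce the reverse triangle inequality for $\tau_\Omega$ to the additivity of the conformal length functional (item \ref{item:additivity} of \Cref{prop:basic_properties_conformal_length}), in exactly the way one proves the reverse triangle inequality for the time separation of a spacetime: by concatenating near-maximal causal curves. First I would dispose of the trivial situation: if $\tau_\Omega(x,y)=0$ and $\tau_\Omega(y,z)=0$, then the right-hand side is $0$ and the inequality holds because $\tau_\Omega\ge 0$ by construction. So assume that at least one of the two values is positive, fix $\varepsilon>0$, and --- to accommodate possibly infinite values --- a threshold $N>0$. Whenever $\tau_\Omega(x,y)>0$ there is, by definition of the supremum, a causal curve $\gamma_1\colon[a_1,b_1]\to X$ from $x$ to $y$ with $L_\Omega^\tau(\gamma_1)\ge\min\{\tau_\Omega(x,y),N\}-\varepsilon$, and likewise a causal curve $\gamma_2\colon[a_2,b_2]\to X$ from $y$ to $z$ whenever $\tau_\Omega(y,z)>0$.

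Assuming both curves are available, I would concatenate them: after a linear (in particular Lipschitz) reparametrization I may take the domain of $\gamma_2$ to be $[b_1,b_1+(b_2-a_2)]$, so that $\gamma:=\gamma_1\gamma_2$ is a continuous curve on $[a_1,b_1+(b_2-a_2)]$ joining $x$ to $z$. That $\gamma$ is causal follows from transitivity of $\le$: for parameters $t<s$ lying both in the first or both in the second subinterval this is inherited from $\gamma_1$, resp.\ $\gamma_2$, and if $t$ lies in the first and $s$ in the second, then $\gamma(t)\le\gamma_1(b_1)=y=\gamma_2(a_2)\le\gamma(s)$. Splitting $\gamma$ at the parameter $b_1$ and invoking additivity together with reparametrization invariance (item \ref{item:additivity} of \Cref{prop:basic_properties_conformal_length}, and the third item of the same proposition) gives $L_\Omega^\tau(\gamma)=L_\Omega^\tau(\gamma_1)+L_\Omega^\tau(\gamma_2)$, so that
\[
\tau_\Omega(x,z)\ \ge\ L_\Omega^\tau(\gamma)\ \ge\ \min\{\tau_\Omega(x,y),N\}+\min\{\tau_\Omega(y,z),N\}-2\varepsilon ;
\]
letting $\varepsilon\to 0$ and then $N\to\infty$ yields the inequality.

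The step I expect to be the main obstacle is the remaining degenerate case, where exactly one of the two values --- say $\tau_\Omega(y,z)$ --- vanishes, so that no near-maximal curve is available on the second leg and one must instead establish $\tau_\Omega(x,z)\ge\tau_\Omega(x,y)$ directly. The natural move is to concatenate an $\varepsilon$-maximal curve $\gamma_1$ from $x$ to $y$ with an arbitrary causal curve from $y$ to $z$: by item \ref{item:length_are_both_positive} of \Cref{prop:length_causal_curves} such a curve necessarily has zero conformal length, but it still contributes $0$ under additivity, so that $\tau_\Omega(x,z)\ge L_\Omega^\tau(\gamma_1)\to\tau_\Omega(x,y)$ as $\varepsilon\to 0$. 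The delicate point is that this requires the existence of \emph{some} causal curve from $y$ to $z$ --- a mild causal-connectedness property, automatic for (strongly causal) spacetimes and, more generally, whenever the space is causally path connected; without it one can genuinely have $\tau_\Omega(x,z)=0<\tau_\Omega(x,y)$, so this is the place where the hypotheses on the space really enter.
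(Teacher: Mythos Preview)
Your approach---pick $\varepsilon$-near-maximizing causal curves on each leg, concatenate, and apply additivity of $L_\Omega^\tau$ (item \ref{item:additivity} of \Cref{prop:basic_properties_conformal_length})---is exactly the paper's. The paper's proof is shorter only because it does not isolate the degenerate case at all: it simply asserts that for any $\varepsilon>0$ one can take causal curves $\gamma$ from $x$ to $y$ and $\alpha$ from $y$ to $z$ with $L_\Omega^\tau(\gamma)\ge\tau_\Omega(x,y)-\varepsilon$ and $L_\Omega^\tau(\alpha)\ge\tau_\Omega(y,z)-\varepsilon$, concatenates, and concludes.

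Your caution about the degenerate case is well placed, and it is a gap the paper's proof shares. The proposition is stated for an arbitrary Lorentzian pre-length space, with no causal path-connectedness hypothesis, and at that generality the conclusion can fail: take $X=[0,1]\cup\{2\}\subset\R$ with the induced order, set $\tau(a,b)=b-a$ for $a\le b$ in $[0,1]$ and $\tau(a,2)=100-a$ for $a\in[0,1]$ (and $\tau=0$ in the remaining cases); this is a Lorentzian pre-length space, yet with $\Omega\equiv 1$ one gets $\tau_\Omega(0,2)=0<1=\tau_\Omega(0,1)+\tau_\Omega(1,2)$, because the point $2$ is topologically isolated and no causal curve can reach it. So ``the hypotheses on the space'' you allude to are not actually present in the statement. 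The paper's subsequent uses of the inequality are unaffected, however: in \Cref{prop:tau_omega_intrinsic} it is only applied along a given causal curve (so the needed sub-curves exist by restriction), and the later results operate under intrinsicness and related hypotheses where the required curves are available.
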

\begin{proof}
    For any \(\varepsilon>0\), we can consider causal curves \(\gamma\) from \(x\) to \(y\) and \(\alpha\) from \(y\) to \(z\) such that
    \[
        \conftau(x,y)\leq L_\Omega^\tau (\gamma) + \varepsilon \ \land \ \conftau(y,z)\leq L_\Omega^\tau (\alpha) + \varepsilon.
    \]
    Concatenating the two curves yields a causal curve \(\gamma\alpha\) from \(x\) to \(z\). By item \textit{\ref{item:additivity}} of \Cref{prop:basic_properties_conformal_length}, we get 
    \[
        \conftau(x,z) \geq L_\Omega^\tau (\gamma\alpha) = L_\Omega^\tau (\gamma) + L_\Omega^\tau (\alpha) \geq \conftau(x,y) + \conftau(y,z) - 2\varepsilon.
    \]
    Since this holds for any \(\varepsilon>0\), we get the desired result.
\end{proof}

\begin{proposition}\label{prop:tau_omega_intrinsic}
    Let $\Xll$ be a Lorentzian pre-length space and $\Omega$ a conformal factor, so that we may define \(\conftau\). For any causal curve \(\gamma \colon [a,b] \to X\) the length is given by
    \[
        L^{\tau_\Omega}(\gamma) = \inf\left\{ \sum_{i=0}^{m-1} \conftau(\gamma(t_i), \gamma(t_{i+1})) \ \colon \ \left\{t_i\right\}_{i=0}^{m} \ \text{partition of} \ [a,b] \right\}.
    \]
    Consider then the map \(\hat{\tau}_\Omega\) given by
    \[
        \hat{\tau}_\Omega(x,y) \coloneqq 
   % \begin{dcases}
        \sup \Big \{ \{0\} \cup \{ L^{\conftau} (\gamma) \ \colon \ \gamma \ \text{is a causal curve from \(x\) to \(y\)} \}  \Big \},%  \Big\} \ \text {if} \ x \leq y, \\
     %   0 \ \text{if} \ x \not \leq y.
   % \end{dcases}
    \]
    then \(\conftau = \hat{\tau}_\Omega\).
\end{proposition}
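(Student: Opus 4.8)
The plan is to treat the two assertions of the proposition separately: the first is essentially a matter of fixing notation, while the second is an equality of suprema which I would establish by proving the two inequalities, the nontrivial one being a short consequence of the additivity of $L_\Omega^\tau$.

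For the displayed formula for $L^{\tau_\Omega}(\gamma)$, I would point out that it is nothing but the variational-length construction of \cref{definition:tau-length} applied to the function $\tau_\Omega$ in place of $\tau$: that construction makes sense verbatim for any $[0,+\infty]$-valued function on $X\times X$ and any curve that is causal for the relation $\leq$, and since $\leq_\Omega$ coincides with $\leq$, the causal curves of the $\Omega$-structure are exactly those of the original space (\cref{definition:causal-timelike-curves}). The infimum is well posed because every summand $\tau_\Omega(\gamma(t_i),\gamma(t_{i+1}))$ lies in $[0,+\infty]$; in particular the failure of lower semicontinuity of $\tau_\Omega$ plays no role here. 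So there is essentially nothing to prove beyond recording this as the definition of $L^{\tau_\Omega}$.

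For the identity $\tau_\Omega = \hat{\tau}_\Omega$ I would argue as follows. The inequality $\hat{\tau}_\Omega(x,y)\le\tau_\Omega(x,y)$ is immediate: for any causal curve $\gamma\colon[a,b]\to X$ from $x$ to $y$, evaluating the variational formula on the trivial partition $\{a,b\}$ gives $L^{\tau_\Omega}(\gamma)\le\tau_\Omega(\gamma(a),\gamma(b))=\tau_\Omega(x,y)$, and taking the supremum over all such $\gamma$ (together with the term $0$) yields the claim. For the reverse inequality, I may assume $\tau_\Omega(x,y)>0$, fix $\varepsilon>0$, and choose a causal curve $\gamma\colon[a,b]\to X$ from $x$ to $y$ with $L_\Omega^\tau(\gamma)\ge\tau_\Omega(x,y)-\varepsilon$ (with the usual reading if $\tau_\Omega(x,y)=+\infty$). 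Given an arbitrary partition $\sigma=\{t_i\}_{i=0}^m$ of $[a,b]$, each restriction $\gamma\vert_{[t_i,t_{i+1}]}$ is a causal curve from $\gamma(t_i)$ to $\gamma(t_{i+1})$, hence $\tau_\Omega(\gamma(t_i),\gamma(t_{i+1}))\ge L_\Omega^\tau(\gamma\vert_{[t_i,t_{i+1}]})$; summing over $i$ and using the additivity of $L_\Omega^\tau$ from \cref{prop:basic_properties_conformal_length}, iterated along $\sigma$, gives $\sum_i\tau_\Omega(\gamma(t_i),\gamma(t_{i+1}))\ge L_\Omega^\tau(\gamma)$. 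Taking the infimum over $\sigma$ yields $L^{\tau_\Omega}(\gamma)\ge L_\Omega^\tau(\gamma)\ge\tau_\Omega(x,y)-\varepsilon$, so $\hat{\tau}_\Omega(x,y)\ge\tau_\Omega(x,y)-\varepsilon$, and letting $\varepsilon\to 0$ concludes.

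I do not expect a genuine obstacle here; the argument is essentially bookkeeping. The only points that need a little care are the degenerate case in which some restriction $\gamma\vert_{[t_i,t_{i+1}]}$ is constant (so not literally a causal curve in the sense of \cref{definition:causal-timelike-curves}), where one checks directly that $\tau_\Omega$ of that repeated point still dominates the conformal length of the constant restriction — trivially if $\tau$ vanishes on the diagonal there, and via a short remark otherwise — and the routine verification that the two-piece additivity in \cref{prop:basic_properties_conformal_length} iterates to an arbitrary finite partition, which follows by an obvious induction. Note also that no intrinsicness or (quasi-strong) causality of $\Xll$ is needed for this statement, which is consistent with its role in the paper as a structural lemma preceding the semicontinuity and intrinsicness results.
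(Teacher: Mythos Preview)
Your proposal is correct and follows essentially the same approach as the paper: both arguments hinge on the inequality $L^{\tau_\Omega}(\gamma)\ge L_\Omega^\tau(\gamma)$, obtained by bounding each summand $\tau_\Omega(\gamma(t_i),\gamma(t_{i+1}))$ below by $L_\Omega^\tau(\gamma\vert_{[t_i,t_{i+1}]})$ and invoking additivity. The only cosmetic difference is that for the upper bound $L^{\tau_\Omega}(\gamma)\le\tau_\Omega(x,y)$ you use the trivial partition $\{a,b\}$, whereas the paper uses the reverse triangle inequality of \cref{prop:reverse_triangle_inequality} on an arbitrary partition; your route is marginally more elementary since it sidesteps that proposition.
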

\begin{proof}
    If \(p \not \leq q\) there is nothing to do. Let us thus assume that \(p \leq q\) and  consider any causal curve \(\gamma \colon [a,b] \to X\) joining them. For any partition of \([a,b]\) we see that
    \[
        \sum_{i=0}^{m-1}L_{\Omega}^\tau(\gamma\vert_{ [t_i, t_{i+1}]}) \leq \sum_{i=0}^{m-1} \tau_\Omega (\gamma(t_i), \gamma(t_{i+1})) \leq \tau_\Omega(p,q),
    \]
    where the left inequality follows from the definition of \(\tau_\Omega\) as a supremum of \(L_\Omega^\tau\)-lengths and the right inequality from the reverse triangle inequality shown in \Cref{prop:reverse_triangle_inequality}. We apply additivity of the \(L_{\Omega}^\tau\)-length and get that
    \[
        L_\Omega^\tau(\gamma) \leq \sum_{i=0}^{m-1} \tau_\Omega (\gamma(t_i), \gamma(t_{i+1})) \leq \tau_\Omega(p,q).
    \]
    Taking the infimum over all partitions of \([a,b]\) we have
    \[
        L_\Omega^\tau(\gamma) \leq L^{\tau_\Omega}(\gamma) \leq \tau_\Omega(p,q),
    \]
    and then taking the supremum over all such curves \(\gamma\) we finally get
    \[
        \tau_\Omega(p,q) \leq \hat{\tau}_\Omega(p,q) \leq \tau_\Omega(p,q),
    \]
    which proves that \(\tau_\Omega(p,q) = \hat{\tau}_\Omega(p,q)\).
\end{proof}

Quasi-strong causality of $\Xll$ means that, for any point \(x \in X\) and any of its neighbourhoods \(U\), we can find a smaller neighbourhood \(V \subset U\) such that any causal curve with endpoints in \(V\) lies in \(U\) (cf.\ \Cref{defintion:quasi_strong_causality}). In the next lemma, we show that under the assumption of quasi-strong causality one can establish some local upper and lower bounds on the conformal length $L_\Omega^\tau$ of causal curves between close enough points. If, in addition, $\Xll$ is intrinsic, then local boundedness of $L_\Omega^\tau $ by \(L^\tau\) translates into local boundedness of $\tau_{\Omega}$ by \(\tau\). This in turn allows us to demonstrate that the conformal factor at a fixed point $p\in X$ is given by the quotient $\frac{\tau_{\Omega}(p,p_n)}{\tau(p,p_n)}$ as $n\to\infty$, where $p_n$ is a suitable sequence of points converging to $p$. 

\begin{lemma}\label{lemma:local_uniform_control_conformal_length}
    Consider an intrinsic, quasi-strongly causal Lorentzian pre-length space $\Xll$, a conformal factor \(\Omega\) and a point \(p \in X\).  
    Let \(\{p_n\}_n\) be a sequence in \(J^+(p)\) (or in \(J^-(p)\)) converging to \(p\).
    Then, for any \(\varepsilon>0\) there exists \(N \in \N\) such that for every \(n \geq N\) and every causal curve \(\gamma_n\) joining \(p\) and \(p_n\), the following inequalities hold
    \begin{align}
        \label{eq:local_uniform_control_conformal_length}
        (\Omega(p) -\varepsilon)L^{\tau}(\gamma_n) &\leq L_\Omega^\tau (\gamma_n) \leq (\Omega(p) +\varepsilon)L^{\tau}(\gamma_n),\\
        \label{equation:tau_ratio_equals_omega}
        (\Omega(p) -\varepsilon)\tau(p, p_n)&\leq \tau_\Omega(p, p_n) \leq (\Omega(p) +\varepsilon)\tau(p, p_n).
    \end{align}
    In particular, if we suppose that eventually in \(n\) we have 
    \(
        \tau(p,p_n) \in (0, +\infty),
    \)
    we get
    \[
        \lim_{n \to \infty} \frac{\tau_\Omega(p,p_n)}{\tau(p,p_n)} = \Omega(p).
    \]
    We get the analogous conclusion for the case $p_n\in J^-(p)$ for all $n\in\N$.
\end{lemma}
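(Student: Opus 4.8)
The plan is to combine the continuity of $\Omega$ with quasi-strong causality so that, for $n$ large, every causal curve joining $p$ and $p_n$ is forced to lie in a neighbourhood of $p$ on which $\Omega$ deviates from $\Omega(p)$ by at most $\varepsilon$; on such a curve $\gamma_n$ one then uses the elementary estimate $m\,L^\tau(\gamma_n)\le L_\Omega^\tau(\gamma_n)\le M\,L^\tau(\gamma_n)$, with $m$ and $M$ the minimum and maximum of $\Omega\circ\gamma_n$ (immediate from \Cref{definition:conformal-factor-variation-length}, and already used in the proof of \Cref{prop:length_causal_curves}), and finally passes to suprema over curves via intrinsicness.

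In detail: fix $\varepsilon>0$, and assume without loss of generality that $\varepsilon<\Omega(p)$, since for $\varepsilon\ge\Omega(p)$ the lower bounds in \eqref{eq:local_uniform_control_conformal_length} and \eqref{equation:tau_ratio_equals_omega} hold trivially (their left-hand sides being nonpositive). By continuity of $\Omega$ there is an open neighbourhood $U$ of $p$ with $\Omega(p)-\varepsilon<\Omega(q)<\Omega(p)+\varepsilon$ for all $q\in U$. Applying \Cref{defintion:quasi_strong_causality} to $U$ yields a neighbourhood $V\subseteq U$ of $p$ such that every causal curve with both endpoints in $V$ has image contained in $U$. Since $p_n\to p$, choose $N\in\N$ with $p_n\in V$ for all $n\ge N$. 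For such $n$ and any causal curve $\gamma_n\colon[a,b]\to X$ joining $p$ and $p_n$, both endpoints lie in $V$, hence $\gamma_n([a,b])\subseteq U$, so $\Omega(p)-\varepsilon\le m\le M\le\Omega(p)+\varepsilon$, and the sandwich estimate gives \eqref{eq:local_uniform_control_conformal_length}.

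For \eqref{equation:tau_ratio_equals_omega} I would take the supremum over all causal curves from $p$ to $p_n$: by intrinsicness this supremum (together with $0$) equals $\tau(p,p_n)$, while by definition it equals $\tau_\Omega(p,p_n)$. If $p\not\le p_n$ there are no such curves and both sides vanish; if $\tau(p,p_n)=0$, then \eqref{eq:local_uniform_control_conformal_length} forces $L_\Omega^\tau(\gamma_n)=0$ for every such $\gamma_n$, so $\tau_\Omega(p,p_n)=0$ and there is nothing to prove. Otherwise, $L_\Omega^\tau(\gamma_n)\le(\Omega(p)+\varepsilon)L^\tau(\gamma_n)\le(\Omega(p)+\varepsilon)\tau(p,p_n)$ yields the upper bound after taking the supremum, and for the lower bound one pulls the positive constant $\Omega(p)-\varepsilon$ out of the supremum to get $\tau_\Omega(p,p_n)\ge(\Omega(p)-\varepsilon)\tau(p,p_n)$ (this covers $\tau(p,p_n)=+\infty$ as well). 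Finally, if eventually $\tau(p,p_n)\in(0,+\infty)$, dividing \eqref{equation:tau_ratio_equals_omega} by $\tau(p,p_n)$ gives $\bigl|\tau_\Omega(p,p_n)/\tau(p,p_n)-\Omega(p)\bigr|\le\varepsilon$ for all $n\ge N$; since $\varepsilon$ was arbitrary, the limit follows. The case $p_n\in J^-(p)$ is handled identically, with causal curves from $p_n$ to $p$ and with $\tau(p_n,p)$, $\tau_\Omega(p_n,p)$ replacing $\tau(p,p_n)$, $\tau_\Omega(p,p_n)$.

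The one genuinely delicate point is the confinement step: mere closeness of the endpoints $p$ and $p_n$ does not keep $\gamma_n$ near $p$, and it is precisely quasi-strong causality that provides the nested pair $V\subseteq U$ making this work — this is where that hypothesis is essential, whereas intrinsicness serves only to convert the curve-level inequality \eqref{eq:local_uniform_control_conformal_length} into the statement \eqref{equation:tau_ratio_equals_omega} about $\tau$ and $\tau_\Omega$. The rest is routine bookkeeping with suprema once the degenerate cases ($p\not\le p_n$, $\tau(p,p_n)\in\{0,+\infty\}$, $\varepsilon\ge\Omega(p)$) are set aside.
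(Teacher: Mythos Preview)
Your proof is correct and follows essentially the same approach as the paper: use continuity of $\Omega$ to find $U$ with oscillation at most $\varepsilon$, invoke quasi-strong causality to get $V\subseteq U$ trapping causal curves, then apply the sandwich estimate $m\,L^\tau\le L_\Omega^\tau\le M\,L^\tau$ and pass to suprema. The only cosmetic difference is that the paper derives the sandwich directly at the level of variations $V_\sigma$ before taking the infimum, whereas you quote it from \Cref{prop:length_causal_curves}; your additional handling of the degenerate cases ($\varepsilon\ge\Omega(p)$, $\tau(p,p_n)\in\{0,+\infty\}$) is more explicit than the paper's but not a different argument.
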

\begin{proof}
    We just consider the case where the sequence \(\{p_n\}_n\) is contained in \(J^+(p)\) --- the other case being completely analogous.
    
    Using continuity of \(\Omega\) at \(p\) we can find an open ball \(U\) around \(p\) such that the oscillation of \(\Omega\) is bounded by \(\varepsilon\) on \(U\). We then employ quasi-strong causality to find a smaller neighbourhood \(V \subset U\) such that any causal curve with endpoints in \(V\) stays in \(U\). Then there exists an \(N \in \N\) such that for every \(n\geq N\) we have \(p_n \in V\). If we now take a curve \(\gamma_n\) from \(p\) to \(p_n\), we see that for any partition \(\sigma\) of its domain, we have that
    \[
        (\Omega(p) -\varepsilon)V_{\sigma}(\gamma_n) \leq V_{\Omega,\sigma}(\gamma_n) \leq (\Omega(p) +\varepsilon)V_{\sigma}(\gamma_n) ,
    \]
    since the terms featuring \(\Omega\) can be bounded above and below by \(\Omega(x)\) up to an error \(\varepsilon\), as \(\gamma_n\) is entirely contained in \(U\) by quasi-strong causality. Taking the infimum over all partitions \(\sigma\) then yields \eqref{eq:local_uniform_control_conformal_length}.
    Now, for any \(\varepsilon>0\) eventually in \(n\) we have that for any causal curve \(\gamma\) between \(p\) and \(p_n\) it holds that
    \[
        (\Omega(p) -\varepsilon)L^{\tau}(\gamma) \leq L_\Omega^\tau (\gamma) \leq (\Omega(p) +\varepsilon)L^{\tau}(\gamma).
    \]
    Taking the supremum over all such curves \(\gamma\) then yields \eqref{equation:tau_ratio_equals_omega}.
    
    If we now consider the additional assumption that \(\tau(p,p_n)\in(0,\infty)\), we can divide the previous inequalities by \(\tau(p,p_n)\) getting
    \[
        (\Omega(p) -\varepsilon)\leq \frac{\tau_\Omega(p, p_n)}{\tau(p, p_n)} \leq (\Omega(p) +\varepsilon)\qquad\Longleftrightarrow\qquad 
        \abs{\frac{\tau_\Omega(p, p_n)}{\tau(p, p_n)} - \Omega(p)} \leq \varepsilon.
    \]
    Since this inequality holds for any \(\varepsilon>0\) and for \(n\) sufficiently large, this proves the last claim.
\end{proof}

Having the upper and lower bounds of \Cref{lemma:local_uniform_control_conformal_length} at our disposal, we can now prove that intrinsicness and quasi-strong causality of $\Xll$ imply lower semicontinuity of $\tau_{\Omega}$.

\begin{proposition}\label{proposition:tau-lower-semi-continuous}
    Suppose \((X,\ll\,\leq,\tau,d)\) is an intrinsic and quasi-strongly causal Lorentzian length space and fix any conformal factor \(\Omega\). Then  \(\tau_\Omega\) is lower semicontinuous.
\end{proposition}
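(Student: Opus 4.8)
The plan is to check lower semicontinuity sequentially: given $x_n\to x$ and $y_n\to y$ in $X$, I want $\liminf_n\tau_\Omega(x_n,y_n)\ge\tau_\Omega(x,y)$. Since $\tau_\Omega\ge 0$, this is automatic when $\tau_\Omega(x,y)=0$, so I may assume $\tau_\Omega(x,y)>0$, hence $x\ll y$ by \Cref{prop:character_causal_curves}. It then suffices to fix an arbitrary $\ell\in(0,\tau_\Omega(x,y))$ — letting $\ell\to\infty$ to handle the case $\tau_\Omega(x,y)=+\infty$ — and to prove $\tau_\Omega(x_n,y_n)>\ell$ for all large $n$. The strategy is: take an almost maximal causal curve from $x$ to $y$, trim tiny portions off its two ends so that the surviving middle arc still begins strictly to the future of $x$ and ends strictly to the past of $y$ while keeping almost all of its conformal length, and then reconnect $x_n$ and $y_n$ to this middle arc via the reverse triangle inequality for $\tau_\Omega$.

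More precisely, I would first fix a causal curve $\gamma\colon[a,b]\to X$ from $x$ to $y$ with $L_\Omega^\tau(\gamma)>\ell$, put $M:=\max_{[a,b]}\Omega\circ\gamma$, and consider the sets $A:=\{c\in(a,b):x\ll\gamma(c)\}$ and $B:=\{c\in(a,b):\gamma(c)\ll y\}$. By push-up, $A$ is upward closed and $B$ downward closed; and for $c\notin A$ one has $L^\tau(\gamma\vert_{[a,c]})\le\tau(x,\gamma(c))=0$, hence $L_\Omega^\tau(\gamma\vert_{[a,c]})\le M\,L^\tau(\gamma\vert_{[a,c]})=0$, symmetrically for $B$. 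Combined with additivity of $L_\Omega^\tau$ (\Cref{prop:basic_properties_conformal_length}), this forces $A\cup B=(a,b)$ — otherwise a cut point of $\gamma$ would make $L_\Omega^\tau(\gamma)$ vanish — so that $c_A:=\inf A<c_B:=\sup B$. Next, since $X$ is a quasi-strongly causal Lorentzian length space, every causal curve between two points sufficiently close to $\gamma(c_A)$ stays in a fixed localizing neighbourhood of $\gamma(c_A)$, on which $\tau$ agrees with the (continuous, by localizability) localized time separation; hence $\tau(\gamma(c_A),\gamma(c))\to 0$ as $c\to c_A$, and likewise near $c_B$. Using this together with additivity and the local control $L_\Omega^\tau\le M\,L^\tau$ of \Cref{lemma:local_uniform_control_conformal_length}, one checks that $L_\Omega^\tau(\gamma\vert_{[a,c]})\to 0$ as $c\to c_A^+$ and $L_\Omega^\tau(\gamma\vert_{[c,b]})\to 0$ as $c\to c_B^-$. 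Therefore I can choose $a<c_1<c_2<b$ with $c_1\in A$, $c_2\in B$ and $L_\Omega^\tau(\gamma\vert_{[a,c_1]})+L_\Omega^\tau(\gamma\vert_{[c_2,b]})<\varepsilon$. Setting $p:=\gamma(c_1)$, $q:=\gamma(c_2)$, this gives $x\ll p\le q\ll y$ and, by additivity, $\tau_\Omega(p,q)\ge L_\Omega^\tau(\gamma\vert_{[c_1,c_2]})>\ell-\varepsilon$.

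Finally, I would use that chronological pasts and futures are open in a Lorentzian length space: as $x\in I^-(p)$ and $y\in I^+(q)$ are open sets, for all large $n$ we get $x_n\in I^-(p)$ and $y_n\in I^+(q)$, i.e.\ $x_n\ll p$ and $q\ll y_n$; in particular $x_n\le p\le q\le y_n$. Two applications of the reverse triangle inequality for $\tau_\Omega$ (\Cref{prop:reverse_triangle_inequality}) then yield, for all large $n$,
\[
\tau_\Omega(x_n,y_n)\;\ge\;\tau_\Omega(x_n,p)+\tau_\Omega(p,q)+\tau_\Omega(q,y_n)\;\ge\;\tau_\Omega(p,q)\;>\;\ell-\varepsilon .
\]
Hence $\liminf_n\tau_\Omega(x_n,y_n)\ge\ell-\varepsilon$, and letting $\varepsilon\downarrow 0$ and then $\ell\uparrow\tau_\Omega(x,y)$ gives the conclusion.

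The step I expect to be the main obstacle is the trimming: one must simultaneously preserve the strict relations $x\ll\gamma(c_1)$ and $\gamma(c_2)\ll y$ and make the conformal length lost at the two ends negligible. The delicate point is that, a priori, a causal curve could concentrate conformal length arbitrarily close to one of its endpoints, so one really has to rule out $L_\Omega^\tau(\gamma\vert_{[a,c]})\not\to 0$ — and this is precisely where quasi-strong causality (via \Cref{lemma:local_uniform_control_conformal_length}) and the local regularity of $\tau$ supplied by the Lorentzian length space structure enter. The remaining ingredients — push-up, additivity of $L_\Omega^\tau$, the reverse triangle inequality for $\tau_\Omega$, and openness of $I^{\pm}$ — are already established in the excerpt or standard in this setting.
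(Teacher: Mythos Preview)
Your overall strategy matches the paper's: trim a near-optimal curve so that the surviving middle arc begins strictly in $I^+(x)$ and ends strictly in $I^-(y)$, then invoke openness of $I^\pm$ and the reverse triangle inequality for $\tau_\Omega$. The sets $A$, $B$ and the cut parameters $c_A$, $c_B$ are exactly the paper's $t_1$, $t_2$, and the observation $L_\Omega^\tau(\gamma\vert_{[a,c_A]})=L_\Omega^\tau(\gamma\vert_{[c_B,b]})=0$ is used in both arguments.

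The substantive difference is how you control the trimmed ends. Your claim $L_\Omega^\tau(\gamma\vert_{[c_A,c]})\to 0$ rests on $\tau(\gamma(c_A),\gamma(c))\to 0$, which you obtain from the continuous local time separation furnished by localizability. That is legitimate if one reads ``Lorentzian length space'' literally, but from the surrounding section the intended hypothesis is only an intrinsic, quasi-strongly causal \emph{pre}-length space---and the paper's proof indeed uses no localizability. In the bare pre-length setting $\tau$ is merely lower semicontinuous, and there is no reason for $\tau(\gamma(c_A),\gamma(c))$, or even $L^\tau(\gamma\vert_{[c_A,c]})$, to tend to $0$ as $c\searrow c_A$; a curve can in principle accumulate length arbitrarily close to $\gamma(c_A)$.

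The paper sidesteps this by never bounding the end-pieces. It sets $p^+(h)=\gamma(t_1+h)$, $q^-(h)=\gamma(t_2-h)$, obtains $\liminf_n\tau_\Omega(x_n,y_n)\ge\tau_\Omega(p^+(h),q^-(h))$ for each fixed $h>0$, and then sends $h\to 0^+$ using auxiliary points $p_1,p_2$ on $\gamma$ inside quasi-strong-causality neighbourhoods of $\gamma(t_1)$, $\gamma(t_2)$, the $(\Omega\pm\varepsilon)$-sandwich of \Cref{lemma:local_uniform_control_conformal_length}, and---crucially---only the \emph{lower} semicontinuity of $\tau$. This two-parameter limiting argument is longer than yours but delivers the result without localizability. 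So you trade a cleaner one-step estimate for an extra structural hypothesis; the paper's route is more delicate but strictly more general. (Minor point: the bound $L_\Omega^\tau\le M\,L^\tau$ you invoke is the elementary one from \Cref{prop:length_causal_curves}, not \Cref{lemma:local_uniform_control_conformal_length}.)
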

\begin{proof}
    Let \(p,q \in X\) be any two points. If \(\tau(p,q)=0\) (which happens if and only if $\tau_\Omega(p,q)=0$ by \Cref{prop:character_causal_curves}), then lower semicontinuity follows immediately. We therefore assume \(\tau(p,q) > 0\), and consider two arbitrary sequences of points \(p_n\) and \(q_n\) converging to \(p\) and \(q\) respectively.

    If \(p = q\), the reverse triangle inequality for $\tau$ implies that \(\tau(p,p) = +\infty\). Then \Cref{prop:length_causal_curves} ensures that \(\tau_{\Omega}(p,p)\neq0\), hence \(\tau_\Omega(p,p) = +\infty \) by the reverse triangle inequality for $\tau_{\Omega}$. We now fix any \(\varepsilon \in (0, \Omega(p))\) and find a neighbourhood \(U\) of \(p\) where the oscillation of \(\Omega\) is bounded by \(\varepsilon\), as well as the corresponding quasi-strong causality neighbourhood \(V\subseteq U\) (cf.\  \Cref{defintion:quasi_strong_causality}). 

    Eventually in \(n\), both sequences \(p_n\) and \(q_n\) are contained in \(V\), and lower semicontinuity of $\tau$ yields
    \[
        \liminf_{n \to +\infty} \tau(p_n,q_n) \geq \tau(p,p) = +\infty,
    \]
    which implies that eventually in \(n\) we have \(p_n \ll q_n\). Therefore, for sufficiently large \(n\) we can find causal curves \(\gamma_n\) between \(p_n\) and \(q_n\) which are contained in \(U\) with \(L^\tau(\gamma_n) \to +\infty\) as \(n \to +\infty\). Estimating the \(L^\tau_\Omega\)-length as in \Cref{lemma:local_uniform_control_conformal_length} we see that
    \[
        \tau_\Omega(p_n,q_n) \geq L_\Omega^\tau(\gamma_n) \geq (\Omega(p) - \varepsilon)L^\tau(\gamma_n).
    \]
    Taking the \(\liminf\) as \(n \to +\infty\) we see that
    \[
        \liminf_{n \to +\infty} \tau_\Omega(p_n,q_n) \geq +\infty = \tau_\Omega(p,p),
    \]
    which proves lower semicontinuity.
    
    Now, if \(p \neq q\), as \(\tau(p,q) > 0\) we get a causal curve \(\gamma\colon[a,b]\rightarrow X\) from \(p\) to \(q\) with positive \(\tau\)-length.

    We define
    \[
        \begin{split}
            t_1 &\coloneqq \sup \{t \in [a,b] \ \colon \ p \not \ll \gamma(t)\}, \\
            t_2 &\coloneqq \inf \{ t \in [a,b] \ \colon \ \gamma(t) \not \ll q\},
        \end{split}
    \]
    and set \(\widetilde{p} \coloneqq \gamma(t_1)\), \(\widetilde{q} \coloneqq \gamma(t_2)\); if the sets are empty, we set \(t_1 = a\) and \(t_2 = b\), so that \(\widetilde{p} = p\) and \(\widetilde{q}=q\) respectively. We now notice the following facts:
    \begin{itemize}
        \item \(L_\Omega^\tau(\gamma\vert_{[t_1,t_2]}) = L^\tau_\Omega(\gamma)\); indeed, this is trivially true if \(t_1 =a\) and \(t_2 = b\). If \(t_1 \neq a\) (or \(t_2 \neq b\)), we see that by continuity of \(\gamma\) and by the fact that the complement of \(I^+(p)\) (\(I^-(q)\), respectively) is a closed set we must have \(p \not \ll \widetilde{p}\) (\(\widetilde{q} \not \ll q\), respectively). In particular then the \(L^\tau\)-length of the curve \(\gamma\vert_{ [a, t_1]}\) (\(\gamma\vert_{ [t_2,b]}\), respectively) vanishes and therefore so does the \(L^\tau_\Omega\)-length, by \Cref{prop:length_causal_curves}. By additivity of the length we see then that the length of \(\gamma\vert_{[t_1,t_2]}\) coincides with the length of \(\gamma\).
        \item \(t_1 < t_2\); this is trivially true when \(t_1 = a\) and \(t_2 = b\) as \(a<b\). If \(t_1 \neq a\) (\(t_2 \neq b\), respectively), assuming by contradiction that \(t_1 \geq t_2\) gives in particular that \(t_2 \neq b\) (\(t_1 \neq a\), respectively): if \(t_1 = b\) (\(t_2 = a\) respectively) then the sup (inf) exists and \(p \not \ll \gamma(b) = q\) (\(p = \gamma(a) \not \ll q\)), since the complement of \(I^+(p)\) (\(I^-(q)\)) is closed; on the other hand we assumed \(p \ll q\), which is a contradiction. As we noticed before in such case we have \(p \not \ll \widetilde{p}\) and \(\widetilde{q} \not \ll q\). We then estimate the length of \(\gamma\) as 
        \[
            L^\tau(\gamma) \leq \tau(p, \widetilde{p}) + \tau(\widetilde{p},q) =0,
        \]
        as the first term vanishes since \(p \not \ll \widetilde{p}\) and the second one vanishes as the assumption that \(t_1 \geq t_2\) implies that \(\gamma(t_1) \not \ll q\) by the push-up property; this contradicts the fact that \(\gamma\) has positive length.
    \end{itemize}
     For \(h \in (0, \frac{t_2-t_1}{2})\) we define \(p^+(h) \coloneqq \gamma(t_1+h)\) and \(q^-(h) \coloneqq \gamma(t_2-h)\). Notice that \(p^+(h) \in I^+(p)\) and \(q^-(h) \in I^-(q)\) since \(t_1+h > t_1\) and \(t_2 -h < t_2\). In particular \(I^-(p^+(h))\) and \(I^+(q^-(h))\) are open neighbourhoods of \(p\) and \(q\) respectively. Therefore for a fixed \(h\) the two sequences are eventually contained in them, i.e., \(p_n \ll p^+(h)\) and \(q_n \ll q^-(h)\) for sufficiently large \(n\). We fix \(\varepsilon>0\) with \(\varepsilon < \min(\Omega(\widetilde{p}), \Omega(\widetilde{q}))\) and consider neighbourhoods at \(\widetilde{p}\) and \(\widetilde{q}\) such that the oscillation of \(\Omega\) is bounded by \(\varepsilon\). We then employ quasi-strong causality to find smaller neighbourhoods \(V_1\) and \(V_2\) which satisfy the quasi-strong causality condition. Lastly, we take \(h \leq h(\varepsilon)\) small enough so that \(p^+(h)\) and \(q^-(h)\) are in \(V_1\) and \(V_2\) respectively.
    
    As \(p^+(h) \leq q^-(h)\), we can apply the reverse triangle inequality to see that
    \[
        \tau_{\Omega}(p_n,q_n) \geq \tau_{\Omega}(p_n, p^+(h)) + \tau_{\Omega}(p^+(h),q^-(h)) + \tau_{\Omega}(q^-(h), q_n) \geq \tau_\Omega(p^+(h), q^-(h)).
    \]
    Notice that such estimate holds whenever \(n\) is big enough, hence taking the \(\liminf\) yields
    \[
        \liminf_{n \to +\infty} \tau_{\Omega}(p_n,q_n) \geq \tau_\Omega(p^+(h), q^-(h)).
    \]
    As \(V_1\) and \(V_2\) are open neighbourhoods of \(p^+(h(\varepsilon))\) and \(q^-(h(\varepsilon))\) respectively, we can find two points \(p_1\) and \(p_2\) along \(\gamma\) so that \(p_i \in V_i\) for \(i=1,2\) and for every \(h \leq h(\varepsilon)\) it holds \(p^+(h) \leq p_1 \leq p_2 \leq q^-(h)\). Applying the triangle inequality then yields
    \[
        \liminf_{n \to +\infty} \tau_{\Omega}(p_n,q_n) \geq \tau_\Omega(p^+(h), q^-(h)) \geq \tau_\Omega(p^+(h), p_1) + \tau_\Omega(p_1, p_2) + \tau_\Omega(p_2, q^-(h)).
    \]
    As \(p^+(h), p_1 \in V_1\) and \(p_2,q^-(h) \in V_2\) every causal curve between each pair is contained in a neighbourhood where the oscillation of \(\Omega\) is bounded by \(\varepsilon\). Analogously to what is done in \Cref{lemma:local_uniform_control_conformal_length}, we can estimate \(\tau_\Omega\) by
    \[
        \tau_\Omega(p^+(h), p_1) + \tau_\Omega(p_1, p_2) + \tau_\Omega(p_2, q^-(h)) \geq (\Omega(\widetilde{p}) - \varepsilon) \tau(p^+(h), p_1) + \tau_\Omega(p_1, p_2) + (\Omega(\widetilde{q}) - \varepsilon)\tau(p_2, q^-(h)).
    \]
    Notice that this estimate holds for any \(h \leq h(\varepsilon)\). We can then employ lower semicontinuity of \(\tau\) sending \(h \to 0\) to see that
    \[
        \liminf_{n \to +\infty}\tau_\Omega(p_n,q_n) \geq\liminf_{h \to 0^+} \tau_\Omega(p^+(h), q^-(h)) \geq (\Omega(\widetilde{p}) - \varepsilon) \tau(\widetilde{p}, p_1) + \tau_\Omega(p_1, p_2) + (\Omega(\widetilde{q}) - \varepsilon) \tau(p_2, \widetilde{q}).
    \]
    We can once again estimate as in \Cref{lemma:local_uniform_control_conformal_length} to get
    \[
        \begin{split}
            \liminf_{n \to +\infty}\tau_\Omega(p_n,q_n) &\geq \frac{\Omega(\widetilde{p}) - \varepsilon}{\Omega(\widetilde{p}) + \varepsilon} \tau(\widetilde{p}, p_1) + \tau_\Omega(p_1, p_2) + \frac{\Omega(\widetilde{q}) - \varepsilon}{\Omega (\widetilde{q}) + \varepsilon}\tau_\Omega(p_2, \widetilde{q})\\
            &\geq \min \left( \frac{\Omega(\widetilde{p}) - \varepsilon}{\Omega(\widetilde{p}) + \varepsilon}, \frac{\Omega(\widetilde{q}) - \varepsilon}{\Omega (\widetilde{q}) + \varepsilon}, 1 \right) (\tau_\Omega(\widetilde{p},p_1) + \tau_\Omega(p_1,p_2) + \tau_\Omega(p_2, \widetilde{q}))
        \end{split}
    \]
    Each term of the sum on the right-hand-side can be estimated from below by the \(L_\Omega^\tau\)-length of \(\gamma\) restricted to the corresponding interval. Using then additivity of the length we get
    \[
        \liminf_{n \to +\infty}\tau_\Omega(p_n,q_n) \geq \min \left( \frac{\Omega(\widetilde{p}) - \varepsilon}{\Omega(\widetilde{p}) + \varepsilon}, \frac{\Omega(\widetilde{q}) - \varepsilon}{\Omega (\widetilde{q}) + \varepsilon}, 1 \right) L_\Omega^\tau(\gamma\vert_{ [t_1,t_2]})
    \]
    Sending \(\varepsilon \to 0^+\) yields
    \[
        \liminf_{n \to +\infty}\tau_\Omega(p_n,q_n) \geq L_\Omega^\tau(\gamma\vert_{ [t_1,t_2]}) = L_\Omega^\tau(\gamma).
    \]
    Notice that this final estimate holds trivially if \(\gamma\) has zero length, so it holds for any causal curve between \(p\) and \(q\). Taking the supremum over all such curves then gives
    \[
        \liminf_{n \to +\infty}\tau_\Omega(p_n,q_n) \geq \tau_\Omega(p,q),
    \]
    which proves lower semicontinuity.
\end{proof}

The previous results establish that \((X,d,\ll,\leq,\tau_{\Omega})\) is a Lorentzian pre-length space, and justify referring to $\tau_{\Omega}$ as \textit{conformal time separation function}. It still remains to prove, however, that the class of intrinsic, quasi-strongly causal Lorenzian pre-length spaces is closed under conformal changes, i.e.\ to check whether the space $\confXll$ is intrinsic and quasi-strongly causal as well. Clearly, both spaces have the same topology and, by \Cref{prop:character_causal_curves}, they also have the same causality relations. Consequently, the assumption of quasi-strong causality of $\Xll$ automatically implies quasi-strong causality of $\confXll$. The intrinsicness of \(\tau_\Omega\), on the other hand, was proven in \Cref{prop:tau_omega_intrinsic}. However, this result can actually be strengthened with the addition of quasi-strong causality by proving that the length induced by \(\tau_\Omega\) coincides with the conformal length \(L_\Omega^\tau\). This is done in the following lemma. 

\begin{lemma}\label{lemma:strongly_causal_implies_intrinsic}
Consider a quasi-strongly causal intrinsic Lorentzian pre-length space $(X,d,\ll,\le,\tau)$ and a conformal factor \(\Omega\). 
For any causal curve $\gamma:[a,b]\to X$, it holds that
\[
    L_\Omega^\tau (\gamma)=L^{\tau_{\Omega}}(\gamma).
\]
In particular, this implies the conclusion of \Cref{prop:tau_omega_intrinsic}.
\end{lemma}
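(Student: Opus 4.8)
The plan is to reduce everything to two facts already in hand from \Cref{prop:tau_omega_intrinsic}: first, that $L^{\tau_\Omega}(\gamma)$ equals the infimum over partitions $\{t_i\}_{i=0}^m$ of $[a,b]$ of the sums $\sum_i \tau_\Omega(\gamma(t_i),\gamma(t_{i+1}))$; and second, that $L_\Omega^\tau(\gamma) \le L^{\tau_\Omega}(\gamma)$ always holds (this is the chain of inequalities appearing in the proof of \Cref{prop:tau_omega_intrinsic}, which uses only additivity of $L_\Omega^\tau$ and the definition of $\tau_\Omega$ as a supremum). Thus it suffices to prove $L^{\tau_\Omega}(\gamma) \le L_\Omega^\tau(\gamma)$, and for this I will show that for every $\varepsilon>0$ there is a partition $\sigma=\{t_i\}_{i=0}^m$ of $[a,b]$ with $\sum_i \tau_\Omega(\gamma(t_i),\gamma(t_{i+1})) \le L_\Omega^\tau(\gamma)+\varepsilon$. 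If $L^\tau(\gamma)=+\infty$, the two-sided bound $m\,L^\tau(\gamma)\le L_\Omega^\tau(\gamma)\le M\,L^\tau(\gamma)$ (with $m,M$ the minimum and maximum of $\Omega\circ\gamma$, as in the proof of \Cref{prop:length_causal_curves}) forces $L_\Omega^\tau(\gamma)=+\infty$ and there is nothing to prove; so I may assume $L^\tau(\gamma)<+\infty$ and set $C:=L^\tau(\gamma)+1$.

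The key step is to use quasi-strong causality to bound $\tau_\Omega$ on short subintervals of $\gamma$. Fix $\varepsilon>0$ and put $\varepsilon_0:=\varepsilon/(2C)$. For each $s\in[a,b]$, continuity of $\Omega$ yields an open ball $U_s$ around $\gamma(s)$ on which the oscillation of $\Omega$ is less than $\varepsilon_0$, and \Cref{defintion:quasi_strong_causality} gives a neighbourhood $V_s\subseteq U_s$ of $\gamma(s)$ such that every causal curve with both endpoints in $V_s$ remains inside $U_s$. The preimages $\{\gamma^{-1}(V_s)\}_{s\in[a,b]}$ cover the compact interval $[a,b]$; let $\lambda>0$ be a Lebesgue number of this cover. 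Using \Cref{prop:basic_properties_conformal_length}(i) I pick a partition of modulus $<\lambda$ with $V_{\Omega,\cdot}(\gamma)\le L_\Omega^\tau(\gamma)+\varepsilon/2$, and, invoking the monotone-decrease property of variations from \Cref{lemma:variations_decreasing} (applied to $V_{\Omega,\cdot}$ and, with $\Omega\equiv 1$, to $V_{\cdot}$), I refine it by the points of a partition witnessing $V_{\cdot}(\gamma)\le C$; the resulting common refinement $\sigma=\{t_i\}_{i=0}^m$ then satisfies simultaneously $\abs{\sigma}<\lambda$, $V_{\Omega,\sigma}(\gamma)\le L_\Omega^\tau(\gamma)+\varepsilon/2$, and $V_\sigma(\gamma)\le C$.

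Since $\abs{\sigma}<\lambda$, each subinterval $[t_i,t_{i+1}]$ is mapped by $\gamma$ into some $V_{s_i}$, hence by quasi-strong causality every causal curve $\alpha$ from $\gamma(t_i)$ to $\gamma(t_{i+1})$ is contained in $U_{s_i}$; therefore $L_\Omega^\tau(\alpha)\le\big(\sup_{U_{s_i}}\Omega\big)\,L^\tau(\alpha)\le\big(\sup_{U_{s_i}}\Omega\big)\,\tau(\gamma(t_i),\gamma(t_{i+1}))$, and taking the supremum over all such $\alpha$, together with $\sup_{U_{s_i}}\Omega\le \max_{t\in[t_i,t_{i+1}]}\Omega(\gamma(t))+\varepsilon_0$ (valid because $\gamma([t_i,t_{i+1}])\subseteq V_{s_i}\subseteq U_{s_i}$ and the oscillation of $\Omega$ on $U_{s_i}$ is $<\varepsilon_0$), gives $\tau_\Omega(\gamma(t_i),\gamma(t_{i+1}))\le\big(\max_{t\in[t_i,t_{i+1}]}\Omega(\gamma(t))+\varepsilon_0\big)\,\tau(\gamma(t_i),\gamma(t_{i+1}))$. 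Summing over $i$ produces $\sum_i\tau_\Omega(\gamma(t_i),\gamma(t_{i+1}))\le V_{\Omega,\sigma}(\gamma)+\varepsilon_0 V_\sigma(\gamma)\le L_\Omega^\tau(\gamma)+\varepsilon/2+\varepsilon_0 C=L_\Omega^\tau(\gamma)+\varepsilon$. As $L^{\tau_\Omega}(\gamma)$ is the infimum of such sums, letting $\varepsilon\to 0$ yields $L^{\tau_\Omega}(\gamma)\le L_\Omega^\tau(\gamma)$, and combined with the reverse inequality this is the claim; the \say{in particular} statement follows at once by taking the supremum over causal curves from $x$ to $y$ in $L^{\tau_\Omega}(\gamma)=L_\Omega^\tau(\gamma)$.

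The main obstacle is the ordering of parameters: the Lebesgue number $\lambda$, and indeed the entire cover, depends on the choice of $\varepsilon_0$, while at the same time the residual error $\varepsilon_0 V_\sigma(\gamma)$ must be kept below $\varepsilon/2$. This is resolved by fixing $\varepsilon_0$ first with the help of the a priori bound $V_\sigma(\gamma)\le C$, which is available precisely because $L^\tau(\gamma)<+\infty$ — hence the preliminary reduction to that case. Conceptually, the crucial point is that quasi-strong causality promotes the pointwise control of $\Omega$ along $\gamma$ to uniform control over all causal competitors joining consecutive partition points, which is exactly what the supremum defining $\tau_\Omega$ requires.
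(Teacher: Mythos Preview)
Your proof is correct. The overall setup---continuity of $\Omega$ to control oscillation, quasi-strong causality to trap competitor curves in the same small set, a Lebesgue-number argument on $[a,b]$---is the same machinery the paper uses. The difference is in the bookkeeping of the error: the paper bounds $V_{\Omega,\sigma}(\gamma)$ from \emph{below} by a multiplicative factor $\big(1-\tfrac{2\varepsilon}{m+2\varepsilon}\big)\,L^{\tau_\Omega}(\gamma)$ (going through $\widetilde\varepsilon$-almost-maximizers $\eta_i$ and comparing $\max\Omega$ along $\gamma$ versus along $\eta_i$), whereas you bound the partition sum $\sum_i\tau_\Omega(\gamma(t_i),\gamma(t_{i+1}))$ from \emph{above} by $V_{\Omega,\sigma}(\gamma)+\varepsilon_0\,V_\sigma(\gamma)$ with an additive error. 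Your route is more direct and avoids the auxiliary curves $\eta_i$, at the price of needing the a priori bound $V_\sigma(\gamma)\le C$ (and hence the preliminary reduction to $L^\tau(\gamma)<+\infty$); the paper's multiplicative factor works uniformly without that case split. Both arguments hinge on exactly the same point you highlight at the end: quasi-strong causality upgrades pointwise control of $\Omega$ along $\gamma$ to control over \emph{all} causal competitors between consecutive partition points.
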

\begin{proof}
We first prove that $L_\Omega^\tau (\gamma)\leq L_{\tau_{\Omega}}(\gamma)$. For any partition \(\sigma\) of \([a,b]\) we see that
\[
    \begin{split}
        \sum_{i=1}^{m-1}\tau_{\Omega}\big(\gamma(t_i),\gamma(t_{i+1})\big)
        &\geq \sum_{i=1}^{m-1}L_\Omega^\tau  \left(\gamma\vert_{[t_i,t_{i+1}]}\right) 
        =L_\Omega^\tau (\gamma),
    \end{split}
\]
as \(\tau_\Omega(\gamma(t_i), \gamma(t_{i+1}))\) is defined as the supremum of the \(L_\Omega^\tau\)-length of curves between the two points and thus can be estimated from below by the \(L_\Omega^\tau\)-length of \(\gamma\vert_{[t_i,t_{i+1}]}\). Taking the infimum over all partitions \(\sigma\) immediately yields $L^{\tau_{\Omega}}(\gamma)\geq L_\Omega^\tau (\gamma)$.

We now prove $L^{\tau_{\Omega}}(\gamma)\leq L_\Omega^\tau (\gamma)$. Consider any positive real number $\varepsilon$. Since $\Omega$ is continuous, for all $t\in[a,b]$ there exists an open ball $U_t = B(\gamma(t),\delta_t)$ centered at $\gamma(t)$ with radius $\delta_t>0$ such that, for all $x\in U_t$, we have $\big\vert\Omega\big(\gamma(t)\big)-\Omega\big(x\big)\big\vert\leq \varepsilon$. By quasi-strong causality, there exists an open set $V_t\subset U_t$ containing $\gamma(t)$, such that any causal curve with endpoints in $V_t$ is contained in $U_t$. This is true for any $t$, so $\gamma$ is entirely contained in $\bigcup_{t\in[a,b]} V_t\subset X$. Moreover, since $\gamma([a,b])$ is compact, there exists a finite collection $\{V_{t_j}\}_{j=1}^{k}, k\in\mathbb{N}$ whose union contains $\gamma$. Lebesgue's number lemma then ensures that there is $\delta>0$ such that 
\begin{align*}
\vert t-t'\vert<\delta\quad\Longrightarrow\quad \gamma\vert_{[t,t']}\subset V_{t_j}\textup{ for some }t_j.
\end{align*}
Now, take any partition $\sigma$ of $\gamma$ with modulus $\vert\sigma\vert<\delta$, and fix some $\widetilde{\varepsilon}>0$. Then for all $i \in \{1,\dots,m-1\}$ there exists a causal curve $\eta_i$ from $\gamma(t_i)$ to $\gamma(t_{i+1})$ such that 
\[
    L_\Omega^\tau (\eta_i)+\widetilde{\varepsilon}\geq \tau_{\Omega}\left(\gamma(t_i),\gamma(t_{i+1})\right).
\] 
The endpoints of $\eta_i$ and $\gamma\vert_{[t_i,t_{i+1}]}$ are the same and are contained in $V_{t_j}$ for some $t_j\in[a,b]$, which (by quasi-strong causality) means $\eta_i\subset U_{t_j}$. Then,
\[
    \begin{split}
        V_{\Omega,\sigma}(\gamma) & = \sum_{i=0}^{m-1} \max_{t \in [t_i, t_{i+1}]} \Omega\left(\gamma(t)\right) \, \tau\big(\gamma(t_i), \gamma(t_{i+1})\big)
        \\
        &
        \geq \sum_{i=0}^{m-1} \max_{t \in [t_i, t_{i+1}]} \Omega\big(\gamma(t)\big) \, L^{\tau}\big(\eta_i\big) \geq \sum_{i=0}^{m-1}   \frac{\max_{t \in [t_i, t_{i+1}]}\Omega\big(\gamma(t)\big) }{\max_{s\in [t_i,t_{i+1}]} \Omega(\eta_i(s)}L_\Omega^\tau (\eta_i),
    \end{split}
\]
where in the last inequality we used the same estimate as in the proof of \Cref{prop:length_causal_curves} to recover \(L_\Omega^\tau \) from \(L^\tau\). We now recall that \(\eta_i\) are \(\widetilde{\varepsilon}\)-geodesics for \(L_\Omega^\tau \), so that
\[
        V_{\Omega,\sigma}(\gamma) \geq \sum_{i=0}^{m-1}   \frac{\max_{t \in [t_i, t_{i+1}]}\Omega\big(\gamma(t)\big) }{\max_{s\in [t_i,t_{i+1}]} \Omega(\eta_i(s))}\Big(\tau_{\Omega}\big(\gamma(t_i),\gamma(t_{i+1})\big)-\widetilde{\varepsilon}\Big).
\]
We then use the fact, that  for each summand, the curves are contained in \(U_{t_j}\) and thus the oscillation of \(\Omega\) is bounded by \(2\varepsilon\); this allows us to replace the maximum term in the denominator with the same one in the numerator up to a \(2\varepsilon\) contribution:
\[
    \begin{split}
        V_{\Omega,\sigma}(\gamma) &\geq \sum_{i=0}^{m-1}   \frac{\max_{t \in [t_i, t_{i+1}]} \Omega \big(\gamma(t)\big) }{\max_{t \in [t_i, t_{i+1}]} \Omega\big(\gamma(t)\big)+2\varepsilon}\Big(\tau_{\Omega}\big(\gamma(t_i),\gamma(t_{i+1})\big)-\widetilde{\varepsilon}\Big)\\
        &= \sum_{i=0}^{m-1} \left( 1 -\frac{2\varepsilon}{\max_{t \in [t_i, t_{i+1}]} \Omega \big(\gamma(t)\big)+2\varepsilon} \right) \Big(\tau_{\Omega}\big(\gamma(t_i),\gamma(t_{i+1})\big)-\widetilde{\varepsilon}\Big).
    \end{split}
\]
We now define $m:=\min_{t\in[a,b]}\Omega\big(\gamma(t)\big)$. Since nothing depends on $\eta_i$ in the inequality above, we can let $\widetilde{\varepsilon} \to0$ and find
\[
    \begin{split}
        V_{\Omega,\sigma}(\gamma) &\geq \sum_{i=0}^{m-1}  \left( 1-\frac{2\varepsilon}{\max_{t \in [t_i, t_{i+1}]}\Omega\big(\gamma(t)\big)+2\varepsilon}\right) \tau_{\Omega}\big(\gamma(t_i),\gamma(t_{i+1})\big)
        \\
        &\geq \left( 1-\frac{2\varepsilon}{m +2\varepsilon}\right) \sum_{i=0}^{m-1}  \tau_{\Omega}\big(\gamma(t_i),\gamma(t_{i+1})\big)
        \geq \left( 1-\frac{2\varepsilon}{m +2\varepsilon}\right)  L^{\tau_{\Omega}}\big(\gamma\big).
    \end{split}
\]
Taking the infimum over all partitions $\sigma$ yields $L_\Omega^\tau (\gamma)\geq \left( 1-\frac{2\varepsilon}{m +2\varepsilon}\right)  L^{\tau_{\Omega}} (\gamma)$, which upon letting $\varepsilon\to0$ becomes $L_\Omega^\tau (\gamma)\geq L^{\tau_{\Omega}} (\gamma)$.

The intrinsicness of \(\tau_\Omega\) is now immediate, as we proved that the length functionals \(L^\tau_\Omega\) and \(L^{\tau_\Omega}\) agree: since the causal relations \(\leq\) and \(\leq_\Omega\) coincide (as we discussed at the beginning of this section), the causal curves between any two given points are the same and thus taking the supremum with respect to either length will give the same value.
\end{proof}

We conclude this section by proving a key property of conformal length, namely that consecutive conformal changes are equivalent to a single change by the product of the conformal factors. In this sense, conformal changes behave analogously to their classical counterparts in what regards to conformal length. This result will be essential later in \Cref{Conformal transformations as an equivalence relation} to demonstrate that conformal transformations define an equivalence relation for the class of intrinsic, quasi-strongly causal Lorentzian pre-length spaces. 

\begin{proposition}\label{prop:transitivity_conformal_length}
    Let $\Xll$ be an intrinsic, quasi-strongly causal Lorentzian pre-length space, and consider two conformal factors \(\Omega, \Omega'\). Then $L_{\Omega'}^{\tau_\Omega}= L_{\Omega\cdot \Omega'}^\tau$. In particular,  $L_{1/\Omega}^{\tau_\Omega}=L^\tau$ and $(\tau_\Omega)_{1/\Omega}=\tau$.
\end{proposition}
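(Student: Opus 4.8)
The plan is to establish the pointwise identity of functionals $L_{\Omega'}^{\tau_\Omega}(\gamma)=L_{\Omega\cdot\Omega'}^\tau(\gamma)$ for every causal curve $\gamma\colon[a,b]\to X$; everything else is an immediate consequence. Indeed, taking the supremum over all causal curves from $x$ to $y$ turns this into $(\tau_\Omega)_{\Omega'}=\tau_{\Omega\cdot\Omega'}$, and specialising to $\Omega'=1/\Omega$ gives $L_{1/\Omega}^{\tau_\Omega}=L_{1}^{\tau}=L^\tau$ (the last equality being immediate from \Cref{definition:conformal-factor-variation-length}), whence $(\tau_\Omega)_{1/\Omega}=\tau$ by intrinsicness of $\tau$. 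Throughout I would use that, by the results of \Cref{A class of spaces closed under conformal transformations}, $(X,d,\ll_\Omega,\leq_\Omega,\tau_\Omega)$ is itself an intrinsic, quasi-strongly causal Lorentzian pre-length space with exactly the same causal curves as $\Xll$ (since $\leq_\Omega=\leq$), so \Cref{lemma:variations_decreasing}, \Cref{prop:length_causal_curves}, \Cref{lemma:local_uniform_control_conformal_length} and \Cref{lemma:strongly_causal_implies_intrinsic} all apply verbatim to $\tau_\Omega$ with the factor $\Omega'$. Before the main argument I would dispose of the case $L^\tau(\gamma)=+\infty$: the sandwich $mL^\tau(\gamma)\le L_\Omega^\tau(\gamma)\le ML^\tau(\gamma)$ from the proof of \Cref{prop:length_causal_curves} (with $m,M>0$ the min and max of $\Omega$ along $\gamma$), together with $L^{\tau_\Omega}=L_\Omega^\tau$ (\Cref{lemma:strongly_causal_implies_intrinsic}) and the analogous sandwich for $\Omega'$ over $\tau_\Omega$, makes both sides $+\infty$; so assume $L^\tau(\gamma)<+\infty$, in which case the same sandwiches give $L_{\Omega\cdot\Omega'}^\tau(\gamma)<+\infty$ and $L_{\Omega'}^{\tau_\Omega}(\gamma)<+\infty$, which I will need to keep error terms bounded.

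The core is a variation-by-variation comparison on fine partitions. Fix $\varepsilon\in(0,\tfrac12\min_\gamma\Omega)$. As in the proofs of \Cref{lemma:local_uniform_control_conformal_length} and \Cref{lemma:strongly_causal_implies_intrinsic} --- continuity of $\Omega,\Omega'$, quasi-strong causality, compactness of $\gamma([a,b])$, and the Lebesgue number lemma --- one produces $\delta>0$ such that for every partition $\sigma=\{t_i\}_{i=0}^m$ of $[a,b]$ with $|\sigma|<\delta$ and every $i$, the restriction $\gamma|_{[t_i,t_{i+1}]}$ together with \emph{every} causal curve joining $\gamma(t_i)$ and $\gamma(t_{i+1})$ lies in one common neighbourhood $W_i$ on which $\Omega$ and $\Omega'$ each oscillate by less than $\varepsilon$. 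Applying the estimate $m_\alpha L^\tau(\alpha)\le L_\Omega^\tau(\alpha)\le M_\alpha L^\tau(\alpha)$ from the proof of \Cref{prop:length_causal_curves} to each competitor $\alpha$ confined to $W_i$, and then taking the supremum that defines $\tau_\Omega$ (using intrinsicness of $\tau$), yields
\[
\big(\min_{[t_i,t_{i+1}]}(\Omega\circ\gamma)-\varepsilon\big)\,\tau(\gamma(t_i),\gamma(t_{i+1}))\ \le\ \tau_\Omega(\gamma(t_i),\gamma(t_{i+1}))\ \le\ \big(\max_{[t_i,t_{i+1}]}(\Omega\circ\gamma)+\varepsilon\big)\,\tau(\gamma(t_i),\gamma(t_{i+1})).
\]
Multiplying by $\max_{[t_i,t_{i+1}]}(\Omega'\circ\gamma)$ and using that, since both oscillations are $<\varepsilon$ along $\gamma|_{[t_i,t_{i+1}]}$, the product $\max(\Omega\circ\gamma)\cdot\max(\Omega'\circ\gamma)$ differs from $\max((\Omega\Omega')\circ\gamma)$ by at most $\varepsilon$ times a bound $C_0$ for $\Omega,\Omega'$ over $\gamma([a,b])$, one gets that each summand of $V^{\tau_\Omega}_{\Omega',\sigma}(\gamma)$ agrees with the corresponding summand of $V^{\tau}_{\Omega\cdot\Omega',\sigma}(\gamma)$ up to an error bounded by $2C_0\,\varepsilon\,\tau(\gamma(t_i),\gamma(t_{i+1}))$. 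Summing,
\[
\big|\,V^{\tau_\Omega}_{\Omega',\sigma}(\gamma)-V^{\tau}_{\Omega\cdot\Omega',\sigma}(\gamma)\,\big|\ \le\ 2C_0\,\varepsilon\,V_\sigma(\gamma),\qquad V_\sigma(\gamma):=\sum_{i}\tau(\gamma(t_i),\gamma(t_{i+1})),
\]
for every partition $\sigma$ with $|\sigma|<\delta$.

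To pass from variations to lengths I would treat the two inequalities separately. For $L^{\tau_\Omega}_{\Omega'}(\gamma)\le L^\tau_{\Omega\cdot\Omega'}(\gamma)$: given $\varepsilon'\in(0,1)$, pick $\sigma$ with $V^\tau_{\Omega\cdot\Omega',\sigma}(\gamma)\le L^\tau_{\Omega\cdot\Omega'}(\gamma)+\varepsilon'$ and refine it to $\sigma'$ with $|\sigma'|<\delta$; by \Cref{lemma:variations_decreasing} (for $\tau$ with factor $\Omega\cdot\Omega'$) this bound persists, and then $V_{\sigma'}(\gamma)\le V^\tau_{\Omega\cdot\Omega',\sigma'}(\gamma)/\min_\gamma(\Omega\Omega')$ is bounded \emph{uniformly in $\varepsilon$}; the variation estimate gives $L^{\tau_\Omega}_{\Omega'}(\gamma)\le V^{\tau_\Omega}_{\Omega',\sigma'}(\gamma)\le V^\tau_{\Omega\cdot\Omega',\sigma'}(\gamma)+2C_0\varepsilon V_{\sigma'}(\gamma)\le L^\tau_{\Omega\cdot\Omega'}(\gamma)+\varepsilon'+(\text{const})\varepsilon$, and letting $\varepsilon,\varepsilon'\to0$ concludes. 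The reverse inequality is symmetric: pick $\sigma$ near-optimal for $L^{\tau_\Omega}_{\Omega'}$, refine to a fine $\sigma'$ (again \Cref{lemma:variations_decreasing}, now for $\tau_\Omega$ with factor $\Omega'$), bound $V_{\sigma'}(\gamma)$ uniformly in $\varepsilon$ via $V^{\tau_\Omega}_{\Omega',\sigma'}(\gamma)\ge\min_\gamma\Omega'\cdot(\min_\gamma\Omega-\varepsilon)\,V_{\sigma'}(\gamma)$ and $\min_\gamma\Omega,\min_\gamma\Omega'>0$, then let $\varepsilon\to0$. I expect the genuine obstacle to be precisely this bookkeeping: $V^{\tau_\Omega}_{\Omega',\sigma}$ naturally produces a \emph{product of maxima} $\max(\Omega'\circ\gamma)\cdot\max(\Omega\circ\gamma)$ whereas $V^\tau_{\Omega\cdot\Omega',\sigma}$ carries the \emph{maximum of the product} $\max((\Omega\Omega')\circ\gamma)$, and on coarse partitions these genuinely disagree; reconciling them requires simultaneously (i) fine partitions, so oscillations of $\Omega,\Omega'$ along $\gamma$ are negligible, (ii) quasi-strong causality, to estimate $\tau_\Omega$ on short segments by $\tau$ as in \Cref{lemma:local_uniform_control_conformal_length}, and (iii) the positive lower bound on the conformal factors, which keeps the accumulated error $2C_0\varepsilon V_\sigma(\gamma)$ bounded along the chosen (near-optimal, refined) partitions as $\varepsilon\to0$.
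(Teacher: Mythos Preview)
Your proposal is correct. Both arguments rest on the same idea—take a fine partition so that $\Omega$, $\Omega'$ and $\Omega\Omega'$ each oscillate by less than $\varepsilon$ on every subinterval, and exploit that the product of maxima and the maximum of the product then agree up to $O(\varepsilon)$—but the packaging differs. The paper works at the level of \emph{lengths on subintervals}: for each $[t_i,t_{i+1}]$ it sandwiches $L_\Omega^\tau$, $L_{\Omega'}^{\tau_\Omega}$, $L_{\Omega\Omega'}^\tau$ by $(M_i\pm\varepsilon)L^\tau$, $(M_i'\pm\varepsilon)L_\Omega^\tau$, $(M_i''\pm\varepsilon)L^\tau$ respectively (the middle one via the identity $L_\Omega^\tau=L^{\tau_\Omega}$ from \Cref{lemma:strongly_causal_implies_intrinsic}), chains these into a multiplicative estimate $(1-O(\varepsilon))L_{\Omega'}^{\tau_\Omega}\le L_{\Omega\Omega'}^\tau\le(1+O(\varepsilon))L_{\Omega'}^{\tau_\Omega}$ on each piece, and sums by additivity. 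You instead work at the level of \emph{variations}: you estimate $\tau_\Omega(\gamma(t_i),\gamma(t_{i+1}))$ directly against $\tau(\gamma(t_i),\gamma(t_{i+1}))$ (as in \Cref{lemma:local_uniform_control_conformal_length}), get an additive bound $|V^{\tau_\Omega}_{\Omega',\sigma}-V^\tau_{\Omega\Omega',\sigma}|\le C\varepsilon\, V_\sigma$, and then pass to the infimum by choosing near-optimal partitions and refining—which forces the extra bookkeeping of bounding $V_{\sigma'}(\gamma)$ uniformly in $\varepsilon$, handled by your finiteness case split. The paper's multiplicative route is cleaner (symmetric in the two inequalities, no need to control $V_\sigma$), while your variation-level estimate is slightly more direct in that the core step does not invoke \Cref{lemma:strongly_causal_implies_intrinsic}.
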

\begin{remark}
    By \(L_{\Omega'}^{\tau_\Omega}\) we mean the conformal length functional obtained by applying the conformal factor \(\Omega'\) to the time separation function \(\tau_\Omega\), i.e., for a given causal curve \(\gamma\) we have
    \[
        L_{\Omega'}^{\tau_{\Omega}} = \inf \left\{ \sum_{i=0}^m \max_{t \in [t_i, t_{i+1}]} \Omega'(\gamma(t)) \, \tau_\Omega(\gamma(t_i), \gamma(t_{i+1})) \ \colon \ \{t_i\}_{i=0}^m \ \text{partition of} \ [a,b]\right\}.
    \]
\end{remark}
\begin{proof}
Let $\gamma:[a,b]\to X$ be any causal curve. We take $\varepsilon > 0$ and consider $\delta>0$ small enough so that $\Omega\circ\gamma$, $\Omega'\circ \gamma$, and $\left(\Omega\cdot\Omega'\right)\circ \gamma$ are uniformly continuous with modulus of continuity $(\varepsilon, \delta)$. We also take $\varepsilon$ small enough so that $2\varepsilon$ is less than the minimum value of all the conformal factors along the curve. We take a partition $\sigma$ with modulus $\abs{\sigma} < \delta$ and for each interval $[t_i, t_{i+1}]$ of the partition we define
     \begin{align*}
         M_{i} &\coloneqq \max_{[t_i, t_{i+1}]} \Omega \circ \gamma, & M_{i}' &\coloneqq \max_{[t_i, t_{i+1}]} \Omega' \circ \gamma, &  M_{i}'' &\coloneqq \max_{[t_i, t_{i+1}]} (\Omega\cdot\Omega') \circ \gamma. & 
     \end{align*}
     Notice that $M_i''$ is not necessarily equal to the product of $M_i$ with $M_i'$, but just less than or equal; however, we can bound it from below by the product of the minimum of $\Omega$ over $[t_i, t_{i+1}]$ with $M_i'$, which in turn is bounded from below by $(M_i - \varepsilon)M_i'$ by uniform continuity. Therefore
     \begin{equation}\label{eq:transitivity_product_estimate}
        M_iM_i' - \varepsilon M \leq (M_i - \varepsilon)M_i' \leq M_i'' \leq M_i M_i' \implies \abs{M_i'' - M_i'M_i} \leq \varepsilon M, 
     \end{equation}
     where $M$ is the maximum of all of the conformal factors along the curve. 

 We now consider any partition $\theta=\{s_j\}_{j=0}^r$ of the interval $[t_i, t_{i+1}]$. It is then immediate to see, as done in \Cref{prop:length_causal_curves}, that
    \[
        (M_i - \varepsilon) \sum_{j=0}^{r-1} \tau(\gamma(s_j), \gamma(s_{j+1})) \leq \sum_{j=0}^{r-1} \max_{s\in [s_j, s_{j+1}]} \Omega\left(\gamma(s)\right)\tau(\gamma(s_j), \gamma(s_{j+1})) \leq  (M_i + \varepsilon) \sum_{j=0}^{r-1} \tau(\gamma(s_j), \gamma(s_{j+1})),
    \]
    which means that
    \[
        (M_i - \varepsilon)V_\theta({\gamma}\vert_{[t_i, t_{i+1}]}) \leq V_{\Omega, \theta}({\gamma}\vert_{[t_i, t_{i+1}]}) \leq (M_i + \varepsilon)V_\theta({\gamma}\vert_{[t_i, t_{i+1}]}),
    \]
    so that taking the infimum over all partitions $\theta$ yields
    \begin{equation}\label{eq:estimating_L_omega}
       (M_i - \varepsilon) L^\tau({\gamma}\vert_{ [t_i, t_{i+1}]}) \leq L_\Omega^\tau  ({\gamma}\vert_{ [t_i, t_{i+1}]}) \leq (M_i + \varepsilon) L^\tau({\gamma}\vert_{ [t_i, t_{i+1}]}).
    \end{equation}

In a completely analogous way, we also get the inequalities 
\begin{equation}\label{eq:estimating_L_omega_omega'}
    (M_i' - \varepsilon) L_\Omega^\tau ({\gamma}\vert_{ [t_i, t_{i+1}]}) \leq L_{\Omega'}^{\tau_\Omega}  ({\gamma}\vert_{ [t_i, t_{i+1}]}) \leq (M_i' + \varepsilon) L_\Omega^\tau ({\gamma}\vert_{ [t_i, t_{i+1}]}),
\end{equation}
\begin{equation}\label{eq:estimating_L_omega_times_omega'}
    (M_i'' - \varepsilon) L^\tau({\gamma}\vert_{ [t_i, t_{i+1}]}) \leq L_{\Omega\cdot\Omega'}^\tau ({\gamma}\vert_{ [t_i, t_{i+1}]}) \leq (M_i'' + \varepsilon) L^{\tau}({\gamma}\vert_{ [t_i, t_{i+1}]}),
\end{equation}
where in \eqref{eq:estimating_L_omega_omega'} we have used that $L_\Omega^\tau =L^{\tau_{\Omega}}$ (by \Cref{lemma:strongly_causal_implies_intrinsic}). Equations \eqref{eq:estimating_L_omega}-\eqref{eq:estimating_L_omega_omega'} can be rewritten as 
     \[
        \begin{split}
            \frac{L_\Omega^\tau ({\gamma}\vert_{ [t_i, t_{i+1}]})}{M_{i} + \varepsilon} &\leq L^\tau({\gamma}\vert_{ [t_i, t_{i+1}]}) \leq \frac{L_\Omega^\tau ({\gamma}\vert_{ [t_i, t_{i+1}]})}{M_{i} - \varepsilon},\\
            \frac{L_{\Omega'}^{\tau_\Omega}({\gamma}\vert_{ [t_i, t_{i+1}]})}{M_{i}' + \varepsilon} &\leq L_\Omega^\tau ({\gamma}\vert_{ [t_i, t_{i+1}]}) \leq \frac{L_{\Omega'}^{\tau_\Omega}({\gamma}\vert_{ [t_i, t_{i+1}]})}{M_{i}' - \varepsilon},
        \end{split}
     \]
     which can then be used in \eqref{eq:estimating_L_omega_times_omega'} to give the estimate
     \[
        \frac{M_i'' - \varepsilon}{(M_i' + \varepsilon)(M_i + \varepsilon)} L_{\Omega'}^{\tau_\Omega}({\gamma}\vert_{ [t_i, t_{i+1}]})\leq L_{\Omega\cdot\Omega'}^\tau({\gamma}\vert_{ [t_i, t_{i+1}]}) \leq \frac{M_i'' + \varepsilon}{(M_i' - \varepsilon)(M_i -\varepsilon)} L_{\Omega'}^{\tau_\Omega}({\gamma}\vert_{ [t_i, t_{i+1}]}).
     \]
    Thanks to \eqref{eq:transitivity_product_estimate} we can replace the denominators by $M_i'' \pm O(\varepsilon)$, where \(O(\varepsilon)\) does not depend on \(i\). The fractions can then be simplified to
     \[
        (1 - O(\varepsilon)) L_{\Omega'}^{\tau_\Omega}({\gamma}\vert_{ [t_i, t_{i+1}]})\leq L_{\Omega\cdot\Omega'}^\tau ({\gamma}\vert_{ [t_i, t_{i+1}]}) \leq (1+ O(\varepsilon))L_{\Omega'}^{\tau_\Omega}({\gamma}\vert_{ [t_i, t_{i+1}]}).
     \]
     Summing over $i$ then gives
     \[
        (1 - O(\varepsilon)) L_{\Omega'}^{\tau_\Omega}({\gamma})\leq L_{\Omega\cdot\Omega'}^{\tau} ({\gamma}) \leq (1+ O(\varepsilon))L_{\Omega'}^{\tau_\Omega}({\gamma}),
     \]
     which implies, letting $\varepsilon \to 0^+$, that $L_{\Omega\cdot\Omega'}(\gamma) = L_{\Omega'}^{\tau_\Omega}(\gamma)$, thus concluding the proof. The special case $L^{\tau_\Omega}_{1/\Omega}=L^\tau$ then follows immediately.
\end{proof}

\subsection{Uniqueness of the conformal time separation function}\label{Uniqueness of the conformal time separation function}

Given an intrinsic, quasi-strongly causal Lorentzian pre-length space \(\Xll\), a natural question is whether two different conformal factors can induce the same time separation function. Under certain assumption of finiteness of the time separation function $\tau$, the following proposition addresses this issue.
\begin{proposition}
    Suppose that for a quasi-strongly causal, intrinsic Lorentzian pre-length space $\Xll$ and for two conformal factors \(\Omega, \Omega'\) we have that \(\tau_{\Omega} = \tau_{\Omega'}\). Then for every point \(p\) such that there exists a sequence \(p_n\) in \(I^+(p)\) with \(p_n \to p\) and \(\tau(p,p_n) < +\infty\), we have \(\Omega(p) = \Omega'(p)\). An analogous statement holds for $p_n\in I^-(p)$.
\end{proposition}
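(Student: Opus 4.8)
The plan is to deduce the statement almost immediately from \Cref{lemma:local_uniform_control_conformal_length}, which already identifies $\Omega(p)$ as the limit of the ratios $\tau_\Omega(p,p_n)/\tau(p,p_n)$ along sequences $p_n \to p$ lying in the causal future (or past) of $p$. The only preliminary work is to check that a sequence in $I^+(p)$ (resp.\ $I^-(p)$) satisfies the positivity-and-finiteness condition under which that lemma computes the limit, and this is guaranteed precisely by the chronological hypothesis together with the assumed finiteness of $\tau$.

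Concretely, I would first note that, since $p_n \in I^+(p)$, we have $p \ll p_n$, so axiom $(iii)$ of \Cref{definition:LpLS} gives $\tau(p,p_n) > 0$ for every $n$; combined with the hypothesis $\tau(p,p_n) < +\infty$ this yields $\tau(p,p_n) \in (0,+\infty)$ for all $n$. Moreover $I^+(p) \subseteq J^+(p)$, so $\{p_n\}_n$ is a sequence in $J^+(p)$ converging to $p$, which is exactly the setting of \Cref{lemma:local_uniform_control_conformal_length}.

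Second, I would apply that lemma twice, once with the conformal factor $\Omega$ and once with $\Omega'$, obtaining
\[
    \lim_{n \to \infty} \frac{\tau_\Omega(p,p_n)}{\tau(p,p_n)} = \Omega(p), \qquad \lim_{n \to \infty} \frac{\tau_{\Omega'}(p,p_n)}{\tau(p,p_n)} = \Omega'(p).
\]
Since by hypothesis $\tau_\Omega = \tau_{\Omega'}$ as maps on $X \times X$, the two quotients agree term by term, hence so do their limits, and therefore $\Omega(p) = \Omega'(p)$. For the past version the argument is identical: if $p_n \in I^-(p)$ then $p_n \ll p$, so $\tau(p_n,p) \in (0,+\infty)$ and $\{p_n\}_n \subseteq J^-(p)$, and the $J^-(p)$ conclusion of \Cref{lemma:local_uniform_control_conformal_length} applied to both $\Omega$ and $\Omega'$ together with $\tau_\Omega = \tau_{\Omega'}$ again forces $\Omega(p) = \Omega'(p)$.

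There is essentially no genuine obstacle, since the analytic content has already been absorbed into \Cref{lemma:local_uniform_control_conformal_length}; the only point requiring a moment's care is verifying that $\tau$ takes values in $(0,+\infty)$ along the chosen sequence, which is exactly where the chronological (rather than merely causal) assumption on the $p_n$ and the finiteness hypothesis on $\tau$ are both used, and which also explains why the conclusion cannot hold at arbitrary $p$ without some such condition.
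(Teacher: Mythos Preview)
Your proof is correct and follows exactly the same approach as the paper: both apply \Cref{lemma:local_uniform_control_conformal_length} to each conformal factor and equate the limits using $\tau_\Omega=\tau_{\Omega'}$. Your version is simply more explicit in verifying the hypotheses $\tau(p,p_n)\in(0,+\infty)$ and $\{p_n\}\subseteq J^+(p)$, which the paper leaves implicit.
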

\begin{proof}
    From \Cref{lemma:local_uniform_control_conformal_length} it follows
    \[
        \Omega(p) = \lim_{n \to +\infty} \frac{\tau_{\Omega}(p,p_n)}{\tau(p,p_n)} = \lim_{n \to +\infty} \frac{\tau_{\Omega'}(p,p_n)}{\tau(p,p_n)} = \Omega'(p).
    \] 
\end{proof}
\begin{remark} \label{remark:infinity_line_example}
    The previous result is false without the assumption on the finiteness of \(\tau\) along a sequence converging to the point under consideration: if we take \(X = \R\) and define the function
    \[
        \tau(x,y) \coloneqq
        \begin{dcases}
            + \infty \ \text{if} \ x < y\\
            0 \qquad \!\!\! \text{else}
        \end{dcases}
    \]
    and set the relations \(x \ll y \iff x<_{\R}y\) and \(x \leq y \iff x \leq_{\R}y\), it is easy to see that the space \((X, \ll, \leq, d_\R, \tau)\) is an intrinsic, globally hyperbolic Lorentzian pre-length space but for any choice of conformal factor \(\Omega\) we see that \(\tau = \tau_\Omega = +\infty\).
\end{remark}

\subsection{Conformal transformation as equivalence relation}\label{Conformal transformations as an equivalence relation}

The purpose of this section is twofold. We first introduce the notion of conformally-related Lorentzian pre-length spaces, and then prove that conformal transformations within the class of intrinsic, quasi-strongly causal Lorentzian pre-length spaces define an equivalence relation. In particular, this ensures that such class of spaces is closed under conformal changes.

\begin{definition}\label{def:conf_rel_LLS}
    (Synthetic conformal relation). Two intrinsic, quasi-strongly causal Lorentzian pre-length spaces $\Xll$, $\Xtildell$ are said to be conformally related if there exists a homeomorphism $\iota:X\longrightarrow\widetilde{X}$ and a conformal factor $\Omega:X\longrightarrow(0,+\infty)$, such that \(\iota^*(\widetilde{\ll}) =  \ll\), \(\iota^*(\widetilde{\leq}) = \leq\) and $\iota^*\widetilde{\tau}=\tau_{\Omega}$. In that case, we write $X \sim_{\iota,\Omega} \widetilde{X}$.
\end{definition}

\begin{remark}\label{remark:notation_iota_star}
    For any two points $p,q\in X$, the notation \(\iota^*(\widetilde{\ll}) = \ll\), \(\iota^*(\widetilde{\leq}) = \leq\) and 
    \(\iota^*\widetilde{\tau}=\tau_{\Omega}\) of \Cref{def:conf_rel_LLS} mean 
    \[
    p\ll q \ \Longleftrightarrow \ \iota(p) \  \widetilde{\ll} \  \iota(q),\quad 
    p\leq q \  \Longleftrightarrow  \ \iota(p) \ \widetilde{\leq}  \  \iota(q),\ \text{ and }\ 
    \widetilde{\tau}\big(\iota(p),\iota(q)\big) = \tau_{\Omega}\big(p,q\big), \text { respectively.}
    \]
\end{remark}

\begin{remark}\label{remark:commute_pullback_and_conformal_change}
    If \(\iota\) is an homemorphism preserving the causal relation and \(\omega\) a conformal factor on \(X\), it is straightforward to see that \((\iota^*\widetilde{\tau})_\omega = \iota^*(\widetilde{\tau}_{\omega \circ \iota^{-1}})\). Indeed, for \(p,q \in X\) we have that
    \[
        (\iota^*\widetilde{\tau})_\omega(p,q) = \sup \left\{ L_\omega^{i^*\widetilde{\tau}} (\gamma) \ \colon \ \gamma \ \text{causal from} \ p \ \text{to} \ q\right\},
    \]
    where
    \[
        \begin{split}
            L_\omega ^{i^*\widetilde{\tau}} (\gamma) &= \inf_{\sigma \ \text{partition}} \left\{ \sum_{i=0}^{m-1} \max_{t \in [t_i, t_{i+1}]} \omega(\gamma(t)) \, (i^*\widetilde{\tau})(\gamma(t_i), \gamma(t_{i+1}))\right\}\\
            &= \inf_{\sigma \ \text{partition}} \left\{ \sum_{i=0}^{m-1} \max_{t \in [t_i, t_{i+1}]} (\omega \circ \iota^{-1}) (\iota\circ\gamma)(t) \, \widetilde{\tau}(\iota \circ \gamma(t_i), \iota\circ\gamma(t_{i+1}))\right\}\\
            &= \widetilde{L}_{\omega \circ \iota^{-1}}(\iota \circ \gamma).
        \end{split}
    \]
    Any causal curve \(\gamma\) from \(p\) to \(q\) induces a causal curve \(\iota\circ \gamma\) from \(\iota(p)\) to \(\iota(q)\) and viceversa as \(\iota\) has a continuous inverse. This immediately implies that taking the supremum over all \(\gamma\) from \(p\) to \(q\) gives
    \[
        (\iota^*\widetilde{\tau})_\omega(p,q) = \widetilde{\tau}_{\omega \circ \iota^{-1}}(\iota(p), \iota(q)) = \iota^*(\widetilde{\tau}_{\omega \circ \iota^{-1}})(p,q).
    \]
\end{remark}
\begin{theorem}
    $\sim_{\iota,\Omega}$ is an equivalence relation.
\end{theorem}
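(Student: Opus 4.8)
The plan is to verify the three defining properties of an equivalence relation for $\sim_{\iota,\Omega}$ on the class of intrinsic, quasi-strongly causal Lorentzian pre-length spaces: reflexivity, symmetry and transitivity. Throughout I will use the three results already established: \Cref{prop:transitivity_conformal_length} (consecutive conformal changes compose, and in particular $(\tau_\Omega)_{1/\Omega}=\tau$), \Cref{remark:commute_pullback_and_conformal_change} (pullback by a causality-preserving homeomorphism commutes with conformal change, $(\iota^*\widetilde\tau)_\omega = \iota^*(\widetilde\tau_{\omega\circ\iota^{-1}})$), and \Cref{prop:character_causal_curves} (the causal relations $\leq_\Omega,\ll_\Omega$ induced by $\tau_\Omega$ coincide with $\leq,\ll$, so $\tau_\Omega$ genuinely yields a Lorentzian pre-length space with the same causality).

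For \textbf{reflexivity}, take $\iota=\mathrm{id}_X$ and $\Omega\equiv 1$. Then $\iota^*(\ll)=\ll$, $\iota^*(\leq)=\leq$ trivially, and $\tau_{\Omega}=\tau_1=\tau$ by the remark following \Cref{prop:basic_properties_conformal_length} (the choice $\Omega\equiv 1$ gives $L_\Omega^\tau=L^\tau$, hence $\tau_1$ is the supremum of $L^\tau$-lengths, which equals $\tau$ by intrinsicness). So $X\sim_{\mathrm{id},1}X$.

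For \textbf{symmetry}, suppose $X\sim_{\iota,\Omega}\widetilde X$, i.e.\ $\iota\colon X\to\widetilde X$ is a causality-preserving homeomorphism with $\iota^*\widetilde\tau=\tau_\Omega$. The claim is $\widetilde X\sim_{\iota^{-1},\,1/(\Omega\circ\iota^{-1})}X$. Indeed $\iota^{-1}$ is again a causality-preserving homeomorphism, and $1/(\Omega\circ\iota^{-1})$ is a conformal factor on $\widetilde X$; I must show $(\iota^{-1})^*\tau = \widetilde\tau_{1/(\Omega\circ\iota^{-1})}$. Apply the conformal factor $\omega:=1/\Omega$ on $X$ to both sides of $\iota^*\widetilde\tau=\tau_\Omega$: by \Cref{remark:commute_pullback_and_conformal_change}, $(\iota^*\widetilde\tau)_{1/\Omega} = \iota^*(\widetilde\tau_{(1/\Omega)\circ\iota^{-1}}) = \iota^*(\widetilde\tau_{1/(\Omega\circ\iota^{-1})})$, while $(\tau_\Omega)_{1/\Omega}=\tau$ by \Cref{prop:transitivity_conformal_length}. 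Hence $\iota^*(\widetilde\tau_{1/(\Omega\circ\iota^{-1})})=\tau$, and pulling back by $\iota^{-1}$ (equivalently, reading the identity of functions through the bijection $\iota$) gives $(\iota^{-1})^*\tau=\widetilde\tau_{1/(\Omega\circ\iota^{-1})}$, as desired.

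For \textbf{transitivity}, suppose $X\sim_{\iota,\Omega}\widetilde X$ and $\widetilde X\sim_{\kappa,\Theta}\widehat X$, so $\iota^*\widetilde\tau=\tau_\Omega$ and $\kappa^*\widehat\tau=\widetilde\tau_\Theta$. Set $\varphi:=\kappa\circ\iota\colon X\to\widehat X$ (a causality-preserving homeomorphism) and the candidate conformal factor $\Omega':=\Omega\cdot(\Theta\circ\iota)$ on $X$; I must show $\varphi^*\widehat\tau=\tau_{\Omega'}$. Compute $\varphi^*\widehat\tau = \iota^*(\kappa^*\widehat\tau) = \iota^*(\widetilde\tau_\Theta)$. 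Now apply \Cref{remark:commute_pullback_and_conformal_change} with the conformal factor $\omega:=\Theta\circ\iota$ on $X$: it gives $(\iota^*\widetilde\tau)_{\Theta\circ\iota} = \iota^*(\widetilde\tau_{(\Theta\circ\iota)\circ\iota^{-1}}) = \iota^*(\widetilde\tau_\Theta)$, so $\iota^*(\widetilde\tau_\Theta) = (\iota^*\widetilde\tau)_{\Theta\circ\iota} = (\tau_\Omega)_{\Theta\circ\iota}$. Finally \Cref{prop:transitivity_conformal_length} (applied in the space $X$, with conformal factors $\Omega$ and $\Omega'':=\Theta\circ\iota$) yields $L_{\Theta\circ\iota}^{\tau_\Omega}=L_{\Omega\cdot(\Theta\circ\iota)}^\tau$, hence $(\tau_\Omega)_{\Theta\circ\iota}=\tau_{\Omega\cdot(\Theta\circ\iota)}=\tau_{\Omega'}$. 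Chaining these equalities gives $\varphi^*\widehat\tau=\tau_{\Omega'}$, so $X\sim_{\varphi,\Omega'}\widehat X$.

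The argument is essentially bookkeeping once the two functorial lemmas are in hand; the one point deserving care is that \Cref{prop:transitivity_conformal_length} is stated as an identity of conformal \emph{length functionals} $L_{\Omega'}^{\tau_\Omega}=L_{\Omega\Omega'}^\tau$, and I should spell out that taking suprema over all causal curves between two fixed points (which are the same curves in $X$ and in $\confXll$, the causal relations being identical by \Cref{prop:character_causal_curves}) upgrades this to the identity $(\tau_\Omega)_{\Omega'}=\tau_{\Omega\Omega'}$ of the associated time separation functions used above. A second minor subtlety is checking that $\Omega'=\Omega\cdot(\Theta\circ\iota)$ and $1/(\Omega\circ\iota^{-1})$ are indeed continuous and strictly positive, which is immediate since $\iota,\iota^{-1}$ are continuous and $\Omega,\Theta$ are conformal factors. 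I expect no genuine obstacle beyond keeping the compositions with $\iota$ and $\iota^{-1}$ straight.
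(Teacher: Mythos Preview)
Your proof is correct and follows essentially the same approach as the paper: reflexivity via $\iota=\mathrm{id}$, $\Omega\equiv 1$; symmetry and transitivity via the two functorial lemmas (\Cref{prop:transitivity_conformal_length} and \Cref{remark:commute_pullback_and_conformal_change}). Your transitivity argument is in fact slightly more direct than the paper's, which first inverts one of the relations before substituting, whereas you compute $\varphi^*\widehat\tau=\iota^*(\widetilde\tau_\Theta)=(\iota^*\widetilde\tau)_{\Theta\circ\iota}=(\tau_\Omega)_{\Theta\circ\iota}=\tau_{\Omega\cdot(\Theta\circ\iota)}$ in one chain.
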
    
\begin{proof}
Reflexivity immediately follows by taking $\iota= \operatorname{id}_X$ and $\Omega\equiv 1$. For symmetry, if \(X \sim_{\iota, \Omega} \widetilde{X}\)  we can set \(j = \iota^{-1}\) and \(\Omega' \coloneqq \frac{1}{\Omega \circ j}\). It is then clear that \(j\) is an homeomorphism which preserves the causal and timelike relations. Moreover, using $\iota^*\widetilde{\tau}=\tau_{\Omega}$ together with \Cref{prop:transitivity_conformal_length}, we get \(\tau = (\iota^*\widetilde{\tau})_{1/\Omega}\) and from the previous remark we then get \(\tau = \iota^*(\widetilde{\tau}_{1/\Omega \circ j}) = \iota^*\widetilde{\tau}_{\Omega'}\). Then we see easily that
\[
    j^*\tau = j^*\big(\iota^*\widetilde{\tau}_{\Omega'}\big) = \widetilde{\tau}_{\Omega'},
\]
which proves symmetry.
Regarding transitivity, if \(X \sim_{\iota, \Omega}X'\) and \(X' \sim_{\iota', \Omega'}X''\) we define the map \(\iota'' := \iota' \circ \iota\) and the conformal factor \(\Omega'' := \Omega \cdot (\Omega' \circ \iota)\). Again, it is immediate to see that \(\iota''\) is an homeomorphism that preserves the causal relations. Using \Cref{remark:commute_pullback_and_conformal_change}, we rewrite the identity \(\iota'^*\tau'' = \tau'_{\Omega'}\) as \(\tau' = \iota'^*(\tau''_{1/\Omega' \circ {\iota'^{-1}}})\). This can be plugged in the identity \(\iota^*\tau' = \tau_\Omega\) to obtain \(\iota^*\iota'^* \big(\tau''_{1/\Omega' \circ \iota'^{-1}}\big) = \tau_\Omega\). On noticing that \(i^* \circ i'^* = i''^*\) we get
\[
    \begin{split}
        \tau_\Omega = i''^{*}(\tau''_{1/\Omega' \circ \iota'^{-1}}) = (\iota''^* \tau'')_{1/\Omega' \circ \iota'^{-1} \circ \iota''} = (\iota''^*\tau'')_{1/\Omega' \circ \iota}.
    \end{split}
\]
Applying \Cref{prop:transitivity_conformal_length} we see that
\(
    \iota''^*\tau'' = (\tau_\Omega)_{\Omega' \circ \iota} = \tau_{\Omega \cdot \Omega' \circ \iota} = \tau_{\Omega''},
\)
which concludes the proof.
\end{proof}

We conclude this section by showing that conformal transformations preserve angles between timelike curves of the same time orientation. We recall the definition of angle in the proof of \Cref{thm-con-ang-pre} below. For more details see \cite{BS:23}. Moreover, we need the notion of \emph{local causal closedness}, see \cite[Def.\ 3.4]{Kunzinger_2018}: a Lorentzian pre-length space $\Xll$ is locally causally closed if for any \(x \in X\) there exists a neighbourhood \(U\) such that, for any pair of sequences \(\{p_n\}_n\) and \(\{q_n\}_n\) in \(U\) converging to points \(p,q \in \overline{U}\) with \(p_n \leq q_n\), we have \(p \leq q\).

\begin{lemma}[Constructing null related sequences along two timelike curves]\label{lem-nul-seq}
Let $\Xll$ be a chronological, locally causally closed Lorentzian pre-length space and $\alpha,\beta\colon[0,b]\rightarrow X$ be two future directed timelike curves with $p:=\alpha(0)=\beta(0)$. Then there exist sequences $\{t_k\}_k\searrow 0$ and $\{s_k\}_k \searrow 0$ in $[0,b]$ such that $\alpha(s_k)\leq \beta(t_k)$ with $\tau(\alpha(s_k),\beta(t_k))=0$, that is, \(\alpha(s_k)\) and \(\beta(t_k)\) are null related.
\end{lemma}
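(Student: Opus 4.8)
The plan is a ``last exit time'' argument. Fix any sequence $t_k\searrow 0$ in $(0,b]$ and let $\alpha(s_k)$ be the last point of $\alpha$ lying in the causal past of $\beta(t_k)$; lower semicontinuity of $\tau$ will then force $\alpha(s_k)$ to be null related to $\beta(t_k)$, while chronology will force $s_k\to 0$. First I would localise: since $\alpha,\beta$ are continuous with $\alpha(0)=\beta(0)=p$, after shrinking $b$ we may assume $\alpha([0,b])$ and $\beta([0,b])$ both lie in a neighbourhood $U$ of $p$ witnessing local causal closedness (sequences found on this shorter interval still lie in the original $[0,b]$, so this is harmless). Now put $A_k:=\{s\in[0,b]:\alpha(s)\le\beta(t_k)\}$ and $s_k:=\sup A_k$. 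Because $\beta$ is future-directed timelike, $p=\alpha(0)\ll\beta(t_k)$, so $0\in A_k$; because $\alpha$ is future-directed timelike, push-up applied to $\alpha(s)\ll\alpha(s')\le\beta(t_k)$ (for $s<s'$) shows $A_k$ is downward closed, hence an interval with left endpoint $0$. Approximating $s_k$ from below by points of $A_k$ and invoking local causal closedness inside $U$ yields $\alpha(s_k)\le\beta(t_k)$, i.e.\ $s_k\in A_k$.

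The two substantive claims are that $s_k\to 0$ and that $\tau(\alpha(s_k),\beta(t_k))=0$ (with $s_k>0$) for large $k$. For the first, if $s_k\not\to 0$ then along a subsequence $s_k\to s_\infty>0$; applying local causal closedness to $\alpha(s_k)\le\beta(t_k)$, with $\alpha(s_k)\to\alpha(s_\infty)$ and $\beta(t_k)\to\beta(0)=p$ (all in $U$), gives $\alpha(s_\infty)\le p$, while $p=\alpha(0)\ll\alpha(s_\infty)$ since $s_\infty>0$; push-up then forces $\alpha(s_\infty)\ll\alpha(s_\infty)$, contradicting chronology. In particular $s_k<b$ eventually.

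For the null relation, fix such a $k$ with $s_k<b$ and suppose $\tau(\alpha(s_k),\beta(t_k))>0$. Since $\tau$ is lower semicontinuous (axiom $(i)$ of \Cref{definition:LpLS}) and $\alpha$ is continuous, the map $s\mapsto\tau(\alpha(s),\beta(t_k))$ is lower semicontinuous, hence positive on a neighbourhood of $s_k$ in $[0,b]$; as $s_k<b$ this neighbourhood contains points $s>s_k$ with $\alpha(s)\ll\beta(t_k)$, so $\alpha(s)\le\beta(t_k)$, contradicting $s_k=\sup A_k$. (If $s_k=0$ the very same argument contradicts $A_k=\{0\}$, so in fact $s_k>0$.) Therefore $\tau(\alpha(s_k),\beta(t_k))=0$ with $\alpha(s_k)\le\beta(t_k)$ and $0<s_k<b$. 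Finally, passing to a subsequence along which $\{s_k\}$ is strictly decreasing (it still converges to $0$, and $\{t_k\}$ stays strictly decreasing to $0$) produces the required sequences.

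The crux — and the only step where lower semicontinuity of $\tau$ is used rather than local causal closedness — is pinning down the supremum point $\alpha(s_k)$ as one that is merely causally, not chronologically, related to $\beta(t_k)$; everything else is a careful but routine interplay of push-up, local causal closedness and chronology. The degenerate case in which the curves cross ($\alpha(s_k)=\beta(t_k)$) is harmless, since then $\tau(\alpha(s_k),\beta(t_k))=\tau(\alpha(s_k),\alpha(s_k))=0$ by chronology, consistent with the statement's convention of calling $x\le y$ with $\tau(x,y)=0$ ``null related''.
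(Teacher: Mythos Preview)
Your proof is correct and follows the same ``boundary point'' strategy as the paper, but with the roles of the two curves interchanged: the paper fixes a point $a_k=\alpha(s_k)$ first (chosen arbitrarily in $I^-(\beta(b/k))$) and then takes $t_k$ to be the supremum of times where $\beta$ has not yet entered $I^+(a_k)$, whereas you fix $t_k$ first and take $s_k$ to be the supremum of times where $\alpha$ is still in $J^-(\beta(t_k))$. The tools are the same up to duality --- where the paper invokes openness of $I^+$ (hence closedness of its complement) to see that the supremum point is not chronologically related, you invoke lower semicontinuity of $\tau$, which is equivalent. The one substantive difference is that the paper gets $s_k,t_k\in(0,b/k)$ and hence convergence to $0$ for free by construction, while you need the separate compactness-plus-chronology argument to force $s_k\to 0$; conversely, your argument avoids the paper's two-step setup of first picking $s_k$ and then finding $t_k$. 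Both executions are clean.
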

\begin{proof}
    Consider the neighbourhood \(U\) of \(p\) given by the locally-causally-closed condition. Without loss of generality, since both \(\alpha\) and \(\beta\) are continuous, we can assume that the image of both of the curves is contained in \(U\), otherwise we just work with the restriction of the curves to an interval where both of them are in \(U\).
    
    Fix any \(k \in \N\) and consider the point \(q_k=\beta(b/k)\). Since the curve \(\beta\) is timelike, we have that \(p \in I^-(q_k)\), which is an open set. By continuity of \(\alpha\) then there exists some time \(s_k \in (0,b/k)\) such that \(a_k \coloneqq \alpha(s_k) \in I^-(q_k)\). Equivalently, we have that \(q_k \in I^+(a_k)\). Since \(p \ll a_k\), we have that \(p \not \in I^+(a_k)\) as the space is chronological. We can then define
    \[
        t_k \coloneqq \sup \left\{ t \in [0,b/k] \ \colon \ \beta(t) \not \in I^+(a_k) \right\}\,.
    \]
    Notice that \(t_k \in (0,b/k)\): firstly, since \(p \neq a_k\), we can take a small neighbourhood of \(p\) which is disjoint from \(a_k\) and contained in \(I^-(a_k)\). Using continuity of \(\beta\) at 0, the curve stays in the neighbourhood for a positive time and thus as the space is chronological none of these points belong to \(I^+(a_k)\), hence \(t_k >0\). Analogously, since \(\beta(b/k) = q_k \in I^+(a_k)\), which is an open set, by continuity the curve \(\beta\) must stay in such neighbourhood for all times sufficiently close to \(b/k\) and thus \(t_k < b/k\). By the same continuity argument, along with the fact that \(t_k\) is defined as a supremum, shows that \(b_k \coloneqq \beta(t_k) \not \in I^+(a_k) \). By local causal closedness, since for every \(t \in (t_k, b/k)\) we have \(a_k \ll \beta(t)\), we get that \(a_k \leq b_k\). Since \(a_k \not \ll b_k\), we have that \(\tau(a(s_k), b(t_k)) = 0\).

    To conclude, notice that both \(t_k\) and \(s_k\) are contained in \((0,b/k)\), hence the sequences \(\{t_k\}_k\) and \(\{s_k\}_k\) converge to 0 as \(k \to +\infty\); up to extracting subsequences, we may assume that both sequences are decreasing.
\end{proof}

\begin{theorem}[Conformal maps are angle preserving]\label{thm-con-ang-pre}
  Let $\Xll$ be an intrinsic, quasi-strongly causal, locally causally closed Lorentzian pre-length space, and let \(\Omega\) be a conformal factor. Let $\alpha,\beta\colon[0,b)\rightarrow X$ be two timelike curves of the same time orientation with $\alpha(0)=\beta(0)=:p$ and assume that \(\tau\) is finite between \(p\) and the endpoints of the curves. Then, if the angle between $\alpha$ and $\beta$ exists in $[0,\infty]$  with respect to both $\tau$ and $\tau_\Omega$, respectively, they agree, i.e.\ $\ma_p^\Omega(\alpha,\beta) = \ma_p(\alpha,\beta)$.
\end{theorem}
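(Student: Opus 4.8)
\emph{Recalling the definition.} The plan rests on the comparison description of angles (see \cite{BS:23}): for a future-directed timelike pair $\alpha,\beta$ from $p$ and small $s,t>0$ with $\alpha(s)\leq\beta(t)$, the comparison angle at $p$ is the hyperbolic angle at the vertex corresponding to $p$ of the Lorentzian comparison triangle in $\mi$ with side lengths $\tau(p,\alpha(s))$, $\tau(p,\beta(t))$, $\tau(\alpha(s),\beta(t))$, i.e.
\[
  \widetilde{\ma}_p\big(\alpha(s),\beta(t)\big)=\arcosh\!\left(\frac{\tau(p,\alpha(s))^2+\tau(p,\beta(t))^2-\tau(\alpha(s),\beta(t))^2}{2\,\tau(p,\alpha(s))\,\tau(p,\beta(t))}\right),
\]
and $\ma_p(\alpha,\beta)$ is its limit as $s,t\to0^+$, when this exists in $[0,+\infty]$; $\ma_p^\Omega$ is defined identically from $\tau_\Omega$ and $\leq_\Omega$, which by \Cref{prop:character_causal_curves} equals $\leq$. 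We may assume $\alpha,\beta$ future directed (the past-directed case follows by reversing the time orientation).

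\emph{Strategy.} The idea is to compute both angles along one and the same sequence of configurations on which the \say{spacelike} side of the comparison triangle degenerates to a null segment, so that only the two timelike legs from $p$ survive; these are then tied to $\Omega(p)$ by \Cref{lemma:local_uniform_control_conformal_length}. First I would invoke \Cref{lem-nul-seq} (using that the space is chronological) to produce $s_k\searrow0$ and $t_k\searrow0$ with $\alpha(s_k)\leq\beta(t_k)$ and $\tau(\alpha(s_k),\beta(t_k))=0$; since $\tau_\Omega$ and $\tau$ carry the same causal relations, the points $\alpha(s_k),\beta(t_k)$ are null related for $\tau_\Omega$ too, hence $\tau_\Omega(\alpha(s_k),\beta(t_k))=0$. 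Setting $\lambda_k\coloneqq\tau(p,\alpha(s_k))$, $\mu_k\coloneqq\tau(p,\beta(t_k))$ --- both in $(0,+\infty)$ by timelikeness of the curves, the finiteness hypothesis and the reverse triangle inequality --- and $\lambda_k^\Omega,\mu_k^\Omega$ for the corresponding $\tau_\Omega$-values, \Cref{lemma:local_uniform_control_conformal_length} applied to $\alpha(s_k)\to p$ and $\beta(t_k)\to p$ in $J^+(p)$ yields $\lambda_k^\Omega/\lambda_k\to\Omega(p)$, $\mu_k^\Omega/\mu_k\to\Omega(p)$, and therefore $\rho_k\coloneqq(\lambda_k^\Omega/\lambda_k)(\mu_k/\mu_k^\Omega)\to1$.

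\emph{Reducing the comparison angle.} Because the third side vanishes, the comparison angle collapses (via $\cosh(\ln r)=\tfrac12(r+r^{-1})$) to
\[
  \widetilde{\ma}_p\big(\alpha(s_k),\beta(t_k)\big)=\arcosh\!\Big(\tfrac12\big(r_k+r_k^{-1}\big)\Big)=\abs{\ln r_k},\qquad r_k\coloneqq\lambda_k/\mu_k,
\]
and analogously $\widetilde{\ma}_p^\Omega\big(\alpha(s_k),\beta(t_k)\big)=\abs{\ln(\rho_k r_k)}=\abs{\ln r_k+\ln\rho_k}$. Hence
\[
  \Big|\,\widetilde{\ma}_p^\Omega\big(\alpha(s_k),\beta(t_k)\big)-\widetilde{\ma}_p\big(\alpha(s_k),\beta(t_k)\big)\,\Big|=\Big|\,\abs{\ln r_k+\ln\rho_k}-\abs{\ln r_k}\,\Big|\leq\abs{\ln\rho_k}\longrightarrow0.
\]
Now, since $\ma_p(\alpha,\beta)$ and $\ma_p^\Omega(\alpha,\beta)$ are assumed to exist (as limits over all admissible $s,t\to0^+$), the particular null-related pairs $(s_k,t_k)\to(0,0)$ give $\widetilde{\ma}_p(\alpha(s_k),\beta(t_k))\to\ma_p(\alpha,\beta)$ and $\widetilde{\ma}_p^\Omega(\alpha(s_k),\beta(t_k))\to\ma_p^\Omega(\alpha,\beta)$; together with the previous display this forces $\ma_p^\Omega(\alpha,\beta)=\ma_p(\alpha,\beta)$ (with the convention that both may be $+\infty$), which is the claim.

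\emph{Main obstacle.} The hard part is precisely this reduction. Since $\tau_\Omega$ agrees with $\tau$ only up to a multiplicative factor converging to $\Omega(p)$, and $\arcosh$ of the comparison ratio is extremely sensitive when $r_k\to0$ or $\infty$ (equivalently, when the angle is infinite) --- there the numerator $\lambda_k^2+\mu_k^2-\tau(\alpha(s_k),\beta(t_k))^2$ may be of strictly smaller order than $\lambda_k^2+\mu_k^2$ --- a crude term-by-term estimate of the three multiplicative errors in the numerator does not close. Passing to null-related sequences via \Cref{lem-nul-seq} eliminates the offending third side (it is exactly $0$ for both $\tau$ and $\tau_\Omega$) and turns the comparison angle into $\abs{\ln r}$, so that the perturbation $\rho_k\to1$ enters only additively through $\ln\rho_k$ and is harmless. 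Intrinsicness and quasi-strong causality enter only via \Cref{lemma:local_uniform_control_conformal_length}; local causal closedness (and chronology) are used only to invoke \Cref{lem-nul-seq}.
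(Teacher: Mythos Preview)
Your proof is correct and follows essentially the same approach as the paper: both arguments invoke \Cref{lem-nul-seq} to produce null-related pairs $(\alpha(s_k),\beta(t_k))$, use that the causal relations (hence null-relatedness) are conformally invariant, and then apply \Cref{lemma:local_uniform_control_conformal_length} to control the two surviving timelike legs. Your final computation via $\widetilde{\ma}=|\ln r_k|$ and the triangle inequality $\big|\,|\ln r_k+\ln\rho_k|-|\ln r_k|\,\big|\leq|\ln\rho_k|\to0$ is equivalent to the paper's treatment in terms of $x_k=a_k/b_k$ and $y_k=1/x_k$ (indeed $\cosh(\ln r)=\tfrac12(r+r^{-1})$), but it is slightly cleaner in that it avoids passing to a subsequence and handles the $+\infty$ case uniformly.
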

\begin{proof}
First, note that the timelike and causal relations and the time orientation of the curves are unchanged. Without loss of generality we assume $\alpha, \beta$ to be future directed (the past directed case works completely analogous), hence the sign $\sigma$ of the angle is $-1$.

By assumption we have
 \begin{align}
  \ma_p(\alpha,\beta) = \lim_{(s,t)\in A_0;\,s,t\searrow 0} \tilde\ma_p(\alpha(s),\beta(t))\,,\\
  \ma_p^\Omega(\alpha,\beta) = \lim_{(s,t)\in A_0;\,s,t\searrow 0} \tilde\ma_p^\Omega(\alpha(s),\beta(t))\,,
 \end{align}
where $A_0:=\{(s,t)\in(0,b)^2\colon \alpha(s)\leq\beta(t)$  or $\beta(t)\leq\alpha(s)\}$ and $\tilde\ma_p^\Omega(\alpha(s),\beta(t))$ is the comparison angle of the triangle with side lengths $\tau_\Omega(p,\alpha(s))$, $\tau_\Omega(p,\beta(t))$ and  $\max(\tau_\Omega(\alpha(s),\beta(t)),\tau_\Omega(\beta(t),\alpha(s)))$.
By finiteness of \(\tau\) and the hyperbolic law of cosines (see e.g.\ \cite[Lem.\ 2.4]{BS:23}) we have for $(s,t)\in A_0$ that
\begin{align*}
 \sigma\cosh(\tilde\ma_p(\alpha(s),\beta(t)))= \frac{a^2 + b^2 - c^2}{2 a b} &= \frac{1}{2} \frac{a}{b} + \frac{1}{2} \frac{b-c}{a} \left(1+\frac{c}{b}\right) \,,
\end{align*}
where $a,b,c$ are the side lengths with respect to $\tau$ of the timelike triangle $(p,\alpha(s),\beta(t))$.

Let $(s_k, t_k)_k$ be a sequence given by \Cref{lem-nul-seq}, hence 
\begin{align*}
 \ma_p(\alpha,\beta)  = \lim_{k\to\infty} \tilde\ma_p(\alpha(s_k),\beta(t_k))\,.
\end{align*}
We write $a_k:=\tau(p,\alpha(s_k))$, $b_k:=\tau(p,\beta(t_k))$, $x_k:= \frac{a_k}{b_k}$, $y_k:= \frac{b_k}{a_k}=\frac{1}{x_k}$ and notice that \(c_k \coloneqq \tau(a_k,b_k)\), which vanishes by \Cref{lem-nul-seq}. Hence $-\cosh(\ma_p(\alpha,\beta))=\lim_{k\to\infty}\Bigl(\frac{1}{2}x_k + \frac{1}{2} y_k\Bigr)$ with $x_k,y_k\geq 0$. We also denote by $\tilde a_k, \tilde b_k, \tilde c_k,  \tilde x_k, \tilde y_k$ the corresponding side-lengths and their fractions with respect to $\tau_\Omega$; notice that \(\tilde{c}_k = \tau_\Omega(\tilde a_k, \tilde b_k) = 0\) by \Cref{lemma:local_uniform_control_conformal_length}.

By construction we have for all $k\in\N$ that $a_k\leq b_k$, hence $x_k = \frac{a_k}{b_k} \leq 1$. By going over to further subsequences, this allows us to assume without loss of generality that $x_k\to x\in[0,1]$ and hence $y_k = \frac{1}{x_k} \to \frac{1}{x}=:y$, with the understanding that $\frac{1}{0}=+\infty$. At this point we observe that since everything is finite and positive we can multiply and divide to get
\begin{equation}
 \tilde x_k = \frac{\tilde a_k}{\tilde b_k}= \frac{a_k \frac{\tilde a_k}{a_k}}{b_k\frac{\tilde b_k}{b_k}} = x_k \frac{\tilde a_k}{a_k}\frac{b_k}{\tilde b_k}\,,
\end{equation}
and use that $\frac{\tilde a_k}{a_k}\to \Omega(p)$ and $\frac{b_k}{\tilde b_k}\to \frac{1}{\Omega(p)}$ by  \Cref{lemma:local_uniform_control_conformal_length}. Thus, we have $\tilde x_k\to x$ and analogously that $\tilde y_k \to y$.

Finally, this allows us to conclude that
\begin{align*}
\ma_p^\Omega(\alpha,\beta) = \lim_{k\to\infty}  \Bigl(\frac{1}{2} \tilde x_k  + \frac{1}{2} \tilde y_k \Bigr) = \ma_p(\alpha,\beta)\,.
\end{align*}
\end{proof}

If we have two conformally related spaces $\Xll$ and $\Xtildell$, angles in $X$ are the same as in $\widetilde{X}$ by pulling back the conformal factor and applying \Cref{thm-con-ang-pre}:
\begin{corollary}[Conformal equivalence preserves angles]
 Let $\Xll$, $\Xtildell$ be intrinsic, quasi-strongly causal and locally causally closed Lorentzian pre-length spaces that are conformally related via $\iota\colon X\rightarrow\widehat{X}$. Let $\alpha,\beta\colon[0,b)\rightarrow X$ be two timelike curves of the same time orientation with $\alpha(0)=\beta(0)=:p$ and assume that \(\tau\) is finite between \(p\) and the endpoints of the curves. Then, if the angles between $\alpha$ and $\beta$ and between $\iota\circ\alpha$ and $\iota\circ\beta$ exist in $[0,\infty]$, they agree, i.e.\ $\ma_{\iota(p)}^{\widetilde{X}}(\iota\circ\alpha,\iota\circ\beta) = \ma_p^X(\alpha,\beta)$.
\end{corollary}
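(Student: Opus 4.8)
The plan is to reduce the statement entirely to \Cref{thm-con-ang-pre}, applied on the space $\Xll$ with the conformal factor $\Omega$ supplied by the conformal relation $\iota\colon X\to\widetilde{X}$, after observing that all comparison-angle data transfers verbatim across $\iota$.

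First I would unfold the conformal relation (cf.\ \Cref{def:conf_rel_LLS}): $\iota$ is a homeomorphism with $\iota^*(\widetilde{\ll})=\ll$, $\iota^*(\widetilde{\leq})=\leq$ and $\iota^*\widetilde{\tau}=\tau_\Omega$ for some conformal factor $\Omega$ on $X$. Since $\iota$ is a homeomorphism preserving both causal relations, $\iota\circ\alpha$ and $\iota\circ\beta$ are timelike curves in $\widetilde{X}$ of the same time orientation emanating from $\iota(p)$, and for all admissible $s,t$ one has $\widetilde{\tau}(\iota(p),\iota(\alpha(s)))=\tau_\Omega(p,\alpha(s))$, $\widetilde{\tau}(\iota(p),\iota(\beta(t)))=\tau_\Omega(p,\beta(t))$, together with $\widetilde{\tau}(\iota(\alpha(s)),\iota(\beta(t)))=\tau_\Omega(\alpha(s),\beta(t))$ and the reversed pair; moreover the index set $A_0$ entering the definition of the angle is the same whether it is formed with $\leq$ along $\alpha,\beta$ or with $\widetilde{\leq}$ along $\iota\circ\alpha,\iota\circ\beta$. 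Hence every comparison angle $\tilde\ma_{\iota(p)}^{\widetilde{X}}(\iota(\alpha(s)),\iota(\beta(t)))$ equals the comparison angle $\tilde\ma_p^\Omega(\alpha(s),\beta(t))$ built from $\tau_\Omega$. Consequently the limit defining $\ma_{\iota(p)}^{\widetilde{X}}(\iota\circ\alpha,\iota\circ\beta)$ exists in $[0,\infty]$ if and only if the limit defining $\ma_p^\Omega(\alpha,\beta)$ does, and the two are then equal; in particular the hypothesis that the angle between $\iota\circ\alpha$ and $\iota\circ\beta$ exists is exactly the assertion that $\ma_p^\Omega(\alpha,\beta)$ exists.

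With this identification I would then check that the hypotheses of \Cref{thm-con-ang-pre} hold for $\Xll$, $\Omega$, $\alpha$, $\beta$: intrinsicness, quasi-strong causality and local causal closedness are assumed; $\alpha,\beta$ are timelike of the same time orientation with $\alpha(0)=\beta(0)=p$; $\tau$ is finite between $p$ and the endpoints of the curves by hypothesis; and the angles with respect to both $\tau$ and $\tau_\Omega$ exist --- the former because $\ma_p^X(\alpha,\beta)$ is assumed to exist, the latter by the preceding paragraph. Then \Cref{thm-con-ang-pre} gives $\ma_p^\Omega(\alpha,\beta)=\ma_p(\alpha,\beta)=\ma_p^X(\alpha,\beta)$, and combining this with $\ma_{\iota(p)}^{\widetilde{X}}(\iota\circ\alpha,\iota\circ\beta)=\ma_p^\Omega(\alpha,\beta)$ yields the claim.

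The main point requiring care is not analytic but bookkeeping: one must verify that the defining data of the comparison angles --- the three $\widetilde{\tau}$-side lengths (and their $\tau_\Omega$-counterparts) as well as the admissible index set $A_0$ over which the limit is taken --- transport exactly under $\iota$, so that both the existence and the value of the angle are genuinely preserved. Once this is in place, no further work is needed beyond quoting \Cref{thm-con-ang-pre}. (If one wished to argue on the $\widetilde{X}$-side instead, finiteness of $\widetilde{\tau}$ along the pulled-back curves near $\iota(p)$ would follow from \Cref{lemma:local_uniform_control_conformal_length}, but this is not necessary since the entire argument can be run on the $X$-side.)
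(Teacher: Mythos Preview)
Your proposal is correct and follows exactly the approach the paper indicates: pull back $\widetilde{\tau}$ via $\iota$ to $\tau_\Omega$, observe that all comparison-angle data (side lengths and the index set $A_0$) transfer verbatim, and then invoke \Cref{thm-con-ang-pre} on $X$. The paper's own proof is a one-line sketch saying precisely this; you have simply spelled out the bookkeeping in appropriate detail.
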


\section{Applications}\label{sectionapplications}

Let us now focus on applications of the synthetic notion of conformal transformation introduced above. In particular, in this section we first establish its connection with the corresponding notion on smooth Lorentzian manifolds, proving that they are equivalent. We then show invariance of causality conditions under conformal changes, and give a characterisation of global hyperbolicity via finiteness of all time separation functions in the conformal class. Finally, we describe how the Lorentzian Hausdorff measures behave under conformal transformations.

\subsection{Compatibility  for smooth spacetimes}\label{Connection with smooth spacetimes}

As we see next, given a spacetime and a conformal transformation, the Lorentzian length of the conformal metric agrees with the synthetic conformal length $L^{\tau}_{\Omega}$ of the Lorentzian (pre-)length space induced from the spacetime. We recall that quasi-strong causality in the synthetic setting is equivalent to strong causality on smooth spacetimes (cf.\ \Cref{defintion:quasi_strong_causality}, \Cref{remark:quasi-strong-is-smooth-strong}).
%For a smooth, strongly causal spacetime $(M,g)$, there is no difference between the differential geometric conformal change and the synthetic one, i.e., $L^\tau_\Omega = L_{\Omega^2 g}$ and $\tau_\Omega = \tau_{\Omega^2 g}$.
\begin{lemma}
    Let $(M,g)$ be a strongly causal spacetime. Consider a smooth scalar function $\Omega:M\longrightarrow(0,+\infty)$ and a causal curve $\gamma: [a,b]\to M$ in \(M\). Define the metric \(\widetilde{g} := \Omega^2g\). 
    Then,
    \begin{equation}
        L_\Omega^\tau (\gamma)=L_{\widetilde g}(\gamma) = \int_a^b \Omega(\gamma(t))\sqrt{-g(\dot{\gamma}(t),\dot{\gamma}(t))}dt.
    \end{equation}
    In particular, the time separation functions agree, i.e., $\tau_\Omega = \tau_{\Omega^2 g}$.
\end{lemma}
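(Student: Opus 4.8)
The plan is to reduce the statement to the classical identification, on a strongly causal spacetime, of the $\tau$-length of a causal curve with its $g$-length, and then to transfer the conformal factor by a Darboux-type squeeze. The middle equality $L_{\widetilde g}(\gamma)=\int_a^b\Omega(\gamma(t))\sqrt{-g(\dot\gamma(t),\dot\gamma(t))}\,dt$ is immediate: along $\gamma$ one has $\widetilde g(\dot\gamma(t),\dot\gamma(t))=\Omega(\gamma(t))^2\,g(\dot\gamma(t),\dot\gamma(t))$, and since $\Omega>0$ the factor $\Omega(\gamma(t))$ may be pulled out of the square root in the definition \eqref{def:g-length} of $L_{\widetilde g}$.

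For the first equality I would first invoke the well-known fact that on a strongly causal spacetime (recall that strong causality here is precisely quasi-strong causality, cf.\ \Cref{remark:quasi-strong-is-smooth-strong}) every causal curve $\eta$ satisfies $L^\tau(\eta)=L_g(\eta)$; see, e.g., \cite{Kunzinger_2018,Oneill_1983}. The inequality $L^\tau(\eta)\ge L_g(\eta)$ follows at once from $\tau_g(\eta(s),\eta(t))\ge L_g(\eta|_{[s,t]})$ together with additivity of $L_g$, while the reverse inequality is the substantial part and rests on the local structure of convex normal neighbourhoods. Given this, I would fix an arbitrary partition $\sigma=\{t_i\}_{i=0}^{N}$ of $[a,b]$, set $m_i:=\min_{[t_i,t_{i+1}]}\Omega\circ\gamma$ and $M_i:=\max_{[t_i,t_{i+1}]}\Omega\circ\gamma$, and combine additivity of $L_\Omega^\tau$ (\Cref{prop:basic_properties_conformal_length}), the elementary bounds $m_iL^\tau(\gamma|_{[t_i,t_{i+1}]})\le L_\Omega^\tau(\gamma|_{[t_i,t_{i+1}]})\le M_iL^\tau(\gamma|_{[t_i,t_{i+1}]})$ observed in the proof of \Cref{prop:length_causal_curves}, and $L^\tau(\gamma|_{[t_i,t_{i+1}]})=L_g(\gamma|_{[t_i,t_{i+1}]})=\int_{t_i}^{t_{i+1}}\sqrt{-g(\dot\gamma,\dot\gamma)}\,dt$, to obtain, for every such $\sigma$,
\[
\sum_{i=0}^{N-1}m_i\int_{t_i}^{t_{i+1}}\sqrt{-g(\dot\gamma,\dot\gamma)}\,dt\ \le\ L_\Omega^\tau(\gamma)\ \le\ \sum_{i=0}^{N-1}M_i\int_{t_i}^{t_{i+1}}\sqrt{-g(\dot\gamma,\dot\gamma)}\,dt.
\]
Since $\Omega\circ\gamma$ is uniformly continuous on the compact interval $[a,b]$ and $t\mapsto\sqrt{-g(\dot\gamma(t),\dot\gamma(t))}\ge0$ is integrable with finite integral $L_g(\gamma)$, the difference of the two outer sums is bounded by $\bigl(\max_i(M_i-m_i)\bigr)L_g(\gamma)\to0$ as $|\sigma|\to0$, and both sums converge to $\int_a^b\Omega(\gamma(t))\sqrt{-g(\dot\gamma(t),\dot\gamma(t))}\,dt$; hence $L_\Omega^\tau(\gamma)$ equals that integral.

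For the last assertion I would use that $\widetilde g=\Omega^2g$ and $g$ induce the same chronological and causal relations, and hence the same family of (future-directed) causal curves between any two points $x,y\in M$. Consequently the supremum of $L_\Omega^\tau(\gamma)$ over all such curves (together with $\{0\}$) equals the supremum of $L_{\widetilde g}(\gamma)$ over the same curves, so $\tau_\Omega(x,y)=\tau_{\widetilde g}(x,y)=\tau_{\Omega^2g}(x,y)$.

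The main obstacle is the invoked identity $L^\tau(\eta)=L_g(\eta)$ for causal curves in strongly causal spacetimes: its nontrivial half relies on strong causality to guarantee that, on sufficiently fine partitions, $\tau_g(\gamma(t_i),\gamma(t_{i+1}))$ is realised by the connecting geodesic within a convex normal neighbourhood and that the resulting broken-geodesic polygons converge in $g$-length to $L_g(\gamma)$. One should also check that causal curves are regular enough (locally Lipschitz after reparametrisation) for $L_g$ and the integral expression to be well defined and additive, which is standard in the smooth setting.
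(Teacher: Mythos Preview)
Your argument is correct and is actually somewhat more elementary than the paper's. Both proofs ultimately rest on the identification $L^\tau=L_g$ for causal curves in strongly causal spacetimes, but they use it differently. The paper establishes $L_{\widetilde g}(\gamma)\le L_\Omega^\tau(\gamma)$ by estimating variations with $L_g(\gamma|_{[t_i,t_{i+1}]})\le\tau(\gamma(t_i),\gamma(t_{i+1}))$, and then obtains the reverse inequality via a differentiation argument: it shows that the right derivative of $t\mapsto L_\Omega^\tau(\gamma|_{[a,t]})$ exists almost everywhere and equals $|\dot\gamma(t)|_{\widetilde g}$, proves absolute continuity of this map by bounding it with $L_g$, and concludes with the Fundamental Theorem of Calculus. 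Your route instead invokes $L^\tau=L_g$ on each subinterval directly (as a black box from \cite{Kunzinger_2018}), combines it with additivity of $L_\Omega^\tau$ and the pointwise bounds $m_iL^\tau\le L_\Omega^\tau\le M_iL^\tau$, and finishes with a Darboux squeeze against the integrable function $\sqrt{-g(\dot\gamma,\dot\gamma)}$. This avoids both the differentiation step and the absolute continuity verification, at the price of quoting $L^\tau=L_g$ rather than rederiving its substance; since that identity is standard and cited in the paper anyway, your argument is a legitimate and cleaner alternative.
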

\begin{proof}
 The proof is analogous to the one in \cite[Prop.\ 2.32]{Kunzinger_2018}. Since we are assuming (quasi-)strong causality, the notions of causal curve induced by the Lorentzian pre-length structure and the one induced by the smooth structure coincide \cite[Lemma 2.21]{Kunzinger_2018}. For any partition $\sigma = \{t_i\}_{i=0}^m$ of the interval $[a,b]$, by additivity of $L_{\widetilde{g}}$ we have (recall     \eqref{def:g-length})
 \[
    \begin{split}
        L_{\widetilde{g}}(\gamma) &= \sum_{i=0}^{m-1} \int_{t_i}^{t_{i+1}} \Omega(\gamma(t)) \sqrt{-g(\dot{\gamma}(t), \dot{\gamma}(t))} \, dt 
        \leq \sum_{i=0}^{m-1} \max_{t \in [t_i, t_{i+1}]} \Omega(\gamma(t)) \int_{t_i}^{t_{i+1}} \ \sqrt{-g(\dot{\gamma}(t), \dot{\gamma}(t))} \, dt \\
        &=  \sum_{i=0}^{m-1} \max_{t \in [t_i, t_{i+1}]} \Omega(\gamma(t)) L_g(\gamma\vert_{ [t_i,t_{i+1}]})\leq \sum_{i=0}^{m-1} \max_{t \in [t_i, t_{i+1}]} \Omega(\gamma(t)) \, \tau(\gamma(t_i), \gamma(t_{i+1})) 
    = V_{\Omega,\sigma}(\gamma),
    \end{split}
 \]
 where for the last inequality we have used that  $L_g(\gamma\vert_{[t_i, t_{i+1}]}) \leq \tau(\gamma(t_i), \gamma(t_{i+1}))$ for any $i \in \{0, \dots, m-1\}$ (cf.\ \eqref{def:time-sep-funct-spacetimes}).
 Taking then the infimum over all partitions $\sigma$ immediately yields that $L_{\widetilde{g}}(\gamma) \leq L_\Omega^\tau (\gamma)$. Now, at any point of differentiability of $\gamma$, it holds \cite[Prop.\ 2.32]{Kunzinger_2018}
 \[
    \lim_{h \to0^+} \frac{\tau(\gamma(t), \gamma(t+h))}{h} = \abs{\dot{\gamma}(t)}_{g}.
 \]
 By continuity of $\Omega \circ \gamma$ it then follows that 
 \[
    \lim_{h \to0^+} \max_{s \in[t,t+h]} \Omega(\gamma(s)) \,\frac{\tau(\gamma(t), \gamma(t+h))}{h} = \Omega(\gamma(t)) \, \abs{\dot{\gamma}(t)}_{g} = \abs{\dot{\gamma}}_{\widetilde{g}}.
 \]
 We now notice that
 \(
    \max_{s \in[t,t+h]} \Omega(\gamma(s)) \, \tau(\gamma(t), \gamma(t+h)) \geq L_\Omega^\tau (\gamma\vert_{ [t,t+h]}),
 \)
 since the left hand side is just $V_{\Omega,\sigma}(\gamma\vert_{ [t,t+h]})$ for $\sigma = \{t,t+h\}$. This, together with the previous estimate $L_\Omega^\tau (\gamma) \geq L_{\widetilde{g}}(\gamma)$, gives
 \[
    \max_{s \in[t,t+h]} \Omega(\gamma(s)) \,\frac{\tau(\gamma(t), \gamma(t+h))}{h} \geq \frac{L^\tau_{ \Omega}(\gamma\vert_{ [t,t+h]})}{h} \geq \frac{L_{\widetilde{g}}(\gamma\vert_{ [t,t+h]})}{h} = \frac{1}{h}\int_t^{t+h} \abs{\dot{\gamma}(s)}_{\widetilde{g}} \, ds.
 \]
 Both the initial and the last term of this chain of inequalities converge to $\abs{\dot{\gamma}(t)}_{\widetilde{g}}$ as $h \to 0^+$, therefore
 \[
    \lim_{h\to 0^+} \frac{L_\Omega^\tau (\gamma\vert_{ [t,t+h]})}{h} = \abs{\dot{\gamma}(t)}_{\widetilde{g}}.
 \]
 Thus, 
 the function $f(t) \coloneqq L_\Omega^\tau (\gamma\vert_{ [a,t]})$ admits right derivatives almost everywhere. Indeed, for almost every $t \in [a,b]$ we have 
 \[
    \lim_{h \to 0^+} \frac{L_\Omega^\tau (\gamma\vert_{ [a,t+h]}) - L_\Omega^\tau (\gamma\vert_{ [a,t]})}{h} 
    = \lim_{h \to 0^+} \frac{L_\Omega^\tau (\gamma\vert_{ [t,t+h]})}{h} = \abs{\dot{\gamma}(t)}_{\widetilde{g}},
 \]
 where in the first equality we used additivity of the length. Notice that the function $q(t) \coloneqq L_{\widetilde{g}}(\gamma\vert_{[a,t]})$ is absolutely continuous (as the primitive of a continuous function) and satisfies $f(0)=q(0)=0$. To conclude, we prove that $f$ is absolutely continuous, so that we may apply the Fundamental Theorem of Calculus. First we note that, for any subinterval $[a_i,b_i]$ in $[a,b]$, $L_\Omega^\tau (\gamma\vert_{ [a_i,b_i]}) \leq M \cdot L_{g}(\gamma\vert_{ [a_i,b_i]})$, where $M>0$ is the maximum of $\Omega \circ \gamma$ over the whole interval $[a,b]$. Indeed, for any partition $\sigma$ of the interval $[a_i,b_i]$, one gets
 \[
    \begin{split}
            V_{\Omega,\sigma}(\gamma\vert_{[a_i,b_i]}) &= \sum_{j=0}^{m-1} \max_{t \in [t_j, t_{j+1}]} \Omega(\gamma(t)) \, \tau(\gamma(t_j), \gamma(t_{j+1})) 
            \leq M \sum_{j=0}^{m-1} \tau(\gamma(t_j), \gamma(t_{j+1}))  = M\cdot V_\sigma(\gamma\vert_{ [a_i,b_i]}),
    \end{split}
 \]
 so that taking the infimum over $\sigma$ yields $L_\Omega^\tau (\gamma\vert_{ [a_i,b_i]}) \leq M L_{g}(\gamma\vert_{ [a_i,b_i]})$. On the other hand, the function $h(t) \coloneqq L_g(\gamma\vert_{ [a,t]})$ is absolutely continuous, hence for any $\varepsilon>0$ there exists $\delta>0$ such that, for any disjoint family $\{[a_i,b_i]\}_{i=0}^m$ of intervals in $[a,b]$, it holds
 \[
    \sum_{i=0}^m \abs{b_i-a_i} < \delta \implies \sum_{i=0}^m \abs{h(b_i)-h(a_i)} < \frac{\varepsilon}{M}.
 \]
 For any such family $\{[a_i,b_i]\}_{i=0}^m$, we get
 \[
    \begin{split}
        \sum_{i=0}^m \abs{f(b_i)-f(a_i)} &= \sum_{i=0}^m \abs{L_\Omega^\tau (\gamma\vert_{ [a,b_i]}) - L_{ \Omega}(\gamma\vert_{ [a,a_i]})} 
        = \sum_{i=0}^m L_\Omega^\tau (\gamma\vert_{ [a_i,b_i]}) \\
        &\leq \sum_{i=0}^m M \, L_{g}({\gamma}\vert_{ [a_i,b_i]}) 
        =M \sum_{i=0}^m \abs{h(b_i)-h(a_i)} < \varepsilon.
    \end{split}
 \]
 Therefore, $f$ is absolutely continuous; in particular, it is differentiable almost everywhere, so its derivative is equal to its right-sided derivative, i.e. $\abs{\dot{\gamma}(t)}_{\widetilde{g}}$. Applying the Fundamental Theorem of Calculus then gives
 \[
    L_\Omega^\tau (\gamma) = f(b) = \int_a^b \frac{d}{dt} f(t) \, dt = \int_a^b \abs{\dot{\gamma}(t)}_{\widetilde{g}} \, dt = \int_a^b \frac{d}{dt} q(t) \, dt = q(b) = L_{\widetilde{g}}(\gamma),
 \]
 which proves the theorem.
\end{proof}

\subsection{Causality conditions}\label{subsec-cc}

Originally presented in \cite[Section 2.7]{Kunzinger_2018} and later extended in \cite{ACS:20}, the causality conditions for Lorentzian pre-length spaces provide a synthetic counterpart of the causal ladder of smooth spacetimes. We next briefly review these conditions, and prove that they are preserved under conformal transformations. We also discuss forward completeness, a concept introduced in \cite{BBCGMORS:24}, also inspired by the considerations by Gigli \cite{Gig:25}, which is the Lorentzian analogue of metric completeness. Note that forward completeness does not fit into the causal ladder since it is implied by global hyperbolicity, but does not entail causal simplicity.

\begin{definition}\label{def:causality_conditions}
  A Lorentzian pre-length space \(\Xll\) is
  \begin{enumerate}[label=(\roman*)]
    \item {\em chronological} if the relation \(\ll\) is irreflexive, that is, \(\forall x \in X \ x \not \ll x\);
    \item {\em causal} if the relation \(\leq\) is a partial order, that is, \(\forall x,y \in X \ x \leq y \ \land \ y \leq x \implies x= y\);
    \item {\em non-totally imprisoning} if for any compact set $K$ in $X$, there exists a constant $C > 0$ which bounds from above the $d$-length of any causal curve $\gamma$ contained in $K$;
    \item {\em distinguishing} if for any \(x,y \in X\), whenever \(I^+(x) = I^+(y)\) or \(I^-(x) = I^-(y)\) hold we have \(x=y\);
    \item {\em strongly causal} if the family $\{ I^+(x) \cap I^-(y) \mid x, y \in X \}$ is a subbase of the topology defined by \(d\); this condition implies quasi-strong causality as stated in \Cref{defintion:quasi_strong_causality};
    \item {\em stably causal} if the smallest closed relation containing \(\leq\) is antisymmetric.
    \item {\em causally continuous} if it is distinguishing and for any \(x,y \in X\) we have that
    \(I^+(x) \subset I^+(y) \implies I^-(y) \subset I^-(x)\), and \(I^-(x) \subset I^-(y) \implies I^+(y) \subset I^+(x)\).
    \item {\em causally simple} if it is causal and for any \(x \in X\) the sets \(J^+(x)\) and \(J^-(x)\) are closed;
    \item {\em globally hyperbolic} if it is non-totally imprisoning and the sets $J^+(x) \cap J^-(y)$ are compact for all $x,y\in X$.
    \item[(*)] \emph{forward complete} if every bounded and monotonically increasing sequence converges, i.e., every sequence $(x_n)_{n\in\N}$ in $X$, with $x_n\leq x_{n+1}\leq z$ (for all $n\in\N$ and some $z\in X$) has a limit in $X$.
  \end{enumerate}
\end{definition}
\begin{remark}
    In Theorem 3.20 of \cite{ACS:20} it is shown that, for Lorentzian length spaces subject to certain technical assumptions, each of the conditions (i)--(ix) above implies the preceding one. It is also proven that 
    %Moreover, it is also proven that in such setting 
    quasi-strong causality and strong causality are equivalent. 
\end{remark}
Observe that none of the conditions (i)--(ix), (*) involves the time separation function, but only the causal relations. In particular, the lower semicontinuity of $\tau$ plays no role, so one could state the same definitions for \say{Lorentzian pre-length spaces} not necessarily endowed with a lower semicontinuous time separation function. This allows us to make a slight abuse of notation in the following proposition, and refer to the conformally transformed space as Lorentzian pre-length space (despite the fact that lower semicontinuity of $\tau_{\Omega}$ requires quasi-strong causality, cf.\ \Cref{proposition:tau-lower-semi-continuous}). We do, however, assume intrinsicness, as otherwise the causal relations are not preserved under conformal changes (cf.\ \Cref{prop:character_causal_curves}). 
\begin{proposition}[Invariance of causality conditions under conformal transformations]\label{prop:conf-cc}
    Consider an intrinsic Lorentzian pre-length space \(\Xll\), a conformal factor \(\Omega \colon X \to (0, +\infty)\) and the conformally transformed Lorentzian pre-length space \(\confXll\). If any of the causality conditions in \Cref{def:causality_conditions} holds in \(\Xll\), it likewise holds  in \(\confXll\).
\end{proposition}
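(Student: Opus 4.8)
The plan is to exploit the fact that a conformal change affects neither the topology of $X$ nor its causal structure, so that every condition listed in \Cref{def:causality_conditions} --- none of which refers to the time separation function --- transfers verbatim.

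First I would record the two structural facts. The metric $d$ is the same for \(\Xll\) and \(\confXll\), hence the two spaces carry the same topology on $X$ and on $X \times X$; and, invoking intrinsicness together with \Cref{prop:character_causal_curves}, the relations coincide, $\ll_\Omega\,=\,\ll$ and $\leq_\Omega\,=\,\leq$. From these I deduce that $I^{\pm}(x)$ and $J^{\pm}(x)$ are unchanged for every $x \in X$, and that --- since \Cref{definition:causal-timelike-curves} only involves the relations $\leq$ and $\ll$ --- the causal and timelike curves of \(\confXll\) are precisely those of \(\Xll\), with the same $d$-lengths.

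Then I would go down the list (i)--(ix), (*) and observe that each is formulated purely in terms of $\ll$, $\leq$, the derived sets $I^{\pm}$ and $J^{\pm}$, causal curves and their $d$-lengths, compact or closed subsets of $X$ (or $X \times X$), and the $d$-topology --- all of which agree between the two spaces --- so that each holds in \(\Xll\) if and only if it holds in \(\confXll\). Concretely: chronology and causality are properties of $\ll$ and $\leq$; distinguishingness and causal continuity constrain only the sets $I^{\pm}$; strong causality asks that $\{I^+(x)\cap I^-(y)\}$ be a subbase of the $d$-topology; stable causality asks that the smallest $d$-closed relation containing $\leq$ be antisymmetric; causal simplicity asks that $J^{\pm}(x)$ be $d$-closed; global hyperbolicity asks that $J^+(x)\cap J^-(y)$ be compact; and forward completeness asks that bounded $\leq$-increasing sequences $d$-converge.

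There is no genuine obstacle here; the only point I would flag explicitly is non-total imprisonment (and with it the corresponding half of global hyperbolicity): one must note that the family of causal curves contained in a compact $K$ and the $d$-length functional on them are unchanged, which is immediate since \say{causal} depends only on $\leq\,=\,\leq_\Omega$ and the variational $d$-length $L^d$ does not involve $\tau$ at all. Hence any constant $C>0$ bounding the $d$-length of all causal curves in $K$ in \(\Xll\) does the same in \(\confXll\), and conversely.
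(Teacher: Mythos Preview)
Your proposal is correct and follows essentially the same approach as the paper's proof: both rely on \Cref{prop:character_causal_curves} (with intrinsicness) to conclude that the causal and timelike relations are preserved, and then observe that every causality condition in \Cref{def:causality_conditions} depends only on these relations, the metric $d$, and the topology --- none of which change under the conformal transformation. Your version is simply a more explicit, condition-by-condition elaboration of what the paper compresses into two sentences.
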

\begin{proof}
    The proof is straightforward as the conformally transformed space has the same causal and timelike relations as the starting space by \Cref{prop:character_causal_curves}. Moreover, the conditions depend only on the topology of the space, the metric and on the causal structure. Since conformal changes do not alter any of these, it immediately follows that they also hold for the final space.
\end{proof}

If we now consider conformally related Lorentzian pre-length spaces (\Cref{def:conf_rel_LLS}) then the invariance of non-total imprisonment does not hold any more. 
However, the following results still apply to (i), (ii),(iv)--(ix), (*).

\begin{proposition}
Let $\Xll$ and $\Xtildell$ be two intrinsic and quasi-strongly causal Lorentzian pre-length spaces that are conformally related, i.e, $X \sim_{\iota,\Omega} \widetilde{X}$. Then, any of the causality conditions (i), (ii), (iv)--(viii), (*) of \Cref{def:causality_conditions} holds if and only it holds for $\Xtildell$. Furthermore, if $X$ and $\widetilde{X}$ are Lorentzian length spaces (cf.\ \cite{Kunzinger_2018}), the same holds for global hyperbolicity (point (ix))\footnote{Actually, it suffices that both spaces are causally path-connected, $d$- and $\widetilde{d}$-compatible, and that through any point there is a timelike curve.}.
\end{proposition}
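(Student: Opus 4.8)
The strategy is to exploit the fact that a conformal relation $X\sim_{\iota,\Omega}\widetilde{X}$ (\Cref{def:conf_rel_LLS}) carries a homeomorphism $\iota\colon X\to\widetilde{X}$ with $\iota^*(\widetilde{\ll})=\ll$ and $\iota^*(\widetilde{\leq})=\leq$ (\Cref{remark:notation_iota_star}). First I would record the basic dictionary this provides: $\iota$ sends the chronological and causal futures and pasts of any point of $X$ onto those of its image, and $\iota\times\iota\colon X\times X\to\widetilde{X}\times\widetilde{X}$ is a homeomorphism sending the relation $\leq$ onto $\widetilde{\leq}$, hence its closure onto that of $\widetilde{\leq}$. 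Since $\iota^{-1}$ is again a causality-preserving homeomorphism, it then suffices to prove one implication for each condition.

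Next I would go through the conditions (i), (ii), (iv)--(viii) and (*) and check, one by one, that each is phrased purely in terms of the topology and the relations $\ll,\leq$ --- equivalently, in terms of $I^{\pm}$, $J^{\pm}$, and the closure of $\leq$ --- with no reference to $\tau$, $\tau_\Omega$, or the metrics $d,\widetilde{d}$ beyond the topology they induce; the dictionary above then transports each such condition verbatim. Concretely: chronology and causality are bijective invariants of $(\ll)$ and $(\leq)$; the distinguishing property and causal continuity only involve the sets $I^{\pm}(\cdot)$, which $\iota$ identifies; strong causality in the sense of point (v) of \Cref{def:causality_conditions} is preserved because a homeomorphism carries a subbase onto a subbase and $\iota$ identifies timelike diamonds; stable causality is preserved because $\iota\times\iota$ identifies the smallest closed relations containing $\leq$ and $\widetilde{\leq}$; causal simplicity is preserved because $\iota$ identifies the sets $J^{\pm}(\cdot)$ and maps closed sets to closed sets; and forward completeness is preserved because $\iota$ takes bounded monotone sequences to bounded monotone sequences and respects convergence. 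I would also point out why non-total imprisoning (iii) has to be excluded here: it bounds the $d$-length of causal curves, and $\iota$ need not be bi-Lipschitz, so this quantity is not a conformal invariant.

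For global hyperbolicity (ix), under the additional assumption that $X$ and $\widetilde{X}$ are Lorentzian length spaces (or merely causally path-connected, $d$- and $\widetilde{d}$-compatible, with a timelike curve through every point), the plan is: compactness of the causal diamonds $J(x,y)$ transfers directly, since $\iota$ is a homeomorphism mapping $J(x,y)$ onto $J(\iota(x),\iota(y))$; it then remains only to recover non-total imprisoning on the target space, and for this I would invoke the characterisation of global hyperbolicity for Lorentzian length spaces from \cite{Kunzinger_2018}, according to which --- under exactly the stated structural hypotheses --- compactness of all causal diamonds already forces the space to be non-totally imprisoning (via the limit curve machinery). This gives global hyperbolicity of $\widetilde{X}$ whenever $X$ is globally hyperbolic, and the same argument applied to $\iota^{-1}$ gives the converse.

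The hard part is this last step --- upgrading compactness of causal diamonds to non-total imprisoning in the length-space setting --- which is where causal path-connectedness, compatibility of the metric with the topology, and the existence of timelike curves genuinely come into play; I would not re-prove it but quote it from \cite{Kunzinger_2018}. Everything else reduces to a routine transport of purely topological-and-causal data (the sets $I^{\pm}$, $J^{\pm}$, the closure of $\leq$, subbases, bounded monotone sequences) along the homeomorphism $\iota$ and its inverse.
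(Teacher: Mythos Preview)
Your proposal is correct and follows essentially the same route as the paper for conditions (i), (ii), (iv)--(viii) and (*): both arguments reduce to transporting purely topological-and-causal data along the homeomorphism $\iota$, with strong causality singled out because one must observe that $\iota$ carries timelike diamonds to timelike diamonds and subbases to subbases.

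For global hyperbolicity (ix) there is a minor but worth-noting difference. You plan to transfer compactness of diamonds via $\iota$ and then invoke a result from \cite{Kunzinger_2018} to upgrade this to non-total imprisonment under the structural hypotheses. The paper instead cites \cite[Thm.\ 3.7, Cor.\ 3.8]{Min:23} for the cleaner statement that, for Lorentzian length spaces, global hyperbolicity is independent of the choice of metric inducing the topology --- which immediately gives conformal invariance. Your route is sound in spirit, but you should double-check that the precise implication ``compact diamonds $\Rightarrow$ non-totally imprisoning'' under those hypotheses is actually stated in \cite{Kunzinger_2018}; the paper's choice of citation suggests the clean version lives in Minguzzi's later work rather than in the foundational paper.
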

\begin{proof}
    The proof of points (i), (ii), (iv), (vi)--(viii), (*) is the same as in the proof of \Cref{prop:conf-cc}. For strong causality we observe that $\iota(x)\,\widetilde\ll\,\iota(y)$ if and only if $x\ll y$, hence $I(\iota(x),\iota(y)) = \iota(I(x,y))$ for all $x,y\in X$. Moreover, as \(\iota\) is an homeomorphism, it is immediate to see that it maps subbases of the starting space to subbases of the target. Hence, if we assume that $X$ is strongly causal, we immediately have that the chronological diamonds of \(\widetilde{X}\), being the image of the chronological diamonds of \(X\), make a subbase of the topology of \(\widetilde{X}\) and thus \(\widetilde{X}\) is strongly causal.

    Finally, for Lorentzian length spaces global hyperbolicity does not depend on the metric inducing the topology, hence is conformally invariant, see \cite[Thm.\ 3.7, Cor.\ 3.8]{Min:23}.
\end{proof}

\subsection{Global hyperbolicity and finiteness of Lorentzian distance}\label{Global hyperbolicity and finiteness of Lorentzian distance}

In the classical smooth setting, a strongly causal spacetime $(M,g)$ is globally hyperbolic if and only if the time separation function $\widetilde{\tau}$ induced from any metric $\widetilde{g}$ in the conformal class $[g]$ of $g$ is everywhere finite (see e.g.\ \cite[Theorem 4.30]{beem2017}). A similar result in the synthetic setting, however, is not to be expected without additional assumptions on the space. Indeed, the Lorentzian pre-length space constructed in \Cref{remark:infinity_line_example} is globally hyperbolic and intrinsic, but no choice of conformal factor can make the time separation function finite, as any compact causal curve of infinite length with respect to a time separation function will be of infinite length with respect to all conformal time separation functions. This pathology arises from the fact that individual causal curves on a compact interval can have infinite length, which cannot happen in the spacetime case. Therefore, it is natural to exclude this from the beginning. To relate global hyperbolicity with the finiteness of $\tau_{\Omega}$ for all conformal factors, 
we consider an intrinsic, quasi-strongly causal Lorentzian pre-length space $\Xll$ satisfying the following additional assumptions:
\begin{enumerate}[label=(\roman*)]
    \item \label{item:assumption_tau_finite} the time separation function \(\tau\) is finite;
    \item \label{item:assumption_non_empty_past} every point of $X$ has a non-empty chronological past, i.e., \(I^-(x) \neq \emptyset\) for all \(x \in X\);
    \item \label{item:assumption_causally_path_connected} $\Xll$ is {\em causally path connected} (cf.\ \Cref{definition:path-connectedness}).
\end{enumerate}
Let us check if these assumptions are indeed necessary. First, the example in \Cref{remark:infinity_line_example} satisfies \ref{item:assumption_non_empty_past} and \ref{item:assumption_causally_path_connected} but not \ref{item:assumption_tau_finite} and, as mentioned before, no conformal time separation function is finite. 
A more well-behaved example which satisfies the same assumptions is the following. Consider the real line \(\R\) with the natural causal/timelike structure given by the ordering of real numbers, and the map \(\tau \colon \R^2 \to \R\) defined as
\[
    \tau(x,y) \coloneqq 
    \begin{dcases}
        \int_x^y \frac{1}{\abs{t}} \, dt \ \text{if} \ x < y, \\
        0 \qquad\qquad \, \text{else}.
    \end{dcases}
\]
This function is indeed a time separation function which makes the space intrinsic, globally hyperbolic and quasi-strongly causal. However, the finiteness of all conformal time separation functions fails because all the non-constant causal curves through the origin have infinite \(\tau\)-length and thus will keep having infinite \(\tau\)-length after a conformal change.

As an example which satisfies assumptions \ref{item:assumption_tau_finite} and \ref{item:assumption_causally_path_connected} but not \ref{item:assumption_non_empty_past}, let us consider the subset of \(\R^2\) defined by (see  \Cref{figure:no_chronological_past})
\[
    X \coloneqq \bigg\{ \left(t,-\abs{t} +1\right): \ t\in [-1,1]\backslash\{0\}\bigg\} \cup \bigcup_{n \in \N} \left\{ \left(t,\abs{t} + 1-\frac{1}{n}\right) \ \colon \ t \in \left( -\frac{1}{2n}, \frac{1}{2n} \right) \right\}.
\]
We define the causal relation as follows: we say that  \(p \leq q\) if there is a causal curve in Minkowski space joining \(p\) to \(q\) that is entirely contained in \(X\).
We set the chronological relation to be empty and \(\tau \equiv 0\). This space is then an intrinsic, quasi-strongly causal Lorentzian pre-length space which satisfies \ref{item:assumption_tau_finite} and \ref{item:assumption_causally_path_connected}, but not \ref{item:assumption_non_empty_past}. The space is not globally hyperbolic as the diamond \(J((-1,0),(1,0))\) equals the whole space, which is not compact. However, for every conformal change \(\Omega\) it is immediate to see that \(\tau_\Omega \equiv 0\). Such example is interesting as it can be built as a subspace of the Minkowski plane by \say{intrinsifying} the standard time-separation function.

To build an example which satisfies \ref{item:assumption_tau_finite} and \ref{item:assumption_non_empty_past} but not \ref{item:assumption_causally_path_connected}, it is sufficient to consider the previous example and add a vertical segment \say{at the bottom} of the space (see \Cref{figure:non_causally_path_connected}):
\[
    Y = X \cup \left\{ (t,0) \ \colon \ t \in (-2,-1)  \right\}. 
\]

We define the causal relation just as in the previous case, and say that points \(p,q\) are timelike related if they are causally related with \(p \neq q\), and \(p\) lies in the (open) segment we added to the space \(X\). 
As time separation function $\tau$, we consider the Minkowskian length of any causal curve from \(p\) to \(q\); notice that this length is independent of the choice of curve. It is then immediate to see that this space is intrinsic, quasi-strongly causal and not globally hyperbolic. Moreover, every point has a non-empty chronological past and \(\tau\) is finite, but the space is not causally path-connected. For any conformal factor \(\Omega\), the corresponding \(\tau_\Omega\) is still finite as every causal curve between two points \(p\leq q\) will have \(L_\Omega^\tau \)-length bounded from above by \(M \tau(p,q)\), where \(M\) is the maximum of the function \(\Omega\) on the segment from \(p\) to \((0,0)\) (if \(p\) is not on the segment, then \(\tau_\Omega(p,q) = 0\)).

\begin{figure}[ht]
    \begin{minipage}{.52\textwidth}
        \centering
        \begin{tikzpicture}[scale=5.75, decoration={markings, 
                            mark= at position 0.52 with {\arrow{stealth}}},
                            ] 
            \draw[line width=1pt ,postaction ={decorate}]  (1,0) -- (0.4,0.6) ;
            \draw[line width=1pt ,postaction ={decorate}] (0.4,-0.6) -- (1,0);
        
            \filldraw[fill=white, draw=black, line width=0.8pt] (1,0) circle (0.45pt);
            \filldraw[fill=black, draw=black, line width=0.8pt] (0.4,0.6) circle (0.45pt);
            \filldraw[fill=black, draw=black, line width=0.8pt] (0.4,-0.6) circle (0.45pt);
            \draw[line width=1pt ,postaction ={decorate}] ({1-1/(0.25*((2.5)^2))},{0}) -- ({1-1/(0.25*2*((2.5)^2))},{1/(0.25*2*(2.5^2))});
            \draw[line width=1pt ,postaction ={decorate}] ({1-1/(0.25*(3^2))},{0}) -- ({1-1/(0.25*2*(3^2))},{1/(0.25*2*(3^2))});
            \draw[line width=1pt ,postaction ={decorate}] ({1-1/(0.25*(4^2))},{0}) -- ({1-1/(0.25*2*(4^2))},{1/(0.25*2*(4^2))});
            \draw[line width=1pt ,postaction ={decorate}] ({1-1/(0.25*(5^2))},{0}) -- ({1-1/(0.25*2*(5^2))},{1/(0.25*2*(5^2))});
            \draw[line width=1pt ,postaction ={decorate}] ({1-1/(0.25*(6^2))},{0}) -- ({1-1/(0.25*2*(6^2))},{1/(0.25*2*(6^2))});
            \draw[line width=1pt] ({1-1/(0.25*(7^2))},{0}) -- ({1-1/(0.25*2*(7^2))},{1/(0.25*2*(7^2))});
            \draw[line width=1pt] ({1-1/(0.25*(8^2))},{0}) -- ({1-1/(0.25*2*(8^2))},{1/(0.25*2*(8^2))});
            \draw[line width=1pt] ({1-1/(0.25*(9^2))},{0}) -- ({1-1/(0.25*2*(9^2))},{1/(0.25*2*(9^2))});
            \draw[line width=1pt] ({1-1/(0.25*(11^2))},{0}) -- ({1-1/(0.25*2*(11^2))},{1/(0.25*2*(11^2))});
            \draw[line width=1pt ,postaction ={decorate}] ({1-1/(0.25*2*(2.5^2))},{-1/(0.25*2*(2.5^2))}) -- ({1-1/(0.25*(2.5^2))},{0});
            \draw[line width=1pt ,postaction ={decorate}]  ({1-1/(0.25*2*(3^2))},{-1/(0.25*2*(3^2))}) -- ({1-1/(0.25*(3^2))},{0});
            \draw[line width=1pt ,postaction ={decorate}]  ({1-1/(0.25*2*(4^2))},{-1/(0.25*2*(4^2))})-- ({1-1/(0.25*(4^2))},{0});
            \draw[line width=1pt ,postaction ={decorate}] ({1-1/(0.25*2*(5^2))},{-1/(0.25*2*(5^2))}) -- ({1-1/(0.25*(5^2))},{0});
            \draw[line width=1pt ,postaction ={decorate}] ({1-1/(0.25*2*(6^2))},{-1/(0.25*2*(6^2))}) -- ({1-1/(0.25*(6^2))},{0});
            \draw[line width=1pt] ({1-1/(0.25*(7^2))},{0}) -- ({1-1/(0.25*2*(7^2))},{-1/(0.25*2*(7^2))});
            \draw[line width=1pt] ({1-1/(0.25*(8^2))},{0}) -- ({1-1/(0.25*2*(8^2))},{-1/(0.25*2*(8^2))});
            \draw[line width=1pt] ({1-1/(0.25*(9^2))},{0}) -- ({1-1/(0.25*2*(9^2))},{-1/(0.25*2*(9^2))});
            \draw[line width=1pt] ({1-1/(0.25*(11^2))},{0}) -- ({1-1/(0.25*2*(11^2))},{-1/(0.25*2*(11^2))});
        \end{tikzpicture}
        \vspace{1.18cm}
        \caption{The space \(X\) is made of points with empty chronological past.}
        \label{figure:no_chronological_past}
    \end{minipage}
    \quad
    \begin{minipage}{.45\textwidth}
        \centering
        \begin{tikzpicture}[scale=5.75, decoration={markings, 
                            mark= at position 0.52 with {\arrow{stealth}}},
                            ]
            \draw[line width=1pt, ,postaction ={decorate}]  (1,0) -- (0.4,0.6);
            \draw[line width=1pt,postaction ={decorate}] (0.4,-0.6) -- (1,0);
            \draw[line width=1pt,postaction ={decorate}] (0.4,-0.8) -- (0.4,-0.6);
        
            \filldraw[fill=white, draw=black, line width=0.8pt] (1,0) circle (0.45pt);
            \filldraw[fill=black, draw=black, line width=0.8pt] (0.4,0.6) circle (0.45pt);
            \filldraw[fill=white, draw=black, line width=0.8pt] (0.4,-0.8) circle (0.45pt);
            \draw[line width=1pt ,postaction ={decorate}] ({1-1/(0.25*((2.5)^2))},{0}) -- ({1-1/(0.25*2*((2.5)^2))},{1/(0.25*2*(2.5^2))});
            \draw[line width=1pt ,postaction ={decorate}] ({1-1/(0.25*(3^2))},{0}) -- ({1-1/(0.25*2*(3^2))},{1/(0.25*2*(3^2))});
            \draw[line width=1pt ,postaction ={decorate}] ({1-1/(0.25*(4^2))},{0}) -- ({1-1/(0.25*2*(4^2))},{1/(0.25*2*(4^2))});
            \draw[line width=1pt ,postaction ={decorate}] ({1-1/(0.25*(5^2))},{0}) -- ({1-1/(0.25*2*(5^2))},{1/(0.25*2*(5^2))});
            \draw[line width=1pt ,postaction ={decorate}] ({1-1/(0.25*(6^2))},{0}) -- ({1-1/(0.25*2*(6^2))},{1/(0.25*2*(6^2))});
            \draw[line width=1pt] ({1-1/(0.25*(7^2))},{0}) -- ({1-1/(0.25*2*(7^2))},{1/(0.25*2*(7^2))});
            \draw[line width=1pt] ({1-1/(0.25*(8^2))},{0}) -- ({1-1/(0.25*2*(8^2))},{1/(0.25*2*(8^2))});
            \draw[line width=1pt] ({1-1/(0.25*(9^2))},{0}) -- ({1-1/(0.25*2*(9^2))},{1/(0.25*2*(9^2))});
            \draw[line width=1pt] ({1-1/(0.25*(11^2))},{0}) -- ({1-1/(0.25*2*(11^2))},{1/(0.25*2*(11^2))});
            \draw[line width=1pt ,postaction ={decorate}] ({1-1/(0.25*2*(2.5^2))},{-1/(0.25*2*(2.5^2))}) -- ({1-1/(0.25*(2.5^2))},{0});
            \draw[line width=1pt ,postaction ={decorate}]  ({1-1/(0.25*2*(3^2))},{-1/(0.25*2*(3^2))}) -- ({1-1/(0.25*(3^2))},{0});
            \draw[line width=1pt ,postaction ={decorate}]  ({1-1/(0.25*2*(4^2))},{-1/(0.25*2*(4^2))})-- ({1-1/(0.25*(4^2))},{0});
            \draw[line width=1pt ,postaction ={decorate}] ({1-1/(0.25*2*(5^2))},{-1/(0.25*2*(5^2))}) -- ({1-1/(0.25*(5^2))},{0});
            \draw[line width=1pt ,postaction ={decorate}] ({1-1/(0.25*2*(6^2))},{-1/(0.25*2*(6^2))}) -- ({1-1/(0.25*(6^2))},{0});
            \draw[line width=1pt] ({1-1/(0.25*(7^2))},{0}) -- ({1-1/(0.25*2*(7^2))},{-1/(0.25*2*(7^2))});
            \draw[line width=1pt] ({1-1/(0.25*(8^2))},{0}) -- ({1-1/(0.25*2*(8^2))},{-1/(0.25*2*(8^2))});
            \draw[line width=1pt] ({1-1/(0.25*(9^2))},{0}) -- ({1-1/(0.25*2*(9^2))},{-1/(0.25*2*(9^2))});
            \draw[line width=1pt] ({1-1/(0.25*(11^2))},{0}) -- ({1-1/(0.25*2*(11^2))},{-1/(0.25*2*(11^2))});
        
        \end{tikzpicture}
        \caption{The space \(Y\) is not causally path connected.}
        \label{figure:non_causally_path_connected}
    \end{minipage}
\end{figure}
The assumptions \ref{item:assumption_tau_finite}--\ref{item:assumption_causally_path_connected} are therefore necessary to relate compactness of diamonds to finiteness of all conformal time separation functions and vice versa. This relation is established in the next theorem.
\begin{theorem}\label{theorem:cpt-diamonds-iff-tau-omega-finite}
    Let \(\Xll\) be an intrinsic and quasi-strongly causal Lorentzian pre-length space satisfying the assumptions \ref{item:assumption_tau_finite}-\ref{item:assumption_causally_path_connected}. Then, the following statements are equivalent:
    \begin{enumerate}
        \item All causal diamonds of $X$ are compact;
        \item For any choice of conformal factor $\Omega$, $\tau_{\Omega}$ is finite.
    \end{enumerate}
\end{theorem}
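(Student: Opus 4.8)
The implication $(1)\Rightarrow(2)$ is the easy direction, and I would prove it directly. Fix a conformal factor $\Omega$ and points $x\le y$ (if $x\not\le y$ then $\tau_\Omega(x,y)=0$ and there is nothing to do). Every causal curve $\gamma\colon[a,b]\to X$ from $x$ to $y$ satisfies $x=\gamma(a)\le\gamma(t)\le\gamma(b)=y$ for all $t$, so its image lies in the causal diamond $J(x,y)$, which is compact by $(1)$; hence $M:=\max_{J(x,y)}\Omega<\infty$ by continuity of $\Omega$. Then $V_{\Omega,\sigma}(\gamma)\le M\,V_\sigma(\gamma)$ for every partition $\sigma$, so (as in \Cref{prop:length_causal_curves}) $L_\Omega^\tau(\gamma)\le M\,L^\tau(\gamma)\le M\,\tau(x,y)$, the last step via the trivial partition. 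Taking the supremum over all such $\gamma$ and invoking the finiteness assumption \ref{item:assumption_tau_finite} yields $\tau_\Omega(x,y)\le M\,\tau(x,y)<\infty$.

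For $(2)\Rightarrow(1)$ the plan is to argue by contraposition: assuming that some causal diamond $J(p,q)$ fails to be compact, I would produce a conformal factor $\Omega$ and points $p'\ll q'$ with $\tau_\Omega(p',q')=+\infty$. Since $(X,d)$ is metric, non-compactness yields a sequence $(x_n)\subseteq J(p,q)$ with no subsequence converging to a point of $J(p,q)$; after passing to a subsequence one is in one of the following situations: $(x_n)$ is uniformly $d$-separated (hence it escapes every compact set and $\{x_n\}$ is closed and discrete in $X$); $(x_n)$ is $d$-Cauchy and converges to a point missing from $X$; or $(x_n)$ converges to a genuine point of $X$ lying outside $J(p,q)$. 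In every case, causal path-connectedness (assumption \ref{item:assumption_causally_path_connected}) and transitivity furnish, for each $n$, a causal curve from $p$ to $q$ through $x_n$; assumption \ref{item:assumption_non_empty_past} keeps the timelike relation non-trivial around the relevant points, so that — combined with intrinsicness and quasi-strong causality, through the estimates of \Cref{prop:length_causal_curves} and \Cref{lemma:local_uniform_control_conformal_length} — one can select connecting curves carrying a controlled positive amount of $\tau$-length. The factor $\Omega$ is then built to equal $1$ outside a neighbourhood of $\{x_n\}$ and to grow without bound on a nested family of ``shells'' shrinking onto the locus of non-precompactness, calibrated so that a causal curve reaching the $n$-th shell accumulates $\Omega$-weighted length bounded below by a divergent quantity — the discrete analogue of conformally rescaling by $\sim 1/\operatorname{dist}(\cdot,\text{excised point})$ near a puncture of Minkowski space. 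This produces causal curves $\gamma_n$ between a fixed pair $p'\ll q'$ with $L_\Omega^\tau(\gamma_n)\to+\infty$, whence $\tau_\Omega(p',q')=+\infty$, contradicting $(2)$.

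The step I expect to be the main obstacle is the construction and estimate of $\Omega$ in this second direction. Because the super-level sets of $\Omega$ must shrink as its values grow, one cannot obtain a uniform positive lower bound for the $\tau$-length that the connecting curves spend where $\Omega$ is large; instead the growth rate of $\Omega$ has to be tuned against the depth that admissible causal curves can reach, all while keeping $\Omega$ continuous, which forces the case split above (escape to infinity, incompleteness of $(X,d)$, or failure of $J(p,q)$ to be closed) to be carried out with care. This is also precisely where the three hypotheses enter: each of the examples preceding the theorem violates exactly one of \ref{item:assumption_tau_finite}--\ref{item:assumption_causally_path_connected} and has all $\tau_\Omega$ finite despite a non-compact diamond, so the argument must use \ref{item:assumption_tau_finite} to forbid individual causal curves on compact intervals of infinite $\tau$-length, \ref{item:assumption_non_empty_past} to keep the timelike relation rich enough for a conformal rescaling to have any effect, and \ref{item:assumption_causally_path_connected} to have the connecting causal curves at all.
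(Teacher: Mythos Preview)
Your $(1)\Rightarrow(2)$ is correct and matches the paper's proof essentially verbatim.

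For $(2)\Rightarrow(1)$ your overall strategy (contraposition, build a bad $\Omega$ supported near the bad sequence) is the paper's, but there are two points where your plan diverges or is incomplete.

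First, the case split is unnecessary. The paper treats all cases uniformly: from any sequence $(x_n)\subset J(p,q)$ with no subsequence converging \emph{in $J(p,q)$}, injectivity alone gives $r(n):=\inf_{m\neq n}d(x_n,x_m)>0$ for every $n$ (otherwise a subsequence would converge to $x_n\in J(p,q)$), and the balls $B(x_n,r(n)/2)$ are then pairwise disjoint by the triangle inequality. No distinction between ``escaping'', ``Cauchy'', or ``converging outside $J(p,q)$'' is needed; the bump functions $\Omega_n$ supported on these disjoint balls can be multiplied into a single continuous $\Omega$.

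Second, and more seriously, your connecting curves ``from $p$ to $q$ through $x_n$'' need not carry any $\tau$-length near $x_n$: you only know $p\le x_n\le q$, so the causal curves supplied by \ref{item:assumption_causally_path_connected} could be null near $x_n$, and then inflating $\Omega$ there has no effect. Your sentence ``assumption \ref{item:assumption_non_empty_past} keeps the timelike relation non-trivial around the relevant points'' is too vague to close this. The paper's precise mechanism is: take $p^-\in I^-(p)$ (this is where \ref{item:assumption_non_empty_past} actually enters), use push-up to get $p^-\ll x_n$, and then \ref{item:assumption_causally_path_connected} yields a \emph{timelike} curve $\gamma_n$ from $p^-$ to $x_n$. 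Choosing a point $s_n$ on $\gamma_n$ inside the quasi-strong-causality neighbourhood $V_n\subset U_n\subset B_n$ gives $s_n\ll x_n$, hence $\tau(s_n,x_n)>0$, and intrinsicness provides a causal curve $\sigma_n$ from $s_n$ to $x_n$ with $L^\tau(\sigma_n)>0$ that is trapped in $U_n$. Now the bump $\Omega_n$ is calibrated to have height $\sim n/L^\tau(\sigma_n)$ on $U_n$, so $L^\tau_\Omega(\sigma_n)\ge n$; concatenation and the reverse triangle inequality give $\tau_\Omega(p^-,q)\ge n$. This calibration against $L^\tau(\sigma_n)$ is exactly the ``tuning of the growth rate of $\Omega$ against the depth admissible curves can reach'' that you flagged as the obstacle --- but you did not supply it, and without the $p^-\ll x_n$ step there is nothing to tune against.
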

\begin{proof}
    We begin by showing \say{\(\implies\)}. Let us take any pair of points \(p \leq q\) and consider the diamond \(J(p,q)\). As the space is globally hyperbolic, this diamond is compact. We consider any conformal factor \(\Omega\) and denote by \(M\) the maximum value it attains on the diamond, which exists by compactness along with continuity of \(\Omega\). For any causal curve \(\gamma\) joining \(p\) to \(q\), which will then be contained in \(J(p,q)\), we can estimate its \(L_\Omega^\tau \)-length analogously to what we did in \Cref{prop:length_causal_curves} by
    \[
        L_\Omega^\tau (\gamma) \leq M L^\tau(\gamma) \leq M \tau(p,q).
    \]
    Notice that here \(M\) is independent of the curve, so by taking the supremum over all such curves \(\gamma\) we see that
    \[
        \tau_\Omega(p,q) \leq M \tau(p,q) < +\infty,
    \]
    as we assumed our initial time separation function to be finite.

    For \say{\(\impliedby\)}, we proceed by contradiction. Suppose that in the initial space there exist two points \(p \leq q\) such that their diamond \(J(p,q)\) is not compact. Then there must exist a sequence of points \(\{p_n\}_n\) in \(J(p,q)\) such that none of its subsequences converges; without loss of generality we can assume that the sequence is injective, so that \(n \neq m \implies p_n \neq p_m\). First, we take a point \(p^- \in I^-(p)\), so that by the push-up property of Lorentzian pre-length spaces we have \(p^- \ll p_n\) for all \(n \in \N\). We see that for every \(n \in \N\) we must have
    \[
        r(n) \coloneqq \inf_{m \neq n} d(p_n,p_m) >0,
    \]
    as otherwise we would be able to extract a subsequence converging to \(p_n\). We now see that the balls defined as
    \[
        B_n \coloneqq B\left( p_n, \frac{r(n)}{2} \right)
    \]
    must be mutally disjoint; indeed, if \(\exists z \in B_n \cap B_m\) for \(n \neq m\) by the triangle inequality we would have
    \[
        d(p_n,p_m) \leq d(p_n,z) + d(z,p_m) < \frac{r(n)}{2} + \frac{r(m)}{2} \leq \max(r(n),r(m)),
    \]
    but we also know that \(d(p_n,p_m)\) is greater or equal than both \(r(n)\) and \(r(m)\), therefore we would get \(d(p_n,p_m) \geq \max(r(n),r(m))\), which contradicts the previous chain of inequalities as one of them is a strict inequality.

    The aim now is to build a conformal factor which takes high values inside each \(B_n\) but is constant outside of them. To this end, for any \(B_n\) we look at a smaller ball \(U_n \coloneqq B(p_n, r(n)/4)\) and employ quasi-strong causality to get the existence of a smaller neighbourhood \(V_n \subset U_n\) such that any causal curve with endpoints in \(V_n\) stays in \(U_n\). By causal path connectedness, as \(p^- \ll p_n\), there exists a timelike curve \(\gamma_n\) joining \(p^-\) to \(p_n\). By continuity, we can consider a point \(s_n\) on this curve which is contained in \(V_n\); as \(\gamma_n\) is timelike, \(s_n \ll p_n\), so in particular \(\tau(s_n,p_n) > 0\). Since \(\tau\) is intrinsic we can find a causal curve \(\sigma_n\) joining \(s_n\) to \(p_n\) with positive length \(L^\tau(\sigma_n)\) and by quasi-strong causality this curve is entirely contained in \(U_n\). The concatenation of the curve \(\gamma_n\) until the point \(s_n\) and the curve \(\sigma_n\) produces a curve \(\eta_n\) from \(p^-\) to \(p_n\) which has positive length in a neighbourhood of the point \(p_n\).

    We define the map \(\Omega_n \colon X \to (0, +\infty)\) by
    \[
        \Omega_n(x) \coloneqq \frac{2n}{r(n)L^\tau(\sigma_n)}\operatorname{dist}(x, (B_n)^c) + 1
    \]
    and notice that this map satisfies the following properties:
    \begin{itemize}
        \item \(\Omega_n\) is continuous and in fact Lipschitz with constant equal to \(2n/(r(n)L^\tau(\sigma_n))\);
        \item \(\Omega_n \equiv 1\) outside of \(B_n\);
        \item \(\Omega_n(x) \geq \frac{n}{L^n(\sigma_n)} +1\) for every \(x \in U_n\).
    \end{itemize}
    We then define the map \(\Omega = \prod_{n \in \N} \Omega_n\) and see that such product is well defined since at every point at most only one of the factors is not equal to 1; in fact such product is continuous. Moreover, for any \(n\in \N\) it holds that \(\Omega \geq \Omega_n\), so that measuring the length of the curve \(\eta_n\) yields
    \[
        \tau_\Omega(p^-,p_n) \geq L_\Omega^\tau (\eta_n) \geq L_\Omega^\tau (\sigma_n) \geq L^\tau_{\Omega_n}(\sigma_n).
    \]
     As \(\sigma_n \subset U_n\), we estimate the last term in the same way as in the proof of \Cref{prop:length_causal_curves} by bounding \(\Omega_n\) from below with its minimum over \(U_n\), which we know to be greater than \(\frac{n}{L(\sigma_n)} +1\):
    \[
        L^\tau_{\Omega_n}(\sigma_n) \geq L^\tau(\sigma_n) \left( \frac{n}{L(\sigma_n)} +1 \right) \geq n.
    \]
    This shows that
    \[
        \tau_\Omega(p^-, p_n) \geq n.
    \]
    As \(p_n \ll q\), we see that
    \[
        \tau_\Omega(p^-,q) \geq \tau(p^-,p_n) + \tau(p_n, q) \geq n,
    \]
    and since this inequality holds for every \(n \in \N\) we must have \(\tau_\Omega(p^-,q) = + \infty\), which shows that for this choice of conformal factor the time separation function is not finite and therefore contradicts our assumption.
\end{proof}
\begin{remark}
    The assumption \ref{item:assumption_tau_finite} of \(\tau\) being finite is always satisfied by globally hyperbolic Lorentzian length spaces as defined in \cite{Kunzinger_2018}, and is only needed to prove one of the two implications (namely \say{\(\implies\)}), as \say{\(\impliedby\)} already takes all conformal time separation functions to be finite.
\end{remark}

According to \Cref{def:causality_conditions}, global hyperbolicity requires not only compactness of causal diamonds but also non-total imprisonment of $\Xll$. The latter must indeed be assumed, as it is possible to construct intrinsic, quasi-strongly causal Lorentzian pre-length spaces satisfying \ref{item:assumption_tau_finite}–\ref{item:assumption_causally_path_connected} that nevertheless fail to be non-totally imprisoning. 

To illustrate this, consider the space \(W \subset \R^2\) of \Cref{Figure:example-totally-imprisoning}, defined by

\[
    W \coloneqq \left\{ (t,0) \ \colon t \in  [0,1/\pi)  \right\} \cup \left\{ (t, t\sin(1/t)) \colon \ t \in (- 1/\pi,0] \right\}, 
\]
with a time separation function defined as
\[
    \tau((t_1,x_1),(t_2,x_2)) \coloneqq \max(t_2-t_1,0).
\]
Define the causal relations as
\[
        (t_1,x_1) \leq (t_2,x_2) \,  :\Longleftrightarrow \,  t_2 \geq t_1, 
        \qquad\qquad 
        (t_1,x_1) \ll (t_2,x_2) \, :\Longleftrightarrow \, t_2 > t_1, 
\]
and use the euclidean distance of \(\R^2\) restricted to \(W\) as distance function \(d\). Then, \((W,d,\ll,\leq,\tau)\) is a Lorentzian pre-length space that satisfies all the assumptions of \Cref{theorem:cpt-diamonds-iff-tau-omega-finite} but is not non-totally imprisoning, as the curve given by 
\(s \mapsto (s, s \sin(1/s))\) for \(s\in [-1/2\pi,0]\) is a future-directed causal curve contained in a compact set which has infinite Euclidean length.

\begin{figure}[ht]
  \centering
\begin{tikzpicture}
  \begin{axis}[
      xlabel={}, ylabel={},
      xmin=-0.3, xmax=0.3,
      ymin=-0.4, ymax=0.4,
      height=7cm,
      axis lines=middle,
      samples=1000,
      domain= -0.0001:0.318,
      xtick=\empty,
      ytick=\empty
      ]
    \addplot[line width=1.05pt,smooth,black] ({x*sin(deg(1/x))},-x);
    \draw[line width=1.05pt, black] (axis cs:0,0) -- (axis cs:0,{1/pi});
    \filldraw[fill = white, draw = black] (axis cs:0, {1/pi}) circle (2pt);
    \filldraw[fill = white, draw = black] (axis cs:0, {-1/(pi)}) circle (2pt);
    \node[anchor=west, black] at (axis cs:0.25,0.02) {$x$};
    \node[anchor=south, black] at (axis cs:0.02,0.3) {$t$};
    \node at (axis cs:0.25,0.25) {$\mathbb{R}^2$};
  \end{axis}
\end{tikzpicture}

  \caption{The space \(W\) fails the non-total imprisonment condition.}
  \label{Figure:example-totally-imprisoning}
\end{figure}
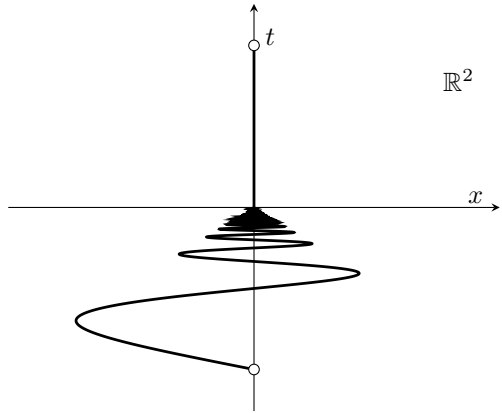

Consequently, we must consider non-total imprisoning spaces to obtain the following result, analogous to the classical one. 
\begin{theorem}
Let $\Xll$ be an intrinsic, non-totally imprisoning, and quasi-strongly causal Lorentzian pre-length space satisfying the assumptions  \ref{item:assumption_tau_finite}-\ref{item:assumption_causally_path_connected}. The following statements are equivalent:
\begin{enumerate}
    \item $\Xll$ is globally hyperbolic;
    \item For any choice of conformal factor $\Omega$, $\tau_{\Omega}$ is finite.
\end{enumerate}
\end{theorem}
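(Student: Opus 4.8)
The plan is to reduce the statement entirely to \Cref{theorem:cpt-diamonds-iff-tau-omega-finite}, since the only difference between that result and the present one is the non-total imprisonment clause in the definition of global hyperbolicity (\Cref{def:causality_conditions}(ix)), which here has been promoted to a standing hypothesis. First I would unwind the definition: by \Cref{def:causality_conditions}(ix), a Lorentzian pre-length space is globally hyperbolic exactly when it is non-totally imprisoning \emph{and} all causal diamonds $J^+(x)\cap J^-(y)$ are compact. Under our standing assumptions (in particular non-total imprisonment), global hyperbolicity of $\Xll$ is therefore \emph{equivalent} to compactness of all causal diamonds of $X$.

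Granting this observation, both implications are immediate. For (1) $\Rightarrow$ (2): if $\Xll$ is globally hyperbolic, then in particular all its causal diamonds are compact; since $\Xll$ is moreover intrinsic, quasi-strongly causal, and satisfies \ref{item:assumption_tau_finite}--\ref{item:assumption_causally_path_connected}, the hypotheses of \Cref{theorem:cpt-diamonds-iff-tau-omega-finite} are met, and its ``$\implies$'' direction gives that $\tau_\Omega$ is finite for every conformal factor $\Omega$. For (2) $\Rightarrow$ (1): assuming $\tau_\Omega$ is finite for every $\Omega$, the ``$\impliedby$'' direction of \Cref{theorem:cpt-diamonds-iff-tau-omega-finite} (whose hypotheses are precisely the ones in force here) yields that all causal diamonds of $X$ are compact; combining this with the standing assumption that $\Xll$ is non-totally imprisoning, we conclude by \Cref{def:causality_conditions}(ix) that $\Xll$ is globally hyperbolic.

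I do not expect any genuine obstacle here: the substantive content --- in particular the construction, in the ``$\impliedby$'' direction, of a conformal factor blowing up near a non-convergent sequence inside a non-compact diamond --- has already been carried out in the proof of \Cref{theorem:cpt-diamonds-iff-tau-omega-finite}. The one point requiring care in the write-up is to make explicit why non-total imprisonment must appear as a hypothesis and where it is actually used, namely in the passage from compactness of causal diamonds back to global hyperbolicity; the space $W$ of \Cref{Figure:example-totally-imprisoning} discussed above shows that this hypothesis cannot be omitted, so the proof of (2) $\Rightarrow$ (1) genuinely needs to invoke it. Beyond that, the argument is pure bookkeeping with the definitions and the previously established theorem.
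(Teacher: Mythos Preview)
Your proposal is correct and matches the paper's proof essentially verbatim: both directions are obtained by invoking \Cref{theorem:cpt-diamonds-iff-tau-omega-finite} and the definition of global hyperbolicity, with the standing non-total imprisonment hypothesis supplying the missing half of \Cref{def:causality_conditions}(ix) in the (2) $\Rightarrow$ (1) direction.
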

\begin{proof}
Suppose that $\tau_{\Omega}$ is finite for any choice of conformal factor. Then by \Cref{theorem:cpt-diamonds-iff-tau-omega-finite} all diamonds in $\Xll$ are compact. Since $\Xll$ is by assumption non-totally imprisoning, it is globally hyperbolic. On the other hand, if $\Xll$ is globally hyperbolic, then all its diamonds are compact, and again by \Cref{theorem:cpt-diamonds-iff-tau-omega-finite} we have that $\tau_{\Omega}$ is finite for any choice of conformal factor.
\end{proof}

\subsection{Lorentzian Hausdorff measures}\label{Lorentzian Hausdorff measure}

In the recent work \cite{McCann_2022}, the authors introduce an analogue of the Hausdorff measure of metric spaces in the setting of Lorentzian (pre-)length spaces. The corresponding notion is that of Lorentzian Hausdorff measure, and consists of a one-parameter family of volume measures defined on a Lorentzian (pre-)length space. This important generalization motivates studying how these measures behave under conformal transformations, which is the aim of the present section. We begin by reviewing the corresponding definition.
\begin{definition}
    Consider a Lorentzian pre-length space \(\Xll\) and any Borel set \(E \subset X\). For a fixed positive value \(\delta>0\)  we define the set of {\em \(\delta\)-covers by diamonds} of \(E\) by
    \[
        \mathcal{J}_\delta(E) \coloneqq \left\{ \{J(p_i,q_i)\}_{i \in \N} \ \colon \bigcup_{i \in \N} J(p_i,q_i) \supset E, \ \operatorname{diam}^d(J(p_i,q_i)) < \delta \right\}.
    \]
    We then fix \(s>0\) and define the  {\em\(s\)-dimensional \(\delta\)-Lorentzian Hausdorff pre-measure of \(E\)} by
    \[
        \mathcal{H}^s_{\delta,\tau}(E) \coloneqq \inf \left\{ \sum_{i \in \mathbb{N}} \omega_s \tau(p_i,q_i)^s \ \colon \ \{J(p_i,q_i)\}_{i \in \N} \subset \mathcal{J}_\delta(E)\right\},
    \]
    where $\omega_s:=\frac{\pi^{(s-1)/2}}{s \Gamma((s+1)/2)2^{s-1}}$ is a normalization constant and $\Gamma$ is Euler's gamma function. Finally, we define the {\em \(s\)-dimensional Lorentzian Hausdorff measure of \(E\)} as
    \[
        \mathcal{H}_\tau^s(E) \coloneqq \lim_{\delta \to 0^+} \mathcal{H}_{\delta,\tau}^s(E).
    \]
    The value \(s\) is usually called {\em dimension exponent} of the Lorentzian Hausdorff measure.
\end{definition}
In \cite{McCann_2022} it is shown that this is indeed a measure on the Borel sets of \(X\). Moreover, it is also shown (Theorem 4.8) that for smooth (and large classes of continuous) spacetimes of dimension \(n\) with their natural structure of Lorentzian length spaces the measure \(\mathcal{H}_\tau^n\) coincides with the natural volume measure induced by the Lorentzian metric.

Our objective now is to describe the relation between the measures \(\mathcal{H}_\tau^s\) and \(\mathcal{H}_{\tau_\Omega}^s\) for all \(s > 0\). We will always assume the Lorentzian pre-length space to be intrinsic. In addition, for most results, we will assume that our space is separable; this assumption is very mild and allows us to apply standard tools of measure theory to study this problem.

The key ingredient that will allow us to study the two measures is given by the following lemma. We point out that at this level we need neither separability nor quasi-strong causality of the space.
\begin{lemma} \label{lemma:hausdorff_measure_control}
    Consider an intrinsic Lorentzian pre-length space \(\Xll\) and $\Omega\colon X\rightarrow(0,\infty)$ a conformal factor. For any point \(x \in X\), any Borel set \(E\), any dimension exponent \(s>0\) and any \(\varepsilon > 0\) there exists a positive value \(r(x,\varepsilon)>0\) such that for any \(r \in (0,r(x,\varepsilon))\) it holds that
    \[
        (\Omega(x)^s - \varepsilon)\mathcal{H}^s_\tau(E \cap B(x,r)) \leq \mathcal{H}_{\tau_\Omega}^s(E \cap B(x,r)) \leq (\Omega(x)^s+ \varepsilon) \mathcal{H}^s_\tau(E \cap B(x,r)).
    \]
\end{lemma}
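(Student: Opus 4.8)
The plan is to reduce the statement to a pointwise comparison of $\tau$ and $\tau_\Omega$ on causal diamonds that lie inside a small ball around $x$, where $\Omega$ is essentially constant, and then to feed that comparison into the definition of the Lorentzian Hausdorff pre-measures. Fix $x\in X$, a Borel set $E$, an exponent $s>0$ and $\varepsilon>0$; we may assume $\varepsilon<\Omega(x)^s$, since otherwise the left inequality is trivial and the right one follows from the case of a smaller $\varepsilon$. As $t\mapsto t^s$ is continuous at $\Omega(x)>0$, choose $\eta\in(0,\Omega(x))$ so small that $(\Omega(x)-\eta)^s\ge\Omega(x)^s-\varepsilon$ and $(\Omega(x)+\eta)^s\le\Omega(x)^s+\varepsilon$; by continuity of $\Omega$ at $x$ there is a radius $\rho>0$ with $\abs{\Omega(y)-\Omega(x)}\le\eta$ for all $y\in B(x,\rho)$. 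Set $r(x,\varepsilon):=\rho$.

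The key auxiliary estimate is the following: if $p\leq q$ and $J(p,q)\subseteq B(x,\rho)$, then
\[
    (\Omega(x)^s-\varepsilon)\,\tau(p,q)^s\ \le\ \tau_\Omega(p,q)^s\ \le\ (\Omega(x)^s+\varepsilon)\,\tau(p,q)^s.
\]
Indeed, any causal curve $\gamma\colon[a,b]\to X$ from $p$ to $q$ is automatically contained in $J(p,q)$ — for $a<t<b$ one has $p=\gamma(a)\leq\gamma(t)\leq\gamma(b)=q$ — hence in $B(x,\rho)$, so $\Omega(x)-\eta\le\Omega(\gamma(t))\le\Omega(x)+\eta$ for all $t$. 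The elementary bound used in the proof of \Cref{prop:length_causal_curves} then gives $(\Omega(x)-\eta)L^\tau(\gamma)\le L_\Omega^\tau(\gamma)\le(\Omega(x)+\eta)L^\tau(\gamma)$. Taking the supremum over all causal curves from $p$ to $q$ — which, by intrinsicness of $X$, produces $\tau(p,q)$ on the $L^\tau$-side, while it produces $\tau_\Omega(p,q)$ on the $L_\Omega^\tau$-side by definition (the subcase in which there is no causal curve from $p$ to $q$ being trivial, as then all three quantities vanish) — yields $(\Omega(x)-\eta)\tau(p,q)\le\tau_\Omega(p,q)\le(\Omega(x)+\eta)\tau(p,q)$, and raising to the $s$-th power together with the choice of $\eta$ gives the displayed chain. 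Note this uses neither quasi-strong causality nor separability, precisely because competitor causal curves cannot leave the diamond they connect.

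It remains to transfer this to the pre-measures. Fix $r\in(0,\rho)$ and $\delta\in(0,\rho-r)$. Since $\leq_\Omega$ coincides with $\leq$ and the metric $d$ is unchanged, the diamonds and their $d$-diameters are the same in $\Xll$ and $\confXll$, so the families $\mathcal{J}_\delta(E\cap B(x,r))$ of $\delta$-covers by diamonds coincide. Let $\{J(p_i,q_i)\}_i\in\mathcal{J}_\delta(E\cap B(x,r))$ be any such cover, and discard all diamonds disjoint from $E\cap B(x,r)$: the remaining collection is still a $\delta$-cover of $E\cap B(x,r)$, its $\tau$- and $\tau_\Omega$-sums do not increase, and every retained diamond $J(p_i,q_i)$ meets $B(x,r)$ and has $\operatorname{diam}^d<\delta$, hence $J(p_i,q_i)\subseteq B(x,r+\delta)\subseteq B(x,\rho)$. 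Applying the key estimate to each retained diamond and summing, then passing to the infimum over all covers, yields
\[
    (\Omega(x)^s-\varepsilon)\,\mathcal{H}^s_{\delta,\tau}(E\cap B(x,r))\ \le\ \mathcal{H}^s_{\delta,\tau_\Omega}(E\cap B(x,r))\ \le\ (\Omega(x)^s+\varepsilon)\,\mathcal{H}^s_{\delta,\tau}(E\cap B(x,r)).
\]
Letting $\delta\to0^+$, both pre-measures increase to the respective Lorentzian Hausdorff measures, and the desired inequality follows for every $r\in(0,r(x,\varepsilon))$.

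The only delicate point — the rest being routine Carath\'eodory-measure bookkeeping — is the combination of two observations: first, that the ratio $\tau_\Omega(p,q)/\tau(p,q)$ is pinched near $\Omega(x)$ for any diamond contained in a small ball, with no causality hypothesis, since causal curves from $p$ to $q$ lie in $J(p,q)$; and second, that after removing from a cover those diamonds that miss $E\cap B(x,r)$, all surviving diamonds are forced into the slightly enlarged ball $B(x,r+\delta)\subseteq B(x,\rho)$ on which $\Omega$ is controlled, so that the pinching applies uniformly to every term of the sum.
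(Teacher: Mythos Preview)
Your proof is correct and follows essentially the same approach as the paper: both arguments localize to a ball on which $\Omega$ oscillates little, discard diamonds in a $\delta$-cover that miss $E\cap B(x,r)$ so that the remaining ones sit inside that ball, use that causal curves between $p_i$ and $q_i$ lie in $J(p_i,q_i)$ to transfer the pinching of $\Omega$ to a pinching of $\tau_\Omega(p_i,q_i)/\tau(p_i,q_i)$, and then pass to the infimum and let $\delta\to0^+$. The only cosmetic differences are that you handle the adjustment from $(\Omega(x)\pm\eta)^s$ to $\Omega(x)^s\pm\varepsilon$ at the beginning rather than the end, and you use the constraint $\delta<\rho-r$ in place of the paper's $r''<r'/2$ with $\delta<r''$.
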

\begin{proof}
    We fix any \(\varepsilon'>0\). By continuity of \(\Omega\) we can find a ball \(B(x,r')\) around \(x\) such that for any \(z \in B(x,r')\) we have \(\abs{\Omega(x) - \Omega(z)} < \varepsilon'\). Next, we consider any radius \(r'' < r'/2\) and work with the ball \(B(x,r'')\).

    Consider any \(\delta \in (0, r'')\) and take any \(\delta\)-cover \(\{J(p_i,q_i)\}_i \subset \mathcal{J}_\delta(E \cap B(x,r''))\). It is not too restrictive to assume that all the diamonds \(J(p_i,q_i)\) intersect \(E \cap B(x,r'')\), as otherwise we can just discard them and get a better estimate of the measure of the set. In turn this yields that all of the diamonds are contained in the set \(B(x,r')\): indeed, since each diamond has diameter less than \(\delta<r''\) and intersects the ball \(B(x,r'')\), the distance of each point of any diamond from \(x\) can be estimated by the triangle inequality by \(2r'' < r'\), so that the diamonds are contained in \(B(x,r')\). For every diamond \(J(p_i,q_i)\), we can estimate the conformal length of any causal curve between \(p_i\) and \(q_i\) as in \eqref{eq:local_uniform_control_conformal_length} as the diamonds are contained in \(B(x,r')\) and hence get that for all \(i \in \N\)
    \[
        (\Omega(x) - \varepsilon')\tau(p_i,q_i)\leq\tau_\Omega(p_i,q_i) \leq (\Omega(x)+\varepsilon') \tau(p_i,q_i).
    \]
    Raising the previous chain of inequalities to the \(s\)-th power and summing over \(i\) yields
    \[
        (\Omega(x) - \varepsilon')^s \sum_i\tau(p_i,q_i)^s\leq \sum_i\tau_\Omega(p_i,q_i)^s \leq (\Omega(x)+\varepsilon')^s \sum_i\tau(p_i,q_i)^s.
    \]
    Taking the infimum over all such \(\delta\)-covers we see that
    \[
        (\Omega(x)-\varepsilon')^s \, \mathcal{H}^s_{\delta,\tau}(E \cap B(x,r'')) \leq \mathcal{H}^s_{\delta,\tau_\Omega}(E \cap B(x,r'')) \leq (\Omega(x)+\varepsilon')^s  \,\mathcal{H}^s_{\delta,\tau}(E \cap B(x,r'')).
    \]
    Finally, sending \(\delta \to 0^+\) gives
    \[
        (\Omega(x)-\varepsilon')^s \, \mathcal{H}^s_{\tau}(E \cap B(x,r'')) \leq \mathcal{H}^s_{\tau_\Omega}(E \cap B(x,r'')) \leq (\Omega(x)+\varepsilon')^s \,\mathcal{H}^s_{\tau}(E \cap B(x,r'')).
    \]
    We stress that this inequality holds for any \(\varepsilon'>0\) and that the dependency on \(x\) and \(\varepsilon'\) is only through the fact that \(r'' < r'/2\), where \(r'\) depends on \(x\) and \(\varepsilon'\).

    To get the desired result, we notice that both factors \((\Omega(x) \pm \varepsilon')^s\) are continuous in \(\varepsilon'\) and thus for a given \(\varepsilon>0\) we can shrink \(\varepsilon'\) to a sufficiently small value so that
    \[
        \Omega(x)^s - \varepsilon \leq (\Omega(x) - \varepsilon')^s \leq (\Omega(x)+\varepsilon')^s \leq \Omega(x)^s + \varepsilon.
    \]
    The claim immediately follows by then choosing as \(r(x,\varepsilon) = r'(x,\varepsilon')/2\).
\end{proof}

With this technical lemma at hand, we can move towards our first step of determining the relation between \(\mathcal{H}^s_\tau\) and \(\mathcal{H}^s_{\tau_\Omega}\). 
\begin{proposition}\label{prop:hausdorff_absolutely_continuous}
    If \(\Xll\) is a separable, intrinsic Lorentzian pre-length space and \(\Omega\) is a conformal factor on \(X\), then for any choice of \(s>0\) the measure \(\mathcal{H}^s_{\tau_\Omega}\) is absolutely continuous with respect to \(\mathcal{H}^s_\tau\) and viceversa. In particular, we have that \(\mathcal{H}^s_\tau(X) = 0 \iff \mathcal{H}^s_{\tau_\Omega}(X) = 0\).
\end{proposition}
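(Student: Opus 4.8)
The plan is to bootstrap the local comparison estimate of \Cref{lemma:hausdorff_measure_control} to a global statement by means of a countable covering of $X$, which is available because separable metric spaces are Lindel\"of. Since the substantive work (localizing the comparison of the two Hausdorff measures) is already done in that lemma, the proof is essentially a packaging argument.

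First I would fix $s>0$ and, for each $x\in X$, apply \Cref{lemma:hausdorff_measure_control} with the $x$-dependent choice $\varepsilon(x)\coloneqq\tfrac12\Omega(x)^s>0$. This produces a radius $r(x)\coloneqq r(x,\varepsilon(x))>0$ --- depending only on $x$ (and $s$), not on the Borel set --- such that, for every Borel set $E$ and every $r\in(0,r(x))$,
\[
    \tfrac12\Omega(x)^s\,\mathcal{H}^s_\tau\big(E\cap B(x,r)\big)\;\leq\;\mathcal{H}^s_{\tau_\Omega}\big(E\cap B(x,r)\big)\;\leq\;\tfrac32\Omega(x)^s\,\mathcal{H}^s_\tau\big(E\cap B(x,r)\big);
\]
the choice of $\varepsilon(x)$ guarantees that $\Omega(x)^s-\varepsilon(x)=\tfrac12\Omega(x)^s$ is strictly positive, so both inequalities are genuinely two-sided. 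Consequently, on each small ball $B(x,r)$ the measures $\mathcal{H}^s_\tau$ and $\mathcal{H}^s_{\tau_\Omega}$ are comparable up to the positive, finite factor $\Omega(x)^s$; in particular $\mathcal{H}^s_\tau(E\cap B(x,r))=0$ if and only if $\mathcal{H}^s_{\tau_\Omega}(E\cap B(x,r))=0$.

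Next I would take the open cover $\{B(x,r(x)/2):x\in X\}$ of $X$ and, invoking separability in the form of the Lindel\"of property, extract a countable subcover $\{B(x_i,r(x_i)/2)\}_{i\in\N}$; note $r(x_i)/2<r(x_i)$, so the displayed estimate applies on each of these balls. Now let $E$ be any Borel set with $\mathcal{H}^s_\tau(E)=0$. By monotonicity, $\mathcal{H}^s_\tau(E\cap B(x_i,r(x_i)/2))=0$ for every $i$, so the estimate gives $\mathcal{H}^s_{\tau_\Omega}(E\cap B(x_i,r(x_i)/2))=0$ for every $i$. Since $E=\bigcup_{i\in\N}\big(E\cap B(x_i,r(x_i)/2)\big)$ and each $E\cap B(x_i,r(x_i)/2)$ is again Borel, countable subadditivity of $\mathcal{H}^s_{\tau_\Omega}$ yields $\mathcal{H}^s_{\tau_\Omega}(E)=0$, which is $\mathcal{H}^s_{\tau_\Omega}\ll\mathcal{H}^s_\tau$. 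The reverse absolute continuity follows by the very same argument using the other half of the comparison inequality, and the stated equivalence $\mathcal{H}^s_\tau(X)=0\iff\mathcal{H}^s_{\tau_\Omega}(X)=0$ is the special case $E=X$.

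I do not expect a genuine obstacle here. The only points that require a little care are: (i) phrasing separability as the Lindel\"of property in order to obtain a \emph{countable} subcover of good balls; (ii) checking that the radii used in the subcover lie in the admissible range of \Cref{lemma:hausdorff_measure_control}; and (iii) using that intersections of $E$ with balls stay Borel, so that both the lemma and the countable subadditivity of the Lorentzian Hausdorff measures (which, by \cite{McCann_2022}, are measures on the Borel $\sigma$-algebra) genuinely apply. Notably, neither finiteness of the measures nor quasi-strong causality of the space is needed, since we only ever transport the property of being an $\mathcal{H}^s$-null set.
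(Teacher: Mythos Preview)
Your proof is correct and follows essentially the same approach as the paper: both invoke \Cref{lemma:hausdorff_measure_control} with the choice $\varepsilon=\tfrac12\Omega(x)^s$, use separability via the Lindel\"of property to extract a countable cover by good balls, and conclude by countable subadditivity. The only cosmetic difference is that you cover all of $X$ once and for all and then intersect with $E$, whereas the paper covers the given null set $E$ directly; both variants work equally well.
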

\begin{proof}
    We start by proving \(\mathcal{H}^s_{\tau_\Omega} \ll \mathcal{H}^s_\tau\). We consider any Borel set \(E\) such that \(\mathcal{H}^s_\tau(E) = 0\) and we prove that \(\mathcal{H}^s_{\tau_\Omega}(E) = 0\). For any point \(x \in E\), we consider a sufficiently small ball \(B(x,r(x))\) such that \Cref{lemma:hausdorff_measure_control} applies with \(\varepsilon = \Omega(x)^s/2\), so that we get the upper bound given by
    \[
        \mathcal{H}_{\tau_\Omega}^s(E \cap B(x,r)) \leq \frac{3\Omega(x)^s}{2} \mathcal{H}^s_\tau(E \cap B(x,r)) \leq \frac{3\Omega(x)^s}{2} \mathcal{H}^s_\tau(E) = 0,
    \]
    where we used monotonicity of the measure along with the fact that \(E\) has zero \(\mathcal{H}^s_\tau\)-measure. The family of such balls \(\{B(x,r(x))\}_{x \in E}\) is an open cover of \(E\) and since \(X\) is a separable metric space we can apply Lindel\"of's property to extract a countable subcover \(\{B(x_i,r_i)\}_{i \in \N}\) of \(E\); in particular, the family \(\{E \cap B(x_i,r_i)\}_{i \in \N}\) still is a countable cover of \(E\). We now can use \(\sigma\)-subadditivity of the measure \(\mathcal{H}^s_{\tau_\Omega}\) to see that
        \[
          \mathcal{H}^s_{\tau_\Omega}(E) \leq \sum_i \mathcal{H}^s_{\tau_\Omega}(E \cap B(x_i,r_i)) = 0,
    \]
    as each term on the right-hand side vanishes.

    The proof that \(\mathcal{H}^s_\tau \ll \mathcal{H}^s_{\tau_\Omega}\) follows word by word the previous one, but using the lower bound given by \Cref{lemma:hausdorff_measure_control} instead of the upper bound.
\end{proof}
\begin{remark}
    The previous proposition is an indication that the Lorentzian Hausdorff dimension of the space as defined in \cite{McCann_2022} should be a conformal invariant, since we have that
    \[
        \inf \{s>0\ \colon \ \mathcal{H}_\tau^s(X) = 0\} = \inf \{s>0\ \colon \ \mathcal{H}_{\tau_\Omega}^s(X) = 0\}.
    \]
    The dimension is defined as a similar infimum, over the values for which the measure of the space is finite, and finiteness is not preserved in general under conformal changes. In \cite{McCann_2022}, it is shown that with the additional assumption of {\em local \(d\)-uniformity}, the two infimums coincid. It is not clear, however, that such a condition is preserved under conformal changes.
\end{remark}

With a similar reasoning, we can also show that one of the two measures is \(\sigma\)-finite whenever the other one is. 
\begin{proposition}
    If \(\Xll\) is a separable, intrinsic Lorentzian pre-length space and \(\Omega\) is a conformal factor on \(X\), then for any choice of \(s>0\) the measure \(\mathcal{H}^s_{\tau_\Omega}\) is \(\sigma\)-finite if and only if \(\mathcal{H}^s_\tau\) is.
\end{proposition}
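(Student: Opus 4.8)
The plan is to argue exactly as in the proof of \Cref{prop:hausdorff_absolutely_continuous}, now combining a countable decomposition of $X$ into finite-measure pieces with a countable cover by small balls on which \Cref{lemma:hausdorff_measure_control} provides a two-sided comparison between $\mathcal{H}^s_\tau$ and $\mathcal{H}^s_{\tau_\Omega}$.

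First I would assume that $\mathcal{H}^s_\tau$ is $\sigma$-finite, so that $X = \bigcup_{j \in \N} E_j$ with each $E_j$ Borel and $\mathcal{H}^s_\tau(E_j) < +\infty$. For every $x \in X$, \Cref{lemma:hausdorff_measure_control} applied with (say) $\varepsilon = 1$ produces a radius $r(x) > 0$ such that for all $r \in (0,r(x))$ and all Borel $E$ one has $\mathcal{H}^s_{\tau_\Omega}(E \cap B(x,r)) \leq (\Omega(x)^s + 1)\,\mathcal{H}^s_\tau(E \cap B(x,r))$. The balls $\{B(x, r(x)/2)\}_{x \in X}$ cover $X$; since $X$ is separable (hence Lindel\"of) I would extract a countable subcover $\{B(x_i, r(x_i)/2)\}_{i \in \N}$. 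Then $\{E_j \cap B(x_i, r(x_i)/2)\}_{i,j \in \N}$ is a countable Borel cover of $X$ and, by monotonicity of $\mathcal{H}^s_\tau$ together with the comparison,
\[
  \mathcal{H}^s_{\tau_\Omega}\big(E_j \cap B(x_i, r(x_i)/2)\big) \leq (\Omega(x_i)^s + 1)\,\mathcal{H}^s_\tau(E_j) < +\infty,
\]
so $\mathcal{H}^s_{\tau_\Omega}$ is $\sigma$-finite.

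For the converse I would run the same argument with $\tau$ and $\tau_\Omega$ interchanged, this time using the lower bound of \Cref{lemma:hausdorff_measure_control}: at each $x$ I would apply the lemma with $\varepsilon = \Omega(x)^s/2$, obtaining $\mathcal{H}^s_\tau(E \cap B(x,r)) \leq \tfrac{2}{\Omega(x)^s}\,\mathcal{H}^s_{\tau_\Omega}(E \cap B(x,r))$ for $r$ small, and then conclude as before. The argument is essentially routine; the only points needing attention are that the covering balls need not have finite measure on their own — so the doubly-indexed family $E_j \cap B(x_i,r(x_i)/2)$ is genuinely needed rather than the balls alone — and that in the reverse implication the parameter $\varepsilon$ in \Cref{lemma:hausdorff_measure_control} must be kept strictly below $\Omega(x)^s$ so that the factor $\Omega(x)^s - \varepsilon > 0$ can be inverted.
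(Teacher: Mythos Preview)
Your proposal is correct and follows essentially the same route as the paper: apply \Cref{lemma:hausdorff_measure_control} pointwise, use separability/Lindel\"of to get a countable cover by small balls, and intersect with the pieces of a given $\sigma$-finite decomposition to obtain a doubly-indexed countable cover by sets of finite measure. The only cosmetic differences are that the paper takes $\varepsilon = \Omega(x)^s/2$ (rather than your $\varepsilon = 1$) and extracts a separate countable subcover of each $X_i$ rather than one cover of all of $X$; in fact your remark that the balls themselves need not have finite measure, so that the intersections $E_j \cap B(x_i,r(x_i)/2)$ are genuinely required, is a point the paper glosses over.
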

\begin{proof}
    We only prove that \(\mathcal{H}^s_{\tau_\Omega}\) is \(\sigma\)-finite when \(\mathcal{H}^s_\tau\) is, as the other implication is completely analogous. We take a \(\sigma\)-finite cover \(\{X_i\}_{i \in \N}\) of \(X\) for the measure \(\mathcal{H}^s_\tau\). For each \(i \in \N\), we employ \Cref{lemma:hausdorff_measure_control} at each point \(x\) of \(E=X_i\), with \(\varepsilon = \Omega(x)^s/2\), so that we get the upper estimate
    \[
         \mathcal{H}^s_{\tau_\Omega} (X_i \cap B(x,r(x))) \leq \frac{3\Omega(x)^s}{2} \mathcal{H}^s_\tau (X_i \cap B(x,r(x))) \leq \frac{3\Omega(x)^s}{2} \mathcal{H}^s_\tau (X_i) < +\infty.
    \]
    By separability, we can extract a countable subcover \(\{B(x_n^i,r_n^i)\}_{n\in \N}\) of \(X_i\), which by the previous estimate is made of finite \(\mathcal{H}^s_{\tau_\Omega}\)-measure sets. Then the family \(\{B(x_n^i,r_n^i)\}_{i,n \in \N}\) is a cover of \(X\) which satisfies the \(\sigma\)-finiteness condition for the measure \(\mathcal{H}^s_{\tau_\Omega}\), thus concluding the proof.
\end{proof}
We are now ready to state the main result of this section. We will additionally assume that the measure \(\mathcal{H}^s_\tau\) is \(\sigma\)-finite. In the smooth setting, this is the case when one chooses the manifold dimension as exponent, since in this case the measure agrees with the volume measure up to normalization (cf.\ \cite{McCann_2022}).
\begin{theorem}\label{theorem:Lorentzian-Hausdorff-measures}
    Suppose that \(\Xll\) is an intrinsic, separable Lorentzian pre-length space and assume that there exists a \(s>0\) such that \(\mathcal{H}^s_\tau\) is \(\sigma\)-finite. Then, for any conformal factor \(\Omega\) and any Borel set \(E\), we have that
    \[
        \mathcal{H}^s_{\tau_\Omega}(E) = \int_{E} \Omega(x)^s \, d\mathcal{H}^s_\tau(x).
    \]
\end{theorem}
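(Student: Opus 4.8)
The plan is to recognise the claimed identity as a Radon--Nikodym statement and to pin down the density explicitly by means of the local comparison in \Cref{lemma:hausdorff_measure_control}. Since $\mathcal{H}^s_\tau$ is assumed $\sigma$-finite, the $\sigma$-finiteness result proved just above gives that $\mathcal{H}^s_{\tau_\Omega}$ is $\sigma$-finite as well, and \Cref{prop:hausdorff_absolutely_continuous} gives $\mathcal{H}^s_{\tau_\Omega} \ll \mathcal{H}^s_\tau$. The Radon--Nikodym theorem then furnishes a Borel measurable density $f \colon X \to [0,+\infty]$ with $\mathcal{H}^s_{\tau_\Omega}(E) = \int_E f \, d\mathcal{H}^s_\tau$ for every Borel set $E$, and by $\sigma$-finiteness of $\mathcal{H}^s_{\tau_\Omega}$ the function $f$ is finite $\mathcal{H}^s_\tau$-almost everywhere. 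It thus suffices to show that $f(x) = \Omega(x)^s$ for $\mathcal{H}^s_\tau$-a.e.\ $x$.

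The point that makes this work is that, for fixed $x$ and $\varepsilon>0$, the radius $r(x,\varepsilon)$ produced by \Cref{lemma:hausdorff_measure_control} does not depend on the Borel set appearing in its statement, so the estimate
\[
    (\Omega(x)^s - \varepsilon)\,\mathcal{H}^s_\tau(A \cap B(x,r)) \;\leq\; \mathcal{H}^s_{\tau_\Omega}(A \cap B(x,r)) \;\leq\; (\Omega(x)^s + \varepsilon)\,\mathcal{H}^s_\tau(A \cap B(x,r))
\]
holds simultaneously for \emph{all} Borel sets $A$ and all $r < r(x,\varepsilon)$. Write $g \coloneqq \Omega^s$, a continuous function. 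To prove $\mathcal{H}^s_\tau(\{f>g\}) = 0$, suppose not; discarding the $\mathcal{H}^s_\tau$-null set $\{f = +\infty\}$, there is a rational $q$ with $\mathcal{H}^s_\tau(B) > 0$ for $B \coloneqq \{f > q\} \cap \{g < q\}$. For each $y \in B$ pick $\varepsilon>0$ with $\Omega(y)^s + \varepsilon < q$; applying the displayed estimate with $A = B$, and using $f > q$ on $B$, we get for every $r < r(y,\varepsilon)$
\[
    q\,\mathcal{H}^s_\tau(B \cap B(y,r)) \leq \int_{B \cap B(y,r)} f \, d\mathcal{H}^s_\tau = \mathcal{H}^s_{\tau_\Omega}(B \cap B(y,r)) \leq (\Omega(y)^s + \varepsilon)\,\mathcal{H}^s_\tau(B \cap B(y,r)),
\]
which, since $q > \Omega(y)^s + \varepsilon$, forces $\mathcal{H}^s_\tau(B \cap B(y,r)) = 0$. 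The balls $\{B(y,r_y)\}_{y\in B}$ cover $B$, and separability of $X$ yields (Lindel\"of) a countable subcover, whence $\mathcal{H}^s_\tau(B) = 0$ by $\sigma$-subadditivity, a contradiction. An entirely symmetric argument, using instead the lower bound in the estimate and the sets $\{f < q'\} \cap \{g > q'\}$ for rational $q'$, gives $\mathcal{H}^s_\tau(\{f<g\}) = 0$. Hence $f = \Omega^s$ $\mathcal{H}^s_\tau$-a.e., and the claimed formula follows.

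The only delicate points are bookkeeping ones: checking that the radius in \Cref{lemma:hausdorff_measure_control} is genuinely uniform over Borel sets (so that $A=B$ is a legitimate choice), discarding the possibility that $f = +\infty$ on a set of positive $\mathcal{H}^s_\tau$-measure via $\sigma$-finiteness of $\mathcal{H}^s_{\tau_\Omega}$, and using Lindel\"of's property rather than a Besicovitch-type differentiation/covering theorem, which need not be available in this generality. No new estimate is required beyond \Cref{lemma:hausdorff_measure_control}; the argument is a localisation-plus-covering identification of the Radon--Nikodym density $d\mathcal{H}^s_{\tau_\Omega}/d\mathcal{H}^s_\tau$.
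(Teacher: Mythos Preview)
Your argument is correct and follows essentially the same route as the paper: apply Radon--Nikodym (using absolute continuity and $\sigma$-finiteness) to obtain a density $f$, then use the local comparison \Cref{lemma:hausdorff_measure_control} together with a Lindel\"of covering to identify $f = \Omega^s$ almost everywhere; your rational-level decomposition of $\{f>\Omega^s\}$ is a cosmetic variant of the paper's $1/n$-gap decomposition. One minor bookkeeping point you should make explicit: from $q\,\mathcal{H}^s_\tau(B\cap B(y,r)) \le (\Omega(y)^s+\varepsilon)\,\mathcal{H}^s_\tau(B\cap B(y,r))$ with $q>\Omega(y)^s+\varepsilon$ you only conclude $\mathcal{H}^s_\tau(B\cap B(y,r))\in\{0,+\infty\}$, so first replace $B$ by a subset of finite positive $\mathcal{H}^s_\tau$-measure (available by $\sigma$-finiteness) to force the value $0$ and make the covering argument go through.
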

\begin{proof}
    By \Cref{prop:hausdorff_absolutely_continuous} we know that \(\mathcal{H}^s_{\tau_\Omega}\) is absolutely continuous with respect to \(\mathcal{H}^s_\tau\); moreover we assumed the latter to be \(\sigma\)-finite. Hence we can apply Radon-Nikodym's theorem to obtain the existence of a measurable map \(f \colon X \to [0, + \infty]\) such that for any Borel set \(E\) we have
    \[
        \mathcal{H}^s_{\tau_\Omega}(E) = \int_{E} f(x) \, d\mathcal{H}^s_\tau(x).
    \]
    To conclude the proof, we only need to show that \(f(x) = \Omega(x)^s\) for \(\mathcal{H}^s_\tau\)-almost every \(x \in X\); we will argue by contradiction. If the set \(\{f \neq \Omega^s\}\) has positive measure, one of the sets \(\{f > \Omega^s\}\), \(\{f < \Omega^s\}\) must have positive measure; without loss of generality we can assume that this is the case for \(P \coloneqq \{f > \Omega^s\}\), the other case being totally analogous. We can write the set \(P\) as
    \[
        P = \bigcup_{n \in \N, n\geq 1} \left\{f > \Omega^s + \frac{1}{n} \right\},
    \]
    so that by \(\sigma\)-subadditivity we see that
    \[
        0<\mathcal{H}^s_\tau(P) \leq \sum_{n=1}^{+\infty} \mathcal{H}^s_\tau \left( \left\{f > \Omega^s + \frac{1}{n} \right\} \right),
    \]
    and hence there exists an integer \(n \in N\) such that the set \(E \coloneqq \left\{f > \Omega^s + 1/n\right\}\) has positive measure.
    
    We now apply \Cref{lemma:hausdorff_measure_control} to every point \(x \in E\) with \(\varepsilon = 1/2n\) to get an open cover of \(E\) consisting of balls and extract a countable subcover by separability; we can take the balls small enough so that the oscillation of \(\Omega^s\) on them is bounded by \(1/2n\). The same argument of subadditivity shows that there exists at least one ball \(B(x,r)\) such that \(E \cap B(x,r)\) has positive measure. We then see that
    \[
        \begin{split}
            \left(\Omega(x)^s + \frac{1}{2n} \right) \mathcal{H}^s_{\tau}(E \cap B(x,r)) &\geq \mathcal{H}^s_{\tau_\Omega}(E \cap B(x,r))\\ 
            &= \int_{E \cap B(x,r)} f(y) \; d\mathcal{H}^s_{\tau}(y) \\
            & \geq \int_{E \cap B(x,r)} \left(\Omega(y)^s + \frac{1}{n} \right) \; d\mathcal{H}^s_{\tau}(y).
        \end{split}
    \]
    For \(y \in E \cap B(x,r)\) it holds that \(\Omega(y)^s \geq \Omega(x)^s - 1/2n\) as the oscillation of \(\Omega^s\) was bounded by \(1/2n\). We can then resume the previous chain of inequalities with
    \[
        \begin{split}
            \left(\Omega(x)^s + \frac{1}{2n} \right) \mathcal{H}^s_{\tau}(E \cap B(x,r)) 
            & \geq \int_{E \cap B(x,r)} \left(\Omega(x)^s + \frac{1}{2n} \right) \; d\mathcal{H}^s_{\tau}(y)\\
            &= \left( \Omega(x)^s +\frac{1}{2n} \right) \mathcal{H}^s_\tau(E\cap B(x,r)).
        \end{split}
    \]
    As the last term of the chain is equal to the first one, all the inequalities must have been equalities; in particular we would get that
    \[
        \int_{E \cap B(x,r)} f(y) \, d\mathcal{H}^s_{\tau}(y) 
             = \int_{E \cap B(x,r)} \left(\Omega(y)^s + \frac{1}{n} \right) \, d\mathcal{H}^s_{\tau}(y),
    \]
    which combined with the fact that on every point of \(E\) the function \(f\) is greater than \(\Omega^s +1/n\) implies that \(f = \Omega^s+1/n\) for \(\mathcal{H}^s_\tau\)-almost every point of \(E \cap B(x,r)\). As this equality is never realized for any point of \(E\) this would yield that \(E \cap B(x,r)\) has zero \(\mathcal{H}^s_\tau\) measure, contradicting the fact that it has positive measure.

    We conclude that \(f = \Omega^s\) \(\mathcal{H}^s_\tau\)-almost everywhere and thus the proof is complete.
\end{proof}

\section{Conformal Length on Metric Spaces}\label{Conformal Length on Metric Spaces}

As mentioned before, the approach we have followed to study conformal transformations in the synthetic Lorentzian setting can also be applied to metric and length spaces. This is the purpose of the present section. Specifically, we introduce the notions of conformal variation, length, and distance in the metric setting, and establish a number of results involving them. We then obtain a generalization of Theorem 1 in \cite{nomizu61}, which states that any Riemannian manifold $(M,g)$ admits a conformally related metric that makes $M$ Cauchy-complete. Finally, we prove a result analogous to \Cref{theorem:Lorentzian-Hausdorff-measures} for (conformal) Hausdorff measures.     

\subsection{Definition and basic properties}\label{basic_properties_metric_case}

Let us start by defining a conformal length in metric spaces, and show that it allows one to get a length structure. This issue has already been discussed in \cite{Han2019}, where a notion of conformal distance is defined as follows. Given a metric space \((X,d)\) and a conformal factor \(\Omega \colon X \to (0,+\infty)\), one defines a new distance by
\[
    \hat{d}_\Omega(x,y) = \inf\left\{ \int_0^1 \Omega(\gamma(t))v_\gamma(t) \, dt \ \colon \ \gamma \in \operatorname{AC}([0,1], X), \, \gamma(0) = x, \, \gamma(1)=y \right\},
\]
where \(v_\gamma\) is the metric speed featured in \Cref{def:metric_speed}. Such definition reflects the idea that the conformal distance should measure lengths as weighted integrals of the metric speed, where the weight is given by the conformal factor. One quickly realizes that, in order to give an analogous definition in the Lorentzian setting, it is necessary to have a notion of \say{causal speed}, which would require the additional structure of a \emph{measured} Lorentzian pre-length space. This approach is carried out in \cite{Bra:25c}. Instead, here we will work with a different notion of conformal distance 
%in metric spaces 
by defining a length structure and showing it to be equivalent to the former one. As we will see, such a definition is the metric counterpart of the Lorentzian one.

If the starting space \((X,d)\) is a length space, the new metric will induce the same topology as the starting one, and the space will still be a length space. Hence, for length spaces, we will be able to define conformal equivalence between them in a completely analogous way to the one featured in \Cref{Conformal transformations as an equivalence relation}. As in the Lorentzian version, we start with the notion of conformal variation and length.
\begin{definition}[Metric conformal variation, conformal length]\label{conformal variation and length in ms}
    Let $(X,d)$ be a metric space, $\Omega\in C^{0}(X,(0, +\infty))$, and consider the set $A$ of all $C^0$-paths in $X$.\  For any path $\gamma:[a,b]\rightarrow X$ in $A$, and any partition $\sigma$ of $[a,b]$, we define the {\em conformal variation of $\gamma$ with respect to the conformal factor $\Omega$} as
    \[
        V_{\Omega,\sigma}^d(\gamma):=\sum_{i=0}^{m-1}\min_{t\in[t_i,t_{i+1}]}\Omega(\gamma(t)) d\big(\gamma(t_i),\gamma(t_{i+1})\big).
    \]
    The conformal length of $\gamma$ with respect to $\Omega$ is defined as
    \[
        L_{\Omega}^d(\gamma):=\sup \big\{V_{\Omega,\sigma}^d(\gamma):\sigma\textup{ is a partition of }[a,b]\big\}.
    \]
\end{definition}
\begin{remark}
    Comparing with \Cref{definition:conformal-factor-variation-length} one sees that this is the natural counterpart in the metric space setting, as the \(\max\) and \(\inf\) are replaced by \(\min\) and \(\sup\) respectively. This is so because metric geodesics are length {\em minimizers} and not maximizers.
\end{remark}
With a proof which is completely analogous to the one in \Cref{prop:basic_properties_conformal_length}, one gets that the conformal variation of a curve is {\em increasing} with respect to inclusions of partitions.
\begin{lemma}
\label{lemma:variations_increasing}
    The conformal variation \(V_{\Omega, \sigma}(\gamma)\) is increasing with respect to inclusions of partitions, namely, if \(\sigma\) and \(\eta\) are partitions of \([a,b]\) such that \(\sigma \subset \eta\), we have that \(V_{\Omega,\sigma}(\gamma) \leq V_{\Omega, \eta}(\gamma)\).\ Moreover, given any partition $\sigma$ of $[a,b]$, it holds
        \begin{align*}
            L_{\Omega}^d(\gamma)=\sup_{\substack{\eta \, \text{partition of} \, [a,b] \\ \sigma \subset \eta}} V_{\Omega, \eta}^d(\gamma).
        \end{align*}
\end{lemma}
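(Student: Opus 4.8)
The plan is to mirror the proof of Lemma~\ref{lemma:variations_decreasing}, adapting the monotonicity direction to the metric setting where the triangle inequality replaces the reverse triangle inequality and $\min$ replaces $\max$. The key structural point is that refining a partition can only \emph{increase} the conformal variation, because on the one hand splitting an interval forces the weight (a minimum of $\Omega\circ\gamma$) to be taken over smaller intervals, hence to increase or stay the same, and on the other hand the ordinary triangle inequality $d(\gamma(t_i),\gamma(t_{i+1}))\le d(\gamma(t_i),\gamma(s))+d(\gamma(s),\gamma(t_{i+1}))$ means the distance term does not decrease when we break up the step.

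First I would reduce to the case $\eta = \sigma\cup\{s\}$ with $s\in(t_i,t_{i+1})$ for a unique index $i$, exactly as in Lemma~\ref{lemma:variations_decreasing}, since a finite number of iterations of this single-point case handles any refinement. For this elementary step, I would write
\[
    \min_{t\in[t_i,t_{i+1}]}\Omega(\gamma(t))\, d\big(\gamma(t_i),\gamma(t_{i+1})\big)
    \le \min_{t\in[t_i,t_{i+1}]}\Omega(\gamma(t))\,\Big( d\big(\gamma(t_i),\gamma(s)\big)+d\big(\gamma(s),\gamma(t_{i+1})\big)\Big),
\]
using the triangle inequality for $d$, and then bound each of the two summands on the right by replacing the minimum over $[t_i,t_{i+1}]$ with the (larger) minima over $[t_i,s]$ and $[s,t_{i+1}]$ respectively:
\[
    \min_{t\in[t_i,t_{i+1}]}\Omega(\gamma(t))\, d\big(\gamma(t_i),\gamma(s)\big)
    \le \min_{t\in[t_i,s]}\Omega(\gamma(t))\, d\big(\gamma(t_i),\gamma(s)\big),
\]
and similarly for the other term. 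Since every other summand entering $V^d_{\Omega,\sigma}(\gamma)$ and $V^d_{\Omega,\eta}(\gamma)$ is identical, adding these up gives $V^d_{\Omega,\sigma}(\gamma)\le V^d_{\Omega,\eta}(\gamma)$, which is the first claim.

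For the second claim, the inequality ``$\ge$'' is immediate from the definition of $L^d_\Omega$ as a supremum over \emph{all} partitions, which in particular includes those refining $\sigma$. For ``$\le$'', given any partition $\theta$ of $[a,b]$, the partition $\theta\cup\sigma$ refines $\sigma$ and, by the first claim, satisfies $V^d_{\Omega,\theta}(\gamma)\le V^d_{\Omega,\theta\cup\sigma}(\gamma)$; taking the supremum over $\theta$ on the left and noting that the right-hand sides all lie among the partitions containing $\sigma$ yields $L^d_\Omega(\gamma)\le \sup\{V^d_{\Omega,\eta}(\gamma):\sigma\subset\eta\}$. I do not anticipate a genuine obstacle here: the only thing to be careful about is the sign/direction of every inequality, since we are now taking suprema of variations that increase under refinement, which is the exact mirror image of the Lorentzian situation, so it is essentially a transcription of the earlier proof with $\min\leftrightarrow\max$, $\sup\leftrightarrow\inf$, and the triangle inequality in place of its reverse.
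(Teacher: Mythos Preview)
Your proposal is correct and follows exactly the approach the paper indicates: the paper omits the proof, stating only that it is completely analogous to the Lorentzian case (Lemma~\ref{lemma:variations_decreasing}), with the expected substitutions of $\min$ for $\max$, $\sup$ for $\inf$, and the triangle inequality for its reverse. Your reduction to the single-point refinement and the handling of the second claim via $\theta\cup\sigma$ mirror that argument precisely.
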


Clearly, \( L^d_{\Omega}:A\longrightarrow[0,+\infty]\), so let us check whether it defines a length structure.
\begin{proposition}
Let $(X,d)$ be a metric space.\ The conformal length $L^d_{\Omega}$ with respect to a conformal factor $\Omega$, together with the set $A$ of all continuous paths in $X$, defines a length structure $(X,A,L^d_{\Omega})$, see \Cref{def-len-str} or \cite[\S 2.1]{Burago_2001} for details on length structures.
\end{proposition}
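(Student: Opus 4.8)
I need to verify the seven axioms (i)--(vii) of \Cref{def-len-str} for the triple $(X,A,L^d_\Omega)$, where $A$ is the set of all continuous paths in $X$. The structural axioms about $A$ itself --- namely (i) that restrictions of admissible paths are admissible, (ii) that concatenations of admissible paths are admissible, and (iii) that affine reparametrizations of admissible paths are admissible --- are immediate, since $A$ consists of \emph{all} continuous paths and continuity is preserved under restriction, concatenation, and composition with a homeomorphism. So the real content lies in the properties of the functional $L^d_\Omega$: additivity (iv), continuity of the partial-length function (v), reparametrization invariance (vi), and the local non-degeneracy condition (vii).

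\textbf{Additivity (iv) and reparametrization invariance (vi).} For additivity, I would argue exactly as in the proof of \Cref{prop:basic_properties_conformal_length}\ref{item:additivity}, with ``$\inf$'' and ``$\max$'' replaced throughout by ``$\sup$'' and ``$\min$''. For the inequality $L^d_\Omega(\gamma) \geq L^d_\Omega(\gamma|_{[a,s]}) + L^d_\Omega(\gamma|_{[s,b]})$, given partitions $\sigma_1$ of $[a,s]$ and $\sigma_2$ of $[s,b]$ one takes $\theta = \sigma_1 \cup \sigma_2$, observes $V^d_{\Omega,\theta}(\gamma) = V^d_{\Omega,\sigma_1}(\gamma|_{[a,s]}) + V^d_{\Omega,\sigma_2}(\gamma|_{[s,b]})$, and takes suprema. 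For the reverse, one uses \Cref{lemma:variations_increasing} to restrict attention to partitions containing $\{a,s,b\}$, splits each such $\theta$ at $s$ into $\theta_{\le s}$ and $\theta_{\ge s}$, and notes $V^d_{\Omega,\theta}(\gamma) = V^d_{\Omega,\theta_{\le s}}(\gamma|_{[a,s]}) + V^d_{\Omega,\theta_{\ge s}}(\gamma|_{[s,b]}) \le L^d_\Omega(\gamma|_{[a,s]}) + L^d_\Omega(\gamma|_{[s,b]})$, then takes the supremum over $\theta$. Reparametrization invariance (vi) is immediate: an affine (or more generally monotone continuous) change of variable sets up a bijection between partitions of the two domains preserving the variation, exactly as in \Cref{prop:basic_properties_conformal_length}(iii).

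\textbf{Continuity (v) and vanishing on constants.} That $L^d_\Omega(\gamma)=0$ for constant $\gamma$ is trivial since every term $d(\gamma(t_i),\gamma(t_{i+1}))$ vanishes. For the continuity of $t \mapsto L^d_\Omega(\gamma|_{[a,t]})$ when $\gamma$ has finite conformal length, I would first use additivity to reduce to showing $L^d_\Omega(\gamma|_{[s,t]}) \to 0$ as $t \to s$. Here I would exploit the comparison, valid on any compact subinterval $[s,t]$ on which $\Omega\circ\gamma$ attains minimum $m_{s,t}>0$ and maximum $M_{s,t}<\infty$, that $m_{s,t}\, L^d(\gamma|_{[s,t]}) \le L^d_\Omega(\gamma|_{[s,t]}) \le M_{s,t}\, L^d(\gamma|_{[s,t]})$ (the analogue of the estimate in \Cref{prop:length_causal_curves}). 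Since $\Omega\circ\gamma$ is continuous and positive on the compact $[a,b]$, it is bounded below by some $m>0$ globally; hence $L^d_\Omega(\gamma)<\infty$ forces $L^d(\gamma)<\infty$, i.e.\ $\gamma$ has finite ordinary variational length, and then $t\mapsto L^d(\gamma|_{[a,t]})$ is continuous by the classical theory of rectifiable curves (\cite[\S2.3]{Burago_2001}); combining with the upper bound $L^d_\Omega(\gamma|_{[s,t]}) \le M\, L^d(\gamma|_{[s,t]})$ (with $M$ the global max of $\Omega\circ\gamma$) gives the desired continuity.

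\textbf{The local non-degeneracy axiom (vii) --- the main obstacle.} This is where a subtlety of the $\min$-convention appears, and it is the step I expect to require the most care. For a point $x\in X$ and a neighbourhood $U$, I must bound $\inf\{L^d_\Omega(\gamma) : \gamma(a)=x,\ \gamma(b)\in X\setminus U\}$ away from $0$. Choose $r>0$ with $B(x,2r)\subseteq U$, and let $m := \min_{\overline{B(x,2r)}}\Omega > 0$ by continuity and positivity (shrinking $r$ if $\overline{B(x,2r)}$ is not compact is not available in general, so I should instead just use $m := \inf_{B(x,2r)}\Omega$, which is $>0$ by continuity of $\Omega$ at points of the closure or by a direct $\varepsilon$-argument: continuity gives a possibly smaller $r'$ on which $\Omega \ge \Omega(x)/2 > 0$, so take $m = \Omega(x)/2$ and work inside $B(x,r')$). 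Now for any path $\gamma$ from $x$ to a point outside $U$, let $t^* := \sup\{t : \gamma|_{[a,t]}\subseteq B(x,r')\}$; by continuity $d(x,\gamma(t^*)) = r'$, and on $[a,t^*]$ every partition point stays in $B(x,r')$, so $\Omega\circ\gamma \ge m$ there. Taking the two-point partition $\{a,t^*\}$ already gives $V^d_{\Omega,\{a,t^*\}}(\gamma|_{[a,t^*]}) \ge m\, d(x,\gamma(t^*)) = m r'$, hence $L^d_\Omega(\gamma) \ge L^d_\Omega(\gamma|_{[a,t^*]}) \ge m r' > 0$, a bound uniform in $\gamma$. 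The point worth stating carefully is \emph{why} the two-point variation is a lower bound for the conformal length rather than an upper bound: with the $\min$-convention the variation increases under refinement (\Cref{lemma:variations_increasing}), so the coarsest partition gives the \emph{smallest} variation and $L^d_\Omega$ is the supremum over refinements --- hence indeed $L^d_\Omega(\gamma|_{[a,t^*]}) \ge V^d_{\Omega,\{a,t^*\}}(\gamma|_{[a,t^*]})$. With (vii) in hand, all seven axioms hold and $(X,A,L^d_\Omega)$ is a length structure.
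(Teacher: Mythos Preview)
Your proof is correct and follows essentially the same strategy as the paper's: the structural axioms are immediate, additivity and reparametrization invariance are obtained by mirroring the Lorentzian arguments with $\min/\sup$ in place of $\max/\inf$, and continuity of $t\mapsto L^d_\Omega(\gamma|_{[a,t]})$ is deduced from the sandwich $m\,L^d \le L^d_\Omega \le M\,L^d$ together with the classical continuity of $t\mapsto L^d(\gamma|_{[a,t]})$ for rectifiable curves. The one minor deviation is in axiom (vii): the paper invokes the non-degeneracy of the underlying length structure $(X,A,L^d)$ to obtain a constant $C>0$ with $L^d(\gamma|_{[a,T]})\ge C$ and then uses $L^d_\Omega\ge m\,L^d$, whereas you bypass this by reading off $d(x,\gamma(t^*))=r'$ directly from the first-exit time and using the two-point variation---a slightly more self-contained argument, but the same idea.
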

\begin{proof}
    Since $(X,d)$ is a metric space, the class of all continuous paths $A$ in $X$ with (monotonous surjective reparametrisations and) the variational length $L^d$ defines a length structure $(X, A, L^d )$.\ This means, in particular, that $A$ is closed under restrictions, concatenations, and linear reparametrisations.
    
    The additivity of the functional \(L^d_\Omega\) follows by arguing as in \Cref{prop:basic_properties_conformal_length}, by replacing every \(\min/\inf\) with \(\max/\sup\) and flipping all the inequalities.
    
    The next step is to check whether $L^d_{\Omega}$ depends continuously on the parameter of the path whenever the length of the path is finite, i.e.\ 
    \begin{align*}
     \lim_{t\rightarrow t_0} L_{\Omega}^d(\gamma\vert_{[a,t]})= L_{\Omega}^d(\gamma\vert_{[a,t_0]}),\qquad t_0,t\in[a,b],
    \end{align*}
    whenever \(L_\Omega(\gamma) < \infty\).
    Since \(\Omega \circ \gamma \) is a continuous function on \([a,b]\), it attains a maximum value \(M>0\) and a minimum \(m>0\). Similarly to \Cref{prop:length_causal_curves}, one gets that, for any \(s_1<s_2\) in \([a,b]\), on considering any variation of \(\gamma_{[s_1, s_2]}\) we have that
    \begin{equation} \label{eq:variation_estimate_above}
        \begin{split}
            mL^d(\gamma\vert_{ [s_1,s_2]}) \leq L_\Omega^d (\gamma\vert_{ [s_1,s_2]}) \leq M L^d(\gamma\vert_{ [s_1,s_2]}).
        \end{split}
    \end{equation}
    In particular, the length \(L^d(\gamma)\) of the curve with respect to the original metric is finite. On the other hand, $(X, A, L^d)$ being a length structure means that the induced length $L^d$ is continuous with respect to its parameter for curves of finite \(L^d\)-length. Therefore, for any $\varepsilon>0$ there exists a $\delta>0$ such that, if $\vert t-t_0\vert<\delta$ then $\left\vert L^d(\gamma\vert_{[a,t]})-L^d(\gamma\vert_{[a,t_0]})\right\vert<\varepsilon/M$. Thus, on denoting by \(s \coloneqq \min\{t_0,t\}, s'\coloneqq \max\{t_0,t\}\) we have
    \[
        \begin{split}
            \left\vert  L_{\Omega}^d(\gamma\vert_{[a,t]})- L_{\Omega}^d(\gamma\vert_{[a,t_0]})\right\vert&=\left\vert  L_{\Omega}^d(\gamma\vert_{[s,s']})\right\vert \\ 
            & \leq M \left\vert L^d(\gamma\vert_{[s,s']})\right\vert \\
            &= M \left\vert  L^{d}(\gamma\vert_{[a,t]})- L^{d}(\gamma\vert_{[a,t_0]})\right\vert <\varepsilon.
        \end{split}
    \]
    This proves that $L_{\Omega}^d$ continuously depends on the start/end-parameter of the curve.
    
    Now, conformal variations are obviously invariant under reparametrizations, hence so is $L^d_{\Omega}$.\ Indeed, if \(\phi \colon [c,d] \to [a,b]\) is a homeomorphism, then any partition \(\sigma\) of \([a,b]\) yields a partition \(\phi^{-1}(\sigma)\) of \([c,d]\) with the same variation.\ Conversely, for any partition \(\eta\) of \([c,d]\) we get the natural partition \(\phi(\eta)\) of \([a,b]\), which again has the same variation.
    
    Finally, let us check whether $L^d_{\Omega}$ is uniformly bounded away from zero for curves joining any point $x\in X$ and a point in the complement of any open neighbourhood $U_x\subset X$ of $x$. As \(U_x\) is a neighbourhood of \(x\), we can find a small radius \(\delta > 0\) such that \(\overline{B_d(x,\delta)}\) is contained in \(U_x\). By continuity of \(\Omega\), we can take an even smaller radius such that for any point \(z \in \overline{B_d(x,\delta)}\) it holds that
    \[
        m \coloneqq \frac{\Omega(x)}{2} \leq \Omega(z).
    \]
    As \((X,A,L^d )\)\ is a length structure, we get a constant \(C>0\) which bounds from below the \(d\)-length of any curve starting at \(x\) and ending outside \(B_d(x, \delta)\). We also notice that for any path \(\gamma\) contained in \( \overline{B_d(x,\delta)}\), reasoning similarly as we did in \eqref{eq:variation_estimate_above} it is straightforward to see that
    \[
        mL(\gamma)^d \leq L_\Omega^d(\gamma).
    \]
    We stress that in this case \(m\) is independent of the curve, but rather depends on the point \(x\). We now consider an admissible path $\gamma:[a,b]\rightarrow X$ from $\gamma(a)=x$ to $\gamma(b) \coloneqq y\in X\setminus U_x$ and define 
    \[
        T \coloneqq \sup\{ t \in [a,b] \  \colon \ \gamma(t') \in B_d(x,\delta) \quad \forall t'\in [a,t)\}.
    \]
    Notice that \(T\) is well defined as \(\gamma\) is continuous and \(\gamma(a) = x\); moreover, \(\gamma(T) \not \in B_d(x, \delta)\) again by continuity. We now see that
    \[
        \begin{split}
            L_\Omega^d(\gamma) \geq L_\Omega^d(\gamma\vert_{ [a,T]}) \geq m L^d(\gamma\vert_{[a,T]}) \geq mC.
        \end{split}
    \]
    As this estimate holds for any \(\gamma\) starting at \(x\) and ending outside \(U_x\), the uniform lower bound is proven. 
\end{proof}

\subsection{The induced conformal distance \texorpdfstring{\(d_\Omega\)}{d\_{omega}}}\label{section5.2}
Since we have proven that \((X,A,L_{\Omega}^d)\) is a length structure and the space \(X\) is Hausdorff (as its topology is induced by \(d\)), we can define the length metric \(d_\Omega\) induced by the length functional \(L_\Omega^d\) as follows.
 \begin{definition}[Conformal metric]
    Let $(X,d)$ be a metric space and consider the conformal length structure $(X,A,L_{\Omega}^d)$ given by all continuous paths in $X$ and the conformal length $L_{\Omega}^d$ with respect to a conformal factor $\Omega$.\ We define the {\em conformal length metric} $d_{\Omega}$ as
    \[
        d_\Omega(x,y):=\inf\{L_{\Omega}^d(\gamma):\gamma\in A, \ \gamma(a)=x,\gamma(b)=y\},
    \]
    where $x,y\in X$.
\end{definition}
\begin{remark}
    At this level, it could be the case that \(d_\Omega(x,y) = + \infty\) for some points \(x, y\in X\). Nevertheless, one can still define a topology and talk about a (pseudo)-metric structure; we discuss such a possibility in \Cref{remark:topologies_do_not_agree}.
\end{remark}
In particular, we know that the topology induced by \(d_\Omega\) is finer than the one we started with. In the next proposition, we show that under the assumption that \((X,d)\) is a length space, the two topologies actually coincide.
\begin{proposition}\label{prop:same_topologies}
    If \((X,d)\) is a length space and \(\Omega \colon X \to (0, +\infty)\) is a continuous function, the topology induced by the metric \(d\) is finer than the topology induced by the metric \(d_\Omega\). In particular, the two topologies agree.
\end{proposition}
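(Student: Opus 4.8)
The plan is to prove that the identity map $\mathrm{id}\colon (X,d)\to (X,d_\Omega)$ is continuous, which is precisely the assertion that the $d$-topology is finer than the $d_\Omega$-topology. Since the remark preceding the statement already records the converse inclusion (the $d_\Omega$-topology is always finer than the $d$-topology), the two topologies will then coincide. Concretely, it suffices to fix a point $x\in X$ and $\varepsilon>0$ and exhibit $\delta>0$ such that $d(x,y)<\delta$ implies $d_\Omega(x,y)<\varepsilon$; the case $x=y$ is trivial since the constant path gives $d_\Omega(x,x)=0$.

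First I would use continuity of $\Omega$ to choose a radius $r>0$ with $\Omega(z)\le M:=\Omega(x)+1$ for every $z\in B_d(x,r)$, and then set $\delta:=\min\{r/4,\ \varepsilon/(4M)\}$. Given $y$ with $d(x,y)<\delta$, the length-space hypothesis $d=d_L$ lets me pick a continuous path $\gamma\colon[a,b]\to X$ from $x$ to $y$ with $L^d(\gamma)<d(x,y)+\delta<2\delta\le r/2$. The key observation is that such a $d$-short path is automatically confined to the ball $B_d(x,r)$: for every $t\in[a,b]$ one has $d(x,\gamma(t))\le L^d(\gamma\vert_{[a,t]})\le L^d(\gamma)<r$. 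This is the only delicate point of the argument --- controlling $\Omega$ along the connecting curve --- and it is resolved purely by exploiting the length structure to make the curve short enough in $d$ to remain trapped near $x$.

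It then remains to estimate the conformal length. Since $\gamma$ lies entirely in $B_d(x,r)$, we have $\max_{[a,b]}(\Omega\circ\gamma)\le M$, so the estimate \eqref{eq:variation_estimate_above} (the metric counterpart of the inequality used in \Cref{prop:length_causal_curves}) yields $L_\Omega^d(\gamma)\le M\,L^d(\gamma)\le 2M\delta\le \varepsilon/2$. Hence $d_\Omega(x,y)\le L_\Omega^d(\gamma)<\varepsilon$, which proves continuity of the identity map, and therefore that the $d$-topology is finer than the $d_\Omega$-topology. Combined with the reverse inclusion noted before the proposition, this shows the two topologies agree, completing the proof.
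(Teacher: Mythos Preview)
Your proof is correct and follows essentially the same strategy as the paper's: use continuity of $\Omega$ to bound it on a $d$-ball, invoke the length-space property to find a short connecting curve that stays in that ball, and then apply the estimate $L_\Omega^d(\gamma)\le M\,L^d(\gamma)$ to conclude $d_\Omega(x,y)<\varepsilon$. The only differences are cosmetic choices of constants (you fix $M=\Omega(x)+1$ where the paper keeps a free $\varepsilon$-parameter), so the arguments are substantively identical.
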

\begin{proof}
    It is sufficient to prove that for any ball \(B_\Omega(x,r)\) with respect to the metric \(d_\Omega\) we can find another radius \(\rho >0\) such that the ball \(B_d(x,\rho)\) with respect to the metric \(d\) is contained in \(B_\Omega(x,r)\). If \(B_\Omega(x,r)\) is a given ball with respect to \(d_\Omega\), we take \(\varepsilon \in (0,\Omega(x))\) (notice that \(\Omega>0\) by assumption). By continuity, there exists a radius \(R>0\) such that
    \[
        \Omega(B_d(x,R)) \subset (-\varepsilon + \Omega(x), \Omega(x) + \varepsilon),
    \]
    where we stress that the ball on the left-hand-side is with respect to the metric \(d\). We now consider a positive number \(\delta>0\) such that
    \[
        \delta < \min\left(\frac{R}{r} \,, \frac{r}{\Omega(x) +\varepsilon}\right),
    \]
    and we define \(\rho \coloneqq \delta r\). We claim that \(B_d(x,\rho)\) is contained in \(B_\Omega(x,r)\). Indeed, for any \(y \in B_d(x,\rho)\) we know that
    \[
        d(x,y) < \rho = \delta r < R.
    \]
    As \((X,d)\) is a length space, there exists a continuous curve \(\gamma\) (with respect to the topology induced by \(d\)) from \(x\) to \(y\) such that \(L^d(\gamma) <\rho<R\). This implies that \(\gamma \subset B_d(x,R)\), hence we can estimate the \(L_\Omega^d\)-length of \(\gamma\) in a similar way as we did in \eqref{eq:variation_estimate_above}, since on the set \(B_d(x,R)\) the function \(\Omega\) is bounded above by \(\Omega(x)+ \varepsilon\):
    \[
        L_\Omega^d(\gamma) \leq (\Omega(x)+\varepsilon)L^d(\gamma)<(\Omega(x)+\varepsilon)\rho< (\Omega(x)+\varepsilon) \frac{r}{\Omega(x) +\varepsilon} =r,
    \]
    hence \(d_\Omega(x,y) < r\), which means that \(y \in B_\Omega(x,r)\). As this holds for any \(y \in B_d(x,\rho)\), we have that \(B_d(x,\rho) \subset B_\Omega(x,r)\), which concludes the proof.
\end{proof}
\begin{remark}\label{remark:topologies_do_not_agree}
    The previous result is not true in general if \((X,d)\) is not a length space. Indeed, we can consider the curve \((x,x\sin(1/x))\) for \(x \in [0,1/\pi]\) in \(\R^2\), endow it with the Euclidean metric and regard it as a metric space \((X,d)\). If we consider the conformal factor \(\Omega \equiv 1\), we see that \(d_\Omega\) is the metric induced from the length structure of such space. Clearly the \(d_\Omega\) distance between any point and the origin equals \(+\infty\), whereas any other pair of points are at finite distance. This implies that the topology induced by \(d_\Omega\) sees the space \(X\) as a disconnected space, whose connected components are \(\{(0,0)\}\) and \(X \setminus \{(0,0)\}\). Hence the topology is not the same as the Euclidean one induced on \(X\).
\end{remark}
\begin{corollary}
    If \((X,d)\) is a length space and \(\Omega \colon X \to (0, +\infty)\) is a conformal factor, then the set of continuous paths with respect to \(d\) and the set of continuous paths with respect to \(d_\Omega\) coincide.
\end{corollary}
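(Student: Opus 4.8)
The plan is to read this off directly from \Cref{prop:same_topologies}. Continuity of a path $\gamma\colon[a,b]\to X$ is a purely topological notion: whether $\gamma$ is continuous depends only on the topology of the target $X$, not on the particular metric inducing that topology. First I would note that a conformal factor is by definition an element of $C^{0}(X,(0,+\infty))$, so it is continuous and positive and the hypotheses of \Cref{prop:same_topologies} are satisfied; hence the metric topology induced by $d$ coincides with the one induced by $d_\Omega$. Consequently a map $\gamma\colon[a,b]\to X$ is continuous with respect to $d$ if and only if it is continuous as a map into the common topological space underlying both metrics, which in turn holds if and only if it is continuous with respect to $d_\Omega$. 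This shows that the set $A$ of continuous paths used to define $L^d$ and $L_\Omega^d$ is exactly the set of continuous paths for the metric $d_\Omega$, which is the claim.

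I do not expect any genuine obstacle here, as the statement is a formal consequence of the equality of topologies. The only point worth keeping in mind is that \Cref{prop:same_topologies} truly requires $(X,d)$ to be a length space — by \Cref{remark:topologies_do_not_agree} the two topologies, and hence the two classes of continuous paths, can differ otherwise — but this hypothesis is part of the assumptions of the corollary, so nothing further is needed.
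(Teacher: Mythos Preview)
Your proposal is correct and follows essentially the same approach as the paper: invoke \Cref{prop:same_topologies} to conclude that the topologies induced by \(d\) and \(d_\Omega\) agree, and then observe that continuity is a purely topological notion. The paper's proof is a one-line version of exactly this argument.
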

\begin{proof}
    By the previous proposition, we know that the induced topologies coincide and thus functions are continuous with respect to \(d\) if and only if they are continuous with respect to \(d_\Omega\).
\end{proof}
\begin{corollary}\label{coro:conformal_metrics_are_loc_lip}
    If \((X,d)\) is a length space and \(\Omega\) is a conformal factor on \(X\), then the metric \(d_\Omega\) is locally biLipschitz equivalent to \(d\). More precisely, for any \(\varepsilon>0\) and any point \(x \in X\) we can find a ball \(B_d(x,r)\) (or a ball \(B_\Omega(x,r)\)) such that for every \(y,z \in B_d(x,r)\) (or in \(B_\Omega(x,r)\)) it holds that
    \[
        \left(\Omega(x) - \varepsilon \right)d(y,z) \leq d_\Omega(y,z) \leq (\Omega(x) + \varepsilon)d(y,z).
    \]
\end{corollary}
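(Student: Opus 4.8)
The plan is to use continuity of $\Omega$ to confine attention to a $d$-ball on which $\Omega$ oscillates by less than $\varepsilon$, and to exploit the length-space hypothesis to produce near-$d$-geodesics between nearby points that do not escape a slightly larger ball. Fix $x\in X$ and $\varepsilon>0$; we may assume $\varepsilon<\Omega(x)$. By continuity of $\Omega$ there is $R>0$ such that $\lvert\Omega(w)-\Omega(x)\rvert<\varepsilon$ whenever $d(x,w)\le R$. Set $r:=R/4$. I claim both inequalities hold for all $y,z\in B_d(x,r)$; the version with $B_\Omega$ then follows since, by \Cref{prop:same_topologies}, the metric topologies of $d$ and $d_\Omega$ coincide, so some $B_\Omega(x,r')$ is contained in $B_d(x,r)$.

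For the upper bound, take $y,z\in B_d(x,r)$, so that $d(y,z)<2r=R/2$. As $(X,d)$ is a length space, for every small $\eta>0$ there is a continuous curve $\gamma$ from $y$ to $z$ with $L^d(\gamma)<d(y,z)+\eta$; choosing $\eta$ so that $d(y,z)+\eta<R-r$, every point $w$ on $\gamma$ satisfies $d(x,w)\le d(x,y)+L^d(\gamma)<r+(R-r)=R$, so $\gamma$ stays inside $B_d(x,R)$. Arguing exactly as in \eqref{eq:variation_estimate_above}, since $\Omega\le\Omega(x)+\varepsilon$ along $\gamma$ we get $L_\Omega^d(\gamma)\le(\Omega(x)+\varepsilon)L^d(\gamma)<(\Omega(x)+\varepsilon)(d(y,z)+\eta)$, whence $d_\Omega(y,z)\le(\Omega(x)+\varepsilon)(d(y,z)+\eta)$, and letting $\eta\to0^+$ gives $d_\Omega(y,z)\le(\Omega(x)+\varepsilon)d(y,z)$.

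For the lower bound, fix $y,z\in B_d(x,r)$ and let $\gamma\colon[a,b]\to X$ be an arbitrary continuous curve from $y$ to $z$; it suffices to show $L_\Omega^d(\gamma)\ge(\Omega(x)-\varepsilon)d(y,z)$ and then take the infimum over $\gamma$. If $\gamma$ stays in $B_d(x,R)$, then $\Omega\ge\Omega(x)-\varepsilon$ along $\gamma$, so as in \eqref{eq:variation_estimate_above} together with $L^d(\gamma)\ge d(y,z)$ we obtain $L_\Omega^d(\gamma)\ge(\Omega(x)-\varepsilon)L^d(\gamma)\ge(\Omega(x)-\varepsilon)d(y,z)$. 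Otherwise, set $T:=\sup\{t\in[a,b]:\gamma([a,t])\subset B_d(x,R)\}$; then $\gamma([a,T])\subset\overline{B_d(x,R)}$ and $d(x,\gamma(T))\ge R$, so $L^d(\gamma\vert_{[a,T]})\ge d(y,\gamma(T))\ge R-d(x,y)>R-r=3r>2r>d(y,z)$. Using additivity of $L_\Omega^d$ and the estimate \eqref{eq:variation_estimate_above} on the sub-curve $\gamma\vert_{[a,T]}$ (which lies where $\Omega\ge\Omega(x)-\varepsilon$), we get $L_\Omega^d(\gamma)\ge L_\Omega^d(\gamma\vert_{[a,T]})\ge(\Omega(x)-\varepsilon)L^d(\gamma\vert_{[a,T]})\ge(\Omega(x)-\varepsilon)d(y,z)$. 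Taking the infimum over $\gamma$ yields $d_\Omega(y,z)\ge(\Omega(x)-\varepsilon)d(y,z)$, completing the claim for $B_d(x,r)$, and hence, as noted, for a suitable $B_\Omega(x,r')$.

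The only mildly delicate step is the second case of the lower bound: one must ensure that a curve which leaves the ambient ball $B_d(x,R)$ is automatically long enough (in $L_\Omega^d$) to beat $(\Omega(x)-\varepsilon)d(y,z)$. This is precisely why the working radius $r$ is chosen much smaller than $R$ (here $r=R/4$). Everything else is a routine application of the local comparison \eqref{eq:variation_estimate_above}, additivity of $L_\Omega^d$, and the coincidence of topologies from \Cref{prop:same_topologies}.
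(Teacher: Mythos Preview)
Your proof is correct and is in fact cleaner than the paper's own argument. The paper builds a four-fold nested system of balls alternating between the $d$- and $d_\Omega$-metrics, $B_\Omega(x,r)\supset B_d(x,s)\supset B_\Omega(x,r')\supset B_d(x,s')$ with $r'<r/4$ and $s'<s/4$, and for the lower bound works with $\varepsilon'$-$d_\Omega$-near-geodesics, showing by a contradiction argument that such curves cannot escape $B_\Omega(x,r)$ when $\varepsilon'$ is small. You bypass all of this by working entirely in the $d$-metric: one ambient ball $B_d(x,R)$ on which $\Omega$ is controlled, a working ball of a quarter of the radius, and for the lower bound a direct case split on whether an \emph{arbitrary} competitor curve leaves $B_d(x,R)$. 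The escape case is handled by the elementary observation that the initial segment up to the first exit already has $d$-length exceeding $d(y,z)$, so the $(\Omega(x)-\varepsilon)$ bound follows without ever invoking $d_\Omega$-near-geodesics. This buys you a shorter proof that uses nothing about $d_\Omega$ beyond its definition as an infimum of $L_\Omega^d$-lengths, whereas the paper's route implicitly relies on the existence of $L_\Omega^d$-near-minimizers and their confinement.
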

\begin{proof}
    We choose any point \(x \in X\) and any \(\varepsilon > 0\); as the topologies induced by \(d\) and \(d_\Omega\) coincide, the function \(\Omega\) is continuous with respect to the topology induced by \(d_\Omega\). We can then consider a \(d_\Omega\)-ball \(B_\Omega(x,r)\) such that for every \(y\) in the ball it holds
    \[
        \abs{\Omega(x) - \Omega(y)} < \varepsilon.
    \]
    By \Cref{prop:same_topologies} we know we can find a \(d\)-ball \(B_d(x,s)\) entirely contained in \(B_\Omega(x,r)\). We again employ \Cref{prop:same_topologies} to find a \(d_\Omega\)-ball \(B_\Omega(x,r')\) contained in \(B(x,s)\) and we take it small enough that \(r' <r/4\). Once again, we take a ball \(B_d(x,s')\) contained in \(B_\Omega(x,r')\) and small enough so that \(s'< s/4\). We claim that on the neighbourhood \(U \coloneqq B_d(x,s')\) the two metrics are Lipschitz equivalent. Indeed, let us take \(y,z \in U\) and consider first an \(\varepsilon'\)-\(d\)-geodesic between them which we denote by \(\gamma_{\varepsilon'}^d\). Clearly \(d_\Omega(y,z) \leq L_\Omega^d(\gamma_{\varepsilon'}^d)\). Suppose that the geodesic leaves the ball \(B_d(x,s)\). We then consider a \(\varepsilon''\)-\(d\)-geodesic from \(x\) to \(y\), which we call \(\sigma_{\varepsilon''}^d\) and concatenate it with \(\gamma_{\varepsilon'}^d\). The concatenated curve starts at \(x\) and leaves \(B_d(x,s)\), so it has \(d\)-length bounded from below by \(s\):
    \[
        L^d(\sigma_{\varepsilon''}^d \gamma_{\varepsilon'}^d) \geq s.
    \]
    By additivity of the length and using that they are almost geodesics, we get that
    \[
        d(x,y) + d(y,z) + \varepsilon' + \varepsilon'' \geq s.
    \]
    On the other hand, the points \(y\) and \(z\) were taken to be inside \(B_d(x,s')\) and thus we can estimate the left hand side from above as
    \[
        3s' + \varepsilon' + \varepsilon'' \geq s.
    \]
    Our assumption is on the curve \(\gamma_{\varepsilon'}^d\) and not on the curve \(\sigma_{\varepsilon''}^d\), hence this estimate holds for any \(\varepsilon'' >0\). Sending it to zero we can rewrite the previous inequality as
    \[
        \varepsilon' \geq s-3s' > s',
    \]
    since we had \(4s'<s\). Hence we have shown that a necessary condition for the curve to leave the neighbourhood \(B_d(x,s)\) is that \(\varepsilon '> s'\). Conversely then, if we take any \(\varepsilon' < s'\), we get that any \(\varepsilon'\)-\(d\)-geodesic must lie inside \(B_d(x,s)\) and in particular inside \(B_\Omega(x,r)\). On such set, we can estimate the \(L_\Omega^d\)-length of curves as in \eqref{eq:variation_estimate_above} and see that
    \[
        d_\Omega(y,z) \leq L_\Omega^d(\gamma_{\varepsilon'}^d) \leq (\Omega(x) + \varepsilon)L^d(\gamma_{\varepsilon'}^d) \leq (\Omega(x) + \varepsilon) (d(x,y) + \varepsilon').
    \]
    As this holds for any \(\varepsilon'<s'\), sending it to zero we see that \(d_\Omega(y,z) \leq (\Omega(x) + \varepsilon) \cdot d(y,z)\).

    We can essentially repeat the same reasoning now, starting with a \(\varepsilon'\)-\(d_\Omega\)-geodesic between \(y\) and \(z\), which  we denote by \(\gamma_{\varepsilon'}^{d_\Omega}\). We can see analogously that if \(\varepsilon' <r'\) then the curve must be contained inside \(B_\Omega(x,r)\). In particular, we can still estimate as in \eqref{eq:variation_estimate_above} and see that
    \[
        d_\Omega(y,z) \geq -\varepsilon' + L_\Omega^d(\gamma_{\varepsilon'}^{d_\Omega}) \geq -\varepsilon' + (\Omega(x) - \varepsilon)L^d(\gamma_{\varepsilon'}^{d_\Omega}) \geq -\varepsilon' + (\Omega(x) - \varepsilon)d(y,z),
    \]
    so that sending \(\varepsilon' \to 0^+\) we get \(d_\Omega(y,z) \geq (\Omega(x) - \varepsilon)d(y,z)\).
    Overall, we found that
    \[
        (\Omega(x) - \varepsilon)d(y,z)\leq d_\Omega(y,z) \leq (\Omega(x) + \varepsilon)d(y,z)
    \]
    for every \(y,z \in U\) and thus the claim follows as \(U\) is open in both topologies and thus we can consider a smaller ball centered at \(x\) inside it with respect to both metrics.
\end{proof}

We now prove an important property of the length functional \(L_\Omega^d\), namely, that it is lower semicontinuous with respect to \(d_\Omega\)-uniform convergence.
\begin{proposition}\label{prop:length_lower_semicontinuous}
    Consider a length space \((X,d)\) and a conformal factor \(\Omega \colon X \to (0, +\infty)\). Consider a sequence of paths \(\{\gamma_n\}_n\) which converges uniformly with respect to the metric \(d_\Omega\) to a path \(\gamma\). Then it holds that
    \[
        L^d_\Omega(\gamma) \leq \liminf_{n \to +\infty} L^d_\Omega(\gamma_n).
    \]
\end{proposition}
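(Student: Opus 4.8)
The plan is to run the classical lower-semicontinuity argument for length functionals, but carefully localising so that we can trade $L^d_\Omega$ for $L^d$ via the local biLipschitz estimate of \Cref{coro:conformal_metrics_are_loc_lip}. First I would fix a partition $\sigma = \{t_i\}_{i=0}^m$ of the common parameter interval $[a,b]$; by definition $L^d_\Omega(\gamma)$ is the supremum of $V^d_{\Omega,\sigma}(\gamma)$ over all such $\sigma$, so it suffices to show $V^d_{\Omega,\sigma}(\gamma) \le \liminf_n L^d_\Omega(\gamma_n)$ for each fixed $\sigma$. Since $V^d_{\Omega,\sigma}(\gamma) = \sum_{i} \min_{t\in[t_i,t_{i+1}]}\Omega(\gamma(t))\, d(\gamma(t_i),\gamma(t_{i+1}))$, I would like to pass to the limit termwise: $d_\Omega$-uniform convergence implies (via \Cref{coro:conformal_metrics_are_loc_lip}, which gives local biLipschitz equivalence, hence convergence in $d$ as well) that $\gamma_n(t_i) \to \gamma(t_i)$ in $d$ for each $i$, so $d(\gamma_n(t_i),\gamma_n(t_{i+1})) \to d(\gamma(t_i),\gamma(t_{i+1}))$, and continuity of $\Omega$ handles the minima.

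The subtlety is that $\min_{t\in[t_i,t_{i+1}]}\Omega(\gamma_n(t))$ need not converge to $\min_{t\in[t_i,t_{i+1}]}\Omega(\gamma(t))$, because the minimum of $\Omega$ along $\gamma_n$ over that subinterval could dip low at a parameter where $\gamma_n$ has drifted away from $\gamma$; but $d_\Omega$-uniform convergence plus continuity of $\Omega$ actually does force $\Omega\circ\gamma_n \to \Omega\circ\gamma$ uniformly on $[a,b]$ (uniform convergence in $d_\Omega$ gives uniform convergence in $d$ on the compact image, and $\Omega$ is uniformly continuous on a neighbourhood of the compact set $\gamma([a,b])$ — one covers $\gamma([a,b])$ by finitely many balls on which \Cref{coro:conformal_metrics_are_loc_lip} applies and $\Omega$ has small oscillation). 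Hence $\min_{[t_i,t_{i+1}]}\Omega\circ\gamma_n \to \min_{[t_i,t_{i+1}]}\Omega\circ\gamma$. Combining, $V^d_{\Omega,\sigma}(\gamma_n) \to V^d_{\Omega,\sigma}(\gamma)$, and since $V^d_{\Omega,\sigma}(\gamma_n) \le L^d_\Omega(\gamma_n)$ for every $n$, we obtain
\[
    V^d_{\Omega,\sigma}(\gamma) = \lim_{n\to\infty} V^d_{\Omega,\sigma}(\gamma_n) \le \liminf_{n\to\infty} L^d_\Omega(\gamma_n).
\]
Taking the supremum over all partitions $\sigma$ of $[a,b]$ on the left yields $L^d_\Omega(\gamma) \le \liminf_n L^d_\Omega(\gamma_n)$, as claimed.

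There is a minor bookkeeping point I would address first: if $\liminf_n L^d_\Omega(\gamma_n) = +\infty$ there is nothing to prove, so one may assume it is finite and pass to a subsequence realising the $\liminf$; and one should note that all the $\gamma_n$ (and $\gamma$) share a common parameter interval, or else reparametrise, since "$\gamma_n \to \gamma$ uniformly" presupposes this. The main obstacle — and really the only non-routine part — is the uniform convergence $\Omega\circ\gamma_n \to \Omega\circ\gamma$; once that is in hand the rest is the standard termwise limit over a fixed partition followed by a supremum. An alternative route avoiding this, should the uniform-oscillation argument prove delicate near non-length-space pathologies, is to only use the lower bound $\min_{[t_i,t_{i+1}]}\Omega\circ\gamma_n \ge \min_{[t_i,t_{i+1}]}\Omega\circ\gamma - \varepsilon_n$ with $\varepsilon_n \to 0$, which is all that is needed for the inequality and follows directly from local biLipschitz control plus uniform continuity of $\Omega$ on a fixed compact neighbourhood of $\gamma([a,b])$.
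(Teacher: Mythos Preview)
Your proposal is correct and follows essentially the same approach as the paper: fix a partition $\sigma$, show $V^d_{\Omega,\sigma}(\gamma_n)\to V^d_{\Omega,\sigma}(\gamma)$ by combining pointwise $d$-convergence at the partition points with convergence of the minima of $\Omega\circ\gamma_n$, then use $V^d_{\Omega,\sigma}(\gamma_n)\le L^d_\Omega(\gamma_n)$ and take the supremum over $\sigma$. Your packaging of the minima step as ``$\Omega\circ\gamma_n\to\Omega\circ\gamma$ uniformly on $[a,b]$'' is slightly cleaner than the paper's per-interval covering argument, and your appeal to \Cref{coro:conformal_metrics_are_loc_lip} is legitimate (it precedes this proposition and does not depend on it), though strictly speaking the equality of topologies from \Cref{prop:same_topologies} already suffices to pass from $d_\Omega$-convergence to $d$-convergence at each partition point.
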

\begin{proof}
    First, we notice that for any partition \(\sigma\) of \([a,b]\) applying triangle inequality yields
    \[
        \begin{split}
         &\abs{ V_{\Omega,\sigma}^d (\gamma) -V_{\Omega,\sigma}^d(\gamma_n)}=\\
            &\ =\abs{\sum_{i=1}^r \min_{t \in [t_i,t_{i+1}]}\Omega(\gamma(t)) \, d(\gamma(t_i),\gamma(t_{i+1})) -  \min_{t \in [t_i,t_{i+1}]} \Omega(\gamma_n(t)) \, d(\gamma_n(t_i), \gamma_n (t_{i+1}))} \\
            &\ =\Bigg\vert\sum_{i=1}^r \left(\min_{t \in [t_i,t_{i+1}]} \Omega(\gamma(t)) - \min_{t \in [t_i,t_{i+1}]} \Omega(\gamma_n(t)) \right) d(\gamma_n(t_i),\gamma_n(t_{i+1}))\ \\
            & \quad +\sum_{i=1}^r \min_{t \in [t_i,t_{i+1}]}\Omega(\gamma(t)) \left( d(\gamma(t_i),\gamma(t_{i+1}))- d(\gamma_n(t_{i}), \gamma_n(t_{i+1})) \right) \Bigg\vert\\
            & \ \leq \sum_{i=1}^r \Bigg\vert \min_{t \in [t_i,t_{i+1}]} \Omega(\gamma(t)) - \min_{t \in [t_i,t_{i+1}]} \Omega(\gamma_n(t)) \Bigg\vert d(\gamma_n(t_i),\gamma_n(t_{i+1}))\ \\
            &\quad +\sum_{i=1}^r \min_{t \in [t_i,t_{i+1}]}\Omega(\gamma(t)) \Bigg\vert d(\gamma(t_i),\gamma(t_{i+1}))- d(\gamma_n(t_{i}), \gamma_n(t_{i+1}))  \Bigg\vert. \\
        \end{split}
    \]
    The last sum obviously goes to zero as \(n \to +\infty\) since we have pointwise convergence. To estimate the other sum, we work on every \([t_i,t_{i+1}]\). For any \(t\) and any \(\varepsilon>0\), there exists a \(\delta_t>0\) such that \(\Omega(B_{\Omega}(\gamma(t),\delta_t)) \subset B_\R(\Omega(\gamma(t)), \varepsilon)\).
    By compactness we can find a finite subcover and thus a positive \(\rho>0\) such that for any point \(\gamma(t)\) the ball \(B_{\Omega}(\gamma(t),\rho)\) is contained in the finite union \(U\) of the balls \(B_{\Omega}(\gamma(t),\delta_t)\); we can take \(\rho\) to be smaller than the (finitely many) radii \(\delta_t\) of the balls defining \(U\). Then we can use uniform convergence of the sequence \(\gamma_{n \vert[t_i,t_{i+1}]}\) with respect to \(d_\Omega\) to get that eventually in \(n\) the \(C^0\) distance of the sequence from \(\gamma\vert_{[t_i,t_{i+1}]}\) is less than \(\rho\); in particular, the sequence is contained in \(U\). Let \(t\) be the point in \([t_i,t_{i+1}]\) where \(\Omega\circ \gamma_n\) attains its minimum. Then there exists a ball \(B_{\Omega}(\gamma(t'), \delta_{t'})\) of the finite covering \(U\) which contains \(\gamma_n(t)\). Hence we get
    \[
        \min_{s \in [t_i,t_{i+1}]} \Omega(\gamma_n(s))=\Omega(\gamma_n(t))  \geq \Omega(\gamma(t')) - \varepsilon \geq \min_{s\in [t_i,t_{i+1}]} \Omega(\gamma(s)) - \varepsilon.
    \]
    Conversely, let \(t\) be the point in \([t_i,t_{i+1}]\) where \(\Omega \circ \gamma\) takes its minimum. As \(\gamma(t) \in U\), there exists an open ball \(B_\Omega(\gamma(t'), \delta_{t'})\) of the finite subcover containing \(\gamma(t)\). Then the point \(\gamma_n(t')\) belongs to \(B_\Omega(\gamma(t'),\rho) \subset B_\Omega(\gamma(t'), \delta_{t'})\) as we took \(\rho\) to be smaller than the radii of the finite subcover. We then have
    \[
        \begin{split}
            \min_{s \in [t_i,t_{i+1}]} \Omega(\gamma(s)) &=\Omega(\gamma(t))  \geq \Omega(\gamma(t')) - \varepsilon \geq \Omega(\gamma_n(t')) - 2\varepsilon \\
            &\geq \min_{s\in [t_i,t_{i+1}]} \Omega(\gamma_n(s)) - 2\varepsilon.
        \end{split}
    \]
    Hence we have found that, for \(n\) big enough
    \[
        \abs{\min_{s \in [t_i,t_{i+1}]} \Omega(\gamma(s)) -\min_{s\in [t_i,t_{i+1}]} \Omega(\gamma_n(s))} \leq 2\varepsilon.
    \]
    Since the number of intervals \([t_i,t_{i+1}]\) is finite, we can find a common value \(N\) such that all of the bounds on each interval hold. Since the sequence is eventually in a neighbourhood of \(\gamma\) which has bounded diameter, the terms of the form \(d(\gamma_n(t_i), \gamma_n(t_{i+1}))\) are uniformly bounded and thus also the first sum is going to zero. This proves that
    \[
        V_{\Omega,\sigma}(\gamma_n)  \to V_{\Omega,\sigma}(\gamma) \ \text{as} \ n \to +\infty.
    \]
    So, for a fixed partition, the variations of \(\gamma_n\) converge to the variation of \(\gamma\). We then choose a partition \(\sigma\) such that, for \(\varepsilon>0\) we have \(L_\Omega^d(\gamma) - \varepsilon < V_{\Omega,\sigma}^d(\gamma)\); if \(L_\Omega^d(\gamma) = +\infty\), we can choose a partition so that \(V_{\Omega,\sigma}(\gamma) > M\) for a fixed \(M>0\) and the following reasoning applies analogously. Since we know that \(V_{\Omega, \sigma}^d(\gamma_n) \to V_{\Omega,\sigma}^d(\gamma)\), we have that eventually in \(n\) it holds that
    \[
        L_\Omega^d(\gamma) - \varepsilon < V_{\Omega, \sigma}^d(\gamma_n) \leq L_\Omega^d(\gamma_n).
    \]
    Taking the \(\liminf\) as \(n \to +\infty\), we get
    \[
        L_\Omega^d(\gamma) - \varepsilon \leq \liminf_{n \to +\infty} L_\Omega^d(\gamma_n).
    \]
    Since this holds for any \(\varepsilon>0\), this gives that 
    \[
        L_\Omega^d(\gamma) \leq \liminf_{n \to +\infty} L_\Omega^d(\gamma_n)
    \]
    which proves lower semicontinuity of \(L_\Omega^d\).
\end{proof}

Our next goal is to explore the relationship between $L_{\Omega}^d$ and the variational length associated to $d_{\Omega}$. 
%$L^{d_{\Omega}}$. 
%We will prove that, if the starting space is a length space, then the two length functionals are actually equal. 
To that aim, we now define $d_{\Omega}$-variation and the length $L^{d_{\Omega}}$ it induces. We then prove that the length functionals $L^d_{\Omega}$ and $L^{d_{\Omega}}$ actually agree on curves when the starting space is a length space.
%We first define conformal length metric.

\begin{definition}
Let $\gamma:[a,b]\rightarrow X$ be a path in $(X,d_{{\Omega}})$.\ We define the {\em $d_\Omega$-variation $V^{d_{\Omega}}_\sigma(\gamma)$ of $\gamma$ with respect to a partition $\sigma=\{t_i\}_{i=0}^{m}$} and the {\em variational length  $L^{d_{{\Omega}}}(\gamma)$ of $\gamma$} by
\begin{align*}
    V^{d_{\Omega}}_\sigma(\gamma)& \coloneqq \sum_{i=0}^{m-1}d_{\Omega}(\gamma(t_i),\gamma(t_{i+1})),\\
    L^{d_{{\Omega}}}(\gamma)&\coloneqq \sup\bigg\{ V^{d_{\Omega}}_\sigma(\gamma): \sigma\textup{ partition of }[a,b]\bigg\}.
\end{align*}
\end{definition}
%In the following proof, we will see that the length functionals \(L_\Omega^d\) and \(L^{d_\Omega}\) agree on curves  when \((X,d)\) is a length space.
\begin{proposition}\label{prop:conformal_length_and_induced_length_coincide}
    Let \((X,d)\) be a length space and consider a conformal factor \(\Omega \colon X \to (0, +\infty)\). Then, for every path \(\gamma\) we have that \(L_\Omega^d(\gamma) = L^{d_\Omega}(\gamma)\).
\end{proposition}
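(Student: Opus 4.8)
The plan is to prove the two inequalities $L^{d_\Omega}(\gamma)\le L_\Omega^d(\gamma)$ and $L_\Omega^d(\gamma)\le L^{d_\Omega}(\gamma)$ separately, mirroring the structure of the Lorentzian argument in \Cref{lemma:strongly_causal_implies_intrinsic} but with all the $\min/\sup$ and inequality directions flipped. For the first inequality, I would start from an arbitrary partition $\sigma=\{t_i\}_{i=0}^m$ of $[a,b]$ and use that, by definition, $d_\Omega(\gamma(t_i),\gamma(t_{i+1}))\le L_\Omega^d(\gamma|_{[t_i,t_{i+1}]})$ since the restriction of $\gamma$ to $[t_i,t_{i+1}]$ is an admissible path joining those two points. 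Summing over $i$ and applying additivity of $L_\Omega^d$ (established in the previous proposition) gives $V^{d_\Omega}_\sigma(\gamma)\le L_\Omega^d(\gamma)$; taking the supremum over all partitions $\sigma$ yields $L^{d_\Omega}(\gamma)\le L_\Omega^d(\gamma)$.

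For the reverse inequality $L_\Omega^d(\gamma)\le L^{d_\Omega}(\gamma)$, I would fix $\varepsilon>0$ and exploit continuity of $\Omega$ together with the fact that $(X,d)$ is a length space. By continuity, each point $\gamma(t)$ has a $d_\Omega$-ball $U_t$ on which the oscillation of $\Omega$ is at most $\varepsilon$; since $(X,d)$ is a length space, \Cref{coro:conformal_metrics_are_loc_lip} (and \Cref{prop:same_topologies}) lets me additionally arrange that on a smaller ball $V_t\subset U_t$ any $d_\Omega$-almost-minimizing curve between points of $V_t$ stays in $U_t$ — this is the metric analogue of the quasi-strong causality neighbourhoods used in the Lorentzian proof. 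Compactness of $\gamma([a,b])$ and Lebesgue's number lemma then give a $\delta>0$ such that $\gamma|_{[t,t']}\subset V_{t_j}$ whenever $|t-t'|<\delta$. Now take any partition $\sigma$ with $|\sigma|<\delta$ and, for each $i$ and each $\widetilde\varepsilon>0$, pick an admissible curve $\eta_i$ from $\gamma(t_i)$ to $\gamma(t_{i+1})$ with $L^d_\Omega(\eta_i)\le d_\Omega(\gamma(t_i),\gamma(t_{i+1}))+\widetilde\varepsilon$; by the near-minimizing choice $\eta_i$ stays in $U_{t_j}$. Using $mL^d\le L^d_\Omega\le ML^d$ on $U_{t_j}$ (the estimate \eqref{eq:variation_estimate_above}) together with $d(\gamma(t_i),\gamma(t_{i+1}))\le L^d(\eta_i)$, one estimates
\[
    V^d_{\Omega,\sigma}(\gamma)=\sum_{i=0}^{m-1}\min_{t\in[t_i,t_{i+1}]}\Omega(\gamma(t))\,d(\gamma(t_i),\gamma(t_{i+1}))\le \sum_{i=0}^{m-1}\frac{\min_{t\in[t_i,t_{i+1}]}\Omega(\gamma(t))}{\min_{s\in[t_i,t_{i+1}]}\Omega(\eta_i(s))}L^d_\Omega(\eta_i),
\]
and since all relevant curves lie in $U_{t_j}$ the ratio is at most $1+O(\varepsilon)$ (with the $O(\varepsilon)$ controlled uniformly in $i$ using the lower bound $m:=\min_{[a,b]}\Omega\circ\gamma>0$). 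Letting $\widetilde\varepsilon\to0$ replaces $L^d_\Omega(\eta_i)$ by $d_\Omega(\gamma(t_i),\gamma(t_{i+1}))$, so $V^d_{\Omega,\sigma}(\gamma)\le(1+O(\varepsilon))\,V^{d_\Omega}_\sigma(\gamma)\le(1+O(\varepsilon))\,L^{d_\Omega}(\gamma)$; taking the supremum over partitions with $|\sigma|<\delta$ (which computes $L^d_\Omega(\gamma)$, by the analogue of \Cref{prop:basic_properties_conformal_length}(i) for the metric case) and then letting $\varepsilon\to0$ finishes the argument.

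The main obstacle is the reverse inequality, specifically justifying that the near-optimal connecting curves $\eta_i$ can be forced to lie in a neighbourhood where $\Omega$ has small oscillation: in the Lorentzian case this is exactly what quasi-strong causality delivers, and here the replacement is the local biLipschitz equivalence of $d$ and $d_\Omega$ from \Cref{coro:conformal_metrics_are_loc_lip} — one must check that a $d_\Omega$-short curve between two nearby points cannot wander far in the $d$-metric, which follows because the two metrics are comparable on a fixed ball and a curve escaping that ball would have $d$-length, hence $d_\Omega$-length, bounded below by a fixed positive constant. Once this containment is in place, the rest is the routine bookkeeping with the ratio estimate already used in \Cref{prop:length_lower_semicontinuous} and \eqref{eq:variation_estimate_above}. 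I would also note at the end that this proposition, combined with \Cref{prop:length_lower_semicontinuous}, shows $(X,d_\Omega)$ is again a length space, so conformal equivalence of length spaces can be set up exactly as in \Cref{Conformal transformations as an equivalence relation}.
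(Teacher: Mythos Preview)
Your argument for the easy inequality $L^{d_\Omega}(\gamma)\le L_\Omega^d(\gamma)$ is exactly the paper's. For the reverse inequality, however, you take a genuinely different route. The paper does \emph{not} mirror \Cref{lemma:strongly_causal_implies_intrinsic}; instead it builds, for each fine partition, a concatenation $\eta_\varepsilon$ of $d_\Omega$-almost-minimizers between consecutive partition points, checks that $L_\Omega^d(\eta_\varepsilon)\le L^{d_\Omega}(\gamma)+\varepsilon$ and that $\eta_\varepsilon\to\gamma$ uniformly in $d_\Omega$, and then invokes the lower semicontinuity of $L_\Omega^d$ established in \Cref{prop:length_lower_semicontinuous} to conclude $L_\Omega^d(\gamma)\le\liminf L_\Omega^d(\eta_\varepsilon)\le L^{d_\Omega}(\gamma)$.

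Your approach is correct as well: the confinement of the near-minimizers $\eta_i$ in $U_{t_j}$ follows, as you say, simply from the fact that $d_\Omega$-short curves stay in $d_\Omega$-balls (so one does not even need the full strength of \Cref{coro:conformal_metrics_are_loc_lip}), and the ratio estimate then goes through exactly as in the Lorentzian lemma with the inequalities flipped. The trade-off is that the paper's proof is shorter once \Cref{prop:length_lower_semicontinuous} is available and avoids the oscillation bookkeeping, whereas your argument is self-contained (it does not use \Cref{prop:length_lower_semicontinuous}) and makes the parallel with the Lorentzian case transparent by exhibiting the length-space property as the metric replacement for quasi-strong causality.
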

\begin{proof}
    One inequality is straightforward: for any partition \(\sigma\) of \([a,b]\) we have that
    \[
        V^{d_\Omega}_{\sigma}(\gamma) = \sum_{i=1}^m d_{\Omega}(\gamma(t_i),\gamma(t_{i+1})) \leq \sum_{i=1}^m L_{\Omega}^d(\gamma\vert_{[t_i, t_{i+1}]}) = L_{\Omega}^d(\gamma),
    \]
    hence \(L^{d_\Omega}(\gamma) \leq L_\Omega^d(\gamma)\).
    
    For the opposite inequality, as \(\gamma\) is continuous in both topologies it is in particular uniformly continuous with respect to \(d_\Omega\) and thus
    for any \(\varepsilon > 0\) there exists \(\delta>0\) such that for any partition \(\sigma=\{t_i\}_{i=1}^m\) with \(\abs{\sigma} <\delta\) we have
    \[
        d_\Omega(\gamma(t_i),\gamma(t_{i+1})) \leq \varepsilon \quad \forall i \in \{1, \dots, m-1\},
    \]
    For every interval \([t_i,t_{i+1}]\) we can find a curve \(\eta_i \colon[t_i,t_{i+1}] \to X\) with \(\eta_i(t_i) = \gamma(t_i)\) and \(\eta_i(t_{i+1}) = \gamma(t_{i+1})\) such that
    \[
        L_\Omega^d(\eta_i) \leq d_\Omega(\gamma(t_i),\gamma(t_{i+1})) + \frac{\varepsilon}{m}.
    \]
    Concatenating such curves yields a curve \(\eta_\varepsilon\) such that
    \begin{equation}\label{eq:lower_semicontinuity_dream}
        L_\Omega^d(\eta_\varepsilon) = \sum_{i=1}^{m-1} L_\Omega^d(\eta_i) \leq \sum_{i=1}^{m-1} \left( d_\Omega(\gamma(t_i),\gamma(t_{i+1})) + \frac{\varepsilon}{m} \right)\leq L^{d_\Omega}(\gamma) + \varepsilon.
    \end{equation}
    We notice that for any \(t \in [a,b]\) we can look at the interval \([t_i,t_{i+1}]\) containing it and using uniform continuity together with the fact that \(\gamma(t_i) = \eta_\varepsilon(t_i) = \eta_i(t_i)\) we see that
    \[
        \begin{split}
            d_\Omega(\gamma(t),\eta_\varepsilon(t)) &\leq d_\Omega(\gamma(t),\gamma(t_i)) + d_\Omega(\gamma(t_i),\eta_\varepsilon(t)) \\
            &\leq \varepsilon + L_\Omega^d(\eta_i) \leq \varepsilon + \frac{\varepsilon}{m} + d_\Omega(\gamma(t_i), \gamma(t_{i+1}))  < 3\varepsilon.
        \end{split}
    \]
    Hence the sequence of curves \(\{\eta_\varepsilon\}_\varepsilon\) converges uniformly with respect to the \(d_\Omega\) metric to \(\gamma\); we set \(\varepsilon = 1/n\) so that we have countably many curves. Applying \Cref{prop:length_lower_semicontinuous} we get
    \[
        L^d_\Omega(\gamma) \leq \liminf_{n \to +\infty} L_\Omega(\eta_{1/n}) \leq L{d_{\Omega}}(\gamma),
    \]
    which proves the missing inequality.
\end{proof}
\begin{corollary}\label{coro:conformally_changed_space_is_length_space}
    If \((X,d)\) is a metric space and \(\Omega \colon X \to (0, +\infty)\)  is a conformal factor, then the space \((X,d_\Omega)\) is a length space.
\end{corollary}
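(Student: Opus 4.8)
The plan is to obtain the corollary as an essentially formal consequence of \Cref{prop:conformal_length_and_induced_length_coincide}. Recall from the definition recalled at the start of \Cref{Preliminaries} that a metric space is a length space exactly when its distance coincides with the variational length metric induced by its own length structure. Thus, writing $(d_\Omega)_L$ for the variational length metric associated with $d_\Omega$, the whole task reduces to checking the single identity $d_\Omega = (d_\Omega)_L$, i.e.\ that the infimum of $L_\Omega^d$-lengths over paths joining two points equals the infimum of $L^{d_\Omega}$-lengths over those same paths.

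First I would observe that the class of admissible paths is unaffected by the conformal change: by \Cref{prop:same_topologies} the metrics $d$ and $d_\Omega$ induce the same topology, so a path is continuous with respect to $d$ if and only if it is continuous with respect to $d_\Omega$; hence the length structures $(X,A,L_\Omega^d)$ and $(X,A,L^{d_\Omega})$ are built over the same set $A$ of paths. Then I would invoke \Cref{prop:conformal_length_and_induced_length_coincide}, which asserts $L_\Omega^d(\gamma) = L^{d_\Omega}(\gamma)$ for every $\gamma \in A$. Combining these two facts with the very definition of $d_\Omega$ as the infimum of $L_\Omega^d(\gamma)$ over all $\gamma \in A$ joining $x$ to $y$, one gets $d_\Omega(x,y) = \inf_\gamma L^{d_\Omega}(\gamma) = (d_\Omega)_L(x,y)$ for all $x,y \in X$, which is precisely the length-space condition for $(X,d_\Omega)$.

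All of the substance sits in \Cref{prop:conformal_length_and_induced_length_coincide} (which in turn rests on the lower semicontinuity of $L_\Omega^d$ established in \Cref{prop:length_lower_semicontinuous}); granting that, there is no genuine obstacle here — the corollary is pure bookkeeping of definitions. The only point deserving a word of caution is the hypothesis: the results cited above are phrased for $(X,d)$ a length space, and indeed for a mere metric space $d_\Omega$ may be $+\infty$ between some points, in which case the statement should be understood on each maximal subset on which $d_\Omega$ is finite — each such subset is then a length space by the very same argument.
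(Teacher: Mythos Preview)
Your argument is correct and matches the paper's intended approach: the corollary is stated immediately after \Cref{prop:conformal_length_and_induced_length_coincide} without proof, so the paper treats it as the same bookkeeping you spell out --- equal path classes by \Cref{prop:same_topologies}, equal length functionals by \Cref{prop:conformal_length_and_induced_length_coincide}, hence equal infima. Your observation about the hypothesis is also apt: the corollary as printed says ``metric space'' but both results you invoke (and the only later use of the corollary, in \Cref{thm:make_a_space_complete}) assume \((X,d)\) is a length space, so this is almost certainly a typo in the statement rather than a genuinely more general claim.
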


Analogously to the proof in \Cref{prop:transitivity_conformal_length}, by using \Cref{prop:conformal_length_and_induced_length_coincide} one can show that doing consecutive conformal changes of a given length metric is the same as doing a single change by the product of the conformal factors:
\begin{proposition}\label{prop:transitivity_metric_case}
    Let \((X,d)\) be a length space and \(\Omega,\Omega' \colon X \to (0, +\infty)\) be conformal factors. Then it holds that \(L^{d_\Omega}_{\Omega'} = L^d_{\Omega\cdot\Omega'}\). In particular, we have that \((d_\Omega)_{\Omega'} = d_{\Omega \cdot \Omega'}\).
\end{proposition}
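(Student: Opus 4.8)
The plan is to mirror the proof of \Cref{prop:transitivity_conformal_length}, with the variational length $L^d$ playing the role of $L^{\tau}$, and using \Cref{prop:conformal_length_and_induced_length_coincide} to identify $L_\Omega^d$ with the variational length $L^{d_\Omega}$ induced by the conformally changed metric. Fix a continuous path $\gamma \colon [a,b] \to X$. If $L^d(\gamma) = +\infty$, then by \eqref{eq:variation_estimate_above} (applied to $d$ and, via the same reasoning together with \Cref{prop:conformal_length_and_induced_length_coincide}, to $d_\Omega$) all of $L_\Omega^d(\gamma)$, $L_{\Omega'}^{d_\Omega}(\gamma)$ and $L_{\Omega\cdot\Omega'}^d(\gamma)$ are $+\infty$ as well, and there is nothing to prove; so assume $L^d(\gamma) < \infty$. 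Given $\varepsilon > 0$, I would use uniform continuity of $\Omega\circ\gamma$, $\Omega'\circ\gamma$ and $(\Omega\cdot\Omega')\circ\gamma$ to pick $\delta>0$ realising the modulus of continuity $(\varepsilon,\delta)$ for all three, shrinking $\varepsilon$ so that $2\varepsilon$ is below the minimum of all three factors along $\gamma$. Fixing a partition $\sigma = \{t_i\}_{i=0}^m$ with $\abs{\sigma} < \delta$ and setting $M_i := \max \Omega\circ\gamma$, $M_i' := \max \Omega'\circ\gamma$, $M_i'' := \max (\Omega\cdot\Omega')\circ\gamma$ (maxima over $[t_i,t_{i+1}]$), the estimate of \eqref{eq:transitivity_product_estimate} carries over verbatim: $M_i'' \le M_iM_i'$ while $M_i'' \ge (M_i-\varepsilon)M_i'$ (evaluate $\Omega\cdot\Omega'$ at a point where $\Omega\circ\gamma$ attains its maximum on the subinterval), so $\abs{M_i'' - M_iM_i'} \le \varepsilon M$, with $M$ the maximum of all conformal factors along $\gamma$ and the error uniform in $i$.

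Next I would establish the three subinterval comparison estimates, analogous to \eqref{eq:estimating_L_omega}--\eqref{eq:estimating_L_omega_times_omega'}. The first, $(M_i-\varepsilon)L^d(\gamma\vert_{[t_i,t_{i+1}]}) \le L_\Omega^d(\gamma\vert_{[t_i,t_{i+1}]}) \le (M_i+\varepsilon) L^d(\gamma\vert_{[t_i,t_{i+1}]})$, follows as in \eqref{eq:variation_estimate_above}, since by uniform continuity every $\min$ of $\Omega\circ\gamma$ over a sub-subinterval of $[t_i,t_{i+1}]$ lies in $[M_i-\varepsilon, M_i]$; the third estimate is identical with $\Omega\cdot\Omega'$ and $M_i''$ in place of $\Omega$ and $M_i$. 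For the middle estimate, $(M_i'-\varepsilon)L_\Omega^d(\gamma\vert_{[t_i,t_{i+1}]}) \le L_{\Omega'}^{d_\Omega}(\gamma\vert_{[t_i,t_{i+1}]}) \le (M_i'+\varepsilon) L_\Omega^d(\gamma\vert_{[t_i,t_{i+1}]})$, I would run the same argument for the metric $d_\Omega$ — which is a genuine length space by \Cref{coro:conformally_changed_space_is_length_space}, and on which $\Omega'$ is still continuous because $d_\Omega$ induces the same topology by \Cref{prop:same_topologies} — obtaining $(M_i'-\varepsilon)L^{d_\Omega}(\gamma\vert_{[t_i,t_{i+1}]}) \le L_{\Omega'}^{d_\Omega}(\gamma\vert_{[t_i,t_{i+1}]}) \le (M_i'+\varepsilon) L^{d_\Omega}(\gamma\vert_{[t_i,t_{i+1}]})$ and then rewriting $L^{d_\Omega}(\gamma\vert_{[t_i,t_{i+1}]}) = L_\Omega^d(\gamma\vert_{[t_i,t_{i+1}]})$ via \Cref{prop:conformal_length_and_induced_length_coincide}.

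Then I would combine these exactly as in \Cref{prop:transitivity_conformal_length}: invert the first and middle estimates to sandwich $L^d(\gamma\vert_{[t_i,t_{i+1}]})$ and $L_\Omega^d(\gamma\vert_{[t_i,t_{i+1}]})$ between multiples of $L_{\Omega'}^{d_\Omega}(\gamma\vert_{[t_i,t_{i+1}]})$, substitute into the third estimate to get $\frac{M_i''-\varepsilon}{(M_i'+\varepsilon)(M_i+\varepsilon)} L_{\Omega'}^{d_\Omega}(\gamma\vert_{[t_i,t_{i+1}]}) \le L_{\Omega\cdot\Omega'}^d(\gamma\vert_{[t_i,t_{i+1}]}) \le \frac{M_i''+\varepsilon}{(M_i'-\varepsilon)(M_i-\varepsilon)} L_{\Omega'}^{d_\Omega}(\gamma\vert_{[t_i,t_{i+1}]})$, and use $\abs{M_i''-M_iM_i'}\le\varepsilon M$ to rewrite both denominators as $M_i''+O(\varepsilon)$ with $O(\varepsilon)$ independent of $i$. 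The prefactors then simplify to $1\pm O(\varepsilon)$; summing over $i$ by additivity of $L_{\Omega'}^{d_\Omega}$ and of $L_{\Omega\cdot\Omega'}^d$ and letting $\varepsilon\to0^+$ gives $L_{\Omega'}^{d_\Omega}(\gamma) = L_{\Omega\cdot\Omega'}^d(\gamma)$. Finally, since by \Cref{prop:same_topologies} the admissible (continuous) paths for $d$ and $d_\Omega$ coincide, taking the infimum over all paths joining two given points yields $(d_\Omega)_{\Omega'} = d_{\Omega\cdot\Omega'}$.

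The hard part will be the $\varepsilon$-bookkeeping rather than any conceptual point: one must be careful that the error terms in the final sandwich are genuinely uniform in $i$ so that they survive the summation over the partition, and that the subinterval estimate for the metric $d_\Omega$ is applied correctly, which is where \Cref{prop:conformal_length_and_induced_length_coincide}, \Cref{prop:same_topologies} and \Cref{coro:conformally_changed_space_is_length_space} enter. All of these inputs are already available, so no new idea is needed beyond transcribing the Lorentzian argument with $\min/\sup$ in place of $\max/\inf$.
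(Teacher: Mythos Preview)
Your proposal is correct and follows exactly the route the paper indicates: it mirrors the proof of \Cref{prop:transitivity_conformal_length} in the metric setting, invoking \Cref{prop:conformal_length_and_induced_length_coincide} to replace $L^{d_\Omega}$ by $L_\Omega^d$ on each subinterval, and using \Cref{prop:same_topologies} and \Cref{coro:conformally_changed_space_is_length_space} to justify that the same estimates apply to $(X,d_\Omega)$. The paper itself gives no details beyond ``analogously to \Cref{prop:transitivity_conformal_length}, using \Cref{prop:conformal_length_and_induced_length_coincide}'', so your write-up is in fact more complete; the only minor slip is in the parenthetical justification of $M_i'' \ge (M_i-\varepsilon)M_i'$, which comes from evaluating $\Omega\cdot\Omega'$ at the maximiser of $\Omega'\circ\gamma$ (or equivalently from $M_i'' \ge (\min\Omega\circ\gamma)\cdot M_i'$), not at the maximiser of $\Omega\circ\gamma$ --- but either route yields $\abs{M_i''-M_iM_i'}\le\varepsilon M$, so the argument goes through.
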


By \Cref{prop:same_topologies}, we know that the topologies induced by \(d\) and \(d_\Omega\) agree. In particular, curves are continuous with respect to \(d\) if and only if they are continuous with respect to \(d_\Omega\). In the following lemma, we prove that the same holds true for absolutely continuous curves.
\begin{lemma}\label{lemma:equivalent_absolute_continuity}
    Let \((X,d)\) be a length space and \(\Omega \colon X \to (0, +\infty)\) a conformal factor on \(X\). Consider a curve \(\gamma \colon [a,b] \to X\). Then \(\gamma\) is absolutely continuous with respect to \(d\) if and only if it is absolutely continuous with respect to \(d_\Omega\).
\end{lemma}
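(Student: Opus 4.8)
The plan is to deduce the statement from the local biLipschitz equivalence of $d$ and $d_\Omega$ proven in \Cref{coro:conformal_metrics_are_loc_lip}, after promoting it to a uniform two-sided comparison along the compact image of $\gamma$, and then combining this with a subdivision argument that preserves the total length of a family of subintervals.

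First I would record that, by \Cref{prop:same_topologies}, the metrics $d$ and $d_\Omega$ induce the same topology, so $\gamma$ is continuous with respect to $d$ if and only if it is continuous with respect to $d_\Omega$; in particular $K \coloneqq \gamma([a,b])$ is compact in the common topology, and the continuous positive function $\Omega$ attains a minimum $m > 0$ and a maximum $M < \infty$ on $K$. Fix $\varepsilon_0 \in (0,m)$ and set $C \coloneqq \max\big(M + \varepsilon_0,\,(m-\varepsilon_0)^{-1}\big) \geq 1$. For each $x \in K$, \Cref{coro:conformal_metrics_are_loc_lip} gives a ball $B_x$ (open in the common topology) such that $C^{-1}\,d(y,z) \leq d_\Omega(y,z) \leq C\,d(y,z)$ for all $y,z \in B_x$. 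Since finitely many of the sets $\gamma^{-1}(B_x)$ cover the compact interval $[a,b]$, the Lebesgue number lemma yields an $\eta > 0$ such that every subinterval of $[a,b]$ of length less than $\eta$ is mapped by $\gamma$ into a single $B_x$; in particular the two-sided estimate above holds across the endpoints of such a subinterval. (Chaining finitely many overlapping balls also shows $d_\Omega$ is finite on $K$, so absolute continuity with respect to $d_\Omega$ is meaningful here; this is in any case subsumed by the estimate below.)

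Next, suppose $\gamma$ is absolutely continuous with respect to $d$. Given $\varepsilon > 0$, let $\delta > 0$ be a modulus of $d$-absolute continuity for the threshold $\varepsilon/C$. Take any finite family of pairwise disjoint subintervals $\{(a_i,b_i)\}_{i=1}^k$ of $[a,b]$ with $\sum_{i}(b_i - a_i) < \delta$ and subdivide each $(a_i,b_i)$ into consecutive subintervals $(c^i_j,c^i_{j+1})$ of length less than $\eta$; the refined family is again pairwise disjoint with the same total length $<\delta$. Applying the triangle inequality for $d_\Omega$ along the subdivision of each $(a_i,b_i)$, then the bound $d_\Omega \leq C\,d$ on each $\eta$-small piece, and finally $d$-absolute continuity on the refined family, I obtain
\[
    \sum_{i=1}^k d_\Omega\big(\gamma(a_i),\gamma(b_i)\big) \leq \sum_{i,j} d_\Omega\big(\gamma(c^i_j),\gamma(c^i_{j+1})\big) \leq C\sum_{i,j} d\big(\gamma(c^i_j),\gamma(c^i_{j+1})\big) < C\cdot\frac{\varepsilon}{C} = \varepsilon ,
\]
so $\gamma$ is absolutely continuous with respect to $d_\Omega$. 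The converse implication is obtained by the symmetric argument, using the reverse inequality $d \leq C\,d_\Omega$ on the $\eta$-small pieces. I do not expect a genuine obstacle: the only point needing a little care is that refining the family into $\eta$-small pieces leaves the total length unchanged, which is precisely what lets the absolute-continuity hypothesis for one metric be applied after the subdivision.
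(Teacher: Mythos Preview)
Your proof is correct. Both directions go through cleanly: the Lebesgue-number step gives you a uniform scale $\eta$ on which the biLipschitz constants from \Cref{coro:conformal_metrics_are_loc_lip} apply across the whole image, and refining a disjoint family into $\eta$-small pieces preserves total length, so the absolute-continuity hypothesis transfers exactly as you indicate.

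The paper argues differently. Instead of the pointwise comparison $d_\Omega \leq C\,d$, it bounds $d_\Omega(\gamma(a_i),\gamma(b_i))$ by the conformal arc-length $L_\Omega^d(\gamma|_{[a_i,b_i]})$, then by $M\,L^d(\gamma|_{[a_i,b_i]})$ with $M=\max\Omega\circ\gamma$, and finally invokes the fact that for an absolutely continuous curve the arc-length function $t\mapsto L^d(\gamma|_{[a,t]})$ is itself absolutely continuous. For the converse it appeals to \Cref{prop:transitivity_metric_case} to write $d = (d_\Omega)_{1/\Omega}$ and reuse the forward direction. Your route is more self-contained and symmetric: it needs only \Cref{coro:conformal_metrics_are_loc_lip} and avoids both the arc-length absolute-continuity fact and the transitivity proposition. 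The paper's route, on the other hand, bypasses the Lebesgue-number/subdivision machinery entirely and ties the argument more directly to the length structure.
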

\begin{proof}
    Suppose that \(\gamma\) is absolutely continuous with respect to \(d\) and denote by \(M \coloneqq \max \Omega \circ \gamma\). Since \((X,d)\) is a length space, we know that the function \(f(t) \coloneqq L(\gamma\vert_{ [a,t]})\) is absolutely continuous as well (notice that \(\gamma\) has finite length, being absolutely continuous). Hence, for any \(\varepsilon >0\) we can find \(\delta>0\) such that for any finite disjoint family of intervals \(\{[a_i,b_i]\}_{i=1}^n\) we have that
    \[
        \sum_{i=1}^n \abs{b_i-a_i} <\delta \implies \sum_{i=1}^n \abs{f(b_i) -f(a_i)}  = \sum_{i=1}^n L^d(\gamma\vert_{ [a_i,b_i]}) < \frac{\varepsilon}{M}.
    \]
    We now see that for any such family of intervals it holds that
    \[
        \sum_{i=1}^n d_{\Omega}(\gamma(a_i),\gamma(b_i)) \leq \sum_{i=1}^n L_\Omega^d(\gamma\vert_{ [a_i,b_i]}) \leq M \sum_{i=1}^n L^d(\gamma\vert_{ [a_i,b_i]}) < M \cdot \frac{\varepsilon}{M} = \varepsilon,
    \]
    which shows that the curve is absolutely continuous with respect to \(d_\Omega\). The other implication is now trivial, since by denoting \(\tilde{d} \coloneqq d_\Omega\) we have that \(\tilde{d}_{1/\Omega} = d\) by \Cref{prop:transitivity_metric_case}, thus we can apply the previous reasoning.
\end{proof}

With
\Cref{lemma:equivalent_absolute_continuity}
at hand, we can now demonstrate that the metric speed of curves with respect to the conformal distance is given by the product of the original metric speed of the curve by the conformal factor. This is completely analogous to what happens in the smooth setting when multiplying a metric tensor by a conformal factor. More importantly, this establishes an equivalence between 
%proposition shows that 
the notion of conformal length given in \cite{Han2019} and the definition given in this work. 
%coincides with the definition given in this work.
\begin{proposition}
    Consider a length space \((X,d)\) and let \(\Omega\) be a conformal factor on \(X\). For any absolutely continuous curve \(\gamma\) (with respect to any of the metrics, see \Cref{lemma:equivalent_absolute_continuity}), we denote by \(v_\gamma(t)\) its metric speed with respect to \(d\) and by \(v_{\Omega,\gamma}(t)\) its metric speed with respect to \(d_\Omega\). Then it holds that
    \[
        v_{\Omega,\gamma}(t) = \Omega(\gamma(t))v_\gamma(t)
    \]
    for almost every \(t \in [a,b]\). In particular, we have that
    \[
        L_\Omega^d(\gamma\vert_{ [a,t]}) = \int_a^t \Omega(\gamma(t)) v_{\gamma}(t) \, dt.
    \]
\end{proposition}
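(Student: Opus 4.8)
The strategy is to localise and use the local biLipschitz comparison between $d$ and $d_\Omega$ established in \Cref{coro:conformal_metrics_are_loc_lip}. Fix a curve $\gamma\colon[a,b]\to X$ that is absolutely continuous with respect to $d$ (equivalently, by \Cref{lemma:equivalent_absolute_continuity}, with respect to $d_\Omega$). Recall that an absolutely continuous curve in a metric space has a metric speed almost everywhere, and we will work at such a point $t\in(a,b)$ where both $v_\gamma(t)$ exists (finite). Fix $\varepsilon>0$. By \Cref{coro:conformal_metrics_are_loc_lip} applied at the point $x=\gamma(t)$, there is a $d$-ball $B_d(\gamma(t),r)$ such that for all $y,z$ in that ball
\[
    \big(\Omega(\gamma(t))-\varepsilon\big)\,d(y,z)\ \leq\ d_\Omega(y,z)\ \leq\ \big(\Omega(\gamma(t))+\varepsilon\big)\,d(y,z).
\]
Since $\gamma$ is continuous, there is $h_0>0$ such that $\gamma(t+h)\in B_d(\gamma(t),r)$ for all $h$ with $|h|<h_0$. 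Applying the displayed inequality with $y=\gamma(t)$, $z=\gamma(t+h)$ and dividing by $|h|$ gives, for $0<|h|<h_0$,
\[
    \big(\Omega(\gamma(t))-\varepsilon\big)\frac{d(\gamma(t),\gamma(t+h))}{|h|}
    \ \leq\ \frac{d_\Omega(\gamma(t),\gamma(t+h))}{|h|}
    \ \leq\ \big(\Omega(\gamma(t))+\varepsilon\big)\frac{d(\gamma(t),\gamma(t+h))}{|h|}.
\]
Letting $h\to 0$, the outer quotients converge to $\big(\Omega(\gamma(t))\mp\varepsilon\big)v_\gamma(t)$, so
\[
    \big(\Omega(\gamma(t))-\varepsilon\big)v_\gamma(t)\ \leq\ \liminf_{h\to0}\frac{d_\Omega(\gamma(t),\gamma(t+h))}{|h|}\ \leq\ \limsup_{h\to0}\frac{d_\Omega(\gamma(t),\gamma(t+h))}{|h|}\ \leq\ \big(\Omega(\gamma(t))+\varepsilon\big)v_\gamma(t).
\]
As $\varepsilon>0$ is arbitrary, the limit defining the $d_\Omega$-metric speed exists at $t$ and equals $\Omega(\gamma(t))v_\gamma(t)$; thus $v_{\Omega,\gamma}(t)=\Omega(\gamma(t))v_\gamma(t)$ for almost every $t\in[a,b]$.

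For the integral formula, recall that $(X,d_\Omega)$ is a length space by \Cref{coro:conformally_changed_space_is_length_space}, and $\gamma$ is $d_\Omega$-absolutely continuous by \Cref{lemma:equivalent_absolute_continuity}; hence the standard metric length formula gives $L^{d_\Omega}(\gamma\vert_{[a,t]})=\int_a^t v_{\Omega,\gamma}(s)\,ds$. Combining this with \Cref{prop:conformal_length_and_induced_length_coincide} (which yields $L_\Omega^d(\gamma\vert_{[a,t]})=L^{d_\Omega}(\gamma\vert_{[a,t]})$) and the identity $v_{\Omega,\gamma}=(\Omega\circ\gamma)v_\gamma$ just proven, we obtain
\[
    L_\Omega^d(\gamma\vert_{[a,t]})=\int_a^t \Omega(\gamma(s))\,v_\gamma(s)\,ds,
\]
which is the claimed equality.

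\textbf{Expected main obstacle.} The only genuinely delicate point is bookkeeping around the base-point dependence of the biLipschitz constant in \Cref{coro:conformal_metrics_are_loc_lip}: one must pin the base point to $x=\gamma(t)$ and invoke continuity of $\gamma$ to guarantee that $\gamma(t+h)$ eventually lies in the comparison ball, rather than trying to use a uniform constant along the curve. Everything else (a.e.\ existence of the metric derivative for absolutely continuous curves, and the metric-space length formula $L=\int v$) is standard and may be cited; the two-sided nature of the limit in \Cref{def:metric_speed} causes no trouble since the comparison bound is applied symmetrically in $h$.
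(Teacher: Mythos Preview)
Your proof is correct, and it takes a somewhat different route from the paper's. The paper works with \emph{length functions}: it fixes a parameter $s$ at which both $t\mapsto L^d(\gamma\vert_{[a,t]})$ and $t\mapsto L_\Omega^d(\gamma\vert_{[a,t]})$ are differentiable, uses the sandwich
\[
    \min_{[s,s']}(\Omega\circ\gamma)\,L^d(\gamma\vert_{[s,s']})\ \leq\ L_\Omega^d(\gamma\vert_{[s,s']})\ \leq\ \max_{[s,s']}(\Omega\circ\gamma)\,L^d(\gamma\vert_{[s,s']})
\]
to show that the ratio $L_\Omega^d/L^d\to\Omega(\gamma(s))$, and then invokes the standard identity $\tfrac{d}{dt}L(\gamma\vert_{[a,t]})=v_\gamma(t)$ to translate this into a statement about metric speeds. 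You instead go straight to the \emph{distances} via \Cref{coro:conformal_metrics_are_loc_lip} and squeeze the metric-speed quotient $d_\Omega(\gamma(t),\gamma(t+h))/|h|$ directly. Your route is shorter and arguably more transparent, since it stays with the definition of metric speed throughout and avoids the detour through differentiability of arclength; the paper's route, on the other hand, is self-contained in that it does not rely on \Cref{coro:conformal_metrics_are_loc_lip} (whose proof already packages the $\varepsilon$-geodesic argument you would otherwise need). For the integral formula both arguments come out the same once \Cref{prop:conformal_length_and_induced_length_coincide} is in hand.
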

\begin{proof}
    Let us consider any  time \(s \in [a,b)\) where both \(t\mapsto L(^d\gamma\vert_{[a,t]})\) and \(t \mapsto L_\Omega^d(\gamma\vert_{[a,t]})\) are differentiable. We know that for any \(t_1 < t_2\) it holds that
    \[
        \min_{t \in [t_1,t_2]} \Omega(\gamma(t)) \,  L^d(\gamma\vert_{[t_1,t_2]}) \leq L_\Omega^d(\gamma\vert_{[t_1,t_2]}) \leq \max_{t \in [t_1,t_2]} \Omega(\gamma(t))\, L^d(\gamma\vert_{[t_1,t_2]}).
    \]
    If there exists \(t>s\) such that \(L^d(\gamma\vert_{ [s,t]}) = 0\), then by the previous inequalities and the fact that \(\Omega > 0\) we have that \(L_\Omega^d(\gamma\vert_{ [s,t]}) = 0\) and so in both cases the metric speed vanishes and thus the result is trivial, so we assume that for every \(s'>s\) it holds that \(L^d(\gamma\vert_{ [s,s']}) > 0\). We can then take the previous chain of inequalities and write it as
    \[
        \min_{t \in [s,s']} \Omega(\gamma(t)) \leq \frac{L_\Omega^d(\gamma\vert_{[s,s']})}{L^d(\gamma\vert_{[s,s']})} \leq  \max_{t \in [s,s']} \Omega(\gamma(t)),
    \]
    so by sending \(s' \to s^+\) and using continuity of \(\Omega \circ \gamma\) we find that
    \begin{equation} \label{eq:infinitesimal_ratio_lengths}
        \lim_{s' \to s^+} \frac{L_\Omega^d(\gamma\vert_{[s,s']})}{L^d(\gamma\vert_{[s,s']})} = \Omega(\gamma(s)).
    \end{equation}
    From the classical theory of length spaces (cf. \cite{Burago_2001}) we know that at every point \(s\) of differentiability of \(t \mapsto L_\Omega^d(\gamma\vert_{[a,t]})\) it holds that
    \[
        {\frac{d}{dt}}\vert_{ t=s} L_\Omega^d(\gamma\vert_{ [a,t]}) = \lim_{t \to s} \frac{L_{\Omega}^d(\gamma\vert_{ [s,t]})}{t-s} = v_{\Omega,\gamma}(s).
    \]
    As we are at a point of differentiability, we can then compute \(v_{\Omega,\gamma}(s)\) using the right-sided derivative:
    \[
        v_{\Omega,\gamma}(s) = \lim_{s' \to s^+} \frac{L^d_\Omega(\gamma\vert_{[s,s']})}{s'-s} = \lim_{s' \to s^+} \frac{L_\Omega^d(\gamma\vert_{[s,s']})}{L^d(\gamma\vert_{[s,s']})} \cdot \frac{L^d(\gamma\vert_{[s,s']})}{s'-s} = \Omega(\gamma(s))v_\gamma(s),
    \]
    concluding the proof.
\end{proof}

\subsection{A generalization of Nomizu-Ozeki's theorem}\label{section 5.3}
In the smooth setting, an important result by Ozeki and Nomizu \cite{nomizu61} shows that every Riemannian manifold can be endowed with a Riemannian metric which is conformally related to the previous one and makes the space Cauchy-complete. Such result can be generalized: if a metric space is locally compact, then it admits a metric which makes it complete. Indeed, one can simply embed the space into its metric completion and the condition of being locally compact implies that such embedding is an open map. From the theory of Polish spaces, we get that an open subset of a Polish space is Polish: although we did not assume our space to be separable, the same proof can be applied to show that an open subset of a complete space is completely metrizable. It is worth mentioning, however, that this generalization does not retain the additional information that the new metric is conformally related to the previous one according to our notion of conformality. In the following theorem, we show that the same proof given in \cite{nomizu61} applies to our notion of conformal distance in the setting of locally compact length spaces, and the resulting complete space is conformally related to the original one.
\begin{theorem}\label{thm:make_a_space_complete}
    Suppose \((X,d)\) is a locally compact length space. Then there exists a metric \(d_\Omega\) which is conformally related to \(d\) such that the space \((X,d_\Omega)\) is Cauchy-complete.
\end{theorem}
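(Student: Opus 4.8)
The plan is to carry out the classical Nomizu--Ozeki construction \cite{nomizu61} at the level of metric length spaces, where the smoothing step of the original proof becomes unnecessary. For $x\in X$ define the \emph{compactness radius} $r(x)\coloneqq\sup\{\rho>0\colon \overline{B_d(x,\rho)}\text{ is compact}\}$. Local compactness forces $r(x)>0$ for all $x$, since a compact neighbourhood of $x$ contains some $\overline{B_d(x,\rho)}$, which is then compact as a closed subset. If $r(x_0)=+\infty$ for one $x_0$, then $r\equiv+\infty$ because $\overline{B_d(x,\rho)}\subseteq\overline{B_d(x_0,\rho+d(x,x_0))}$, so $(X,d)$ is proper and hence Cauchy-complete, and $\Omega\equiv 1$ works since $d_\Omega=d$ ($d$ being a length metric). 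Thus we may assume $r<+\infty$ everywhere; the triangle inequality shows $r$ is $1$-Lipschitz (if $\overline{B_d(x,\rho)}$ is compact and $d(x,y)<\rho$, then $\overline{B_d(y,\rho-d(x,y))}$ is a closed subset of it, hence compact), so $\Omega\coloneqq 1/r$ is a continuous positive conformal factor, and the resulting $d_\Omega$ is conformally related to $d$ by construction. We claim $(X,d_\Omega)$ is Cauchy-complete.

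For the reduction, it suffices to show that every curve $\gamma\colon[0,1)\to X$ with $L^d_\Omega(\gamma)<+\infty$ converges as $t\to1^-$. Indeed, given a $d_\Omega$-Cauchy sequence $(x_n)_n$, pass to a subsequence with $d_\Omega(x_{n_k},x_{n_{k+1}})<2^{-k}$; since $(X,d_\Omega)$ is a length space (\Cref{coro:conformally_changed_space_is_length_space}) and $L^{d_\Omega}=L^d_\Omega$ (\Cref{prop:conformal_length_and_induced_length_coincide}), choose curves $\gamma_k$ from $x_{n_k}$ to $x_{n_{k+1}}$ with $L^d_\Omega(\gamma_k)\le 2^{-k+1}$, and concatenate them, after affine reparametrisations, into a single continuous curve $\gamma\colon[0,1)\to X$ with $L^d_\Omega(\gamma)=\sum_k L^d_\Omega(\gamma_k)<+\infty$ reaching each $x_{n_k}$ at a parameter $t_k\uparrow1$. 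If $\gamma$ has a limit $x_\infty$, then $x_{n_k}\to x_\infty$, hence $d_\Omega(x_{n_k},x_\infty)\to0$, and the whole Cauchy sequence converges to $x_\infty$.

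For the core step, take $\gamma\colon[0,1)\to X$ with $L^d_\Omega(\gamma)<+\infty$. On each $[0,c]$, $c<1$, the continuous function $\Omega\circ\gamma$ is bounded below by some $m>0$, so \eqref{eq:variation_estimate_above} gives $m\,L^d(\gamma\vert_{[0,c]})\le L^d_\Omega(\gamma)<+\infty$; thus $\gamma$ is $d$-rectifiable on compacts, and we reparametrise by $d$-arclength on $[0,S)$ with $S=L^d(\gamma)\in(0,+\infty]$, so $\gamma$ is $1$-Lipschitz for $d$ and $v_\gamma\equiv1$. By the metric-speed identity $L^d_\Omega(\gamma\vert_{[0,t]})=\int_0^t\Omega(\gamma(s))\,ds=\int_0^t\tfrac{ds}{r(\gamma(s))}$ (\Cref{section5.2}) together with monotonicity of length in the endpoint, $L^d_\Omega(\gamma)=\int_0^S\tfrac{ds}{r(\gamma(s))}$. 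Since $r$ is $1$-Lipschitz and $d(\gamma(0),\gamma(s))\le s$, we have $r(\gamma(s))\le r(\gamma(0))+s$; were $S=+\infty$, this would force $L^d_\Omega(\gamma)\ge\int_0^\infty\tfrac{ds}{r(\gamma(0))+s}=+\infty$, a contradiction, so $S<+\infty$. Next, $a\coloneqq\liminf_{s\to S^-}r(\gamma(s))>0$: otherwise pick $s_k\uparrow S$ with $r(\gamma(s_k))=:\varepsilon_k\downarrow0$, thin so that the intervals $[s_k-\varepsilon_k/2,\,s_k]$ are pairwise disjoint in $[0,S)$, use $r(\gamma(s))\le\tfrac32\varepsilon_k$ there (by $1$-Lipschitzness) to get $\int_{s_k-\varepsilon_k/2}^{s_k}\tfrac{ds}{r(\gamma(s))}\ge\tfrac13$, whence $L^d_\Omega(\gamma)=+\infty$, again a contradiction. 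Finally, choose $s_0<S$ with $S-s_0<a/2$ and $r(\gamma(s))>a/2$ for all $s\in[s_0,S)$; then $\gamma([s_0,S))\subseteq\overline{B_d(\gamma(s_0),S-s_0)}$, which is compact because $S-s_0<r(\gamma(s_0))$, and $\gamma$ is $d$-Cauchy as $s\to S$, so it converges to some $x_\infty\in X$. As $d$ and $d_\Omega$ induce the same topology (\Cref{prop:same_topologies}), $\gamma(s)\to x_\infty$ also with respect to $d_\Omega$, which is what was needed.

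\textbf{Main obstacle.} The delicate point is the lower bound $\liminf_{s\to S^-}r(\gamma(s))>0$, that is, ruling out that a curve of finite conformal length ``escapes to the boundary of compactness.'' The $1$-Lipschitz control on $r$ is precisely the mechanism that both forces $S<+\infty$ and prevents this escape; everything else is a routine application of the length-space machinery of \Cref{basic_properties_metric_case} and \Cref{section5.2}. A minor technical care goes into the identity $L^d_\Omega(\gamma)=\int_0^S\Omega(\gamma(s))\,ds$ on the half-open interval, which follows from additivity and monotonicity of $L^d_\Omega$.
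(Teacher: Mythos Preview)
Your proof is correct, but it takes a genuinely different route from the paper's. Both start identically: define the compactness radius $r(x)$, dispose of the case $r\equiv+\infty$, observe that $r$ is $1$-Lipschitz, and set $\Omega=1/r$. From there the arguments diverge.

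The paper proves directly that every $d_\Omega$-ball of radius $1/3$ is precompact, which immediately yields completeness. The estimate is short and algebraic: for $y\in B_{\Omega,1/3}(x)$, pick a curve $\gamma$ from $x$ to $y$ with $L^d_\Omega(\gamma)<1/3$; letting $t$ be the point where $\Omega\circ\gamma$ attains its minimum, the crude bound $\Omega(\gamma(t))L^d(\gamma)\le L^d_\Omega(\gamma)$ together with $1$-Lipschitzness of $r$ gives $L^d(\gamma)<\tfrac{1}{3}\bigl(r(x)+L^d(\gamma)\bigr)$, hence $L^d(\gamma)<r(x)/2$, so $y\in\overline{B_d(x,r(x)/2)}$, which is compact. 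No integral formula, no arclength reparametrisation, no $\liminf$ argument.

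Your approach instead reduces completeness to convergence of curves of finite $L^d_\Omega$-length, reparametrises by $d$-arclength, and uses the metric-speed identity to turn the problem into an integral estimate for $\int_0^S ds/r(\gamma(s))$. The two key steps---$S<+\infty$ and $\liminf_{s\to S}r(\gamma(s))>0$---are both driven by the $1$-Lipschitz bound on $r$, exactly as in the paper, but the packaging is more analytic and uses more of the machinery of \Cref{section5.2}. What your argument buys is a transparent picture of \emph{why} the conformal factor $1/r$ works: it makes the ``boundary of compactness'' infinitely far away along any curve. The paper's argument is shorter and needs only the elementary length bound \eqref{eq:variation_estimate_above}, not the metric-speed formula or absolute continuity; in particular it avoids the (minor) care you flag about the integral identity on the half-open interval.
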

\begin{proof}
    We consider the function \(\rho \colon X \to \R \cup \{+\infty\}\) given by
    \[
        \rho(x) \coloneqq \sup \{ r> 0 \ \colon\ \overline{B_r(x)} \ \text{is compact} \}.
    \]
    Notice that the local compactness assumption implies that \(\rho(x)>0\) for all \(x \in X\). If there exists a point such that \(\rho(x) = +\infty\) we would get that \(X\) itself is compact and thus complete, hence the trivial conformal change \(\Omega \equiv 1\) yields the conclusion.

    We therefore assume that \(\rho(x) < +\infty\) for all $x\in X$, so that \(\rho\) is actually a real-valued, positive function. In this case, \(\rho\) is a 1-Lipschitz function: if not, then there are \(x,y \in X\) with \(\rho(y) > \rho(x) +d(x,y)\). Then there exists a positive radius \(r > \rho(x) +d(x,y)\) such that \(\overline{B_r(y)}\) is compact; notice that this implies that
    \[
        r' := r-d(x,y) > \rho(x) > 0.
    \]
    We then consider the ball \(\overline{B_{r'}(x)}\). By the triangle inequality, such ball is contained in \(\overline{B_r(y)}\), which is compact and therefore it is compact as well; however we noticed that \(r' > \rho(x)\), which contradicts the fact that \(\rho(x)\) is the supremum of the radii for which closed balls at \(x\) are compact. Hence we have
    \[
        \abs{\rho(y) - \rho(x)} \leq d(x,y),
    \]
    showing that \(\rho\) is 1-Lipschitz. We then set \(\Omega \coloneqq 1/\rho\) and notice that it is a positive, continuous function on \(X\), so that we may use it as a conformal factor and consider the metric \(d_\Omega\); recall that by \Cref{prop:same_topologies}, the topology of the space does not change.
    
    We now aim to show that for every point \(x \in X\) the ball \(B \coloneqq B_{\Omega,1/3}(x)\) with respect to the metric \(d_\Omega\) is pre-compact. This would immediately imply that the space is complete. We will prove that \(B \subset\overline{B_{\rho(x)/2}(x)}\): for any \(y \in B\) we have that \(d_\Omega(x,y) < 1/3\); as \((X,d_\Omega)\) is a length space by \Cref{coro:conformally_changed_space_is_length_space}, there exists a curve \(\gamma\) from \(x\) to \(y\) with \(L^d_{\Omega}(\gamma) < 1/3\). As done several times in previous proofs, we can relate \(L^d_\Omega\) to \(L^d\) via
    \[
        \Omega(\gamma(t))L^d(\gamma) \leq L^d_\Omega(\gamma) < 1/3,
    \]
    where \(\gamma(t)\) is the point for which \(\Omega \circ \gamma\) attains its minimum. On recalling that \(\Omega = 1/\rho\) and that \(\rho\) is 1-Lipschitz we may write
    \[
        L^d(\gamma) < \frac{\rho(\gamma(t))}{3} \leq\frac{\rho(x)}{3} + \frac{d(x,\gamma(t))}{3} \leq \frac{\rho(x)}{3} + \frac{L^d(\gamma)}{3}.
    \]
    Hence we have that
    \[
        L^d(\gamma) < \frac{\rho(x)}{2} \implies d(x,y) < \frac{\rho(x)}{2},
    \]
    hence \(y \in \overline{B_{\rho(x)/2}(x)}\), thus \(B \subset \overline{B_{\rho(x)/2}(x)}\). As \(\rho(x)/2<\rho(x)\), the ball on the right-hand-side is compact and thus \(B\) is precompact, concluding the proof.
\end{proof}
\begin{remark}
    Applying the previous theorem together with the Hopf--Rinow theorem, we have that any length metric on a locally compact space can be conformally changed to make the space geodesically complete. In particular, we also get that between any two points we can find a length minimizer which joins them, so the space becomes strictly intrinsic.
\end{remark}
\subsection{Hausdorff measures}\label{section 5.4}
We conclude this section by showing that the same reasoning we applied in \Cref{Lorentzian Hausdorff measure} can be used in the metric case to prove a relation between the Hausdorff measures induced by conformally related metrics. We will often additionally assume that the space is separable. For a given dimensional exponent \(s>0\), we will denote by \(\mathcal{H}^s_d\) and \(\mathcal{H}^s_{d_\Omega}\) the \(s\)-dimensional Hausdorff measures with respect to \(d\) and \(d_\Omega\) respectively.

Let us first show that an analogue of \Cref{lemma:hausdorff_measure_control} holds.
\begin{lemma}
    Consider a length space \((X,d)\) and $\Omega\colon X \rightarrow(0,\infty)$ a conformal factor. For any point \(x \in X\), any Borel set \(E\), any dimension exponent \(s>0\) and any \(\varepsilon > 0\) there exists a positive value \(r(x,\varepsilon)>0\) such that for any \(r \in (0,r(x,\varepsilon))\) it holds that
    \[
        (\Omega(x)^s - \varepsilon)\mathcal{H}_d^s(E \cap B(x,r)) \leq \mathcal{H}_{d_\Omega}^s(E \cap B(x,r)) \leq (\Omega(x)^s+ \varepsilon) \mathcal{H}^s_d(E \cap B(x,r)).
    \]
\end{lemma}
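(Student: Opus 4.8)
The plan is to follow the proof of \Cref{lemma:hausdorff_measure_control} almost verbatim, with \Cref{coro:conformal_metrics_are_loc_lip} taking the place of \Cref{lemma:local_uniform_control_conformal_length}: since $(X,d)$ is a length space, $d$ and $d_\Omega$ are locally biLipschitz equivalent near $x$ with constants as close to $\Omega(x)$ as we like, and a biLipschitz comparison of two metrics transfers directly to a comparison of the diameters of small sets, hence of the $s$-dimensional Hausdorff (pre-)measures built from such covers. Throughout, $B(x,r)$ will denote the $d$-ball; by \Cref{prop:same_topologies} the metric used to form these auxiliary balls is immaterial up to shrinking $r(x,\varepsilon)$.

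First I would fix $\varepsilon'>0$ with $\Omega(x)-\varepsilon'>0$ and apply \Cref{coro:conformal_metrics_are_loc_lip} to obtain an open ball $U=B_d(x,s')$ such that
\[
    (\Omega(x)-\varepsilon')\,d(y,z)\le d_\Omega(y,z)\le(\Omega(x)+\varepsilon')\,d(y,z)\qquad\text{for all }y,z\in U.
\]
By \Cref{prop:same_topologies}, $U$ is also an open neighbourhood of $x$ in the $d_\Omega$-topology, so it contains some ball $B_\Omega(x,t')$, and in turn $B_\Omega(x,t'/2)$ contains some ball $B_d(x,r_0)$. I would then set $r(x,\varepsilon'):=\min(r_0,s'/2,t'/2)$ and, from now on, only consider radii $r<r(x,\varepsilon')$ and scales $\delta<\min(s'/2,t'/2)$. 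The crucial bookkeeping step is the following: a cover of $E\cap B(x,r)$ by sets of diameter $<\delta$ (in either metric) becomes, after discarding the members that do not meet $E\cap B(x,r)$ — which changes neither the set covered nor the pre-measure — a cover by subsets of $U$. Indeed, if $\operatorname{diam}^d U_i<\delta$ and $U_i$ meets $B(x,r)$ then $U_i\subset B_d(x,r+\delta)\subset B_d(x,s')=U$; and if $\operatorname{diam}^{d_\Omega}V_i<\delta$ and $V_i$ meets $B(x,r)\subset B_\Omega(x,t'/2)$ then $V_i\subset B_\Omega(x,t')\subset U$. On $U$ the displayed estimate gives $(\Omega(x)-\varepsilon')\operatorname{diam}^d W\le\operatorname{diam}^{d_\Omega}W\le(\Omega(x)+\varepsilon')\operatorname{diam}^d W$ for every $W\subset U$.

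With this at hand the two estimates follow exactly as in \Cref{lemma:hausdorff_measure_control}. A $d$-$\delta$-cover of $E\cap B(x,r)$ (inessential members removed) is a $d_\Omega$-$(\Omega(x)+\varepsilon')\delta$-cover whose $d_\Omega$-cost is at most $(\Omega(x)+\varepsilon')^s$ times its $d$-cost; taking the infimum at scale $\delta$ and then $\delta\to0^+$ gives $\mathcal{H}^s_{d_\Omega}(E\cap B(x,r))\le(\Omega(x)+\varepsilon')^s\,\mathcal{H}^s_d(E\cap B(x,r))$. Symmetrically, a $d_\Omega$-$\delta$-cover is a $d$-$(\Omega(x)-\varepsilon')^{-1}\delta$-cover whose $d$-cost is at most $(\Omega(x)-\varepsilon')^{-s}$ times its $d_\Omega$-cost, whence $(\Omega(x)-\varepsilon')^s\,\mathcal{H}^s_d(E\cap B(x,r))\le\mathcal{H}^s_{d_\Omega}(E\cap B(x,r))$. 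Any fixed normalization constant in the definition of the Hausdorff measure is the same on both sides and plays no role. Finally, given $\varepsilon>0$ I would use continuity of $t\mapsto(\Omega(x)\pm t)^s$ at $t=0$ to choose $\varepsilon'$ so small that $\Omega(x)^s-\varepsilon\le(\Omega(x)-\varepsilon')^s$ and $(\Omega(x)+\varepsilon')^s\le\Omega(x)^s+\varepsilon$, and put $r(x,\varepsilon):=r(x,\varepsilon')$; this yields the claimed two-sided inequality.

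The only genuinely delicate point — and the reason the auxiliary balls $B_\Omega(x,t')$ and $B_d(x,r_0)$ are introduced — is avoiding a circularity: the biLipschitz bound is available only inside $U$, so before using it on the members of a $d_\Omega$-cover one must already know they lie in $U$, which needs control of both their $d_\Omega$-diameter and their position. This is arranged purely topologically by the nesting $B_d(x,r)\subset B_\Omega(x,t'/2)$ together with the restriction on $\delta$, without invoking the estimate we are trying to prove. Everything else is a routine transcription of \Cref{lemma:hausdorff_measure_control}.
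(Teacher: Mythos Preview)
Your proof is correct and takes essentially the same approach as the paper: invoke \Cref{coro:conformal_metrics_are_loc_lip} for local biLipschitz control of $d_\Omega$ by $d$ with constants $\Omega(x)\pm\varepsilon'$, transcribe the argument of \Cref{lemma:hausdorff_measure_control} with diameters in place of $\tau$-values, and finish by shrinking $\varepsilon'$ via continuity of $t\mapsto(\Omega(x)\pm t)^s$. Your explicit nesting $B_d(x,r_0)\subset B_\Omega(x,t'/2)\subset U$, used to ensure that $d_\Omega$-small covering sets already lie in $U$ before the biLipschitz bound is applied, is more careful than the paper, which simply takes $r''<r'/2$ and declares the rest identical to \Cref{lemma:hausdorff_measure_control}.
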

\begin{proof}
    The proof goes analogously as in \Cref{lemma:hausdorff_measure_control}: for a given \(x \in X\), we fix \(\varepsilon'>0\) and employ \Cref{coro:conformal_metrics_are_loc_lip} to find an open ball \(B(x,r')\) around \(x\) such that for any \(y,z \in B(x,r')\) it holds that
    \[
        (\Omega(x)-\varepsilon)d(y,z) \leq d_\Omega(y,z) \leq (\Omega(x) + \varepsilon)d(y,z).
    \]
    We consider any radius \(r''<r'/2\) and apply the same reasoning as in \Cref{lemma:hausdorff_measure_control}; this time we employ the above estimate to bound the diameter of the covering sets \(\{J_i\}_i\) as
    \[
        (\Omega(x) - \varepsilon')\operatorname{diam}(J_i)\leq \operatorname{diam}_\Omega(J_i) \leq (\Omega(x)+\varepsilon') \operatorname{diam}(J_i).
    \]
    The rest of the proof is identical to \Cref{lemma:hausdorff_measure_control}.
\end{proof}

It is immediate to see that all the proofs after \Cref{lemma:hausdorff_measure_control} in \Cref{Lorentzian Hausdorff measure} hold for generic measures which satisfy the estimate featured in \Cref{lemma:hausdorff_measure_control}. As the previous proof shows that this is the case for \(\mathcal{H}^s_d\) and \(\mathcal{H}_{d_\Omega}^s\), all of the results apply here as well. We summarize them in the following theorem.
\begin{theorem}
    Consider a separable length space \((X,d)\) and a conformal factor \(\Omega \colon X \to (0, +\infty)\). Then for any \(s>0\) the measures \(\mathcal{H}^s_d\) and \(\mathcal{H}^s_{d_\Omega}\) are mutually absolutely continuous with respect to each other. In particular, one vanishes if and only if the other does as well. Moreover, one is \(\sigma\)-finite if and only if the other one is as well and in such case, for any borel set \(E\) we have that
    \[
        \mathcal{H}^s_{d_\Omega}(E) = \int_E \Omega(x)^s \, d\mathcal{H}^s_d(x).
    \]
\end{theorem}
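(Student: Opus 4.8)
The plan is to exploit the structural observation the authors already flagged: every argument in \Cref{Lorentzian Hausdorff measure} after \Cref{lemma:hausdorff_measure_control} uses only that the two measures involved are Borel (hence monotone and countably subadditive), that the underlying space is separable (so that open covers admit countable subcovers by Lindel\"of's property), and the local two-sided comparison estimate supplied by \Cref{lemma:hausdorff_measure_control}. The lemma immediately preceding this theorem establishes precisely that estimate for the pair $\mathcal{H}^s_d$, $\mathcal{H}^s_{d_\Omega}$ on a separable length space, and both are genuine Borel outer measures on $X$ by the standard Carath\'eodory construction of Hausdorff measures. So the proof reduces to verifying this compatibility and then quoting the corresponding Lorentzian statements almost verbatim, with $\tau$ replaced by $d$, $\tau_\Omega$ by $d_\Omega$, $\Omega^s$ playing the same role, and ``diamonds'' replaced by metric balls.

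Concretely I would proceed in three steps. \emph{Mutual absolute continuity.} Given a Borel set $E$ with $\mathcal{H}^s_d(E)=0$, cover it by balls $B(x,r(x))$ to which the preceding lemma applies with $\varepsilon=\Omega(x)^s/2$, extract a countable subcover $\{B(x_i,r_i)\}_i$ by separability, and combine the upper bound $\mathcal{H}^s_{d_\Omega}(E\cap B(x_i,r_i))\le \tfrac{3}{2}\Omega(x_i)^s\,\mathcal{H}^s_d(E\cap B(x_i,r_i))=0$ with monotonicity and $\sigma$-subadditivity to get $\mathcal{H}^s_{d_\Omega}(E)=0$; the reverse implication uses the lower bound symmetrically. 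This is the metric copy of \Cref{prop:hausdorff_absolutely_continuous}, and in particular $\mathcal{H}^s_d(X)=0\iff\mathcal{H}^s_{d_\Omega}(X)=0$. \emph{$\sigma$-finiteness.} From a $\sigma$-finite cover $\{X_i\}_i$ for $\mathcal{H}^s_d$, apply the preceding lemma pointwise on each $X_i$ (again with $\varepsilon=\Omega(x)^s/2$) and separability to produce a countable cover of each $X_i$, hence of $X$, by balls of finite $\mathcal{H}^s_{d_\Omega}$-measure; the converse is identical with the roles of the two metrics swapped. \emph{The integral formula.} Assuming $\mathcal{H}^s_d$ is $\sigma$-finite, the first step gives $\mathcal{H}^s_{d_\Omega}\ll\mathcal{H}^s_d$, so Radon--Nikodym produces a measurable density $f\ge 0$ with $\mathcal{H}^s_{d_\Omega}(E)=\int_E f\,d\mathcal{H}^s_d$ for every Borel $E$; one then shows $f=\Omega^s$ $\mathcal{H}^s_d$-a.e.\ exactly as in \Cref{theorem:Lorentzian-Hausdorff-measures}, by assuming for contradiction that $\{f>\Omega^s\}$ (the case $\{f<\Omega^s\}$ being symmetric) has positive measure, decomposing it as $\bigcup_n\{f>\Omega^s+1/n\}$, covering a positively charged piece by balls small enough that the oscillation of $\Omega^s$ is at most $1/(2n)$, applying the preceding lemma with $\varepsilon=1/(2n)$, and noticing that the resulting chain of inequalities must collapse to equalities, which forces $f=\Omega^s+1/n$ a.e.\ on a set on which this never holds.

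I do not expect a genuine obstacle; the only points requiring attention are essentially bookkeeping. One must check that the preceding lemma is indeed stated in the normalized form $(\Omega(x)^s-\varepsilon)\mathcal{H}^s_d(E\cap B(x,r))\le\mathcal{H}^s_{d_\Omega}(E\cap B(x,r))\le(\Omega(x)^s+\varepsilon)\mathcal{H}^s_d(E\cap B(x,r))$ used throughout the Lorentzian arguments (it is), that separability of $(X,d)$ is the same as separability of $(X,d_\Omega)$ (immediate, since the two metrics induce the same topology by \Cref{prop:same_topologies}), and that the Hausdorff premeasures $\mathcal{H}^s_{\delta,d}$ are monotone and $\sigma$-subadditive and pass in the limit $\delta\to 0^+$ to Borel measures --- all classical. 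With these in place the three assertions of the theorem follow by repeating the proofs of \Cref{prop:hausdorff_absolutely_continuous}, its $\sigma$-finiteness counterpart, and \Cref{theorem:Lorentzian-Hausdorff-measures} word for word.
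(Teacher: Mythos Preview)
Your proposal is correct and matches the paper's own approach exactly: the paper simply observes that every argument after \Cref{lemma:hausdorff_measure_control} in \Cref{Lorentzian Hausdorff measure} uses only separability, basic Borel-measure properties, and the local two-sided comparison estimate, and that the metric lemma preceding this theorem supplies that estimate for $\mathcal{H}^s_d$ and $\mathcal{H}^s_{d_\Omega}$. If anything, your write-up is more detailed than the paper's, which dispatches the proof in a single sentence.
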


In the metric case we also get an additional result which is not so trivial for Lorentzian pre-Length spaces, namely that the Hausdorff dimension of separable length spaces is invariant under conformal transformations.
\begin{proposition}
    Consider a separable length space \((X,d)\) and fix a conformal factor \(\Omega \colon X \to (0, +\infty)\). Then the Hausdorff dimension of \(X\) with respect to \(d\) equals the Hausdorff dimension with respect to \(d_\Omega\). Equivalently, the Hausdorff dimension of a separable length space is conformally invariant. 
\end{proposition}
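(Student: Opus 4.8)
The plan is to obtain this as an immediate consequence of the preceding theorem. Recall that the Hausdorff dimension of a metric space $(X,\delta)$ can be defined as
\[
  \dim_H(X,\delta) \coloneqq \inf\{s>0 \ \colon \ \mathcal{H}^s_\delta(X) = 0\},
\]
which agrees with the other standard characterisations, e.g.\ the supremum of the exponents for which $\mathcal{H}^s_\delta(X) = +\infty$, or the infimum of those for which $\mathcal{H}^s_\delta(X) < +\infty$.

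First I would invoke the previous theorem, which yields, for every fixed exponent $s>0$, the equivalence $\mathcal{H}^s_d(X) = 0 \iff \mathcal{H}^s_{d_\Omega}(X) = 0$. This shows that the two threshold sets $\{s>0 \colon \mathcal{H}^s_d(X) = 0\}$ and $\{s>0 \colon \mathcal{H}^s_{d_\Omega}(X) = 0\}$ are literally the same subset of $(0,+\infty)$; taking infima then gives $\dim_H(X,d) = \dim_H(X,d_\Omega)$.

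To phrase the conclusion as genuine conformal invariance, I would finally note that if $(X,d)$ and $(\widetilde X,\widetilde d)$ are conformally related length spaces — that is, there is a homeomorphism $\iota\colon X\to\widetilde X$ and a conformal factor $\Omega$ with $\iota^*\widetilde d = d_\Omega$ — then $\iota$ is by construction an isometry from $(X,d_\Omega)$ onto $(\widetilde X,\widetilde d)$, so $\dim_H(X,d_\Omega) = \dim_H(\widetilde X,\widetilde d)$, and combining this with the identity above gives $\dim_H(X,d) = \dim_H(\widetilde X,\widetilde d)$.

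I do not expect any real obstacle here, since all of the analytic work has already been carried out in the comparison of the two Hausdorff measures; the only point requiring a little care is to use the characterisation of $\dim_H$ through the vanishing of $\mathcal{H}^s$ — which is precisely the quantity controlled by the preceding theorem — keeping in mind that the actual finite values of the Hausdorff measures are in general not preserved under conformal changes (they pick up a factor $\int\Omega^s$, cf.\ the discussion in \Cref{Lorentzian Hausdorff measure}), although this does not affect the critical exponent.
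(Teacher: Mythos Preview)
Your proposal is correct and follows essentially the same argument as the paper: both use the mutual-vanishing equivalence $\mathcal{H}^s_d(X)=0 \iff \mathcal{H}^s_{d_\Omega}(X)=0$ from the preceding theorem and then take the infimum characterisation of Hausdorff dimension to conclude. Your extra paragraph unpacking conformal equivalence via a homeomorphism is a harmless elaboration the paper omits.
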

\begin{proof}
    The claim immediately follows the previous theorem, in particular from the fact that the measure are mutually vanishing and from the fact that
    \[
        \dim_d(X) = \inf\{s>0 \ \colon \mathcal{H}^s_d(X) = 0\} = \inf\{s>0 \ \colon \mathcal{H}^s_{d_\Omega}(X) = 0\} = \dim_{d_\Omega}(X).
    \]
\end{proof}

\section{Outlook}\label{Conclusions}  
     In the smooth setting of spacetimes, the classical result of Beem (see \cite[Thm.\ 8]{Beem_1976}) states that, under certain assumptions, a spacetime (that may or may not be timelike or null geodesically complete) can be conformally transformed into a timelike and null geodesically complete spacetime. This provides a mechanism to construct models of causally geodesically complete spacetimes (devoid of spacetime singularities), while key causal structures, such as the existence of a black hole horizon, are preserved. The resulting black hole spacetimes are called regular black hole spacetimes (see for e.g.\ \cite{Bambi_2016}). Such spacetimes violate one of the conditions of the pattern singularity theorem (see \cite[Thm.\ 6.1]{Senovilla_1998}), for instance the energy condition. The synthetic version of Beem's theorem, which would also be the Lorentzian analogue of \Cref{thm:make_a_space_complete} will extend these ideas to the non-smooth setting of Lorentzian pre-length spaces offering a rigorous framework for analysing completeness and ``singularity-free" structures beyond the smooth category.  

    As also mentioned in the \hyperref[Introduction]{Introduction}, one may seek to extend the conformal and asymptotic analysis of spacetimes to the synthetic setting. In the smooth category, the notions of conformal relation and conformal transformations (see \Cref{definition:conformal relation}) allow one to define a conformal completion, which in turn induces a well-defined conformal boundary $\mathscr{I}$. A spacetime is said to be asymptotically flat if  $\mathscr{I}$ satisfies the following three properties: 1) there exists an open neighbourhood of $\mathscr{I}$ in which Einstein's vacuum equations hold; 
    2) $\mathscr{I}$ can be decomposed into two (not necessarily disjoint) components, $\mathscr{I}^+$ and $\mathscr{I}^-$, such that every inextendible future-directed null geodesic originates at $\mathscr{I}^-$ and terminates at $\mathscr{I}^+$; and 
    3) $\mathscr{I}$ is a null hypersurface. With the recent development of tools from optimal transport --- namely, the optimal transport formulation of the Einstein vacuum equations (see Appendix B in \cite{Mondino_2022}) --- and the synthetic notion of null hypersurfaces (see \cite{Cavalletti_2025}), it seems conceivable to formulate a synthetic version of asymptotic flatness. Such a formulation would provide a framework to study asymptotic structure and causal properties of spacetimes beyond the smooth category, potentially extending classical results to Lorentzian pre-length spaces.

    \section*{Acknowledgements}
    The authors are grateful to Tobias Beran for helpful discussions and Michael Kunzinger for valuable comments on the draft of this paper.
    
    This research was funded in whole or in part by the Austrian Science Fund (FWF) [grants DOI \href{https://www.fwf.ac.at/en/research-radar/10.55776/EFP6}{10.55776/EFP6} and \href{https://www.fwf.ac.at/en/research-radar/10.55776/STA32}{10.55776/STA32}]. For open access purposes, the authors have applied a CC BY public copyright license to any author-accepted manuscript version arising from this submission.
    \printbibliography

\end{document}